\definecolor{green}{rgb}{0,0.8,0} 
\newtheorem{theorem}{Theorem}[section]
\newtheorem{corollary}[theorem]{Corollary}
\newtheorem{lemma}[theorem]{Lemma}
\newtheorem{proposition}[theorem]{Proposition}
\theoremstyle{definition}
\newtheorem{definition}[theorem]{Definition}
\theoremstyle{remark}
\newtheorem{remark}[theorem]{Remark}
\numberwithin{equation}{section}
\newcommand{\nrm}[1]{\Vert#1\Vert}
\newcommand{\abs}[1]{\vert#1\vert}
\newcommand{\brk}[1]{\langle#1\rangle}
\newcommand{\set}[1]{\{#1\}}
\newcommand{\supp}{{\mathrm{supp}}}
\renewcommand{\Re}{\mathrm{Re}}
\renewcommand{\Im}{\mathrm{Im}}
\newcommand{\aeq}{\sim}
\newcommand{\aleq}{\lesssim}
\newcommand{\lap}{\triangle}
\newcommand{\ud}{\mathrm{d}}
\newcommand{\rd}{\partial}
\newcommand{\nb}{\nabla}
\newcommand{\imp}{\Rightarrow}
\newcommand{\bb}{\Big}
\newcommand{\0}{\emptyset}
\newcommand{\alp}{\alpha}
\newcommand{\bt}{\beta}
\newcommand{\gmm}{\gamma}
\newcommand{\dlt}{\delta}
\newcommand{\eps}{\epsilon}
\newcommand{\veps}{\varepsilon}
\newcommand{\lmb}{\lambda}
\newcommand{\sgm}{\sigma}
\newcommand{\tht}{\theta}
\newcommand{\Tht}{\Theta}
\newcommand{\omg}{\omega}
\newcommand{\Omg}{\Omega}
\newcommand{\zt}{\zeta}
\newcommand{\bfa}{{\bf a}}
\newcommand{\bfb}{{\bf b}}
\newcommand{\bfc}{{\bf c}}
\newcommand{\bfd}{{\bf d}}
\newcommand{\bfe}{{\bf e}}
\newcommand{\bfm}{{\bf m}}
\newcommand{\bfn}{{\bf n}}
\newcommand{\bfD}{{\bf D}}
\newcommand{\bfE}{{\bf E}}
\newcommand{\mfa}{\mathfrak{a}}
\newcommand{\mfb}{\mathfrak{b}}
\newcommand{\bbC}{\mathbb C}
\newcommand{\bbD}{\mathbb D}
\newcommand{\bbH}{\mathbb H}
\newcommand{\bbR}{\mathbb R}
\newcommand{\bbS}{\mathbb S}
\newcommand{\bbZ}{\mathbb Z}
\newcommand{\calD}{\mathcal D}
\newcommand{\calE}{\mathcal E}
\newcommand{\calF}{\mathcal F}
\newcommand{\calG}{\mathcal G}
\newcommand{\calH}{\mathcal H}
\newcommand{\calL}{\mathcal L}
\newcommand{\calO}{\mathcal O}
\newcommand{\calP}{\mathcal P}
\newcommand{\calQ}{\mathcal Q}
\newcommand{\calS}{\mathcal S}
\newcommand{\calX}{\mathcal X}
\newcommand{\calY}{\mathcal Y}
\newcommand{\covD}{\bfD}
\newcommand{\met}{\bfm}
\newcommand{\uL}{\underline{L}}
\newcommand{\near}{\mathrm{near}}
\newcommand{\far}{\mathrm{far}}
\newcommand{\ualp}{\underline{\alp}}
\newcommand{\M}{\mathrm{M}}
\newcommand{\KG}{\mathrm{KG}}
\newcommand{\defT}[1]{{}^{(#1)} \pi}
\newcommand{\vC}[1]{{}^{(#1)} J}
\newcommand{\sC}[1]{{}^{(#1)} K}
\newcommand{\mvC}[1]{{}^{(#1)} P}			
\newcommand{\msC}[1]{{}^{(#1)} Q}			
\newcommand{\scovD}{\! \not \!\! \covD}
\newcommand{\snb}{\hskip-.25em \not \hskip-.25em \nb}
\newcommand{\pfstep}[1]{\vspace{.5em} \noindent {\bf #1.} }
\DeclareMathOperator*{\esssup}{ess\,sup\,}
\newcommand{\extr}{\mathrm{ext}}
\newcommand{\rst}{\!\upharpoonright}		
\newcommand{\covED}{\bfE \bfD}			
\newcommand{\ED}{ED}					
\newcommand{\EM}{\calQ}				
\newcommand{\thED}{\eps}		
\newcommand{\thcovED}{\bfe}		
\newcommand{\thE}{\eps_{\ast}}		
\newcommand{\Xw}{\calX^{w}}						
\newcommand{\Yw}{\calY^{w}}						
\newcommand{\Hardy}{\mathcal{{H}}}
\newcommand{\Null}{\mathcal{{N}}}
\newcommand{\EFlux}{\calF}
\newcommand{\G}{\calG}
\newcommand{\weakto}{\rightharpoonup}
\begin{document}

\title[GWP and scattering of (4+1)-d MKG]{Global well-posedness and scattering of the \\ (4+1)-dimensional Maxwell-Klein-Gordon equation}
\author{Sung-Jin Oh}%
\address{Department of Mathematics, UC Berkeley, Berkeley, CA, 94720}%
\email{sjoh@math.berkeley.edu}%

\author{Daniel Tataru}%
\address{Department of Mathematics, UC Berkeley, Berkeley, CA, 94720}%
\email{tataru@math.berkeley.edu}%


\begin{abstract}
  This article constitutes the final and main part of a three-paper
  sequence \cite{OT1, OT2}, whose goal is to prove global
  well-posedness and scattering of the energy critical
  Maxwell-Klein-Gordon equation (MKG) on $\mathbb{R}^{1+4}$ for
  arbitrary finite energy initial data. Using the successively
  stronger continuation/scattering criteria established in the
  previous two papers \cite{OT1, OT2}, we carry out a blow-up analysis
  and deduce that the failure of global well-posedness and scattering
  implies the existence of a nontrivial stationary or self-similar
  solution to MKG. Then, by establishing that such solutions do not
  exist, we complete the proof.
\end{abstract}
\maketitle

\setcounter{tocdepth}{1}
\tableofcontents

\section{Introduction}
In this article we prove global well-posedness and scattering of the
energy critical Maxwell-Klein-Gordon equation on $\bbR^{1+4}$
for any finite energy initial data data. In Section~\ref{subsec:bg},
we present some background material concerning the
Maxwell-Klein-Gordon equation on $\bbR^{1+4}$. Readers already
familiar with this equation may skip to Section~\ref{subsec:result},
where we give a precise statement of the main theorem
(Theorem~\ref{thm:main}). This paper is the main and logically the
final part of the three-paper sequence \cite{OT1, OT2}.  In
Sections~\ref{sec:overview} and \ref{sec:OT3} below, we provide an
overview of the entire proof of Theorem~\ref{thm:main} spanning the
whole sequence.

\subsection{$(4+1)$-dimensional Maxwell-Klein-Gordon
  system} \label{subsec:bg} Let $\bbR^{1+4}$ be the
$(4+1)$-dimensional Minkowski space with the metric
\begin{equation*}
  \met_{\mu \nu} := \mathrm{diag}\,(-1,+1,+1,+1,+1)
\end{equation*}
in the standard rectilinear coordinates $(t=x^{0}, x^{1}, \cdots,
x^{4})$. Consider the trivial complex line bundle $L = \bbR^{1+4}
\times \bbC$ over $\bbR^{1+4}$ with structure group $\mathrm{U}(1) =
\set{e^{i \chi} \in \bbC}$. Global sections of $L$ may be identified
with $\bbC$-valued functions on $\bbR^{1+4}$. Using the identification
$u(1) \equiv i \bbR$ and taking the trivial connection $\ud$ as a
reference, any connection $\covD$ on $L$ takes the form
\begin{equation*}
  \covD = \ud + i A
\end{equation*}
for some real-valued 1-form $A$ on $\bbR^{1+4}$. The
\emph{Maxwell-Klein-Gordon system} is a Lagrangian field theory for a
pair $(A, \phi)$ of a connection on $L$ and a section of $L$ with the
action functional
\begin{equation*}
  \calS[A, \phi] = \int_{\bbR^{1+4}} \frac{1}{4} F_{\mu \nu} F^{\mu \nu} + \frac{1}{2} \covD_{\mu} \phi \overline{\covD^{\mu} \phi} \, \ud t \ud x,
\end{equation*}
where $F_{\mu \nu} = (\ud A)_{\mu \nu} = \rd_{\mu} A_{\nu} - \rd_{\nu}
A_{\mu}$ is the \emph{curvature 2-form} associated to $\covD$. We
follow the usual convention of raising/lowering indices by the
Minkowski metric $\bfm$, and also of summing over repeated upper and
lower indices.  Computing the Euler-Lagrange equations, we arrive at
the \emph{Maxwell-Klein-Gordon equations} (MKG)
\begin{equation} \label{eq:MKG} \tag{MKG} \left\{
    \begin{aligned}
      \rd^{\mu} F_{\nu \mu} =& \Im(\phi \overline{\covD_{\nu} \phi})  \\
      \Box_{A} \phi =& 0,
    \end{aligned}
  \right.
\end{equation}
where $\Box_{A} := \covD^{\mu} \covD_{\mu}$ is the (gauge) covariant
d'Alembertian.

A basic feature of \eqref{eq:MKG} is \emph{gauge
  invariance}. Geometrically, a gauge transform is a change of basis
in the fiber $\bbC$ over each point on $\bbR^{1+4}$ by an element of
the gauge group $\mathrm{U}(1)$. Accordingly, we refer to a
real-valued function $\chi : \bbR^{1+4} \to \bbR$ (hence $e^{i \chi}
\in \mathrm{U}(1)$) as a \emph{gauge transformation} and define the
corresponding gauge transform of a pair $(A, \phi)$ as
\begin{equation} \label{eq:MKG-gt}
  (A, \phi) \mapsto (\widetilde{A}, \widetilde{\phi}) := (A - \ud \chi, e^{i \chi} \phi).
\end{equation}
Observe that $\covD$ and $\Box_{A}$ are covariant under gauge
transforms (i.e., $e^{i \chi} \covD \phi = \widetilde{\covD}
\widetilde{\phi}$ etc), whereas $F$ and $\Im(\phi
\overline{\covD_{\mu} \phi})$ are invariant. Hence \eqref{eq:MKG} is
invariant under gauge transforms. Since $\mathrm{U}(1)$ is an abelian group,
\eqref{eq:MKG} is said to be an \emph{abelian gauge theory}.

We now formulate the \emph{initial value problem} for \eqref{eq:MKG},
in a way that is consistent with the gauge invariance of the system. An
\emph{initial data set} for \eqref{eq:MKG} consists of a pair of
1-forms $(a_{j}, e_{j})$ and a pair of $\bbC$-valued functions $(f,
g)$ on $\bbR^{4}$. We say that $(a, e, f, g)$ is the initial data for
a solution $(A, \phi)$ at time $t_{0}$ if
\begin{equation*}
  (A_{j}, F_{0j}, \phi, \covD_{t} \phi) \rst_{\set{t = t_{0}}} = (a_{j}, e_{j}, f, g).
\end{equation*}
We usually take the initial time $t_{0}$ to be zero. Observe that the
$\nu = 0$ component of \eqref{eq:MKG} imposes a constraint on any
initial data for \eqref{eq:MKG}, namely
\begin{equation} \label{eq:gaussEq} \rd^{\ell} e_{\ell} = \Im(f
  \overline{g})
\end{equation}
This equation is called the \emph{Gauss} (or \emph{constraint})
\emph{equation}.

There is a \emph{conserved energy} for \eqref{eq:MKG}, which is one of
the basic ingredients of the non-perturbative analysis performed in
this paper. We define the conserved energy of a solution $(A, \phi)$
at time $t$ to be
\begin{equation}
  \calE_{\set{t} \times \bbR^{4}}[A, \phi] := \frac{1}{2} \int_{\set{t} \times \bbR^{4}} \sum_{0 \leq \mu < \nu \leq 4} \abs{F_{\mu \nu}}^{2} + \sum_{0 \leq \mu \leq 4} \abs{\covD_{\mu} \phi}^{2} \, \ud x.
\end{equation}
For a suitably regular solution to \eqref{eq:MKG} defined on a
connected interval $I$, this quantity is constant. This conservation
law is in fact a consequence of N\"other's principle (i.e., continuous
symmetry of the field theory corresponds to a conserved quantity)
applied to the time translation symmetry of \eqref{eq:MKG}; we refer
to Section~\ref{sec:energy} for further discussion and a proof.

Observe that the conserved energy is invariant under the scaling
\begin{equation*}
  (A, \phi)(t,x) \mapsto (\lmb^{-1} A, \lmb^{-1} \phi)(\lmb^{-1} t, \lmb^{-1} x) \quad \hbox{ for any } \lmb > 0,
\end{equation*}
which also preserves the system \eqref{eq:MKG}. Hence \eqref{eq:MKG}
on $\bbR^{1+4}$ is \emph{energy critical}.

\subsection{Statement of the main theorem}\label{subsec:result}
Our goal now is to give a precise statement of the global
well-posedness/scattering theorem proved in this paper. For this
purpose, we first borrow some definitions from \cite{Krieger:2012vj,
  OT1}.

We say that a \eqref{eq:MKG} initial data set $(a, e, f, g)$ (i.e., a
solution to the Gauss equation) is \emph{classical} and write $(a, e,
f, g) \in \calH^{\infty}$ if each of $a, e, f, g$ belongs to
$H^{\infty}_{x} := \cap_{n=0}^{\infty} H^{n}_{x}$. Correspondingly, we
say that a smooth solution $(A, \phi)$ to \eqref{eq:MKG} on $I \times
\bbR^{4}$ (where $I \subseteq \bbR$ is an interval) is a
\emph{classical solution} if $A_{\mu}, \phi \in \cap_{n, m=0}^{\infty}
C_{t}^{m} (I; H^{n}_{x})$. 

Define the space $\calH^{1} = \calH^{1}(\bbR^{4})$ of \emph{finite
  energy initial data sets} to be the space of \eqref{eq:MKG} initial
data sets for which the following norm is finite:
\begin{equation}
  \nrm{(a, e, f, g)}_{\calH^{1}} := \sup_{j=1, \ldots, 4} \nrm{(a_{j}, e_{j})}_{\dot{H}^{1}_{x} \times L^{2}_{x}(\bbR^{4})} + \nrm{(f, g)}_{\dot{H}^{1}_{x} \times L^{2}_{x}(\bbR^{4})}.
\end{equation}
Given a pair $(A, \phi)$ on $I \times \bbR^{4}$, we define its $C_{t}
\calH^{1}(I \times \bbR^{4})$ norm as
\begin{equation*}
  \nrm{(A, \phi)}_{C_{t} \calH^{1}(I \times \bbR^{4})} 
  := \esssup_{t \in I} \bb( \nrm{A[t]}_{\dot{H}^{1}_{x} \times L^{2}_{x}} + \nrm{\phi[t]}_{\dot{H}^{1}_{x} \times L^{2}_{x}} \bb),
\end{equation*}
where $A[t]$ and $\phi[t]$ are shorthands for $(A, \rd_{t} A)(t)$ and
$(\phi, \rd_{t} \phi)(t)$, respectively. We then define the notion of
an \emph{admissible $C_{t} \calH^{1}$ solution} to \eqref{eq:MKG} via
approximation by classical solutions as follows.

\begin{definition}[Admissible $C_{t} \calH^{1}$ solutions to
  \eqref{eq:MKG}] \label{def:admSol} Let $I \subseteq \bbR$ be an
  interval.  We say that a pair $(A, \phi) \in C_{t} \calH^{1}(I
  \times \bbR^{4})$ is an \emph{admissible $C_{t} \calH^{1}(I \times
    \bbR^{4})$ solution} to \eqref{eq:MKG} if there exists a sequence
  $(A^{(n)}, \phi^{(n)})$ of classical solutions to \eqref{eq:MKG} on
  $I \times \bbR^{4}$ such that
  \begin{equation*}
    \nrm{(A, \phi) - (A^{(n)}, \phi^{(n)})}_{C_{t} \calH^{1}(J \times \bbR^{4})} \to 0  \quad \hbox{ as } n \to \infty,
  \end{equation*}
  for every compact subinterval $J \subseteq I$.
\end{definition}
The necessity of restricting the class of energy solutions under
consideration to the admissible ones as defined above is a relatively
standard matter in the realm of low regularity solutions for
nonlinear dispersive equations. Often uniqueness statements require
additional regularity properties for solutions, which are then proved
to hold for the solutions which are limits of smooth solutions, but
might not be true or straightforward in general. In our case the
difficulties are compounded by the need to have a good notion of
finite energy solution which is gauge invariant.

\begin{remark}
  The above definitions can be localized to an open subset $O
  \subseteq \bbR^{4}$ or $\calO \subseteq \bbR^{1+4}$ in an obvious
  manner; see \cite[Sections 3 and 5]{OT1}.
\end{remark}

Next, we recall the \emph{global Coulomb gauge condition}
\begin{equation} \label{eq:g-Coulomb} \rd^{\ell} A_{\ell} = \sum_{\ell
    = 1, \ldots, 4} \rd_{\ell} A_{\ell} = 0.
\end{equation}
The role of this condition is to fix the ambiguity arising from the
gauge invariance of \eqref{eq:MKG}, which is an immediate formal
obstruction for well-posedness.

Finally, given an interval $I \subseteq \bbR$, we borrow the
space-time norms $Y^{1}(I \times \bbR^{4})$ and $S^{1}(I \times
\bbR^{4})$ from \cite{Krieger:2012vj, OT1, OT2}. We define the
$S^{1}$ norm of a solution $(A, \phi)$ on $I \times \bbR^{4}$ to be
\begin{equation*}
  \nrm{(A, \phi)}_{S[I]} := \nrm{A_{0}}_{Y^{1}(I \times \bbR^{4})} + \nrm{A_{x}}_{S^{1}(I \times \bbR^{4})} + \nrm{\phi}_{S^{1}(I \times \bbR^{4})}.
\end{equation*}
In particular, the $S^{1}$ norm captures the dispersive properties of
$A_{x}$ and $\phi$. The precise definition of the $S^{1}$ norm is
rather intricate; instead of the full definition, in this paper we
only rely on a few basic properties of the spaces $Y^{1}$ and $S^{1}$,
such as those below (see also Remark~\ref{rem:SY-norms}).
\begin{equation*}
  \nrm{(\varphi, \rd_{t} \varphi)}_{C_{t}(I; \dot{H}^{1}_{x} \times L^{2}_{x})} \aleq \nrm{\varphi}_{S^{1}(I \times \bbR^{4}) }, \quad
  \nrm{(\varphi, \rd_{t} \varphi)}_{C_{t}(I; \dot{H}^{1}_{x} \times L^{2}_{x})} \aleq \nrm{\varphi}_{Y^{1}(I \times \bbR^{4}) }.
\end{equation*}

We are now ready to state our main theorem.
\begin{theorem}[Main Theorem] \label{thm:main} Let $(a, e, f, g) \in
  \calH^{1}$ be a finite energy initial data set for \eqref{eq:MKG}
  obeying the global Coulomb gauge condition $\rd^{\ell} a_{\ell} =
  0$.  Then there exists a unique admissible $C_{t} \calH^{1}$
  solution $(A, \phi)$ to the initial value problem defined on the
  whole $\bbR^{1+4}$ which satisfies the global Coulomb gauge
  condition $\rd^{\ell} A_{\ell} = 0$. Moreover, the $S^{1}$ norm of $(A,
  \phi)$ is finite, i.e.,
  \begin{equation} \label{eq:main-apriori}
    \nrm{A_{0}}_{Y^{1}(\bbR^{1+4})} + \nrm{A_{x}}_{S^{1}(\bbR^{1+4})}
    + \nrm{\phi}_{S^{1}(\bbR^{1+4})} < \infty.
  \end{equation}
\end{theorem}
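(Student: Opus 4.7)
The plan is to follow a concentration-compactness/rigidity argument in the spirit of Kenig--Merle, tied to the successively stronger continuation and scattering criteria established in \cite{OT1, OT2}. The first step is to reformulate the theorem as an a priori $S^{1}$ bound statement: by the conditional results of the earlier papers, it suffices to show that for every finite energy Coulomb initial data set, the corresponding maximal admissible solution has finite $S^{1}$ norm on its entire interval of existence. The small-energy theory together with the stability/perturbation lemma then makes the set of allowable energies open, so one can run an induction on energy: define $E_{\ast}$ to be the infimum of energies at which an admissible Coulomb solution fails to have a global finite $S^{1}$ norm, and assume for contradiction that $E_{\ast} < \infty$.

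Next I would execute a profile decomposition for a sequence of solutions whose energies approach $E_{\ast}$ and whose $S^{1}$ norms diverge. Using the nonlinear profile decomposition associated with \eqref{eq:MKG} --- which has to be carried out in a gauge-invariant manner and must respect the scaling and space-time translation symmetries --- together with the induction hypothesis to scatter all strictly subcritical profiles, one shows that only a single profile can carry nontrivial energy. Passing to the limit produces a minimal blow-up solution $(A, \phi)$ with energy exactly $E_{\ast}$ whose orbit modulo the symmetry group (space-time translations and scaling) is precompact in the energy space.

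The key remaining work is then a rigidity theorem for this minimal blow-up solution. Using the monotonicity of suitable Morawetz and conformal weighted energies derived from the energy-momentum tensor, together with the compactness of the orbit and finite speed of propagation, one argues that the frequency scale $\lmb(t)$ must follow a very rigid trajectory, forcing $(A, \phi)$ to be either stationary (i.e., $\rd_{t}$-invariant modulo gauge) if the maximal lifespan is $\bbR$, or self-similar, concentrating at a point $(t_{\ast}, x_{\ast})$ inside a backward light cone, if the lifespan is finite. The last step is the non-existence of such solutions: for the stationary case one applies a Pohozaev-type identity to the resulting elliptic system on $\bbR^{4}$ to conclude triviality, while the self-similar case is eliminated by a weighted energy estimate inside the backward light cone showing that all of the conformal energy must leak out, contradicting the non-triviality of the profile.

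I expect the main obstacle to be the rigidity step and its compatibility with the gauge freedom. Unlike scalar dispersive models, every stage of the argument --- the profile decomposition, the Morawetz-based compactness analysis, and the final elliptic rigidity --- has to be compatible with gauge transformations and with the specific choice of the Coulomb gauge, whose global availability is itself nontrivial at finite energy. In particular, showing that the minimal blow-up solution admits a well-defined Coulomb representation on its whole maximal interval, and that the monotone Morawetz functional controls enough of the connection $A$ (and not just the scalar field $\phi$) to force the rigid self-similar or stationary structure, is the heart of the argument.
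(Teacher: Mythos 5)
Your proposal is a coherent outline of the Kenig--Merle concentration-compactness/rigidity scheme, but this is a genuinely different route from the one taken in the paper, and the difference is not cosmetic. The paper deliberately does \emph{not} introduce a profile decomposition, a minimal-energy threshold $E_{\ast}$, or a minimal blow-up solution with precompact orbit modulo symmetries. Instead it follows the Sterbenz--Tataru ``energy dispersion'' blueprint developed for wave maps: the continuation/scattering criteria from \cite{OT1,OT2} (local well-posedness at the energy concentration scale $r_c$, and Theorem~\ref{thm:ED}, the energy-dispersed regularity theorem) are used to show directly that a failure of Theorem~\ref{thm:main} produces a sequence of rescaled solutions with bounded energy in a light cone, small energy and flux outside, and a fixed amount of pointwise concentration at $t=1$ (Lemma~\ref{lem:ini-seq}). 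The Morawetz monotonicity formula (Proposition~\ref{prop:monotonicity}) then rules out null concentration, yields a nontrivial lower bound in a timelike region, and after a pigeonhole rescaling gives asymptotic self-similarity; compactness is obtained not from a nonlinear profile decomposition but from the much weaker \emph{local} strong compactness statement (Proposition~\ref{prop:cpt}), which needs only small energy on a ball plus the vanishing of $\iota_X F^{(n)}$ and $(\covD_X + b)\phi^{(n)}$ in $L^2$ and is proved by an elliptic pseudodifferential parametrix, not by Bahouri--G\'erard. Both routes converge on the final step --- triviality of finite-energy stationary and self-similar solutions via the stress-tensor/Pohozaev identities of Section~\ref{sec:stationary-self-sim} --- so your last paragraph is essentially correct. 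What the paper's approach buys is exactly what you flag as your main obstacle: it sidesteps the need for a gauge-covariant nonlinear profile decomposition and perturbation theory in the renormalized problem, which the authors explicitly note ``seems to cause considerable trouble.'' Your proposed route is the one carried out independently by Krieger--L\"uhrmann (mentioned in the introduction), so it is viable in principle, but as written it leaves the heart of the argument --- the construction of the minimal blow-up solution and the precompactness of its orbit in a gauge-invariant energy topology --- unaddressed, whereas the paper's energy-dispersion scheme avoids this construction entirely.
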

\begin{remark} \label{rem:scat} The a-priori bound above implies scattering
  towards both $t \to \pm \infty$; see Theorem~\ref{thm:finite-S}. It also implies
continuity of the data to solution map on compact time intervals, though not 
on the full real line.  
\end{remark}

\begin{remark}
  We do not lose any generality by restricting to initial data sets in
  the global Coulomb gauge, since any finite energy initial data set
  can be gauge transformed to obey the condition $\rd^{\ell} a_{\ell}
  = 0$. See \cite[Section 3]{OT1}.
\end{remark}

\begin{remark} 
  We note that an independent proof of global well-posedness and
  scattering of MKG-CG has been recently announced by
  Krieger-L\"uhrman, following a version of the Bahouri-G\'erard
  nonlinear profile decomposition \cite{MR1705001} and Kenig-Merle
  concentration compactness/rigidity scheme \cite{MR2257393,
    MR2461508} developed by Krieger-Schlag \cite{KrSch} for
  the energy critical wave maps.
\end{remark}

\subsection{A brief history and broader context} \label{subsec:literature} 
A natural point of view is to place the present papers and results within the 
larger context of nonlinear wave equations, of which the starting point is
 the semilinear wave equation $\Box u = \pm |u|^{p} u$. More accurately,
the (MKG) equation belongs to the class of geometric wave 
equations, which includes wave maps (WM), Yang-Mills (YM), Einstein equations,
as well as many other coupled models. Two common features of all these 
problems are that they admit a Lagrangian formulation, and have some natural 
gauge invariance properties. Following are some of the key developments
that led to the present work.

\medskip

{\em 1. The null condition.} A crucial early observation in the study
of both long range and low regularity solutions to geometric wave
equations was that the nonlinearities appearing in the equations have
a favorable algebraic structure, which was called {\em null
  condition}, and which can be roughly described as a cancellation
condition in the interaction of parallel waves. In the low regularity
setting, this was first explored in work of Klainerman and
Machedon~\cite{MR1231427}, and by many others later on.
 
\medskip

{\em 2. The $X^{s,b}$ spaces.}  A second advance was the introduction
of the $X^{s,b}$ spaces\footnote{The concept, and also the notation,
  is due to Bourgain, in the context of KdV and NLS type problems.},
also first used by Klainerman and Machedon~\cite{MR1381973} in the context of
the wave equation.  Their role was to provide enough structure in
order to be able to take advantage of the null condition in bilinear
and multilinear estimates. Earlier methods, based on energy bounds,
followed by the more robust Strichartz estimates, had proved inadequate 
to the task.

\medskip

{\em 3. The null frame spaces.}  To study nonlinear problems at
critical regularity one needs to work in a scale invariant
setting. However, it was soon realized that the homogeneous $X^{s,b}$
spaces are not even well defined, not to mention suitable for
this. The remedy, first introduced in work of the second
author~\cite{MR1827277} in the context of wave maps, was to produce a better
description of the fine structure of waves, combining frequency and
modulation localizations with adapted frames in the physical space.
This led to the {\em null frame spaces}, which played a key role
in subsequent developments for wave maps. We remark that another
scale invariant  alternative to $X^{s,b}$ spaces are the $U^p$ and $V^p$ 
spaces, also originally developed by the second author; while these 
played a role in the study of  other  nonlinear dispersive problems at
critical regularity, they play no role in the present story.

\medskip

{\em 4. Renormalization.}  A remarkable feature of all semilinear
geometric wave equations is that while at high regularity
(and locally in time) the nonlinearity is perturbative, this is no longer 
the case at critical regularity. Precisely, isolating the 
non-perturbative component of the nonlinearity, one can see that this 
is of  paradifferential type; in other words, the high frequency waves 
evolve on a variable low frequency background.  To address this 
difficulty, the idea of Tao~\cite{MR1869874}, also in the wave map context, was to 
{\em renormalize} the paradifferential problem, i.e., to find a suitable 
approximate conjugation to the corresponding constant coefficient problem.

\medskip

{\em 5. Induction of energy.} The ideas discussed so far seem to suffice
for small data critical problems. Attacking the large data problem
generates yet another range of difficulties. One first step in this
direction is Bourgain's {\em induction of energy} idea \cite{Bo},
which is a convenient mechanism to transfer information to higher and
higher energies. We remark that an alternate venue here, which
sometimes yields more efficient proofs, is the Kenig-Merle idea
\cite{MR2461508} of constructing {\em minimal blow-up
  solutions}. However, the implementation of this method in problems
which require renormalization seems to cause considerable trouble.
For a further discussion on this issue, we refer to \cite{KrSch}, where this 
method was carried out in the case of energy critical wave maps into the hyperbolic plane.

\medskip

{\em 6. Energy dispersion.} One fundamental goal in the study of large
data problems is to establish a quantitative dichotomy between
dispersion and concentration.  The notion of {\em energy dispersion},
introduced in joint work~\cite{MR2657817, MR2657818} of the second author and Sterbenz in
the wave map context, provides a convenient measure for pointwise
concentration. Precisely, at each energy there is an energy dispersion
threshold below which dispersion wins.  We remark that, when it can be
applied, the Kenig-Merle method \cite{MR2461508} yields more accurate
information; for instance, see \cite{KrSch}. However, the energy dispersion idea,
which is what we follow in the present series of papers, is much easier to
implement in conjunction with renormalization.

\medskip

{\em 7. The frequency gap.}  One obstacle in the transition from small
to large data in renormalizable problems is that the low frequency
background may well correspond to a large solution. Is this fatal to
the renormalized solution? The answer to that, also originating in
\cite{MR2657817, MR2657818}, is that there may be a second hidden source of
smallness, namely a large {\em frequency gap} between the
high frequency wave and the low frequency background it evolves on.

\medskip

{\em 8. Morawetz estimates.} The outcome of the ideas above is a dichotomy 
between dispersion and scattering on one hand, and very specific concentration
patterns, e.g., solitons, self-similar solutions on the other hand. The Morawetz estimates,
first appearing in this role in the work of Grillakis~\cite{MR1078267},
are a convenient and relatively simple tool to eliminate such concentration scenarios.

\medskip
We now recall some earlier developments on geometric wave equations related to the present
paper. We start our discussion with the (MKG) problem above the scaling critical
regularity. In the two and three dimensional cases, which are energy subcritical, global regularity 
of sufficiently regular solutions was shown in the early works \cite{MR579231, MR649158, MR649159}. 
The latter two in fact handled the more general Yang-Mills-Higgs system.
In dimension $d=3$, this result was greatly improved by \cite{MR1271462}, 
which established global well-posedness for any finite energy data. 
In this work, the quadratic null structure of (MKG) in the Coulomb gauge was uncovered and used for the first time. 
Subsequent developments were made by \cite{Cu} and
more recently \cite{MaSte}, where an essentially optimal local
well-posedness result was established. An important observation in \cite{MaSte}
is that \eqref{eq:MKG} in Coulomb gauge exhibits a secondary multilinear cancellation feature. The related paper
\cite{MR2784611} is concerned with global well-posedness of the same problem at low regularity.
We also mention the work \cite{Selberg:2010ig}, in which finite energy global well-posedness was established in the Lorenz gauge.
In the higher dimensional case $d \geq 4$, an essentially optimal local
well-posedness result for a model problem closely related to (MKG) was 
obtained in \cite{KlainTat}. This was followed by further refinements in \cite{Se, Ste}.

The progress for the closely related Yang-Mills system (YM) in the subcritical regularity has largely paralleled 
that of (MKG), at least for small data. Indeed, (YM) exhibits a  
null structure in the Coulomb gauge which is very similar to (MKG). In particular, the aforementioned work \cite{KlainTat}
is also relevant for the small data problem for (YM) in the Coulomb gauge at an essentially optimal regularity.


However, a new difficulty arises in the large data\footnote{More precisely, a suitable scaling critical norm of the connection $A$ (e.g., $\nrm{A}_{L^{d}_{x}}$) or the curvature $F$ (e.g., $\nrm{F}_{L^{\frac{d}{2}}_{x}}$) is large.} problem for (YM): Namely, the gauge transformation law is \emph{nonlinear} due to the non-abelian gauge group. In particular, gauge transformations into the Coulomb gauge obey a nonlinear elliptic equation, 
for which no suitable large data regularity theory is available. Note, in comparison, that such gauge transformations obey a linear Poisson equation in the case of (MKG).
In \cite{Klainerman:1995hz}, where finite energy global well-posedness of the 3+1 dimensional (YM) problem was proved, 
this issue was handled by localizing in space-time via the finite speed of propagation to gain smallness, and then working in local Coulomb gauges.
An alternative, more robust approach without space-time localizations to the same problem has been put forth by the first author in \cite{Oh1, Oh2},
inspired by \cite{Tao-large}. The idea is to use an associated geometric flow, namely the \emph{Yang-Mills heat flow}, to select 
a global-in-space Coulomb-like gauge for data of any size. 

Before turning to the (MKG) and (YM) problems at critical regularity,
we briefly recall some recent developments on the wave map equation (WM), where many of the
methods we implement here have their roots. We confine our discussion
to the energy critical problem in $2+1$ dimensions, which is both the
most difficult and the most relevant to our present paper.  For the small data problem, 
global well-posedness was established in \cite{MR1869874}, \cite{Tao:2001gb},
\cite{MR2130618}.  More recently, the \emph{threshold theorem} for large data wave maps, 
which asserts that global well-posedness and scattering hold below the ground state energy, 
was proved in \cite{MR2657817, MR2657818} in general, and independently in
\cite{KrSch} and \cite{Tao-large} for specific targets (namely the hyperbolic
space). See also \cite{Lawrie:2015rr} for a sharp refinement in the case of a two-dimensional target, 
taking into account an additional topological invariant (namely, the degree of the wave map). 
Our present strategy was strongly influenced by \cite{MR2657817, MR2657818}, which can be seen as the first predecessor of
this work.

Despite the many similarities, there is a key structural difference between (WM) on the one hand 
and (MKG), (YM) on the other, whose understanding is crucial for making progress on the latter two problems.
Roughly speaking, all three equations can be written in a form where the main `dynamic variables', 
which we denote by $\phi$, obey a possibly nonlinear gauge covariant wave equation $\Box_{A} \phi = \cdots$, 
and the associated curvature $F[A]$ is determined by $\phi$.
In the case of (WM), this dependence is simply algebraic, whereas for 
(MKG) and (YM) the curvature $F[A]$ obeys a wave equation with a nonlinearity 
depending on $\phi$. This difference manifests in the renormalization procedure for each equation: 
For (WM) it suffices to use a physical space gauge transformation, whereas for (MKG) and (YM)
it is necessary to use a microlocal (more precisely, \emph{pseudo-differential}) gauge transformation that exploits the fact that $A$
solves a wave equation in a suitable gauge. 

The first (MKG) renormalization argument appeared in \cite{MR2100060}, 
in which global regularity of (MKG) for small critical Sobolev data was established in dimensions $d \geq 6$.
This work was followed by a similar high dimensional result for (YM) in \cite{KrSte}. 
Finally, the small data result in the energy critical dimension $4+1$ was obtained in \cite{Krieger:2012vj}, which may be
viewed as the second direct predecessor to the present work. In particular we borrow a good deal of notations, ideas and estimates
from \cite{Krieger:2012vj}.
  
We end our introduction with a few remarks on the energy critical (YM) problem in $4+1$ dimensions, 
which is a natural next step after the present work. The issue of non-abelian gauge group for the large data problem 
has already been discussed. Another important difference between (MKG) and (YM) 
in $4+1$ dimensions is that the latter problem admits \emph{instantons}, which are nontrivial static solutions with finite energy. 
Therefore, in analogy with (WM), it is reasonable to put forth the \emph{threshold conjecture} for the energy critical (YM) problem, namely that global well-posedness and scattering hold below the energy of the first instanton. Finally, (YM) is more `strongly coupled' as a system compared to (MKG), in the sense that the connection $A$ itself obeys a covariant wave equation. This feature seems to necessitate a more involved renormalization procedure compared to (MKG).

\subsection*{Acknowledgements}
Part of the work was carried out during the trimester program
`Harmonic Analysis and Partial Differential Equations' at the
Hausdorff Institute for Mathematics in Bonn; the authors thank the
institute for hospitality. S.-J. Oh is a Miller Research Fellow, and
thanks the Miller Institute for support. D. Tataru was partially
supported by the NSF grant DMS-1266182 as well as by the Simons
Investigator grant from the Simons Foundation.

\section{Overview of the proof I: Summary of the first two
  papers} \label{sec:overview} The basic strategy for proving
Theorem~\ref{thm:main} is by contradiction, following the scheme
successfully developed in \cite{MR2657817, MR2657818} in the setting
of energy critical wave maps. In the first two papers of the sequence
\cite{OT1, OT2} we establish successively stronger continuation and
scattering criteria, whose contrapositives provide precise information
about the nature of a finite time blow-up (i.e., failure of global
well-posedness) or non-scattering. In the present paper, we use this
information, as well as conservation laws and Morawetz-type monotonicity formulae
for \eqref{eq:MKG}, to perform a blow-up analysis and show that the
failure of Theorem~\ref{thm:main} implies the existence of a
nontrivial finite energy stationary or self-similar solution to
\eqref{eq:MKG}.  Since such a solution does not exist (see
Section~\ref{sec:stationary-self-sim} below), Theorem~\ref{thm:main}
must hold.

In this section we review the main results and ideas of the
earlier two papers in the sequence \cite{OT1, OT2}. In
Section~\ref{sec:OT3} we summarize the argument given in the present
paper. To steer away from unnecessary technical details we only
consider smooth data and solutions; however we remark that the results
also apply to merely finite energy data and admissible $C_{t}
\calH^{1}$ solutions. For the notation, we refer to
Section~\ref{sec:prelim}.

\subsection{Local well-posedness in the global Coulomb gauge and non-concentration of energy}
The main result of the first paper \cite{OT1} of the sequence is local
well-posedness of \eqref{eq:MKG} in the global Coulomb gauge with a
lower bound on the lifespan in terms of the \emph{energy concentration
  scale}
\begin{equation*}
  r_{c} = r_{c}(E)[a, e, f, g] := \sup \set{r > 0 : \forall x \in \bbR^{4}, \ \calE_{B_{r}(x)}[a, e, f, g] < \dlt_{0}(E, \thE^{2})},
\end{equation*}
where $B_{r}(x)$ denotes the open ball of radius $r$ with center $x$,
$\dlt_{0}(E, \thE^{2}) = c^2 \thE^{2} \min \set{1, \thE E^{-1}})$ and $c$ is an absolute constant
(see Theorem~\ref{thm:smallEnergy}). A simplified version is as follows:
\begin{theorem} \label{thm:OT1-simple} Given any $E > 0$ let 
  $\dlt_{0}(E,\thE^{2}) > 0$ be as above. Let $(a,
  e, f, g)$ be a smooth finite energy initial data for \eqref{eq:MKG}
  satisfying the global Coulomb gauge condition $\sum_{j} \rd_{j}
  a_{j} = 0$. Then there exists  a unique smooth solution $(A, \phi)$ to
  \eqref{eq:MKG} in the global Coulomb gauge on $[-r_{c}, r_{c}] 
 \times \bbR^{4}$.
\end{theorem}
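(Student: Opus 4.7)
The plan is to reduce Theorem~\ref{thm:OT1-simple} to the small-energy global well-posedness result of \cite{Krieger:2012vj}, using the definition of $r_{c}$ together with the finite speed of propagation for \eqref{eq:MKG}. Since $\mathrm{U}(1)$ is abelian, gauge transformations preserving the Coulomb condition obey a linear Poisson equation, which makes this reduction far more tractable than in the non-abelian Yang-Mills setting.

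The first step is a \emph{localized extension} procedure. For each $x_{0} \in \bbR^{4}$, I would construct a Coulomb data set $(\tld{a}, \tld{e}, \tld{f}, \tld{g})$ which agrees with $(a, e, f, g)$ on $B_{r_{c}}(x_{0})$ and whose total energy is bounded by a small absolute multiple of $\dlt_{0}(E, \thE^{2})$. The obstruction is the combination of the Gauss equation \eqref{eq:gaussEq} with the Coulomb condition $\rd^{\ell} \tld{a}_{\ell} = 0$: one cannot simply truncate the data with a smooth cutoff. The construction proceeds by first smoothly cutting off $\phi$ and $\covD_{t} \phi$ outside $B_{r_{c}}(x_{0})$, and then solving a div-curl Hodge system on $\bbR^{4}$ for $\tld{a}$ and $\tld{e}$ enforcing \eqref{eq:gaussEq} and the Coulomb condition; the correction to $(a, e)$ is supported away from $B_{r_{c}}(x_{0})$ and controlled in energy by the smallness built into the definition of $r_{c}$.

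Applying the small-data theorem of \cite{Krieger:2012vj} to each such extension yields a global admissible $C_{t} \calH^{1}$ solution $(A^{(x_{0})}, \phi^{(x_{0})})$ in the Coulomb gauge with finite $S^{1}$ norm. Finite speed of propagation for the gauge-invariant quantities $F$, $\covD \phi$, and $\abs{\phi}$ guarantees that the restriction of $(A^{(x_{0})}, \phi^{(x_{0})})$ to the cone $\calK_{x_{0}} := \set{(t, x) : \abs{x - x_{0}} < r_{c} - \abs{t}}$ is independent of the chosen extension, so for any two base points $x_{0}, x_{0}'$ the local solutions are gauge-equivalent on $\calK_{x_{0}} \cap \calK_{x_{0}'}$. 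To glue them into a single $(A, \phi)$ on $[-r_{c}, r_{c}] \times \bbR^{4}$ in the global Coulomb gauge, one assembles the gauge-invariant physical data on the slab and then fixes a single global gauge representative by solving $-\lap A_{0} = \Im(\phi \overline{\covD_{0} \phi})$ together with $\rd^{\ell} A_{\ell} = 0$ at each time; the rigidity of harmonic gauge transformations with decaying gradient in $\bbR^{4}$ ensures this representative is unique.

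The main obstacle is the localized extension step: it is a nontrivial elliptic construction that must simultaneously enforce the Gauss equation, the Coulomb condition, and exact agreement with the given data on $B_{r_{c}}(x_{0})$, while keeping the total energy small uniformly in $x_{0} \in \bbR^{4}$. Once this is in place, persistence of regularity under the small-data theorem propagates smoothness from $\calH^{\infty}$ data to the global solution, and uniqueness follows from the uniqueness part of \cite{Krieger:2012vj} combined with the gauge rigidity used in the gluing step.
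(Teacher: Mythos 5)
Your high-level strategy — truncate initial data to small energy balls, apply the small-data global theorem of \cite{Krieger:2012vj}, and glue the resulting local solutions using finite speed of propagation up to gauge — is indeed the route taken in \cite{OT1} and summarized in Section~\ref{sec:overview}. However, there are gaps in two places where the paper's argument requires substantive additional ideas that your sketch omits.

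\textbf{The extension step.} Your proposal cuts off $(f,g)$ and then solves a div--curl system for $(\tld a, \tld e)$ enforcing Gauss and Coulomb while \emph{exactly matching} $(a,e)$ on $B_{r_c}(x_0)$. This interior-matching formulation is delicate: prescribing the solution of a first-order elliptic system on an open set is an overdetermined transmission problem, and one has to be careful that the correction can be arranged to have small energy uniformly in $x_0$. The paper's Theorem~\ref{thm:gluing} takes a different and cleaner route — it constructs an \emph{extension operator} $E^{\extr}$ mapping annular data in $\calH^{1}(2B\setminus\overline{B})$ to data on $\bbR^{4}\setminus\overline{B}$ that agrees with the input on $\tfrac32 B\setminus\overline{B}$, with the explicit ``electric monopole'' $e_{(q)j}=\frac{q}{2\pi^2}\frac{x_j}{|x|^4}$ glued in to absorb the net charge forced by the Gauss equation. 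Your div--curl solution would in fact reproduce the monopole tail automatically when the truncated charge $\int\Im(\tld f\overline{\tld g})$ is nonzero, but you should acknowledge the charge obstruction explicitly — it is what makes a naive cutoff fail — and the control over the energy added by the correction (tied to Lemma~\ref{l:hardy+}) is exactly where $\dlt_0(E,\thE^2)$ earns its dependence on $E$.

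\textbf{The gluing step.} Here the gap is more serious. You propose to ``assemble the gauge-invariant physical data on the slab and then fix a single global gauge representative by solving $-\lap A_0 = \Im(\phi\overline{\covD_0\phi})$ together with $\rd^\ell A_\ell=0$.'' This is circular: the right-hand side of the elliptic equation for $A_0$ involves the global $\phi$ and $\covD_0\phi$, but the local solutions $\phi^{(x_0)}$ only agree up to phase factors $e^{i\chi^{(x_0,x_0')}}$, and reconciling these phases consistently across overlaps is precisely what needs to be established — it does not follow from knowing $|\phi|$, $\covD\phi$, and $F$. The paper handles this with an adaptation of Uhlenbeck's patching lemma: one first pieces together a single global pair $(A',\phi')$ that is only \emph{approximately} Coulomb ($\rd^\ell A'_\ell$ small but nonzero), using the fact that transition functions between local Coulomb gauges solve a Laplace equation and hence have improved regularity; then a separate, well-controlled gauge transformation converts $(A',\phi')$ to the exact global Coulomb gauge. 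Your one-step ``fix the gauge at each time slice'' does not address how the local Coulomb frames, each individually constrained, are to be reconciled into a single coherent global Coulomb frame while preserving the $S^1$ bounds. That reconciliation is the content of the patching argument and cannot be bypassed.
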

Theorem~\ref{thm:OT1-simple} implies that finite time blow-up is
always accompanied by concentration of energy (i.e., $r_{c} \to 0$ at
the end of the maximal lifepan). For a precise statement, see
Theorem~\ref{thm:lwp4MKG}. In what follows we explain the ideas
involved in the proof of local existence, which lies at the heart of
Theorem~\ref{thm:OT1-simple}.

\subsubsection*{Strategy of proof in model cases}
For many other semi-linear equations, such as $\Box u = \pm
u^{\frac{d+2}{d-2}}$ or the wave map equation, a result analogous to
Theorem~\ref{thm:OT1-simple} is a rather immediate consequence of
small energy global well-posedness and finite speed of
propagation. Roughly speaking, the proof (of local existence) proceeds
in the following three steps:
\begin{itemize}[leftmargin=50px]
\item [{\it Step A}.] One truncates the initial data locally in space
  to achieve small energy.
\item [{\it Step B}.] By the small energy global well-posedness, the
  truncated data give rise to global solutions. Restricting these
  global solutions to the domain of dependence of the truncated
  regions, one obtains a family of local-in-spacetime solutions that
  agree with each other on the intersection of their domains by finite
  speed of propagation.
\item [{\it Step C}.] One patches together these solutions to obtain a
  local-in-time solution to the original initial data.
\end{itemize}
In particular, the lifespan of the solution constructed by this scheme
depends on the size of spatial truncation in Step A, which in turn is
dictated by the energy concentration scale $r_{c}$ of the initial
data.

\subsubsection*{Non-locality of \eqref{eq:MKG} in the global Coulomb gauge}
When carrying out the above strategy in our setting, however, we face
difficulties arising from non-local features of \eqref{eq:MKG} in the
global Coulomb gauge. One source of non-locality is the Gauss (or the
constraint) equation
\begin{equation} \label{eq:OT1-gauss} \rd^{\ell} e_{\ell} = \Im(f
  \overline{g}),
\end{equation}
which must be satisfied by every \eqref{eq:MKG} initial data
set. Another source is the presence of the elliptic equation for
$A_{0}$ in the global Coulomb gauge
(cf. \eqref{eq:OT2-simple:MKG-Coulomb}); in particular, finite speed
of propagation \emph{fails} in the global Coulomb gauge.

In the remainder of this subsection, we give an overview of the
techniques developed in \cite{OT1} for overcoming these issues, and
explain how these can be used to essentially execute Steps A-C above
to obtain Theorem~\ref{thm:OT1-simple} from the small energy global
well-posedness theorem proved in
\cite{Krieger:2012vj} (see Theorem~\ref{thm:smallEnergy}).


\subsubsection*{Execution of Step A: Initial data excision and gluing}
Consider the problem of truncating a \eqref{eq:MKG} initial data
set\footnote{In application $a$ obeys the global Coulomb gauge
  condition $\rd^{\ell} a_{\ell} = 0$, but this fact is irrelevant for
  the discussion here.}  $(a, e, f, g)$ to a ball $B$. A naive way to
proceed would be to apply a smooth cutoff to each of $a, e, f,
g$. However, integrating the Gauss equation \eqref{eq:OT1-gauss} by
parts over balls of large radius, we see that $e_{j}$ must in general
be nontrivial on the boundary spheres outside $B$, even if $f$ and $g$
are supported in $B$.

Instead, the idea of \emph{initial data excision and
  gluing}\footnote{We remark that similar techniques have been
  developed in mathematical general relativity, as a means to
  construct a large class of interesting initial data sets for the
  Einstein equations. Our setting involves a simpler constraint
  equation, but we require sharp techniques which are applicable at
  the critical regularity.} is as follows: Rather than just
\emph{excising} the unwanted part, we \emph{glue} it to another
initial data set (i.e., solution to the Gauss equation) which has an
explicit description, so that the Gauss equation is still
satisfied. For example, in the exterior of a ball $B$ we may glue to
the data
\begin{equation*}
  (e_{(q)j} = \frac{q}{2 \pi^{2}} \frac{x_{j}}{\abs{x}^{4}}, 0, 0, 0)
\end{equation*}
with an appropriate $q$. Note that $e_{(q)}$ is precisely the electric
field of an electric monopole of charge $q$ placed at the origin.

Using this idea we may truncate $(a, e, f, g)$ to balls to make the 
energy sufficiently small. The minimum size of these
balls, which later dictates the lifespan of the solution, can be chosen to be
proportional to the energy concentration scale. This procedure is our
analogue of Step A.

\subsubsection*{Execution of Step B: Geometric uniqueness of admissible solution to \eqref{eq:MKG}}
Though finite speed of propagation fails for \eqref{eq:MKG} in certain
gauges such as the global Coulomb gauge, it is still true up to gauge
transformations. We refer to this statement as \emph{local geometric
  uniqueness} for \eqref{eq:MKG}, and use it as a substitute for the
usual finite speed of propagation property.

Applying a suitable gauge transformation to each truncated initial
data set to impose the global Coulomb gauge condition, we are in
position to apply the small energy global well-posedness theorem
(Theorem~\ref{thm:smallEnergy}) and construct a family of global
smooth solutions. Restricting these solutions to the domain of
dependence of the truncated regions and appealing to local geometric
uniqueness, we obtain local-in-spacetime Coulomb solutions (i.e., obey
$\rd^{\ell} A_{\ell} = 0$ on the domains) which are gauge equivalent
to each other on the interaction of their domains. We refer to such
solutions as \emph{compatible pairs}\footnote{See also
  Section~\ref{subsec:cp} of the present paper, where this notion
  arises naturally from local limits of a sequence of solutions.};
geometrically, these are precisely local descriptions of a globally
defined pair of a connection and a section on local trivializations of
the bundle $L$.


\subsubsection*{Execution of Step C: Patching local Coulomb solutions}
The final task is to patch together the local-in-spacetime
descriptions of a solution (i.e., compatible pairs) to produce a
global-in-space solution $(A, \phi)$ in the global Coulomb gauge. We
first adapt a patching argument of Uhlenbeck \cite{Uhlenbeck:1982vna}
to produce a single global-in-space solution $(A', \phi')$ obeying an
appropriate $S^1$ norm bound. The fact that a gauge transformation
$\chi$ between Coulomb gauges obeys the Laplace equation $\lap \chi$,
and hence possesses improved regularity, is important for this step. The
solution $(A', \phi')$ obtained by this patching process is not
necessarily in the global Coulomb gauge; it is however
\emph{approximately Coulomb} (i.e., $\rd^{\ell} A'_{\ell}$ obeys an
improved bound), since it arose from patching together local Coulomb
solutions. It is thus possible to find a nicely behaved gauge
transformation into the exact global Coulomb gauge, leading us to the
desired local-in-time solution.

\subsection{Continuation of energy dispersed solutions}
We now describe the content of \cite{OT2}. The main theorem of
\cite{OT2} is a continuation/scattering criterion in the global
Coulomb gauge for a large energy solution $(A, \phi)$ to
\eqref{eq:MKG} in terms of its \emph{energy dispersion}
$\ED[\phi](I)$, defined as
\begin{equation} \label{eq:overview:ED} \ED[\phi](I) = \sup_{k} \bb(
  2^{-k} \nrm{P_{k} \phi}_{L^{\infty}_{t,x}(I \times \bbR^{4})} +
  2^{-2k} \nrm{\rd_{t} P_{k} \phi}_{L^{\infty}_{t,x}(I \times
    \bbR^{4})} \bb)
\end{equation}
for any time interval $I \subseteq \bbR$. A simple version is as
follows:
\begin{theorem} \label{thm:OT2-simple} Given any $E > 0$, there exist
  positive numbers $\thED = \thED(E) > 0$ and $F = F(E)$ such that the
  following holds. Let $(A, \phi)$ be a smooth solution to
  \eqref{eq:MKG} in the global Coulomb gauge (MKG-CG) on $I \times
  \bbR^{4}$ with energy $\leq E$. If $\ED[\phi](I) \leq \thED(E)$, then 
  the following a-priori $S^{1}$ norm bound holds:
  \begin{equation} \label{eq:OT2-simple} \nrm{A_{0}}_{Y^{1}[I]} + \nrm{A_{x}}_{S^{1}[I]} +
    \nrm{\phi}_{S^{1}[I]} \leq F(E).
  \end{equation}
  Moreover, $(A, \phi)$ extends as a smooth solution past finite
  endpoints of $I$.
\end{theorem}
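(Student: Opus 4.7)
The plan is to run a bootstrap argument over $I$, using the small-data global well-posedness theorem of Krieger-Sterbenz-Tataru (i.e.\ Theorem~\ref{thm:smallEnergy}) as the perturbative starting point and the energy-dispersion smallness $\thED$ as the mechanism for closing the bootstrap at the energy level $E$. Concretely, I would subdivide $I$ into consecutive subintervals $I = \bigcup_j I_j$ and, on each $I_j$, postulate the bootstrap hypothesis
\[
\nrm{A_0}_{Y^1[I_j]} + \nrm{A_x}_{S^1[I_j]} + \nrm{\phi}_{S^1[I_j]} \leq 2 F(E),
\]
and aim to upgrade the bound to $\leq F(E)$ for a constant $F(E)$ chosen large enough (which in turn dictates the size of the $I_j$). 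Summing the resulting bounds and using continuity of the $S^1$ norm extends the estimate to all of $I$.

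The main machinery for the upgrade is a frequency-localized paradifferential analysis of the covariant wave equation $\Box_A \phi = 0$. Writing $\phi_k = P_k \phi$, one expands
\[
\Box \phi_k = 2 i A^{<k-c, \mu} \rd_\mu \phi_k + \calN_k,
\]
where $A^{<k-c}$ is the low-frequency part of the spatial connection (which carries the large energy and so is \emph{not} perturbative), and $\calN_k$ collects the balanced and high-high-to-low interactions together with the quadratic magnetic term $A^\mu A_\mu \phi$ and errors coming from the paraproduct decomposition. The null structure of MKG-CG uncovered in \cite{MR1271462} and sharpened in \cite{MaSte, KlainTat, Krieger:2012vj} makes $\calN_k$ tractable provided one extracts an $L^\infty_{t,x}$-type smallness from at least one factor in every trilinear interaction; this smallness is supplied exactly by $\thED$, while the companion $L^2$-type factors are absorbed by the bootstrap hypothesis. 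The analogous paradifferential expansion for $A_x$, driven via Hodge theory and \eqref{eq:gaussEq} by the current $\Im(\phi \overline{\covD \phi})$, is treated in parallel.

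The non-perturbative paradifferential term $2 i A^{<k-c, \mu} \rd_\mu \phi_k$ is handled by the pseudodifferential renormalization inherited from \cite{Krieger:2012vj}: one constructs a magnetic phase $\Psi_{<k-c}(t, x, \xi)$ from $A_x^{<k-c}$ so that conjugation of $\phi_k$ by $e^{-i \Psi_{<k-c}(t, x, D)}$ converts $\Box - 2 i A^{<k-c, \mu} \rd_\mu$ into $\Box$ modulo errors of the same schematic nature as $\calN_k$. The construction exploits the wave equation satisfied by $A_x$ in the global Coulomb gauge, decoupled from the elliptic equation for $A_0$. The renormalized profile $\widetilde \phi_k = e^{-i \Psi_{<k-c}(t,x,D)} \phi_k$ then solves a nearly-free wave equation, for which the linear $S^1$ estimate applies directly, and square-summing in $k$ against a frequency envelope recovers the $S^1$ bound on $\phi$.

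The hardest step, and the true technical core of \cite{OT2}, will be proving the multilinear bound on $\calN_k$ (and on the errors produced by the renormalization) in the dual space to $S^1$, keeping in mind that (i) one factor of pointwise smallness $\thED$ must be harvested in every interaction even though all other factors are only large-energy controlled, and (ii) the bilinear null-form estimates at the critical regularity genuinely require the intricate null-frame components of the $S^1$ spaces introduced in \cite{MR1827277}. In particular, interactions where the two high-frequency factors are nearly parallel demand separate treatment via null-frame atoms, and the energy dispersion must be carefully propagated through bilinear and trilinear frequency envelopes. Once these multilinear bounds are in place, the bootstrap closes on each $I_j$, the pieces assemble into \eqref{eq:OT2-simple}, and continuation past finite endpoints of $I$ follows from Theorem~\ref{thm:OT1-simple} together with the fact that a finite $S^1$ norm implies a positive energy concentration scale $r_c$.
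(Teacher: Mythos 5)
The overall structural features you describe---paradifferential decomposition of $\Box_A\phi$, pseudodifferential renormalization of the problematic paraproduct term, exploitation of null structure, and extraction of smallness from the energy dispersion in the balanced multilinear estimates---are indeed all present in the argument. However, the skeleton you build around them, a direct bootstrap at fixed energy $E$ over subintervals $I_j$, does not close, and the paper replaces it with a genuinely different mechanism: \emph{induction on energy}.

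The core difficulty is that the paradifferential term $2i\,P_{<k-c}A_x\cdot\nabla_x P_k\phi$ is driven by a large low-frequency connection, and the renormalization of \cite{Krieger:2012vj} relies on smallness of $A$ to control its error terms. Your bootstrap hypothesis gives only $\nrm{A_x}_{S^1[I_j]}\leq 2F(E)$ with $F(E)$ large, and the energy dispersion of $\phi$ does nothing to make $A$ small; the resulting errors scale like $F(E)$ (or worse, $F(E)^2$ from the quadratic magnetic term and phase-correction errors) rather than like $\thED$. The paper handles this in two coordinated ways that your proposal omits. First, the renormalization is built with a \emph{large frequency gap} $m$, i.e.\ one conjugates $\Box+2i\sum_k P_{<k-m}A_x\cdot\nabla_x P_k$ and gains smallness by taking $m\to\infty$; without this, the phase construction from \cite{Krieger:2012vj} does not survive large energy. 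Second, and more fundamentally, even with the frequency gap there remains a low$\times$high$\to$high, high$\times$low$\to$low modulation term in $-2iA\cdot\nabla_x\phi$ with no off-diagonal decay, which is only \emph{divisible}: it becomes small on subintervals whose number depends on the a priori $S^1$ bound one is trying to prove. This circularity is broken by the induction-on-energy scheme: one compares $(A,\phi)$ at energy $E+c_0(E)$ with the solution $(\tilde A,\tilde\phi)$ evolved from frequency-truncated data at energy $E$, controls $(\tilde A,\tilde\phi)$ by the induction hypothesis (and then by weak divisibility reduces to subintervals where $\nrm{(\tilde A,\tilde\phi)}_{S^1[I_k]}\aleq_E 1$, an $O(1)$ quantity, not $O(F(E))$), and treats $(B^{\mathrm{high}},\psi^{\mathrm{high}})=(A-\tilde A,\phi-\tilde\phi)$ as a \emph{small-data} perturbation of size $c_0(E)$ around this controlled background. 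Your bootstrap never reduces to such a small quantity, so it cannot close; and your final step of ``summing the resulting bounds'' has no control on the number of subintervals, which again depends on the answer.

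In short, the blind spot is that energy dispersion provides smallness only in balanced transversal interactions; it does not render the large low-frequency connection perturbative, and the missing ingredients---the frequency gap in the renormalization, the weak divisibility of the $S^1$ norm, and above all the induction on energy with frequency-truncated comparison solutions---are precisely what replaces the absent smallness. The continuation statement at the end is fine: finiteness of the $S^1$ norm implies continuation, as in Theorem~\ref{thm:finite-S}.
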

Theorem~\ref{thm:OT2-simple} is analogous to the main result in
\cite{MR2657817} for energy critical wave maps.  Thanks to the {\it a
  priori} bound \eqref{eq:OT2-simple}, the solution $(A, \phi)$ scatters towards each infinite endpoint in the sense of
Remark~\ref{rem:scat}. For a more precise formulation, see
Theorems~\ref{thm:ED} and \ref{thm:finite-S}.

We now describe the main ideas of the proof of
Theorem~\ref{thm:OT2-simple}. In what follows, we only consider
solutions to \eqref{eq:MKG} in the global Coulomb gauge.

\subsubsection*{Decomposition of the nonlinearity}
We begin by describing the structure of the Maxwell-Klein-Gordon
system in the global Coulomb gauge (MKG-CG), which take the form
\begin{equation} \label{eq:OT2-simple:MKG-Coulomb} \left\{
    \begin{aligned}
      \lap A_{0} = & \Im(\phi \overline{\rd_{t} \phi})+ \hbox{(cubic terms)} \\
      \Box A_{j} = & \calP_{j} \Im(\phi \overline{\rd_{x} \phi}) + \hbox{(cubic terms)} \\
      \Box \phi = & - 2 i A_{\mu} \rd^{\mu} \phi+ \hbox{(cubic terms)}
    \end{aligned}
  \right.
\end{equation}
where $\calP$ is the Leray $L^{2}$-projection to the space of
divergence-free vector fields. We omitted cubic terms as they are
strictly easier to handle. The elliptic equation for $A_{0}$ allows us
to obtain the appropriate $Y^{1}$ bound once we establish $S^{1}$
bounds for $A_{x}$ and $\phi$; henceforth we focus on the wave
equations for $A_{x}$ and $\phi$.

As in the case of small energy global well-posedness
\cite{Krieger:2012vj}, the null structure of \eqref{eq:MKG} in the
global Coulomb gauge plays an essential role in the proof of
Theorem~\ref{thm:OT2-simple}.  All quadratic terms in the wave
equations exhibit null structure, i.e., cancellation in the angle
between inputs in Fourier space. There is also a secondary multilinear 
null structure in the term $2 i A_{\mu} \rd^{\mu} \phi$ which arises by plugging in
the equations for $A_{0}, A_{j}$. All of this structure is necessary
for controlling the $S^{1}$ norm of $(A, \phi)$, but it is by no means
sufficient as we discuss below.

\subsubsection*{Renormalization for large energy}
Even in the case of small energy global well-posedness
\cite{Krieger:2012vj}, the null structure alone is not enough to bound
the $S^{1}$ norm of $(A, \phi)$ due to the paradifferential term in the
$\phi$-equation
\begin{equation*}
  - \sum_{k} 2i P_{<k} A^{\mathrm{free}} \cdot \rd_{x} P_{k} \phi.
\end{equation*}
Here $A_{j}^{\mathrm{free}}$ is the free wave evolution of $A_{j}[0]
:= (A_{j}, \rd_{t} A_{j}) \rst_{\set{t = 0}}$. As in \cite{MR2100060,
  Krieger:2012vj}, we handle this term by a renormalization argument.
  More precisely, we treat the problematic term as a part of the linear operator
   and construct a \emph{paradifferential parametrix}. The construction in 
   \cite{MR2100060, Krieger:2012vj}, however, relied on smallness of the energy, 
   which we lack in our setting. Instead we consider the linear operator with a 
   \emph{frequency gap} $m$
\begin{equation*}
  \Box_{A^{\mathrm{free}}}^{p, m} \psi := \Box \psi + \sum_{k}  2 i P_{<k-m} A_{x}^{\mathrm{free}} \cdot \rd_{x} P_{k}\psi,
\end{equation*}
and gain smallness by taking $m$ sufficiently large. This idea is akin
to the gauge renormalization procedure for wave maps in
\cite{MR2657817}, where a large frequency gap was used to control the
large paradifferential term.

\subsubsection*{Role of energy dispersion}
We now describe the role of small energy dispersion
$\ED[\phi]$. Roughly speaking, small energy dispersion allows us to
gain in transversal balanced frequency interactions. This complements
the gain in parallel interactions, due to the null condition, and the
gain in the $high \times high \to low$ interactions due to the
favorable frequency balance. For instance, by interpolation with
(non-sharp) Strichartz norms controlled by the $S^{1}$ norm, we
have\footnote{Note that \eqref{eq:ED-ex} is symmetric in $\phi$ and
  $\psi$, so we may choose to use the energy dispersion norm of
  either. Note also that all nonlinearity of \eqref{eq:MKG} involve at
  least one factor of $\phi$. This is why it suffices to assume
  smallness of just $\ED[\phi]$ and not $A$.  }
\begin{equation} \label{eq:ED-ex} \nrm{P_{k}(P_{k_{1}} \phi P_{k_{2}}
    \psi)}_{L^{2}_{t,x}(I \times \bbR^{4})} \aleq 2^{-\frac{1}{2}
    \min\set{k_{1}, k_{2}}} \ED[\phi]^{\tht} \nrm{P_{k_{1}}
    \phi}_{S^{1}[I]}^{1-\tht} \nrm{P_{k_{2}} \psi}_{S^{1}[I]},
\end{equation}
which is useful when $k_{1} = k+O(1)$, $k_{2} = k + O(1)$ and $\phi,
\psi$ are at a large angle so that the output modulation is high.

To see how this gain is useful, we return to the full nonlinear system
\eqref{eq:MKG} in the global Coulomb gauge. Upon decomposing the
inputs and output into Littlewood-Paley pieces, most of the
nonlinearity exhibits an off-diagonal exponential decay in
frequency. For example, the nonlinearity in the $A_{x}$-equation obeys
\begin{equation*}
  \nrm{P_{k} \calP_{x} ( P_{k_{1}} \phi \rd_{x} \overline{P_{k_{2}} \phi})}_{N[I]} \aleq 2^{-\dlt (\abs{k-k_{1}} + \abs{k - k_{2}})} \nrm{P_{k_{1}} \phi}_{S^{1}[I]} \nrm{P_{k_{2}} \phi}_{S^{1}[I]}.
\end{equation*}
Introducing again a large frequency gap $m$, we gain smallness except
when $k_{1} = k + O_{m}(1)$ and $k_{2} = k + O_{m}(1)$. Furthermore,
thanks to the null structure, we also gain extra smallness except for
angled interaction; then we are precisely in position to use
$\ED[\phi]$. In conclusion, we gain smallness from $\ED[\phi] \leq
\eps$ for the nonlinearity in the $A_{x}$-equation.

\subsubsection*{Linear well-posedness of $\Box_{A} \psi = f$}
Unfortunately the a-priori estimate \eqref{eq:OT2-simple} does
not close yet, as there exists a nonlinear term in the $\phi$-equation
with no off-diagonal exponential decay. This part is precisely the
$low \times high \to high$ frequency and $high \times low \to low$
modulation interaction\footnote{We note that this term is where the
  secondary multilinear cancellation structure of MKG-CG is needed.} in the term $-2 i A
\cdot \rd_{x} \phi$, i.e.,
\begin{equation} \label{eq:OT2-no-exp} -2 i \sum_{\substack{k_{1} < k
      \\ k_{2} = k+O(1)}} \sum_{j < k_{1}} P_{k} Q_{<j}( P_{k_{1}}
  Q_{j} A \cdot \rd_{x} P_{k_{2}} Q_{<j} \phi).
\end{equation}
Nevertheless, this term has the redeeming feature that it can be bounded
by a \emph{divisible} norm: Given any $\veps > 0$ the interval $I$ can
be split into smaller pieces $I_{k}$ on each of which the $N$ norm of
the above expression is bounded by $\leq \veps^{2} \nrm{P_{k_{2}}
  \phi}_{S^{1}[I]}$, where the number of such intervals is
$O_{\nrm{\phi}_{S^{1}[I]}, \veps}(1)$.  For a solution $(A, \phi)$ to
\eqref{eq:MKG}, this observation leads to {\it linear well-posedness}
of the magnetic wave equation\footnote{More precisely, the observation
  regarding \eqref{eq:OT2-no-exp}, combined with the paradifferential
  parametrix construction mentioned above, implies well-posedness of
  the equation $\Box^{p, m}_{A} \psi := \Box \psi + 2 i \sum_{k}
  P_{<k-m} A_{\mu} \rd^{\mu} P_{k} \psi = f$ for sufficiently large
  $m$ with bound \eqref{eq:lin-wp}. The terms in $\Box_{A} - \Box^{p,
    m}_{A}$ also turn out to be bounded by divisible norms, which
  leads to the well-posedness of $\Box_{A} \psi = f$.} $\Box_{A} \psi
= f$ with bound
\begin{equation} \label{eq:lin-wp} \nrm{\psi}_{S^{1}[I]}
  \aleq_{\nrm{(A_{x}, \phi)}_{S^{1}[I]}} \nrm{\psi[0]}_{\dot{H}^{1}_{x} \times
    L^{2}_{x}} + \nrm{f}_{N[I]},
\end{equation}
where $\psi[0] := (\psi, \rd_{t} \psi) \rst_{\set{t = 0}}$.  The bound
\eqref{eq:lin-wp} allows us to setup an \emph{induction on energy}
scheme to establish \eqref{eq:OT2-simple}, which we now turn to
explain.


\subsubsection*{Induction on energy}
The starting point of our induction is the small energy global
well-posedness theorem \cite{Krieger:2012vj}, which implies that
\eqref{eq:OT2-simple} holds with $F(E) = C \sqrt{E}$ when the energy
$E$ is sufficiently small. Our goal is to show the existence of a
non-increasing positive function $c_{0}(\cdot)$ on the whole interval $[0,
\infty)$ such that if the conclusion of Theorem~\ref{thm:OT2-simple}
holds for energy up to $E$, then it also holds for energy up to $E +
c_{0}(E)$. Monotonicity of $c_{0}(\cdot)$ implies that it has a uniform
positive lower bound on every finite interval; thus the continuous
induction works for all energy.

In what follows, we describe the construction of $c_{0}(E)$, $F := F(E +
c_{0}(E))$ and $\eps := \eps(E + c_{0}(E))$ under the induction hypothesis
that Theorem~\ref{thm:OT2-simple} holds up to energy $E$ for some
$F(E)$ and $\eps(E)$. For the scheme to work, it is crucial to let
$c_{0}(E)$ depend only on $E$ and \emph{not} on $F(E)$ or $\eps(E)$. On
the other hand, $F$ and $\eps$ may depend on $F(E)$ and $\eps(E)$.

Let $(A, \phi)$ be a solution on $I \times \bbR^{4}$ with energy $
E + c_{0}(E)$ and $\ED[\phi] \leq \eps$. To prove \eqref{eq:OT2-simple} for
$(A, \phi)$, we compare it with another solution $(\tilde{A},
\tilde{\phi})$ with frequency truncated initial data\footnote{In the
  global Coulomb gauge, $A_{x}[0] = (A_{x}, \rd_{t} A_{x})(0)$ and
  $\phi[0] = (\phi, \rd_{t} \phi)(0)$ determine the whole initial data
  set $(a, e, f, g)$, as we can solve for $A_{0}$ in the constraint
  equation $- \lap A_{0} = \Im(\phi \overline{\rd_{t} \phi}) -
  \abs{\phi}^{2} A_{0}$.}
\begin{equation*}
  (\tilde{A}_{j}[0], \tilde{\phi}[0]) = (P_{\leq k^{\ast}} A_{j}[0], P_{\leq k^{\ast}} \phi[0])
\end{equation*}
where the `cut frequency' $k^{\ast} \in \bbR$ is chosen so that
$(\tilde{A}, \tilde{\phi})$ has energy $E$.  By taking $c_{0}(E)$ and
$\eps$ sufficiently small, we aim for the following two goals:
\begin{itemize}[leftmargin=50px]
\item[{\it Goal A}.] The energy dispersion $\ED[\tilde{\phi}](I)$ is
  sufficiently small so that the induction hypothesis applies to
  $(\tilde{A}, \tilde{\phi})$. Hence
  \begin{equation} \label{eq:S-norm-tilde}
    \nrm{\tilde{A}_{0}}_{Y^{1}[I]} + \nrm{\tilde{A}_{x}}_{S^{1}[I]} +
    \nrm{\tilde{\phi}}_{S^{1}[I]} \leq F(E).
  \end{equation}
\item[{\it Goal B}.] The difference $(B^{high},
  \psi^{high}) := (A_{\mu} - \tilde{A}_{\mu}, \phi -
  \tilde{\phi})$ obeys
  \begin{equation} \label{eq:S-norm-high}
    \nrm{B^{high}_{0}}_{Y^{1}[I]} +
    \nrm{B^{high}_{x}}_{S^{1}[I]} +
    \nrm{\psi^{high}}_{S^{1}[I]} \leq C_{E, F(E)}.
  \end{equation}
\end{itemize}
Adding \eqref{eq:S-norm-tilde} and \eqref{eq:S-norm-high}, the desired
bound \eqref{eq:OT2-simple} would follow with $F := F(E) + C_{E,
  F(E)}$.

Goal A is accomplished by showing that if $\eps$ is sufficiently
small, then $(\tilde{A}, \tilde{\phi})$ is arbitrarily close (i.e., within $\epsilon^\delta$) to the
frequency truncated solution $(P_{\leq k^{\ast}} A, P_{\leq k^{\ast}}
\phi)$ which has small energy dispersion.  For Goal B, the idea is to
view $(B^{high}, \psi^{high})$ as a perturbation
around $(\tilde{A}, \tilde{\phi})$.  To ensure that $c_{0}(E)$ is
independent of $F(E)$, we rely on two observations: First, by the
\emph{weak divisibility}\footnote{This terminology should be compared
  with full \emph{divisibility}, which means that $I$ can be split
  into a controlled number of subintervals, on each of which the
  restricted norm is arbitrarily small. Weak divisibility of the
  $S^{1}$ norm is a quick consequence of the \emph{energy inequality}
  $\nrm{\psi}_{S^{1}[I]} \aleq \nrm{\psi[0]}_{\dot{H}^{1}_{x} \times
    L^{2}_{x}} + \nrm{\Box \psi}_{N[I]}$ and (full) divisibility of
  the $N$ norm.  } of the $S^{1}$ norm, the interval $I$ can be split into
$O_{F(E)}(1)$ many subintervals $I_{k}$ on each of which we have
\begin{equation} \label{eq:divided-S-norm-tilde}
  \nrm{\tilde{A}_{0}}_{Y^{1}[I_{k}]} +
  \nrm{\tilde{A}_{x}}_{S^{1}[I_{k}]} +
  \nrm{\tilde{\phi}}_{S^{1}[I_{k}]} \aleq_{E} 1.
\end{equation}
Second, by conservation of energy for $(A, \phi)$ and $(\tilde{A},
\tilde{\phi})$, as well as the approximation $(\tilde{A},
\tilde{\phi}) \approx (P_{\leq k^{\ast}} A, P_{\leq k^{\ast}} \phi)$,
it follows that the $\dot{H}^{1}_{x} \times L^{2}_{x}$ norm of the
data for $(B^{high}, \psi^{high})$ can be
reinitialized to be of size $\aleq c_{0}(E)$ on each $I_{k}$.

With these two observations in hand, we claim that
$(B^{high}, \psi^{high})$ obeys the following
$S^{1}$ norm bound on each $I_{k}$:
\begin{equation} \label{eq:divided-S-norm-high}
  \nrm{B^{high}_{0}}_{Y^{1}[I_{k}]} +
  \nrm{B^{high}_{x}}_{S^{1}[I_{k}]} +
  \nrm{\psi^{high}}_{S^{1}[I_{k}]} \aleq_{E} c_{0}(E) + O_{F}(\eps^{\dlt}).
\end{equation}
Indeed, in the equation for $(B^{high},
\psi^{high})$, all nonlinear terms in $(B^{high},
\psi^{high})$ can be handled by taking $c_{0}(E) \ll_{E} 1$ and $\eps \ll_{F} 1$. Furthermore, exploiting small energy dispersion, all linear
terms can be made appropriately small except $- 2 i A_{\mu} \rd^{\mu}
\psi^{high}$. Nevertheless, the $S^{1}$ norm of $(A, \phi)$ on
$I$ can be assumed to be $\aleq_{E} 1$ by
\eqref{eq:divided-S-norm-tilde} and a bootstrap
assumption\footnote{More precisely, in proving
  \eqref{eq:divided-S-norm-high} we may assume, using a continuous
  induction in time, that the same bound holds with a worse
  constant. Combined with \eqref{eq:divided-S-norm-tilde} this bound
  is sufficient for ensuring that the $S^{1}$ norm of $(A, \phi)$ is
  $\aleq_{E} 1$.}; hence we can group this term with $\Box$ and use
\eqref{eq:lin-wp} (linear well-posedness of $\Box_{A} \,
\psi^{high}$) to arrive at
\eqref{eq:divided-S-norm-high}. Goal B now follows by summing up this
bound on $O_{F(E)}(1)$ intervals.

\section{Overview of the proof II: Content of the present
  paper} \label{sec:OT3} This section is a continuation of the
previous section. Section~\ref{subsec:OT3} provides an overview of the
argument in the present paper, while Section~\ref{subsec:structure} contains
an outline of the structure of the remainder of the paper.

\subsection{Blow-up analysis} \label{subsec:OT3}
Here we give an overview of the final blow-up analysis of \eqref{eq:MKG}, which is carried out in the present paper. This part is analogous to \cite{MR2657818} for energy critical wave maps. We refer to Section~\ref{sec:prelim} for the notation used below.
\subsubsection*{Main ingredients}
In addition to the continuation/scattering criteria established in
\cite{OT1, OT2} (see Theorems~\ref{thm:OT1-simple} and
\ref{thm:OT2-simple}), our blow-up analysis of \eqref{eq:MKG} relies
on the following three key ingredients:
\begin{itemize}[leftmargin=1.5em]
\item[-] ({\it Monotonicity formula for \eqref{eq:MKG}}) Besides the
  conservation of energy, we use the following \emph{monotonicity} (or
  \emph{Morawetz}) \emph{formula} for \eqref{eq:MKG}. Let $\rho :=
  \sqrt{t^{2} - \abs{x}^{2}}$ and
  \begin{equation*}
    X_{0} := \frac{1}{\rho} (t \rd_{t} + x \cdot \rd_{x})
  \end{equation*}
  be the normalized scaling vector field. To avoid the degeneracy of
  $\rho$ on $\rd C = \set{ t = \abs{x}}$, we also define the
  translates
  \begin{equation*}
    \rho_{\veps} := \sqrt{(t+\veps)^{2} - \abs{x}^{2}}, \quad X_{\veps} := \frac{1}{\rho_{\veps}} ((t + \veps) \rd_{t} + x \cdot \rd_{x}).
  \end{equation*}
  Given a smooth solution $(A, \phi)$ to \eqref{eq:MKG} on the
  truncated cone $C_{[\veps, 1]}$ satisfying
  \begin{equation*}
    \calE_{S_{1}}[A, \phi] \leq E, \quad
    \EFlux_{\rd C_{[\veps, 1]}} [A, \phi] \leq \veps^{\frac{1}{2}} E, \quad
    \G_{S_{1}} [\phi] \leq \veps^{\frac{1}{2}} E,
  \end{equation*}
  where $\EFlux_{\rd C_{[t_{0}, t_{1}]}} := \calE_{S_{t_{1}}} -
  \calE_{S_{t_{0}}}$ is the energy flux through $\rd C_{[t_{0},
    t_{1}]}$ and $\G_{S_{t}} := \frac{1}{t}\int_{S_{t}}
  \abs{\phi}^{2}$, we have
  \begin{equation} \label{eq:OT3-simple:mono}
    \begin{aligned}
      & \hskip-2em \int_{S_{1}} \mvC{X_{\veps}}_{T}[A, \phi] \, \ud x
      + \iint_{C_{[\veps, 1]}} \frac{1}{\rho_{\veps}} \abs{\iota_{X_{\veps}} F}^{2} + \frac{1}{\rho_{\veps}} \abs{(\covD_{X_{\veps}}  + \frac{1}{\rho_{\veps}}) \phi}^{2} \, \ud t \ud x \\
      \aleq & \int_{S_{\veps}} \mvC{X_{\veps}}_{T}[A, \phi] \, \ud x +
      E.
    \end{aligned}\end{equation}
  Here $\mvC{X_{\veps}}_{T}[A, \phi]$ is a non-negative weighted
  energy density; we refer to Lemma~\ref{lem:monotonicity} for an
  explicit formula for $\mvC{X_{\veps}}_{T}[A, \phi]$. We remark that
  the entire right-hand side of \eqref{eq:OT3-simple:mono} is bounded
  by $\aleq E$. Finiteness of the space-time integral term `breaks the
  scaling' and implies that $\iota_{X_{\veps}} F$ and
  $(\covD_{X_{\veps}} + \frac{1}{\rho_{\veps}}) \phi$ decay near the
  tip of the cone $C$.

\item[-] ({\it Strong local compactness result}) Given a sequence
  $(A^{(n)}, \phi^{(n)})$ of solutions whose energy is uniformly small
  and $\iota_{X} F^{(n)} \to 0$ and $(\covD_{X}^{(n)} + b) \phi^{(n)}
  \to 0$ in $L^{2}_{t,x}$ on a space-time cube for some smooth
  time-like vector field $X$ and smooth function $b$, we show that
  there exists a subsequence which converges strongly in (essentially)
  $H^{1}_{t,x}$ in a smaller subcube; see Proposition~\ref{prop:cpt}
  for more details. The proof relies on the initial data
  excision/gluing technique and the small energy global well-posedness
  theorem.

\item[-] ({\it Triviality of finite energy stationary/self-similar
    solutions}) We say that $(A, \phi)$ is a \emph{stationary}
  solution to \eqref{eq:MKG} if for some constant time-like vector
  field $Y$
  \begin{equation*}
    \iota_{Y} F = 0, \quad \covD_{Y} \phi = 0,
  \end{equation*}
  and that $(A, \phi)$ is a \emph{self-similar} solution if
  \begin{equation*}
    \iota_{X_{0}} F = 0, \quad (\covD_{X_{0}} + \frac{1}{\rho}) \phi = 0.
  \end{equation*}
  Using the method of stress tensor, we show that every smooth
  stationary or self-similar solution with finite energy is trivial
  (i.e., $F = 0$ and $\phi = 0$); see
  Propositions~\ref{prop:trivial:st} and \ref{prop:trivial:ss}.  We
  also establish a regularity result (Proposition~\ref{prop:reg}),
  which says that all stationary and self-similar solutions arising
  from the above strong local compactness result
  (Proposition~\ref{prop:cpt}) are smooth.
\end{itemize}

With these in mind, we now sketch the blow-up analysis of
\eqref{eq:MKG}, which is performed in full detail in
Section~\ref{sec:proof}.

 \subsubsection*{Finite time blow-up/non-scattering scenarios and
   initial reduction}
 Suppose that the conclusion of Theorem~\ref{thm:main} fails for a
 smooth finite energy data $(a, e, f, g)$ in the forward time
 direction. Then the corresponding smooth solution either blows up in
 finite time, or does not scatter as $t \to \infty$.  The first step
 of the blow-up analysis is to construct in both scenarios a sequence
 of global Coulomb solutions $(A^{(n)}, \phi^{(n)})$ on $[\veps_{n},
 1] \times \bbR^{4}$ (where $\veps_{n} \to 0$) obeying the following
 properties:
 \begin{itemize}[leftmargin=1.5em]
 \item[-] ({\it Bounded energy in the cone}) $\calE_{S_{t}} [A^{(n)},
   \phi^{(n)}] \leq E$ for every $t \in [\veps_{n}, 1]$
 \item[-] ({\it Small energy outside the cone}) $\calE_{(\set{t}
     \times \bbR^{4}) \setminus S_{t}} [A^{(n)}, \phi^{(n)}] \ll E$ for every $t \in [\veps_{n}, 1]$
 \item[-] ({\it Decaying flux on $\rd C$}) $\EFlux_{[\veps_{n}, 1]}
   [A^{(n)}, \phi^{(n)}] + \G_{S_{1}}[\phi^{(n)}] \leq
   \veps_{n}^{\frac{1}{2}} E$,
 \item[-] ({\it Pointwise concentration at $t = 1$}) There exist
   $k_{n} \in \bbZ$ and $x_{n} \in \bbR^{4}$ such that
   \begin{equation} \label{eq:OT3-simple:conc} 2^{-k_{n}}
     \abs{\zt_{2^{-k_{n}}} \ast \phi^{(n)}(1,x_{n})} + 2^{-2k_{n}}
     \abs{\zt_{2^{-k_{n}}} \ast \covD_{t}^{(n)} \phi^{(n)}(1, x_{n})}
     > \thcovED
   \end{equation}
   for some $\thcovED = \thcovED(E) > 0$.
 \end{itemize}
 Here $\zt$ is a smooth function supported in the unit ball $B_{1}(0)$
 and $\zt_{2^{-k}}(x) := 2^{4k} \zt(2^{k} x)$. In view of the next
 step, we require $\zt$ to be non-negative. See
 Lemma~\ref{lem:ini-seq} for details.

 Key to this construction are Theorems~\ref{thm:OT1-simple} and
 \ref{thm:OT2-simple}, which provide detailed information about finite
 time blow-up or non-scattering scenarios. In particular, the tip of
 the cone $C$ is the point of energy concentration (which exists by
 Theorem~\ref{thm:OT1-simple}) in the finite time blow-up case. ({\it
   Pointwise concentration at $t=1$}) follows from the failure of the
 energy dispersion bound in Theorem~\ref{thm:OT2-simple}. ({\it
   Decaying flux on $\rd C$}) is a consequence of the local
 conservation of energy and localized Hardy's inequality; see
 Lemma~\ref{lem:flux-decay-prelim} and
 Corollary~\ref{cor:flux-decay}. ({\it Smallness of the energy outside
   the cone}) is achieved using the initial data excision/gluing
 technique in the finite time blow-up case; in the non-scattering
 case, this property is trivial to establish.

 \subsubsection*{Elimination of the null concentration scenario}
 Thanks to the above properties, we may apply the monotonicity formula
 \eqref{eq:OT3-simple:mono} to each solution in the sequence
 $(A^{(n)}, \phi^{(n)})$.  Using the weighted energy term (i.e., the
 first term on the left-hand side) in \eqref{eq:OT3-simple:mono}, we
 show in Lemma~\ref{lem:no-null} that the null concentration scenario
 (i.e., $\abs{x_{n}} \to 1$ and $k_{n} \to \infty$) is impossible.
 Unlike in the case of wave maps \cite{MR2657818}, however, the
 weighted energy involves the covariant derivatives $\covD^{(n)}_{\mu}
 \phi^{(n)} = \rd_{\mu} \phi^{(n)} + i A^{(n)}_{\mu} \phi^{(n)}$, and the
 term involving $A^{(n)}$ could be problematic. We avoid this issue by
 first working with the gauge invariant amplitude $\abs{\phi^{(n)}}$,
 for which we have the \emph{diamagnetic inequality}
 \begin{equation*}
   \abs{X^{\mu} \rd_{\mu} \abs{\phi^{(n)}}} \leq \abs{\covD_{X} \phi^{(n)}}	
 \end{equation*}
 in the sense of distributions, for any smooth vector field $X$. We
 then transfer the bound to $\phi^{(n)}$ using the inequality
 \begin{equation*}
   \abs{\zt_{2^{-k}} \ast \phi^{(n)}} \leq \zt_{2^{-k}} \ast \abs{\phi^{(n)}},
 \end{equation*}
 which holds if $\zt$ is chosen to be non-negative.

 \subsubsection*{Nontrivial energy in a time-like region}
 The absence of the null concentration scenario implies the following
 uniform lower bound for $\phi^{(n)}$ away from the boundary at $t =
 1$: There exist $E_{1} = E_{1}(E) > 0$ and $\gmm_{1} = \gmm_{1}(E)
 \in (0, 1)$ such that
 \begin{equation} \label{eq:OT3-simple:t-like-e:t=1}
   \int_{S_{1}^{1-\gmm_{1}}} \sum_{\mu = 0}^{4} \abs{\covD_{\mu}^{(n)}
     \phi^{(n)}}^{2} + \frac{1}{r^{2}} \abs{\phi^{(n)}}^{2} \, \ud x
   \geq E_{1}.
 \end{equation}
 See Lemma~\ref{lem:t-like-e:t=1}. Using a localized version of the
 monotonicity formula \eqref{eq:OT3-simple:mono}, this lower bound can
 be propagated towards $t=0$. More precisely, there exist $E_{2} =
 E_{2}(E)$ and $\gmm_{2} = \gmm_{2}(E) \in (0, 1)$ and $E_{2} =
 E_{2}(E) > 0$ such that
 \begin{equation} \label{eq:OT3-simple:t-like-e} \int_{S^{(1-\gmm_{2})
       t}_{t}} \mvC{X_{0}}_{T}[A^{(n)}, \phi^{(n)}] \, \ud x \geq
   E_{2} \quad \hbox{ for all } t \in [\veps_{n}^{\frac{1}{2}},
   \veps_{n}^{\frac{1}{4}}].
 \end{equation}
 \subsubsection*{Final rescaling}
 Thanks to the space-time integral term in \eqref{eq:OT3-simple:mono},
 $(A^{(n)}, \phi^{(n)})$ obeys
 \begin{equation*}
   \iint_{C_{[\veps_{n}, 1]}} \frac{1}{\rho_{\veps_{n}}} \abs{\iota_{X_{\veps_{n}}} F^{(n)}}^{2} + \frac{1}{\rho_{\veps_{n}}} \abs{(\covD_{X_{\veps_{n}}}^{(n)} + \frac{1}{\rho_{\veps_{n}}}) \phi^{(n)}}^{2} \, \ud t \ud x \aleq E.
 \end{equation*}
 which implies an integrated decay of $\iota_{X_{\veps_{n}}} F^{(n)}$
 and $(\covD_{X_{\veps_{n}}}^{(n)} + \frac{1}{\rho_{\veps_{n}}})
 \phi^{(n)}$ near the tip of the cone $C$. Applying the pigeonhole
 principle and rescaling, we obtain a new sequence of solutions which
 is asymptotically self-similar. More precisely, there exist a
 sequence of solutions on $[1, T_{n}] \times \bbR^{4}$ (where $T_{n}
 \to \infty$) to \eqref{eq:MKG}, which we still denote by $(A^{(n)},
 \phi^{(n)})$, obeying the following properties (see
 Lemma~\ref{lem:final-rescale}):
 \begin{itemize}[leftmargin=1.5em]
 \item[-]({\it Bounded energy in the cone}) $\calE_{S_{t}}[A^{(n)},
   \phi^{(n)}] \leq E$ for every $t \in [1, T_{n}]$,
 \item[-]({\it Small energy outside the cone}) $\calE_{\set{t} \times
     \bbR^{4} \setminus S_{t}}[A^{(n)}, \phi^{(n)}] \ll
   E$ for every $t \in [1, T_{n}]$,
 \item[-]({\it Nontrivial energy in a time-like region}) For every $t
   \in [1, T_{n}]$ we have
   \begin{equation} \label{eq:OT3-simple:nontrivial}
     \int_{S^{(1-\gmm_{2}) t}_{t}} \mvC{X_{0}}_{T}[A^{(n)},
     \phi^{(n)}] \, \ud x \geq E_{2},
   \end{equation}
 \item[-]({\it Asymptotic self-similarity}) For every compact subset
   $K$ of the interior of $C_{[1, \infty)}$, we have
   \begin{equation} \label{eq:OT3-simple:asymp-st} \iint_{K}
     \abs{\iota_{X_{0}} F^{(n)}}^{2} + \abs{(\covD_{X_{0}}^{(n)} +
       \frac{1}{\rho}) \phi^{(n)}}^{2} \, \ud t \ud x \to 0 \quad
     \hbox{ as } n \to \infty.
   \end{equation}
 \end{itemize}

 \subsubsection*{Extraction of concentration scales and
   compactness/rigidity argument}
 Let $(A^{(n)}, \phi^{(n)})$ be a sequence obtained by the final
 rescaling argument. Using a combinatorial argument, we show in
 Lemma~\ref{lem:conc-scales} that one of the following two scenarios
 holds:
 \begin{itemize}
 \item[A.] Either we can identify a sequence of points and decreasing
   scales at which energy concentrates, or
 \item[B.] There is a uniform non-concentration of energy.
 \end{itemize}

 In Scenario A we obtain a fixed number $r > 0$ and a sequence of
 times $t_{n} \to t_{0}$, points $x_{n} \to x_{0}$ and scales $r_{n}
 \to 0$ such that
 \begin{equation*}
   \sup_{x \in B_{r}(x_{n})} \calE_{\set{t_{n}} \times B_{r_{n}}(x)}[A^{(n)}, \phi^{(n)}]
 \end{equation*}
 is uniformly small but nontrivial, and
 \begin{equation*}
   \frac{1}{4 r_{n}} \int_{t_{n}-2r_{n}}^{t_{n}+2r_{n}} \int_{B_{r}(x_{n})} \abs{\iota_{Y} F^{(n)}}^{2} + \abs{\covD_{Y}^{(n)} \phi^{(n)}}^{2} \, \ud t \ud x \to 0 \quad \hbox{ as } n \to \infty.
 \end{equation*}
 where $Y = X_{0}(t_{0}, x_{0})$. Applying Proposition~\ref{prop:cpt},
 we obtain as a limit a nontrivial finite energy solution to
 \eqref{eq:MKG} which is stationary with respect to $Y$. As discussed
 above, however, such solutions do not exist.

 In Scenario B we can cover each truncated cone $\widetilde{C}_{j} :=
 C^{1/2}_{[1/2, \infty)} \cap \set{2^{j} \leq t < 2^{j+1}}$ with
 spatial balls of radius $r = r(j)$, on each of which the energy of
 $(A^{(n)}, \phi^{(n)})$ is uniformly small and
 \begin{equation*}
   \iint_{\widetilde{C}_{j}} \abs{\iota_{X_{0}} F^{(n)}}^{2} + \abs{(\covD_{X_{0}}^{(n)} + \frac{1}{\rho}) \phi^{(n)}}^{2} \, \ud t \ud x \to 0 \quad \hbox{ as } n \to \infty.
 \end{equation*}
 Hence we are again in position to apply Proposition~\ref{prop:cpt}
 and extract a finite energy self-similar solution to \eqref{eq:MKG}
 on $C_{[1/2, \infty)}^{1/2}$. By self-similarity, this limit easily
 extends to the whole forward cone $C$. By
 \eqref{eq:OT3-simple:nontrivial} this limit is necessarily
 nontrivial, which contradicts the triviality of finite energy
 self-similar solutions.

 In conclusion, we have seen that neither of the two scenarios can
 hold, which is a contradiction. This completes the proof of the main
 theorem.

\subsection{Structure of the present paper} \label{subsec:structure}
The remainder of the paper is structured as follows. 
\subsubsection*{Section~\ref{sec:prelim}.} We provide the setup for our arguments to follow. In particular, we precisely state the results that we need from the other papers of the series \cite{OT1, OT2} in Section~\ref{subsec:prev-results}.
\subsubsection*{Section~\ref{sec:energy}.} We state and prove all the conservation laws and monotonicity formulae that are used in this paper.
\subsubsection*{Section~\ref{sec:cpt}.} We use the small energy global well-posedness theorem (Theorem~\ref{thm:smallEnergy}) and the technique of initial data excision/gluing to prove a strong local compactness statement (Proposition~\ref{prop:cpt}) that we rely on in our blow-up analysis. We also formulate a notion of weak solutions to \eqref{eq:MKG} and their local descriptions (weak compatible pairs), which naturally arise as limits from Proposition~\ref{prop:cpt}.
\subsubsection*{Section~\ref{sec:stationary-self-sim}.} We show that there does not exist any nontrivial stationary or self-similar solutions to \eqref{eq:MKG} with finite energy. We also prove regularity theorems for weak stationary or self-similar solutions to \eqref{eq:MKG} considered in Section~\ref{sec:cpt}.
\subsubsection*{Section~\ref{sec:proof}.} We finally carry out the blow-up analysis as outlined in Section~\ref{subsec:OT3}, thereby completing the proof of global well-posedness and scattering of \eqref{eq:MKG}.

\section{Preliminaries} \label{sec:prelim}
\subsection{Notation for constants and asymptotics} \label{subsec:asymp}
Throughout the paper we use $C$ for a general positive constant, which may vary from line to line. For a constant $C$ that depends on, say, $E$, we write $C = C(E)$. We write $A \aleq B$ when there exists a constant $C > 0$ such that $A \leq C B$. When the implicit constant should be regarded as small, we write $A \ll B$. The dependence of the constant is specified by a subscript, e.g., $A\aleq_{E} B$. We write $A \approx B$ when both $A \aleq B$ and $B \aleq A$ hold.

\subsection{Coordinate systems on $\bbR^{1+4}$} \label{subsec:coords}
Several different coordinate systems on $\bbR^{1+4}$ will be used in this paper. A basic choice, which has already been mentioned in the introduction, is the \emph{rectilinear coordinates} $(x^{0}, x^{1}, \ldots, x^{4})$ on $\bbR^{1+4}$, in which the Minkowski metric takes the diagonal form $\met = - (\ud x^{0})^{2} + (\ud x^{1})^{2} + \cdots + (\ud x^{4})^{2}$. Alternatively, we will often write $t = x^{0}$ and $x = (x^{1}, \ldots, x^{4})$ as well. We reserve the greek indices $\mu, \nu, \ldots$ for expressions in the rectilinear coordinates, and the latin indices $j, k, \ell, \ldots$ expressions only in terms of the spatial coordinates $x^{1}, x^{2}, x^{3}, x^{4}$.

We also introduce the \emph{polar coordinates} $(t, r, \Tht)$ on $\bbR^{1+4}$, where 
\begin{equation*}
r = \abs{x}, \quad \Tht = \frac{x}{\abs{x}} \in \bbS^{3}, 
\end{equation*}
and the \emph{null coordinates} $(u, v, \Tht)$, defined by
\begin{equation*}
u = t - r, \quad v = t + r.
\end{equation*}
We can furthermore specify a spherical coordinate system for $\Tht$, but it will not be necessary.
We also define the null vector fields $L, \uL$ as
\begin{equation*}
	L = \rd_{t} + \rd_{r} = 2 \rd_{v}, \quad
	\uL = \rd_{t} - \rd_{r} = 2 \rd_{u}.
\end{equation*}
In these coordinates, the metric takes the form 
\begin{equation*}
\met = - \ud t^{2} + \ud r^{2} + r^{2} g_{\bbS^{3}} 
				= - \ud u \ud v + r^{2}(u, v) g_{\bbS^{3}}.
\end{equation*} 
where $g_{\bbS^{3}}$ is the standard metric on $\bbS^{3}$ in the coordinates $\Tht$.

Finally, we will also use the \emph{hyperbolic polar coordinates} (in short, \emph{hyperbolic coordinates}) $(\rho, y, \Tht)$ on the future light cone $C_{(0, \infty)} = \set{(t, r, \Tht) : 0 \leq r < t}$ (see below), where
\begin{equation*}
	\rho = \sqrt{t^{2} - r^{2}}, \quad y = \tanh^{-1} (r/t).
\end{equation*}
The Minkowski metric takes the form
\begin{equation*}
	\met = - \ud \rho^{2} + \rho^{2} (\ud y^{2} + \sinh^{2} y \, g_{\bbS^{3}}).
\end{equation*}
Every constant $\rho$ hypersurface $\calH_{\rho}$ is isometric to the simply connected space of constant sectional curvature $-\frac{1}{\rho^{2}}$; in particular, $\calH_{1}$ is the hyperboloidal model for the hyperbolic $4$-space $\bbH^{4}$. Using the coordinates $(y, \Tht)$, the metric on $\bbH^{4}$ can be written as
\begin{equation*}
	g_{\bbH^{4}} = \ud y^{2} + \sinh^{2} y \, g_{\bbS^{3}}.
\end{equation*}


\subsection{Geometric notation}
To ease the transition from one coordinate system to another, we shall
use the tensor formalism.
We will denote by $\nb$ the Levi-Civita connection on $\bbR^{1+4}$ to
distinguish from coordinate vector fields $\rd_{\mu}$. The gauge
covariant connection associated to $A$ for $\bbC$-valued tensors takes
the form $\covD = \nb + i A$.  Similarly, we shall denote the
Levi-Civita connection on $\bbH^{4}$ by $\nb_{\bbH^{4}}$, and the
gauge covariant connection by $\covD_{\bbH^{4}} = \nb_{\bbH^{4}} + i
A$. We use the bold latin indices $\bfa, \bfb, \ldots$ for expressions
in a general coordinate system. We also employ the usual convention of
raising and lowering indices using the Minkowski metric $\bfm$, and
summing up repeated upper and lower indices.

We now introduce some notation for geometric subsets of $\bbR^{1+4}$
and $\bbR^{4}$. The forward light  cone
\begin{equation*}
  C := \set{(t,x) : 0 < t < \infty, \abs{x} \leq t} 
\end{equation*}
will play a central role in this paper. For $t_{0} \in \bbR$ and $I
\subset \bbR$, we define
\begin{align*}
  C_{I} :=& \set{(t, x) : t \in I, \abs{x} \leq t}, &
  \rd C_{I} :=& \set{(t, x) : t \in I, \abs{x} = t}, \\
  S_{t_{0}} :=& \set{(t, x) : t = t_{0}, \abs{x} \leq t}, & \rd
  S_{t_{0}} :=& \set{(t, x) : t = t_{0}, \abs{x} = t}.
\end{align*}
For $\dlt \in \bbR$, we define the translated cones
\begin{align*}
  C^{\dlt} :=& \set{(t,x) : \max\set{0,\dlt} \leq t < \infty, \abs{x}
    \leq t-\dlt}.
\end{align*}
The corresponding objects $C^{\dlt}_{I}$, $\rd C^{\dlt}_{I}$,
$S^{\dlt}_{t_{0}}$ and $\rd S^{\dlt}_{t_{0}}$ are defined in the
obvious manner.

We also define $B_{r}(x)$ to be the ball of radius $r$ centered at $x$
in $\bbR^{4}$.

\subsection{Frequency projections and function spaces}
Let $m_{\leq 0}$ be a smooth cutoff that equals $1$ on $\set{r \leq 1}$ and $0$ on $\set{r \geq 2}$. For $k \in \bbZ$, we define
\begin{equation*}
	m_{\leq k}(r) := m_{\leq 0}(r/2^{k}), \quad
	m_{k}(r) := m_{\leq k}(r) - m_{\leq k-1}(r).
\end{equation*}
so that $\supp \, m \subseteq \set{2^{k-1} \leq r \leq 2^{k+1}}$ and $\sum_{k} m_{k}(r) = 1$. We introduce the Littlewood-Paley projections $P_{k}$, $Q_{j}$ and $S_{\ell}$, which are used in this paper:
\begin{align*}
	P_{k} \varphi =& \calF^{-1}[m_{k}(\abs{\xi}) \calF[\varphi]], \\
	Q_{j} \varphi =& \calF^{-1}[m_{j}(\abs{\abs{\tau} - \abs{\xi}}) \calF[\varphi]], \\
	S_{\ell} \varphi =& \calF^{-1}[m_{\ell}(\abs{(\tau, \xi)}) \calF[\varphi]], 
\end{align*}
where $\calF$ [resp. $\calF^{-1}$] is the [resp. the inverse] space-time Fourier transform.

Given a normed space $X$ of function on $\bbR^{1+4}$, we define the restriction space $X(\calO)$ on a measurable subset $\calO \subseteq \bbR^{1+4}$ by the norm
\begin{equation*}
	\nrm{\varphi}_{X(\calO)} := \inf_{\psi = \varphi \hbox{ on } \calO} \nrm{\psi}_{X(\bbR^{1+4})}.
\end{equation*}
In application, the set $\calO$ is often an open set with (piecewise) smooth boundary, and hence there exists a bounded linear extension operator from $X(\calO)$ to $X(\bbR^{1+4})$ for many standard function spaces $X$ (e.g., $X = H^{1}$).

\subsection{Results from previous papers} \label{subsec:prev-results}
Here we give precise statements of results from \cite{Krieger:2012vj}
and the first two papers in the sequence \cite{OT1, OT2}, which are
used in the present paper.  Given a measurable subset $S \subseteq
\set{t} \times \bbR^{4}$ for some $t$, we define the energy of a pair
$(A, \phi)$ on $S$ by
\begin{equation*}
  \calE_{S}[A, \phi] := \int_{S} \frac{1}{2} \sum_{0 \leq \mu < \nu \leq 4} \abs{F_{\mu \nu}}^{2} 
  + \frac{1}{2} \sum_{\mu = 0}^{4} \abs{\covD_{\mu} \phi}^{2} \, \ud x.
\end{equation*}
Accordingly, for a measurable subset $S \subseteq \bbR^{4}$, we define
\begin{equation*}
  \calE_{S}[a, e, f, g] := \int_{S} \frac{1}{2} \sum_{1 \leq j < k \leq 4} \abs{(\ud a)_{j k}}^{2} + \frac{1}{2} \sum_{j=1}^{4} \abs{e_{j}}^{2}
  + \frac{1}{2} \sum_{j = 1}^{4} \abs{\covD_{j} f}^{2} 
  + \frac{1}{2} \abs{g}^{2} \, \ud x. 
\end{equation*}

The following is the main theorem of \cite{Krieger:2012vj}.
\begin{theorem}[Small energy global well-posedness in global Coulomb
  gauge] \label{thm:smallEnergy} There exists $\thE > 0$ such that the
  following holds. Let $(a, e, f, g)$ be a $\calH^{1}$ initial data
  set on $\bbR^{4}$ satisfying the global Coulomb gauge condition
  $\rd^{\ell} a_{\ell} = 0$, whose energy does not exceeding
  $\thE^{2}$, i.e.,
  \begin{equation}
    \calE_{\bbR^{4}} [a, e, f, g] \leq \thE^{2}.
  \end{equation}
  \begin{enumerate}
  \item Then there exists a unique $C_{t} \calH^{1}$ admissible
    solution $(A, \phi)$ to \eqref{eq:MKG} on $\bbR^{1+4}$ satisfying
    the global Coulomb gauge condition $\rd^{\ell} A_{\ell} = 0$ with
    $(a, e, f, g)$ as its initial data at $t = 0$, i.e., $(A_{j},
    F_{0j}, \phi, \covD_{t} \phi) \rst_{\set{t=0}} = (a_{j}, e_{j}, f,
    g)$.

  \item Moreover, $(A, \phi)$ obeys the $S^{1}$ norm bound
    \begin{equation} \label{eq:smallEnergy:sbd}
      \nrm{A_{0}}_{Y^{1}(\bbR^{1+4})} +
      \nrm{A_{x}}_{S^{1}(\bbR^{1+4})} + \nrm{\phi}_{S^{1}(\bbR^{1+4})}
      \aleq \nrm{(a, e, f, g)}_{\calH^{1}}.
    \end{equation}

  \item If the initial data set $(a, e, f, g)$ is more regular, then
    so is the solution $(A, \phi)$; in particular, if $(a, e, f, g)$
    is classical, then $(A, \phi)$ is a classical solution to
    \eqref{eq:MKG}.

  \item Finally, given a sequence $(a^{(n)}, e^{(n)}, f^{(n)},
    g^{(n)}) \in \calH^{1}(\bbR^{4})$ of Coulomb initial data sets
    such that $\calE[a^{(n)}, e^{(n)}, f^{(n)}, g^{(n)}] \leq
    \thE^{2}$ and $(a^{(n)}, e^{(n)}, f^{(n)}, g^{(n)}) \to (a, e, f,
    g)$ in $\calH^{1}(\bbR^{4})$, we have
    \begin{equation} \label{eq:smallEnergy:contDep} \nrm{A_{0}^{(n)} -
        A_{0}}_{Y^{1}(I \times \bbR^{4})} + \nrm{A_{x}^{(n)} -
        A_{x}}_{S^{1}(I \times \bbR^{4})} + \nrm{\phi^{(n)} -
        \phi}_{S^{1}(I \times \bbR^{4})} \to 0
    \end{equation}
    as $n \to \infty$, for every compact interval $I \subseteq \bbR$.
  \end{enumerate}
\end{theorem}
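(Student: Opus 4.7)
The plan is to reduce the Coulomb-gauge system to wave equations for $A_{x}$ and $\phi$ coupled to an elliptic equation for $A_{0}$, and then close a contraction argument in the $S^{1}$-based spaces using the smallness $\calE \leq \thE^{2}$. In the Coulomb gauge, $A_{0}$ is recovered from $(\phi, \rd_{t}\phi)$ via the elliptic equation $-\lap A_{0} + \abs{\phi}^{2} A_{0} = \Im(\phi \overline{\rd_{t}\phi})$, so it suffices to control $(A_{x}, \phi)$ in $S^{1} \times S^{1}$; the $Y^{1}$ bound on $A_{0}$ then follows from elliptic regularity together with the embedding $S^{1} \hookrightarrow C_{t}(\dot{H}^{1}_{x} \times L^{2}_{x})$. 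The remaining wave equations are, schematically,
\begin{equation*}
  \Box A_{j} = \calP_{j} \Im(\phi \overline{\rd_{x} \phi}) + (\hbox{cubic}), \qquad
  \Box \phi = -2i A^{\mu} \rd_{\mu} \phi + (\hbox{cubic}),
\end{equation*}
and all their quadratic interactions enjoy null structure arising from the constraint $\rd^{\ell} A_{\ell} = 0$, with a secondary trilinear cancellation hidden inside $A_{\mu} \rd^{\mu} \phi$ once $A_{0}$ and the divergence-free part of $A_{x}$ are substituted in.

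The main obstacle, and the step I expect to be by far the hardest, is the low-to-high paradifferential term $-2i \sum_{k} P_{<k} A_{x}^{\mathrm{free}} \cdot \rd_{x} P_{k} \phi$ in the $\phi$-equation: the low-frequency magnetic potential receives no smallness from any frequency balance and therefore cannot be handled perturbatively, even when the total energy is small. My approach would be to renormalize, treating this term as part of the linear operator and constructing an approximate parametrix for $\Box^{p}_{A^{\mathrm{free}}}$ by conjugating with a microlocal phase $e^{i \Psi}$ adapted to null frames. The phase $\Psi$, pseudo-differential in angle and chosen block-by-block so that the gauge-covariant operator agrees with the flat d'Alembertian to leading order, would be built from the low-modulation part of $A_{x}^{\mathrm{free}}$; smallness of the energy then provides the smallness needed for the conjugation error $\Box^{p}_{A^{\mathrm{free}}} - e^{-i\Psi} \Box e^{i\Psi}$ to map $S^{1}$ into a small multiple of $N$.

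With the linear well-posedness bound $\nrm{\psi}_{S^{1}} \aleq \nrm{\psi[0]}_{\dot{H}^{1}_{x} \times L^{2}_{x}} + \nrm{f}_{N}$ for the renormalized operator in hand, I would close the nonlinear iteration by combining (a) Klainerman--Machedon-type bilinear $N$-estimates exploiting the Coulomb null structure to handle the quadratic interactions, (b) trilinear estimates for $A_{\mu} \rd^{\mu} \phi$ that exploit the secondary cancellation visible only after inserting the Hodge decomposition of $A_{\mu}$, and (c) Strichartz components of $S^{1}$ for the cubic remainders. A standard contraction in $(A_{x}, \phi)$ then produces the bound \eqref{eq:smallEnergy:sbd}. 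Parts (3) and (4) follow by soft arguments: persistence of regularity by rerunning the same contraction at higher Sobolev levels (the nonlinearity is at worst trilinear), and continuous dependence \eqref{eq:smallEnergy:contDep} by applying the contraction to the difference equation, after verifying that the renormalizing phase depends Lipschitz-continuously on $A^{\mathrm{free}}$. The passage from classical to admissible $\calH^{1}$ solutions is then a density argument using \eqref{eq:smallEnergy:contDep}.
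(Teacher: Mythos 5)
This theorem is not proved in the paper you are reading: it is recorded in Section~\ref{subsec:prev-results} as the main result of \cite{Krieger:2012vj} (Krieger--Sterbenz--Tataru) and used throughout as a black box, so there is no argument in the source for your attempt to match. Your sketch is nevertheless a fair high-level account of the strategy in that reference: Coulomb-gauge reduction with $A_{0}$ recovered by an elliptic slave equation, Klainerman--Machedon null structure plus the secondary trilinear cancellation in $A^{\mu}\rd_{\mu}\phi$ that appears only after the Hodge decomposition of $A$, and a microlocal $e^{\pm i\Psi}$ parametrix (in the spirit of Rodnianski--Tao and the wave-maps renormalization) to absorb the low-to-high paradifferential magnetic term $\sum_{k} P_{<k}A_{x}^{\mathrm{free}}\cdot\nabla_{x}P_{k}\phi$, which small energy alone does not make perturbative.

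Where the sketch is thinnest if you wanted to make it rigorous: the $Y^{1}$ bound for $A_{0}$ is not a byproduct of fixed-time elliptic regularity from $C_{t}(\dot{H}^{1}_{x}\times L^{2}_{x})$ control of $(A_{x},\phi)$, since $Y^{1}$ carries genuine space-time information (of $L^{2}_{t}\dot{H}^{1/2}_{x}$ type) that must be extracted from the $S^{1}$-based product estimates; the claim that small energy makes the conjugation error $\Box^{p}_{A^{\mathrm{free}}}-e^{-i\Psi}\Box e^{i\Psi}$ small compresses the bulk of the work, namely the null-frame and modulation decompositions required to estimate the worst nearly-parallel, low-modulation interactions where the phase is not negligible; and parts (3)--(4), persistence of regularity and $S^{1}$-continuous dependence, are substantive theorems in \cite{Krieger:2012vj} rather than soft corollaries---the continuous dependence in particular is exactly what underwrites the admissible-$C_{t}\calH^{1}$ framework of Definition~\ref{def:admSol} that the present paper relies on.
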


\begin{remark} \label{rem:SY-norms} For the purpose of the present
  paper, the precise structure of the norms $S^{1}$ and $Y^{1}$ are
  not necessary. Instead, we rely on the following embedding
  properties:
  \begin{align*}
    \nrm{\rd_{t,x} \phi}_{L^{\infty}_{t} L^{2}_{x}}+\nrm{\Box
      \phi}_{L^{2}_{t} \dot{H}^{-\frac{1}{2}}_{x}}
    \aleq& \nrm{\phi}_{S^{1}}, \\
    \nrm{\rd_{t,x} A}_{L^{\infty}_{t} L^{2}_{x}}+\nrm{\rd_{t,x}
      A}_{L^{2}_{t} \dot{H}^{\frac{1}{2}}_{x}} \aleq& \nrm{A}_{Y^{1}},
  \end{align*}
  where all norms are taken on $\bbR^{1+4}$. Furthermore, $S^{1}$ and
  $Y^{1}$ are closed under multiplication by $\eta \in
  C^{\infty}_{0}(\bbR^{1+4})$, i.e., $\eta S^{1}(\bbR^{1+4}) \subseteq
  S^{1}(\bbR^{1+4})$ and $\eta Y^{1}(\bbR^{1+4}) \subseteq
  Y^{1}(\bbR^{1+4})$; we refer to \cite[Sections 6 and 7]{OT1}.
\end{remark}

Given a positive number $E \gtrsim \thE$ and a $\calH^{1}$ initial data set
$(a, e, f, g)$ on $\bbR^{4}$ with energy $\calE[a, e, f, g] \leq E$,
we define its \emph{energy concentration scale} $r_{c} = r_{c}[a, e,
f, g]$ (with respect to energy $E$), in terms of the function $\dlt_{0}(E, \thE^{2})
= c \thE^{2} \min \set{1,  \thE^{2} E^{-1}}$ with a small universal constant $c$, by
 \begin{equation} \label{eq:EC:def}
 r_{c} = r_{c}(E)[a, e, f, g] :=
  \sup \set{r \geq 0 : \forall x \in \bbR^{4}, \ \calE_{B_{r}(x)}[a,
    e, f, g] < \dlt_{0}(E, \thE^{2}) }.
\end{equation}

The following is the main result of \cite{OT1}.
\begin{theorem}[Large energy local well-posedness theorem in global
  Coulomb gauge]\label{thm:lwp4MKG}
  Let $(a, e, f, g)$ be an $\calH^{1}$ initial data set satisfying the
  global Coulomb gauge condition $\rd^{\ell} a_{\ell} = 0$ with energy
  $\calE[a, e, f, g] \leq E$. Let $r_{c} = r_{c}[a, e, f, g]$ be
  defined as above. Then the following statements
  hold:
  \begin{enumerate}
  \item (Existence and uniqueness) There exists a unique admissible
    $C_{t} \calH^{1}$ solution $(A, \phi)$ to \eqref{eq:MKG} on $[-r_{c},
    r_{c}] \times \bbR^{4}$ satisfying the global Coulomb gauge condition with $(a, e, f, g)$ as its initial data.
  \item (A-priori $S^{1}$ regularity) We have the additional regularity
    properties
    \begin{equation*}
      A_{0} \in Y^{1}[-r_{c}, r_{c}], \quad A_{x}, \phi \in S^{1}[-r_{c}, r_{c}].
    \end{equation*}
  \item (Persistence of regularity) If the initial data set $(a, e, f, g)$ is more regular, then so is the solution $(A, \phi)$; in particular, the solution $(A, \phi)$ is
    classical if $(a, e, f, g)$ is classical.
    
  \item (Continuous dependence) Consider a sequence $(a^{(n)},
    e^{(n)}, f^{(n)}, g^{(n)})$ of $\calH^{1}$ Coulomb initial data
    sets such that $(a^{(n)}, e^{(n)}, f^{(n)}, g^{(n)}) \to (a, e, f, g)$ in $\calH^{1}$
    Then the lifespan of $(A^{(n)}, \phi^{(n)})$ eventually contains
    $[-r_{c}, r_{c}]$, and we have
    \begin{equation*}
      \nrm{A_{0} - A^{(n)}_{0}}_{Y^{1}[-r_{c}, r_{c}]}
      + \nrm{(A_{x} - A^{(n)}_{x}, \phi - \phi^{(n)})}_{S^{1}[-r_{c}, r_{c}]} \to 0 \quad \hbox{ as } n \to \infty. 
    \end{equation*}
  \end{enumerate}
  \end{theorem}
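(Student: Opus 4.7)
The plan is to reduce large energy local well-posedness to the small energy global theorem (Theorem~\ref{thm:smallEnergy}) via a three-step scheme: (A) spatial truncation of the initial data to small energy, (B) construction of local-in-spacetime Coulomb solutions from the truncated data, and (C) patching them into a single Coulomb solution on $[-r_{c}, r_{c}] \times \bbR^{4}$. The lifespan $r_{c}$ is dictated by the maximal radius on which one may truncate while keeping the local energy below the small energy threshold.

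For Step A, cover $\bbR^{4}$ by a family of balls $\set{B_{\sgm r_{c}}(x_{\alpha})}$ with $\sgm \aeq 1$. On each such ball the energy is well below $\thE^{2}$ by the definition of $r_{c}$. Naive truncation (multiplying $a, e, f, g$ by spatial cutoffs) fails because the Gauss equation $\rd^{\ell} e_{\ell} = \Im(f \br{g})$ forces $e$ to carry a long range contribution outside the support of $f, g$, equal to the total charge enclosed. Instead, one glues the exterior of $B_{\sgm r_{c}}(x_{\alpha})$ to an explicit Gauss-compatible data set—most simply an electric monopole $(0, \tfrac{q_{\alpha}}{2 \pi^{2}} |x-x_{\alpha}|^{-4} (x-x_{\alpha}), 0, 0)$ centered at $x_{\alpha}$ with charge $q_{\alpha}$ matching that of the truncated piece—to produce a Coulomb initial data set $(a^{\alpha}, e^{\alpha}, f^{\alpha}, g^{\alpha})$ of globally small energy that agrees with the original on $B_{\sgm r_{c}}(x_{\alpha})$.

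For Step B, apply a gauge transformation (solving a Poisson equation $\lap \chi^{\alpha} = \rd^{\ell} a^{\alpha}_{\ell}$) to put each $(a^{\alpha}, e^{\alpha}, f^{\alpha}, g^{\alpha})$ into the global Coulomb gauge, and invoke Theorem~\ref{thm:smallEnergy} to obtain a global Coulomb solution $(A^{\alpha}, \phi^{\alpha})$ with small $S^{1} \times Y^{1}$ norm. Restrict each to the domain of dependence $\calD_{\alpha}$ of $B_{\sgm r_{c}}(x_{\alpha})$, which contains a slab of thickness $\sim r_{c}$ around $t = 0$. On each nonempty overlap $\calD_{\alpha} \cap \calD_{\beta}$, local geometric uniqueness—a gauge-invariant finite speed of propagation for \eqref{eq:MKG} that replaces the usual finite speed property, which fails in the Coulomb gauge due to the elliptic equation for $A_{0}$—produces a transition gauge transformation $\chi_{\alpha \beta}$ identifying $(A^{\alpha}, \phi^{\alpha})$ and $(A^{\beta}, \phi^{\beta})$ via \eqref{eq:MKG-gt}. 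The collection $\set{(A^{\alpha}, \phi^{\alpha}), \chi_{\alpha \beta}}$ is a compatible pair in the language of \cite{OT1}.

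For Step C, a Uhlenbeck-style patching argument assembles $\set{(A^{\alpha}, \phi^{\alpha}), \chi_{\alpha \beta}}$ into a single pair $(A', \phi')$ on $[-r_{c}, r_{c}] \times \bbR^{4}$ with uniform $S^{1}$ control. Because each local piece is already Coulomb, the transition gauges satisfy $\lap \chi_{\alpha \beta} = 0$ and enjoy improved regularity, so $(A', \phi')$ is only \emph{approximately} Coulomb, meaning $\rd^{\ell} A'_{\ell}$ obeys a better-than-generic estimate. Solving the final elliptic equation $\lap \chi = \rd^{\ell} A'_{\ell}$ with appropriate decay and applying the resulting gauge transformation produces the desired admissible $C_{t} \calH^{1}$ Coulomb solution $(A, \phi)$ along with the $S^{1} \times Y^{1}$ bound of assertion~(2). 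Uniqueness, persistence of regularity, and continuous dependence then transfer from the corresponding properties of Theorem~\ref{thm:smallEnergy} through this construction, via the fact that the excision/gluing, the gauge adjustments, and the patching are each continuous in the relevant norms. The hard part is Step C: carrying out the patching and the final elliptic gauge adjustment entirely within the critical $S^{1}, Y^{1}$ scale, with control sharp enough that the gluing radius $\sgm r_{c}$ in Step A actually yields the full lifespan $r_{c}$ rather than some shorter interval.
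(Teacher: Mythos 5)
Your proposal accurately reconstructs the three-step scheme (excision/gluing of the initial data to small energy, construction of local Coulomb solutions via Theorem~\ref{thm:smallEnergy} and local geometric uniqueness, and Uhlenbeck-style patching with a final gauge adjustment to global Coulomb) that the paper outlines in Section~\ref{sec:overview} as the strategy behind this theorem, which is proved in detail in the cited companion paper \cite{OT1}. The proposal is correct and follows essentially the same route, correctly flagging Step~C (the patching and the final elliptic gauge transformation at the critical $S^{1}, Y^{1}$ scale) as the delicate part.
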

%
%
%

We also state the initial data excition/gluing theorem from
\cite{OT1}, which is used in several places in the present
paper. Given a measurable subset $O \subseteq \bbR^{4}$, the
$\calH^{1}(O)$ norm is defined as the restriction of the
$\calH^{1}(\bbR^{4})$ norm to $O$, and the space $\calH^{1}(O)$
consists of all initial data sets on $O$ with finite
$\calH^{1}(O)$ norm.
\begin{theorem}[Excision and gluing of initial data
  sets] \label{thm:gluing} Let  $B =
  B_{r_{0}}(x_{0}) \subseteq \bbR^{4}$. Then there exists an operator
  $E^{\extr}$ from $\calH^{1}(2 B \setminus \overline{B})$ to
  $\calH^{1}(\bbR^{4} \setminus \overline{B})$ satisfying the
  following properties.
  \begin{enumerate}
  \item Extension property:
    \begin{equation*}
      E^{\extr}[a, e, f, g] = (a, e, f, g) \quad \hbox{ on the annulus }
 \frac32 B \setminus \overline{B}.
    \end{equation*}  
\item  Uniform bounds:
    \begin{align}
      \nrm{E^{\extr}[a, e, f, g]}_{\calH^{1}(\bbR^{4} \setminus
        \overline{B})}
      \lesssim &  \ \nrm{(a, e, f, g)}_{\calH^{1}(2 B \setminus \overline{B})} \label{eq:gluing:H1} \\
      \calE_{\bbR^{4} \setminus \overline{B}}[E^{\extr}[a, e, f, g]]
      \aleq & \ 
      \nrm{\frac{1}{\abs{x - x_{0}}} f}_{L^{2}_{x}(2 B
        \setminus \overline{B})}^{2} + 
      \calE_{2 B \setminus \overline{B}}[a, e, f,
      g]. \label{eq:gluing:energy}
    \end{align}

  \item Regularity: The operator $E^{\extr}$ is continuous from
    $\calH^{1}(2 B \setminus \overline{B})$ to
    $\calH^{1}(\bbR^{4} \setminus \overline{B})$.  Moreover, if $(a,
    e, f, g)$ is classical, then so is $E^{\extr}[a, e, f, g]$.
  \end{enumerate}

\end{theorem}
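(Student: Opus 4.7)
The principal obstruction is the Gauss constraint $\partial^\ell e_\ell = \Im(f\overline{g})$, which couples $e$ to $f, g$ and forbids a naive componentwise extension. The plan is to first extend the unconstrained components $a$, $f$, $g$ using standard cutoff-and-extension operators, then produce a preliminary extension of $e$ by the same procedure, and finally repair the violated Gauss constraint by adding a corrective vector field supported in the transition annulus.

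For the first step, I will fix a smooth cutoff $\chi \in C_c^\infty(2B)$ with $\chi \equiv 1$ on $\tfrac{3}{2}B$, together with a bounded linear Sobolev extension operator from $H^1(2B \setminus \overline{B})$ to $H^1(\bbR^4 \setminus \overline{B})$. Applied componentwise and then multiplied by $\chi$, this yields preliminary extensions $\tilde{a}_j$, $\tilde{e}_j$, $\tilde{f}$, $\tilde{g}$ supported in $2B \setminus \overline{B}$ that agree with the originals on $\tfrac{3}{2}B \setminus \overline{B}$. The $\calH^{1}$ bound \eqref{eq:gluing:H1} follows from the boundedness of the extension operator and the cutoff, while the Hardy-type term in \eqref{eq:gluing:energy} arises from derivatives of $\chi$ hitting $f$ in the commutator $\covD_j(\chi f) - \chi \covD_j f = (\partial_j \chi) f$, estimated using $|\partial_j \chi| \lesssim 1/|x-x_0|$ on the transition annulus.

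Next, $\tilde{e}$ fails the Gauss equation by an error $h := \Im(\tilde{f}\overline{\tilde{g}}) - \partial^\ell \tilde{e}_\ell$, which vanishes on $\tfrac{3}{2}B \setminus \overline{B}$ (where the original constraint holds) and outside $2B$ (where all extensions vanish). Thus $h$ is supported in the transition annulus $2B \setminus \tfrac{3}{2}B$ and lies in $L^2$. To correct, I set the net flux $q := \int h$ and take the monopole field $e_{(q), j}(x) = \tfrac{q}{2\pi^2}(x_j - x_{0,j})/|x-x_0|^4$, which is smooth on $\bbR^4 \setminus \overline{B}$, divergence-free there, square-integrable at infinity in dimension $4$, and carries total flux $q$ through $\partial B$. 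Multiplying $e_{(q)}$ by a smooth $\eta$ equal to $1$ outside $2B$ and $0$ on $\tfrac{3}{2}B \setminus \overline{B}$ produces a monopole correction $\eta e_{(q)}$ whose divergence is supported in the transition annulus and integrates to $q$; the residual $h' := h - \partial^\ell (\eta e_{(q)})_\ell$ then has zero integral and support in the transition annulus, so a Bogovskii-type operator adapted to this annular region produces a compactly supported $v$ with $\partial^\ell v_\ell = h'$ and $\|v\|_{H^1} \lesssim \|h'\|_{L^2}$. The final extension is $e^{\extr} := \tilde{e} + v + \eta e_{(q)}$.

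Verification of the extension property, the $\calH^{1}$ bound \eqref{eq:gluing:H1}, and the energy bound \eqref{eq:gluing:energy} follows directly from the construction and the boundedness of each operator used; continuity of $E^{\extr}$ is immediate since the construction is linear apart from the bilinear pairing $\Im(\tilde{f}\overline{\tilde{g}})$, which is continuous $\calH^{1} \times \calH^{1} \to L^2$ by Sobolev embedding in four dimensions. Classical regularity propagates because Sobolev extensions, Bogovskii operators, and the explicit monopole formula all preserve $C^\infty$. The main technical obstacle I anticipate is the localization of the divergence solver: a global Newton potential for $h$ would introduce slowly decaying tails that violate both the agreement condition on $\tfrac{3}{2}B \setminus \overline{B}$ and the sharp energy bound, so it is essential to combine the monopole term (which absorbs the net flux, the only obstruction to a zero-integral residual) with a localized Bogovskii construction on the transition annulus.
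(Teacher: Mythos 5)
Your plan (componentwise cutoff extension of $a,f,g$; preliminary cutoff of $e$; monopole absorbing the net charge; Bogovskii-type divergence solver on the transition annulus) matches the gluing strategy sketched in Section~2 and produces a construction for which the Gauss equation, the extension property, and the $\calH^1$ bound~\eqref{eq:gluing:H1} are all checkable. The genuine gap is in the \emph{energy} bound~\eqref{eq:gluing:energy} for the connection component.

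The right-hand side of~\eqref{eq:gluing:energy} controls only $\nrm{\ud a}_{L^2(2B\setminus\overline{B})}^2$, $\nrm{e}_{L^2}^2$, $\nrm{\covD f}_{L^2}^2$, $\nrm{g}_{L^2}^2$ and the Hardy term $\nrm{f/\abs{x-x_0}}_{L^2}^2$; it does \emph{not} control $\nrm{\nabla a}_{L^2}$ or $\nrm{a/\abs{x-x_0}}_{L^2}$. Cutting off $a$ naively produces $\ud(\chi\tilde a)=\chi\,\ud\tilde a+\ud\chi\wedge\tilde a$; the term $\ud\chi\wedge\tilde a\sim \tilde a/r_0$ on the transition annulus is bounded by the $\dot H^1$ norm of $a$ (hence by the $\calH^1$ norm, giving \eqref{eq:gluing:H1}) but not by $\calE_{2B\setminus\overline{B}}+\nrm{f/\abs{x-x_0}}_{L^2}^2$. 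A concrete counterexample: take $a$ equal to a nonzero constant covector on $2B\setminus\overline{B}$, $f=g=0$, $e$ any compatible field — then $\ud a=0$, so the $a$-contribution to $\calE_{2B\setminus\overline{B}}$ and the Hardy term vanish, yet $\nrm{\ud(\chi a)}_{L^2}^2\approx \abs{a}^2 r_0^2\neq 0$. The same phenomenon pollutes the Klein--Gordon part: $\covD^{\chi a}_j(\chi f)-\chi\covD^{a}_j f=(\partial_j\chi)f+i(\chi^2-\chi)a_jf$, and the second term requires a bound on $a$ in $L^4$ of the annulus, which the energy does not provide. The fix must involve a gauge normalization of $a$ on $2B\setminus\tfrac32 B$ (e.g.\ solving a Coulomb- or radial-gauge problem with trivial boundary data on $\partial(\tfrac32 B)$, so the extension property is preserved), after which $a-\ud\psi$ is controlled by $\ud a$ and the cutoff errors become admissible; this gauge step is the real content of the theorem and is absent from your argument.

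Two smaller issues: (i) the constraint error $h=(\chi^2-\chi)\Im(f\bar g)-(\nabla\chi)\cdot e$ lies in $L^{4/3}$ (since $f\in L^4$, $g\in L^2$), not $L^2$, so the Bogovskii step should be carried out in $W^{1,4/3}_0$ of the annulus (which still embeds into $L^2$ in $d=4$, so the conclusion survives but the exponent should be stated correctly); (ii) because $h$ is bilinear in the data, the resulting correction contributes a term quadratic in $\calE+\nrm{f/\abs{x-x_0}}_{L^2}^2$ to the left-hand side of~\eqref{eq:gluing:energy}, so the estimate as you have written it is only linear after a smallness reduction — worth flagging explicitly when you close the estimate.
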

In order to gain control of the first norm on the right in \eqref{eq:gluing:energy},
we will repeatedly use the following improvement of the classical 
Hardy inequality, which is a consequence of a result proved in \cite{OT1}, Lemma 6.5:

\begin{lemma}
\label{l:hardy+}
Let $\sigma \geq 2$. Then for any ball $B$ of radius $r$ in $\bbR^4$ we have the bounds
\begin{equation}\label{eq:hardy+}
r^{-1} \| f \|_{L^2_{x}(2B)} \lesssim \| \bfD_x f \|_{L^2_{x}(\sigma B)}
+ \sigma^{-1} \|\bfD_x f \|_{L^2_{x}(\bbR^4\setminus \overline{\sigma B})}
\end{equation}
\begin{equation}\label{eq:hardy+out}
r^{-1} \| f \|_{L^2_{x}(2B \setminus \overline{B})} \lesssim \| \bfD_x f \|_{L^2_{x}(\sigma B \setminus \overline{B})}
+ \sigma^{-1} \|\bfD_x f \|_{L^2_{x}(\bbR^4\setminus \overline{\sigma B})}
\end{equation}
\end{lemma}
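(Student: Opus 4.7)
\medskip
\noindent\textbf{Proof proposal.} The plan is to reduce both covariant inequalities to their non-covariant analogues via the \emph{diamagnetic inequality} $|\nabla |f|| \leq |\bfD_x f|$ (valid almost everywhere), and then to invoke the cited Lemma 6.5 of \cite{OT1} applied to the scalar nonnegative function $|f|$.

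Concretely, applying Lemma 6.5 of \cite{OT1} to $|f|$ should yield the scalar analogue
\begin{equation*}
r^{-1} \| |f| \|_{L^2_x(2B)} \lesssim \| \nabla |f| \|_{L^2_x(\sigma B)} + \sigma^{-1} \| \nabla |f| \|_{L^2_x(\bbR^4 \setminus \overline{\sigma B})}.
\end{equation*}
Since $\| |f| \|_{L^2} = \| f \|_{L^2}$ pointwise, the left-hand side is already $r^{-1} \| f \|_{L^2(2B)}$, and the diamagnetic inequality dominates each term on the right by its covariant counterpart, producing \eqref{eq:hardy+}. The second estimate \eqref{eq:hardy+out} follows by exactly the same reduction, with the ball $2B$ replaced by the annulus $2B \setminus \overline{B}$ throughout (in particular $|f|$ does not see the singularity of the Hardy weight at $x_0$, which is consistent with the form of \eqref{eq:hardy+out}).

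The structural reason the $\sigma^{-1}$ gain is natural --- and presumably the content of Lemma 6.5 of \cite{OT1} --- is a Green's function heuristic: in dimension $4$, one can represent $f(x)$ for $x \in 2B$ as a convolution of $\nabla f$ against $|\nabla_y G(x,y)| \sim |x-y|^{-3}$, and the $L^2_y$ norm of this kernel on $\bbR^4 \setminus \overline{\sigma B}$ is precisely of order $(\sigma r)^{-1}$. This is a scale-invariant improvement on the classical Hardy estimate $r^{-1} \|f\|_{L^2(2B)} \lesssim \|\nabla f\|_{L^2(\bbR^4)}$, which would correspond to $\sigma^{-1}$ replaced by $1$ in the exterior term.

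The main obstacle is a bookkeeping one: verifying that Lemma 6.5 of \cite{OT1} is phrased in precisely the form required here; if not, a short cutoff/rescaling argument should suffice to bridge the gap. The diamagnetic step itself is entirely standard and introduces no loss. I do not anticipate further difficulty once the scalar version is in place.
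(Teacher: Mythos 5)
Your proposal is sound and the diamagnetic reduction is exactly the right tool: for $f, A \in H^1_{\mathrm{loc}}$ one has $\nabla|f| = \mathrm{Re}(\bar f\,\bfD_x f)/|f|$ a.e.\ on $\{f\neq 0\}$ (the gauge potential drops out from the real part), so $|\nabla|f|| \le |\bfD_x f|$ pointwise a.e., and both sides of each inequality are preserved under $f\mapsto |f|$ on the left and dominated on the right. This is also consistent with the paper's own toolkit, since the diamagnetic inequality is recorded separately as Lemma~\ref{lem:diamagnetic}. The paper itself offers no proof of Lemma~\ref{l:hardy+}: it only states that the bounds are "a consequence of" Lemma~6.5 of \cite{OT1}, so there is no written argument to compare against. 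The one point you should tighten is the annulus case \eqref{eq:hardy+out}: the scalar Hardy bound on $2B\setminus\overline{B}$ with gradient taken only over $\sigma B\setminus\overline{B}$ (rather than over $\sigma B$) is genuinely a different statement from the ball version, not a notational substitution, and you correctly flag that a cutoff/translation argument or a direct appeal to the precise form of \cite{OT1}, Lemma~6.5 is needed there. Also note that if \cite{OT1}, Lemma~6.5 is already stated for covariant derivatives --- which is plausible given the context of that paper --- your diamagnetic step becomes redundant but does no harm; the reduction is a safe way to bridge whatever gap remains between the cited statement and the form needed here.
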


Furthermore, we state the local geometric uniqueness result from
\cite{OT1}, which we use in this paper to construct compatible
pairs. For a ball $B = \set{t_{0}} \times B_{r_{0}}(x_{0}) \subseteq
\set{t_{0}} \times \bbR^{4}$, we define its \emph{future domain of
  dependence} $\calD^{+}(B)$ to be the set
\begin{equation*}
  \calD^{+}(B) := \set{(t, x) \in \bbR^{1+4} : t_{0} \leq t < r_{0}, \ \abs{x - x_{0}} < t - t_{0}}.
\end{equation*} 
Given a measurable subset $O \subseteq \bbR^{4}$, the space
$\calG^{2}(O)$ consists of locally integrable gauge transformations
such that the following semi-norm is finite:
\begin{equation*}
  \nrm{\chi}_{\calG^{2}(O)} := \nrm{\rd_{x} \chi}_{L^{4}_{x}(O)} + \nrm{\rd_{x}^{(2)} \chi}_{L^{2}_{x}(O)}.
\end{equation*}
Given a measurable subset $\calO \subseteq \bbR^{1+4}$, define
$\calO_{t} := \calO \cap (\set{t} \times \bbR^{4})$ and $I(\calO) :=
\set{t \in \bbR : \calO_{t} \neq \0}$. Note that $I(\calO)$ is
measurable and $\calO_{t}$ is measurable for almost every
$t$. Accordingly, we define the space $C_{t} \calG^{2}(\calO)$ by the
semi-norm
\begin{equation*}
  \nrm{\chi}_{C_{t} \calG^{2}(\calO)} := \esssup_{t \in I(\calO)} \bb( \nrm{\chi}_{\dot{H}^{2}_{x} \cap \dot{W}^{1, 4}_{x} \cap \mathrm{BMO} (\calO_{t})} + \nrm{\rd_{t} \chi}_{\dot{H}^{1}_{x} \cap L^{4}_{x}(\calO_{t})} + \nrm{\rd_{t}^{2} \chi}_{L^{2}_{x}(\calO_{t})} \bb).
\end{equation*}

\begin{proposition}[Local geometric uniqueness among admissible
  solutions] \label{prop:geomUni} Let $T_{0} > 0$ and let $B \subset
  \bbR^{4}$ be an open ball. Consider $C_{t} \calH^{1}$ admissible
  solutions $(A, \phi)$, $(A', \phi')$ on the region
  \begin{equation*}
    \calD := \calD^{+}(\set{0} \times B) \cap ( [0, T_{0}) \times \bbR^{4}).
  \end{equation*}
  Suppose that the respective initial data $(a, e, f, g)$ and $(a',
  e', f', g')$ are gauge equivalent on $B$, i.e., there exists
  $\underline{\chi} \in \calG^{2}(B)$ such that $(a, e, f, g) = (a' -
  \ud \underline{\chi}, e', e^{i \underline{\chi}} f', e^{i
    \underline{\chi}} g')$. Then there exists a unique gauge
  transformation $C_{t} \calG^{2}(\calD)$ such that $\chi
  \rst_{\set{0} \times B} = \underline{\chi}$ and
  \begin{equation*}
    (A, \phi) = (A' - \ud \chi, e^{i \chi} \phi') \quad \hbox{ on } \calD.
  \end{equation*}
\end{proposition}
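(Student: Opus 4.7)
\emph{Strategy.} The plan is to construct the gauge transformation $\chi$ explicitly from $\underline{\chi}$ and the $A_0$-components, and then reduce the desired identity to a uniqueness statement for \eqref{eq:MKG} in a gauge where $A_0$ is prescribed. Since $\calD$ is a future domain of dependence, both steps can be carried out locally by finite speed of propagation.

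\emph{Construction of $\chi$.} Define $\chi$ on $\calD$ by
\[
\chi(t,x) := \underline{\chi}(x) + \int_{0}^{t} (A_0' - A_0)(s,x)\,\ud s.
\]
Every $(t,x) \in \calD$ satisfies $(0,x)\in B$, so $\chi$ is well-defined; its $C_t\calG^2(\calD)$ regularity follows from $\underline{\chi}\in\calG^2(B)$ together with the embeddings for $A_0, A_0'$ supplied by admissibility (cf.\ Remark~\ref{rem:SY-norms}). Set $\widetilde{A}:=A'-\ud\chi$ and $\widetilde{\phi}:=e^{i\chi}\phi'$. By construction $\widetilde{A}_0 = A_0$, and a direct calculation using the hypothesis $(a,e,f,g)=(a'-\ud\underline{\chi},e',e^{i\underline{\chi}}f',e^{i\underline{\chi}}g')$ shows that the initial data of $(\widetilde{A},\widetilde{\phi})$ on $B$ coincide with those of $(A,\phi)$; in particular $\widetilde{\covD}_t\widetilde{\phi}\rst_{t=0} = e^{i\underline{\chi}}\covD'_t\phi'\rst_{t=0} = g = \covD_t\phi\rst_{t=0}$. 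By gauge invariance, $(\widetilde{A},\widetilde{\phi})$ is itself an admissible $C_t\calH^1$ solution to \eqref{eq:MKG} on $\calD$.

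\emph{Matching step.} The task reduces to the following claim: two admissible $C_t\calH^1$ solutions on $\calD$ with the same $A_0$ component and the same initial data on $B$ must coincide. I would prove this by approximation---using the definition of admissibility, approximate both solutions by classical ones with matching data on $B$ and a shared $A_0$-component, then perform a gauge-covariant energy estimate for the classical difference $(B_j,\Psi):=(\widetilde{A}_j-A_j,\widetilde{\phi}-\phi)$ on constant-$t$ slices of $\calD$. With $A_0$ fixed, the $A_j$-equation is the wave equation plus the gauge-breaking term $\rd_j\rd^\mu A_\mu$; although this has the same order as $\Box A_j$, in the difference equation $\rd^k B_k$ is determined (up to zero initial datum) by propagating the Gauss constraint forward in time, converting it into a lower-order coupling that can be absorbed via Gr\"onwall. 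Finite speed of propagation for the underlying wave equations confines the estimate to $\calD$ and forces $(B,\Psi)\equiv 0$. Uniqueness of $\chi$ itself is then immediate: any two choices $\chi_1,\chi_2$ satisfy $\ud(\chi_1-\chi_2)=0$ on $\calD$ with $\chi_1=\chi_2=\underline{\chi}$ on $\{0\}\times B$, hence are equal on the connected set $\calD$.

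\emph{Main obstacle.} The principal difficulty is the fixed-$A_0$ uniqueness step. The term $\rd_j\rd^\mu A_\mu$ has the same order as $\Box A_j$ along the gradient direction, so the estimate only closes by exploiting the propagated Gauss constraint to trade this top-order quantity for a lower-order one. Justifying this at the admissible $C_t\calH^1$ regularity level---and constructing approximating classical sequences that genuinely share $A_0$ and the initial data on $B$, rather than merely converging to them---is the most delicate technical point.
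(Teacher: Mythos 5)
The paper does not prove Proposition~\ref{prop:geomUni} here; it is imported from \cite{OT1}, so there is no internal argument to compare against. Assessing your proposal on its own terms: the explicit formula for $\chi$, the identification $\widetilde{A}_0 = A_0$, and the verification that the gauge-transformed initial data on $B$ coincide with those of $(A,\phi)$ are all correct, and the reduction to a fixed-$A_0$ uniqueness statement is the right first move. The genuine gap is in the step you flagged yourself. In $d=4$ the system is energy critical, so the quadratic sources in the difference system sit at scaling-critical regularity: for instance the term $B^\mu \rd_\mu \phi$ in the $\Psi$-equation, with $B \in L^\infty_t \dot{H}^1_x \hookrightarrow L^\infty_t L^4_x$ and $\rd \phi \in L^\infty_t L^2_x$, lands only in $L^\infty_t L^{4/3}_x$, which is not a dual wave-energy space. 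A Gr\"onwall estimate built on $\dot{H}^1_x \times L^2_x$ energies therefore cannot close; one needs the critical null-form, Strichartz, and $S^1$--$N$ estimates underlying Theorem~\ref{thm:smallEnergy} even just to run linear well-posedness of $\Box_{A}\psi = f$.

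Your mitigation---choosing classical approximations of \emph{both} solutions that \emph{share} $A_0$ and initial data on $B$, so that the Gr\"onwall argument applies at smooth regularity and then passes to the limit---is not available from Definition~\ref{def:admSol}, which only supplies some approximating classical sequences for each solution separately, with no compatibility between them. Forcing the approximations into a common gauge leaves initial data that agree only approximately; you then need a quantitative continuous-dependence estimate to conclude, but the classical Gr\"onwall constant depends on high Sobolev norms of the approximants that are not uniformly bounded along an admissible sequence, so it does not pass to the limit. This is why the overview of \cite{OT1} given in Section~\ref{sec:overview} routes local geometric uniqueness through the small-energy Coulomb theory---excision/gluing (Theorem~\ref{thm:gluing}) to produce small data, passage to the global Coulomb gauge, then the uniqueness and continuous dependence supplied by Theorem~\ref{thm:smallEnergy}---rather than through a direct energy/Gr\"onwall argument. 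You correctly isolate the fixed-$A_0$ uniqueness step as the crux, but the elementary mechanism you offer for resolving it does not suffice at $C_t \calH^1$ regularity. A secondary issue: the propagated Gauss constraint gives $\rd_t(\rd^k B_k)$ in terms of differences of the charge density, so $\rd_j \rd^k B_k$ becomes a spatial derivative of a time integral of quadratic expressions in the fields---again at critical regularity, not a genuinely lower-order quantity in the sense a Gr\"onwall estimate requires.
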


We now pass to results from \cite{OT2}. Given an interval $I \subseteq
\bbR$, we define the \emph{energy dispersion} of a function $\phi$ on
$I \times \bbR^{4}$ by
\begin{equation} \label{eq:EDC:def} \ED[\phi](I) := \sup_{k \in \bbZ}
  \bb( 2^{-k} \nrm{P_{k} \phi}_{L^{\infty}_{t,x}(I \times \bbR^{4})} +
  2^{-2k} \nrm{P_{k} (\rd_{t} \phi)}_{L^{\infty}_{t,x}(I \times
    \bbR^{4})} \bb)
\end{equation}
The main theorem of \cite{OT2} is as follows.
\begin{theorem}[Energy dispersed regularity theorem] \label{thm:ED}   For each  $E > 0$  there exist
  positive numbers $\thED = \thED(E)$ and $F = F(E)$ such that the
  following holds.  Let $I \subseteq \bbR$ be an open interval, and
  let $(A, \phi)$ be an admissible $C_{t} \calH^{1}$ solution to
  \eqref{eq:MKG} on $I \times \bbR^{4}$ in the global Coulomb gauge
  $\rd^{\ell} A_{\ell} = 0$ with energy not exceeding $E$, i.e.,
  \begin{equation}
    \calE_{\set{t} \times \bbR^{4}}[A, \phi] \leq E \quad \hbox{ for every } t \in I.
  \end{equation}
  If, furthermore, the energy dispersion of $\phi$ on $I \times
  \bbR^{4}$ is less than or equal to $\thED(E)$, i.e.,
  \begin{equation} \label{eq:ED:small-EDC} \ED[\phi](I) \leq \thED(E),
  \end{equation}
  then the following a-priori estimate for $(A, \phi)$ on $I
  \times \bbR^{4}$ holds:
  \begin{equation}
    \nrm{A_{0}}_{Y^{1}[I]} + \nrm{A_x}_{S^{1}[I]} + \nrm{\phi}_{S^{1}[I]} \leq F(E).
  \end{equation}
\end{theorem}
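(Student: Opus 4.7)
My plan is to establish Theorem~\ref{thm:ED} by a continuous induction on energy, as outlined in Section~\ref{sec:overview}. The base of the induction is the small energy global well-posedness theorem (Theorem~\ref{thm:smallEnergy}), which gives the bound \eqref{eq:smallEnergy:sbd} when $E \leq \thE^{2}$. I would then seek a non-increasing positive function $c_{0}(\cdot)$ on $[0, \infty)$, depending only on $E$ (not on $F(E)$ or $\thED(E)$), such that if the conclusion holds up to energy $E$ with parameters $F(E)$, $\thED(E)$, then it also holds up to energy $E + c_{0}(E)$ with some larger parameters $F$, $\thED$. Monotonicity of $c_{0}$ gives a uniform positive lower bound on any finite interval, so the continuous induction covers all energies.

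For the inductive step, given a solution $(A, \phi)$ of energy $\leq E + c_{0}(E)$ with $\ED[\phi](I) \leq \thED$, I would compare it to the solution $(\tilde{A}, \tilde{\phi})$ with frequency-truncated data $(P_{\leq k^{\ast}} A_{j}[0], P_{\leq k^{\ast}} \phi[0])$, with $k^{\ast}$ chosen so that $(\tilde{A}, \tilde{\phi})$ has energy exactly $E$. The first subgoal is to show that $\ED[\tilde{\phi}](I)$ is small, so that the induction hypothesis applies and yields \eqref{eq:S-norm-tilde}; this reduces to comparing $(\tilde{A}, \tilde{\phi})$ with $(P_{\leq k^{\ast}} A, P_{\leq k^{\ast}} \phi)$, which inherits small energy dispersion from $\phi$. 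The second subgoal is to bound the high-frequency difference $(B^{\high}, \psi^{\high}) := (A - \tilde{A}, \phi - \tilde{\phi})$ with a constant depending on $E$ and $F(E)$ but not improving further; here I would use weak divisibility of the $S^{1}$ norm of $(\tilde{A}, \tilde{\phi})$ to split $I$ into $O_{F(E)}(1)$ subintervals on which \eqref{eq:divided-S-norm-tilde} holds, then on each subinterval treat $(B^{\high}, \psi^{\high})$ as a perturbation around $(\tilde{A}, \tilde{\phi})$ with reinitialized data of size $\aleq c_{0}(E)$.

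The core analytic tools on each subinterval are the following. First, I would exploit the null structure of \eqref{eq:MKG} in Coulomb gauge, encoded in the system \eqref{eq:OT2-simple:MKG-Coulomb} together with the secondary multilinear cancellation in $-2iA_{\mu} \rd^{\mu} \phi$, to get off-diagonal exponential decay in the frequency-localized nonlinearities. Second, for the paradifferential piece $-\sum_{k} 2iP_{<k-m} A_{x}^{\mathrm{free}} \cdot \rd_{x} P_{k} \psi$ with no such decay, I would construct a paradifferential parametrix for $\Box_{A^{\mathrm{free}}}^{p,m}$ with a large frequency gap $m$, gaining smallness from the gap rather than from smallness of energy. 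Third, for balanced and transverse interactions not covered by null cancellation or frequency decay, I would exploit the small energy dispersion via interpolation bounds of the form \eqref{eq:ED-ex} to gain $\ED[\phi]^{\theta}$. Fourth, for the residual $low \times high \to high$, $high \times low \to low$ modulation interaction \eqref{eq:OT2-no-exp}, which has no off-diagonal decay or ED gain, I would rely on the fact that it is bounded by a divisible norm, yielding linear well-posedness \eqref{eq:lin-wp} for the covariant wave equation $\Box_{A} \psi = f$ on short enough subintervals.

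The hardest step, and the one around which the whole argument is organized, is producing \eqref{eq:divided-S-norm-high} in a way that keeps $c_{0}(E)$ independent of $F(E)$ and $\thED(E)$. The inductive bound on $(\tilde{A}, \tilde{\phi})$ gives only $S^{1}$ control depending on $F(E)$, which enters the perturbation problem for $(B^{\high}, \psi^{\high})$; absorbing this requires the double smallness mechanism of a frequency gap plus energy dispersion (which controls terms not handled by $c_{0}(E)$ and does not need to be independent of $F(E)$), together with a continuity-in-time bootstrap for the full $(A, \phi)$ norm, so that the problematic term $-2iA_{\mu} \rd^{\mu} \psi^{\high}$ can be regrouped with $\Box$ and controlled by \eqref{eq:lin-wp}. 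Summing the resulting bound \eqref{eq:divided-S-norm-high} over the $O_{F(E)}(1)$ subintervals and combining with \eqref{eq:S-norm-tilde} closes the induction with $F := F(E) + C_{E, F(E)}$, completing the proof; the final statement about smooth extension past finite endpoints follows from the a-priori bound and the local theory of Theorem~\ref{thm:lwp4MKG}.
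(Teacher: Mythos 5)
Your proposal closely mirrors the induction-on-energy scheme that the paper itself sketches in Section~\ref{sec:overview} as a summary of the proof from \cite{OT2}; note that Theorem~\ref{thm:ED} is not actually proved in the present paper but is imported as a black box from \cite{OT2}. Your outline agrees with the paper's synopsis in all the main steps — the frequency-truncated comparison solution $(\tilde A,\tilde\phi)$ with cut frequency $k^\ast$ chosen so its energy equals $E$, weak divisibility of $S^1$ giving $O_{F(E)}(1)$ subintervals, the frequency-gap paradifferential parametrix for $\Box^{p,m}_{A^{\mathrm{free}}}$, the energy-dispersion gain in balanced transverse interactions via interpolation as in \eqref{eq:ED-ex}, and linear well-posedness \eqref{eq:lin-wp} of $\Box_A\psi = f$ through divisibility of the residual term \eqref{eq:OT2-no-exp} — so it is the same approach.
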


We also state an continuation and scattering result for Coulomb solutions
with finite $S^{1}$ norm, which is proved in \cite{OT2}.
\begin{theorem}[Continuation and scattering of solutions with finite
  $S^{1}$ norm] \label{thm:finite-S} Let $0 < T_{+} \leq \infty$ and $(A,
  \phi)$ an admissible $C_{t} \calH^{1}$ solution to \eqref{eq:MKG} on
  $[0, T_{+}) \times \bbR^{4}$ in the global Coulomb gauge which obeys
  the bound
  \begin{equation*}
    \nrm{A_{0}}_{Y^{1}([0, T_{+}) \times \bbR^{4})} + \sup_{j=1, \ldots, 4} \nrm{A_{j}}_{S^{1}([0, T_{+}) \times \bbR^{4})}
    + \nrm{\phi}_{S^{1}([0, T_{+}) \times \bbR^{4})} < \infty.
  \end{equation*}
  Then the following statements hold.
  \begin{enumerate}
  \item If $T_{+} < \infty$, then $(A, \phi)$ extends to an admissible
    $C_{t} \calH^{1}$ solution with finite $S^{1}$ norm past $T_{+}$.
  \item If $T_{+} = \infty$, then $(A_{x}, \phi)$ scatters as $t \to
    \infty$ in the following sense: There exist a solution
    $(A^{(\infty)}_{x}, \phi^{(\infty)})$ to the system
    \begin{equation*}
      \left\{
        \begin{aligned}
          \Box A^{(\infty)}_{j} =& 0, \\
          (\Box + 2 i A^{free}_{\ell} \rd^{\ell}) \phi^{(\infty)}
          =& 0,
        \end{aligned}
      \right.
    \end{equation*}
    with initial data $A^{(\infty)}_{x}[0], \phi^{(\infty)}[0] \in
    \dot{H}^{1}_{x} \times L^{2}_{x}$ such that
    \begin{equation*}
      \sup_{j=1,\ldots,4} \nrm{A_{j}[t] - A^{(\infty)}_{j}[t]}_{\dot{H}^{1}_{x} \times L^{2}_{x}}
      + \nrm{\phi[t] - \phi^{(\infty)}[t]}_{\dot{H}^{1}_{x} \times L^{2}_{x}} \to 0 \quad \hbox{ as } T \to \infty.
    \end{equation*}
    Here $A_{x}^{free}$ can be either the homogeneous wave with $A^{free}_{x}[0] = A_{x}[0]$ or $A^{free}_{x} = A^{(\infty)}_{x}$
  \end{enumerate}
  Analogous statements hold in the past time direction as well.
\end{theorem}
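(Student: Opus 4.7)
The plan is to deduce both parts from \emph{divisibility} of the $N$ norm that controls the \eqref{eq:MKG} nonlinearities in Coulomb gauge, combined with the linear well-posedness bound \eqref{eq:lin-wp} and the embeddings of Remark~\ref{rem:SY-norms}. Finiteness of the $S^{1}$ and $Y^{1}$ norms on $[0, T_{+})$ will, after pigeonholing, reduce each statement to a small $S^{1}$ norm problem on a short time window near the endpoint.

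For Part (1), I would first invoke weak divisibility of the $S^{1}$ and $Y^{1}$ norms (a consequence of the energy inequality and divisibility of $N$, as recalled in Section~\ref{subsec:OT3}) to obtain $T_{0} < T_{+}$ with $\nrm{A_{0}}_{Y^{1}[T_{0}, T_{+})} + \nrm{A_{x}}_{S^{1}[T_{0}, T_{+})} + \nrm{\phi}_{S^{1}[T_{0}, T_{+})}$ arbitrarily small. The embeddings $S^{1}, Y^{1} \hookrightarrow C_{t}(\dot{H}^{1}_{x} \times L^{2}_{x})$ together with $\nrm{\Box \psi}_{L^{2}_{t} \dot{H}^{-1/2}_{x}} \aleq \nrm{\psi}_{S^{1}}$ imply uniform continuity of $t \mapsto (A[t], \phi[t])$ into $\dot{H}^{1}_{x} \times L^{2}_{x}$, hence a limit $(a, e, f, g) \in \calH^{1}$ at $T_{+}$ inheriting the Coulomb condition. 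Theorem~\ref{thm:lwp4MKG} applied to this data produces an admissible $C_{t} \calH^{1}$ extension in the Coulomb gauge with finite $S^{1}$ norm, and its uniqueness clause glues the extension to the original solution.

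For Part (2), assume $T_{+} = \infty$ and write the equations schematically as $\Box A_{j} = \calN_{j}$ and $\Box \phi = \calN_{\phi}$. The multilinear $S^{1} \times S^{1} \to N$ estimates proved in \cite{Krieger:2012vj, OT2} (including the secondary cancellation needed for the term \eqref{eq:OT2-no-exp}) place $\calN_{j}$ and $\calN_{\phi}$ in $N[0, \infty)$; divisibility of $N$ then provides, for every $\veps > 0$, a time $T_{\veps}$ such that $\nrm{\calN_{j}}_{N[T_{\veps}, \infty)} + \nrm{\calN_{\phi}}_{N[T_{\veps}, \infty)} < \veps$. For $A_{j}$, Duhamel's formula and the energy inequality then show that the difference between $A_{j}[t]$ and the forward free wave launched from $A_{j}[T]$ is Cauchy in $\dot{H}^{1}_{x} \times L^{2}_{x}$ as $T \to \infty$, yielding $A^{(\infty)}_{j}$. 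For $\phi$, the same argument applied to the linear operator $\Box + 2 i A^{\mathrm{free}}_{\ell} \rd^{\ell}$ in place of $\Box$ (with the remaining terms of $-2i A_{\mu} \rd^{\mu} \phi$ treated perturbatively via small $N$ norm as discussed around \eqref{eq:OT2-no-exp}) gives $\phi^{(\infty)}$. The equivalence between the two allowed choices of $A^{\mathrm{free}}_{x}$ follows from the fact that their difference is a free wave whose paradifferential coefficient contribution vanishes at late times in the relevant norms.

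The principal obstacle is the scattering statement for $\phi$: the coefficient $A^{\mathrm{free}}_{\ell} \rd^{\ell}$ is non-perturbative, so rather than placing it on the right-hand side one must establish $\dot{H}^{1}_{x} \times L^{2}_{x}$ well-posedness and Cauchy continuity for the renormalized linear equation $(\Box + 2 i A^{\mathrm{free}}_{\ell} \rd^{\ell}) \psi = f$ driven by an $N$-norm source. This in turn requires control of the paradifferential parametrix uniformly in $A^{\mathrm{free}}$ of bounded energy, together with a verification that switching $A^{\mathrm{free}}_{x}[0] = A_{x}[0]$ to $A^{\mathrm{free}}_{x} = A^{(\infty)}_{x}$ preserves the scattering statement modulo perturbative corrections.
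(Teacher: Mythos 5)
The paper does not prove Theorem~\ref{thm:finite-S}; just before the statement it records that the result ``is proved in \cite{OT2}'', so there is no argument in the present text to compare against. Judged on its own terms, your outline matches the mechanism the authors sketch around \eqref{eq:lin-wp} and \eqref{eq:OT2-no-exp}: divisibility of the $N$ norm, the linear well-posedness estimate \eqref{eq:lin-wp} for $\Box_A\psi = f$, and the paradifferential parametrix for the frequency-gapped magnetic wave operator are the right ingredients, and pigeonholing a late time window with small $N$-norm forcing is the expected way to reduce both conclusions to small-data facts.

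Two details should be tightened. For Part~(1), the limit of $(A[t],\phi[t])$ at $T_+$ comes directly from the restriction-norm definition: finiteness of the $S^1[0,T_+)$ norm means there is a global extension $\psi \in S^1(\bbR^{1+4})$, and by the embedding $S^1 \subseteq C_t(\bbR;\dot H^1_x\times L^2_x)$ the quantity $\psi[T_+]$ is automatically the limiting data; the detour through ``uniform continuity via $\|\Box\psi\|_{L^2_t\dot H^{-1/2}_x}$'' adds nothing. For Part~(2), your final claim --- that the two allowed choices of $A^{\mathrm{free}}_x$ are equivalent because their difference ``vanishes at late times'' --- is not a valid reason: that difference is a full-sized free wave whose initial data is the net Duhamel contribution to $A_j$, and it does not decay in $\dot H^1_x\times L^2_x$. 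The correct justification is that inserting this free wave into the paradifferential coefficient of $\Box + 2i A^{\mathrm{free}}_\ell \rd^\ell$ produces a term controlled by the same null-form estimates and parametrix bounds, uniformly over free backgrounds of bounded energy, so that $\phi^{(\infty)}$ can be adjusted within the iteration; a complete write-up should make that argument explicit rather than appeal to smallness.
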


\section{Conservation laws and monotonicity formulae} \label{sec:energy}
In this section, we derive key conservation laws and monotonicity formulae that will serve as a basis for proving regularity and scattering. We begin by describing the main results, deferring their proofs until later in the section. We emphasize that all statements in this section apply to \emph{admissible} $C_{t} \calH^{1}$ solutions to \eqref{eq:MKG}, unless otherwise stated.

One of the fundamental conservation laws for \eqref{eq:MKG} is that of the standard energy: Given an admissible $C_{t} \calH^{1}$ solution $(A, \phi)$ to \eqref{eq:MKG} on $I \times \bbR^{4}$, for $t_{0}, t_{1} \in I$ we have
\begin{equation} \label{eq:energyConsv}
	\calE_{\set{t_{0}} \times \bbR^{4}}[A, \phi] = \calE_{\set{t_{1}} \times \bbR^{4}}[A, \phi].
\end{equation}
For self-similar solutions, finite energy condition translates to a weighted $L^{2}$ estimate on $\calH_{\rho}$. This estimate will be used to show that they must in fact be trivial.
\begin{proposition} \label{prop:energy-H-rho}
Let $(A, \phi)$ be a smooth solution to \eqref{eq:MKG} on $C_{(0, \infty)}$ with finite energy, i.e., there exists $E > 0$ such that
\begin{equation*}
	\esssup_{t \in (0, \infty)} \calE_{S_{t}}[A, \phi] \leq E <\infty.
\end{equation*}
Suppose furthermore that $(A, \phi)$ is self-similar, i.e., $\iota_{X_{0}} F = 0$ and $(\covD_{X_{0}} + \frac{1}{\rho}) \phi = 0$, where $X_{0} = \rd_{\rho}$ in the hyperbolic coordinates $(\rho, y, \Tht)$. Then we have
\begin{equation} \label{eq:energy-H-rho}
	\int_{\calH_{\rho}}	\frac{1}{2} 
				\bb( \frac{\cosh y}{\rho^{2}}\abs{\phi}^{2} 
					+ 2 \frac{\sinh y}{\rho^{2}} \Re(\phi \overline{\covD_{y} \phi}) 
					+ \cosh y (\abs{\covD \phi}_{\calH_{\rho}}^{2} + \abs{F}_{\calH_{\rho}}^{2}) \bb) 
	\leq E,
\end{equation}
where $\abs{\covD \phi}_{\calH_{\rho}}^{2}$, $\abs{F}_{\calH_{\rho}}^{2}$ are to be defined in \eqref{eq:norms-H-rho}.
\end{proposition}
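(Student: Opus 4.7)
The proof plan rests on the standard Noether conservation law associated with the time-translation Killing field $\rd_{t}$. Let $T_{\mu \nu} = T^{F}_{\mu \nu} + T^{\phi}_{\mu \nu}$ denote the energy-momentum tensor of \eqref{eq:MKG}, with the electromagnetic and scalar pieces given respectively by
\[
T^{F}_{\mu \nu} = F_{\mu \alpha} F_{\nu}{}^{\alpha} - \tfrac{1}{4} \met_{\mu \nu} F_{\alpha \beta} F^{\alpha \beta}, \qquad T^{\phi}_{\mu \nu} = \Re(\covD_{\mu} \phi \overline{\covD_{\nu} \phi}) - \tfrac{1}{2} \met_{\mu \nu} \covD^{\alpha} \phi \overline{\covD_{\alpha} \phi}.
\]
Then $T_{\mu \nu}$ is symmetric and (since $(A,\phi)$ solves MKG) divergence-free, and $\rd_{t}$ is Killing, so the energy current associated to $\rd_{t}$ is divergence-free. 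This is the infinitesimal form of \eqref{eq:energyConsv}.

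The first main step is to apply the divergence theorem to this current on the region $\Omg_{\rho, T} := \set{(t, x) : t^{2} - \abs{x}^{2} \geq \rho^{2}, \; 0 < t \leq T}$ for large $T$. Its spacelike boundary consists of the truncated hyperboloid $\calH_{\rho}^{T} := \calH_{\rho} \cap \set{t \leq T}$, whose future-directed unit normal is $X_{0}$ (verified from $\nb^{\mu} \rho = -X_{0}^{\mu}$ in hyperbolic coordinates), together with the spatial disk $S_{T}^{\rho} := \set{t = T, \abs{x}^{2} \leq T^{2} - \rho^{2}} \subset S_{T}$, whose future-directed unit normal is $\rd_{t}$. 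Conservation then produces the flux identity
\[
\int_{\calH_{\rho}^{T}} T(X_{0}, \rd_{t}) \, \ud A_{\calH_{\rho}} = \int_{S_{T}^{\rho}} T(\rd_{t}, \rd_{t}) \, \ud x \leq \calE_{S_{T}}[A, \phi] \leq E,
\]
uniformly in $T$.

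The second step is to match the integrand on $\calH_{\rho}$ with the explicit expression stated in \eqref{eq:energy-H-rho}. Decomposing $\rd_{t} = \cosh y \, X_{0} - \rho^{-1} \sinh y \, \rd_{y}$ and using $\met(X_{0}, X_{0}) = -1$, $\met(X_{0}, \rd_{y}) = 0$, one expands
\[
T(X_{0}, \rd_{t}) = \cosh y \, T(X_{0}, X_{0}) - \rho^{-1} \sinh y \, T(X_{0}, \rd_{y}).
\]
The self-similarity hypotheses $\iota_{X_{0}} F = 0$ and $\covD_{X_{0}} \phi = -\rho^{-1} \phi$ immediately yield $T^{F}(X_{0}, X_{0}) = \tfrac{1}{2} \abs{F}_{\calH_{\rho}}^{2}$, $T^{F}(X_{0}, \rd_{y}) = 0$, $T^{\phi}(X_{0}, X_{0}) = \tfrac{1}{2 \rho^{2}} \abs{\phi}^{2} + \tfrac{1}{2} \abs{\covD \phi}_{\calH_{\rho}}^{2}$, and $T^{\phi}(X_{0}, \rd_{y}) = -\rho^{-1} \Re(\phi \overline{\covD_{y} \phi})$, so the integrand on the left side of the flux identity reproduces exactly the one in \eqref{eq:energy-H-rho}; in particular, the coefficient $\cosh y / (2 \rho^{2})$ of $\abs{\phi}^{2}$ reflects $\abs{\covD_{X_{0}} \phi}^{2} = \abs{\phi}^{2} / \rho^{2}$, and the cross term $\rho^{-2} \sinh y \, \Re(\phi \overline{\covD_{y} \phi})$ comes from $T^{\phi}(X_{0}, \rd_{y})$.

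The final step is to pass $T \to \infty$. Since $X_{0}$ and $\rd_{t}$ are both future-directed timelike, the dominant energy condition guarantees $T(X_{0}, \rd_{t}) \geq 0$ pointwise (this can also be read directly from the explicit formula, by treating the $y$-component terms as a positive-definite quadratic form in $\abs{\phi}$ and $\abs{\covD_{y} \phi}$, using $\cosh^{2} y - \sinh^{2} y = 1$). Monotone convergence therefore upgrades the truncated bound to the desired estimate \eqref{eq:energy-H-rho}. The principal technical obstacle is the tensorial computation in hyperbolic coordinates, especially checking the sign and coefficient of the cross term and confirming that, because $F_{\rho \mu} = 0$, the Minkowski and induced $\calH_{\rho}$-metric raising of indices coincide for the remaining tangential components; once this bookkeeping is carried out, the rest is routine.
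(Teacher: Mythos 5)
Your proposal is correct and follows essentially the same route as the paper: contract the energy-momentum tensor with the Killing field $\rd_t$ to obtain a divergence-free current, apply the divergence theorem on the region between the truncated hyperboloid $\calH_\rho^T$ and the disk $S_T^\rho$ (in the paper, this region is written as $C_{(0,T)} \cap \calH_{>\rho}$), bound the flux through $S_T^\rho$ by the conserved energy, compute $\EM(\rd_t, X_0)$ in the orthonormal frame $\{\rd_\rho, \rho^{-1}\rd_y, e_\mathfrak{a}\}$ using the self-similarity conditions, and pass $T \to \infty$ by monotone convergence. All the component computations check out against the paper's, including the sign and coefficient of the cross term.
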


The next statement concerns the quantities
\begin{equation} \label{eq:F-G}
	\EFlux_{\rd C_{[t_{0}, t_{1}]}}[A, \phi] := \calE_{S_{t_{1}}}[A, \phi] - \calE_{S_{t_{0}}}[A, \phi], \quad
	\G_{\rd S_{t_{1}}}[\phi] := \frac{1}{t_{1}} \int_{\rd S_{t_{1}}} \abs{\phi}^{2}.
\end{equation}
Here, $\EFlux_{\rd C_{[t_{0}, t_{1}]}}$ is the \emph{energy flux} of $(A, \phi)$ through $\rd C_{[t_{0}, t_{1}]}$. For $\phi \in C_{t} (I; \dot{H}^{1}_{x})$ and $t_{1} \in I$,  observe that $\G_{\rd S_{t_{1}}}[\phi]$ is well-defined by the trace theorem. In fact, $\phi \rst_{\rd S_{t_{1}}} \in H^{1/2}(\rd S_{t_{1}})$.

\begin{lemma} \label{lem:flux-decay-prelim}
Let $(A, \phi)$ be an admissible $C_{t} \calH^{1}$ solution to \eqref{eq:MKG} on $I \times \bbR^{4}$ where $I \subset \bbR^{4}$ is an open interval. Then for every $t_{0}, t_{1} \in I$ with $t_{0} \leq t_{1}$, the following statements hold:
\begin{enumerate}
\item The energy flux on $\EFlux_{\rd C_{[t_{0}, t_{1}]}} [A, \phi]$ is non-negative and additive, i.e.,
\begin{equation} \label{eq:flux-additive}
\EFlux_{\rd C_{[t_{0}, t_{1}]}} [A, \phi] = \EFlux_{\rd C_{[t_{0}, t']}} [A, \phi] + \EFlux_{\rd C_{[t', t_{1}]}} [A, \phi] \quad \hbox{ for } t' \in [t_{0}, t_{1}].
\end{equation}
\item The following \emph{local Hardy's inequality} holds on $\rd C_{[t_{0}, t_{1}]}$:
\begin{equation} \label{eq:local-hardy:G}  
	\G_{\rd S_{t_{0}}}[\phi] + \int_{t_{0}}^{t_{1}} \G_{\rd S_{t}} [\phi] \, \frac{\ud t}{t} \leq \G_{\rd S_{t_{1}}}[\phi] + \EFlux_{\rd C_{[t_{0}, t_{1}]}}[A, \phi].
\end{equation}
	Moreover, we also have
\begin{equation} \label{eq:bound4G}
	\G_{\rd S_{t_{1}}}[\phi] \leq \calE_{(\set{t} \times \bbR^{4}) \setminus S_{t_{1}}}[A, \phi]
\end{equation}
\end{enumerate}
\end{lemma}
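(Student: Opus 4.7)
The plan is to establish both parts by first reducing to classical smooth solutions via Definition~\ref{def:admSol}, then applying standard energy-momentum and integration-by-parts arguments. The inequalities transfer to admissible $C_{t} \calH^{1}$ solutions by passing to the limit in the approximating sequence, using that $\calE$, $\EFlux$, and $\G$ are continuous under $\calH^{1}$ convergence (with $\G$ relying on the trace theorem since $\phi \rst_{\rd S_{t}} \in H^{1/2}$ from $\phi \in C_{t} \dot{H}^{1}_{x}$).

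For Part (1), additivity \eqref{eq:flux-additive} is immediate telescoping from $\EFlux_{\rd C_{[t_{0}, t_{1}]}} = \calE_{S_{t_{1}}} - \calE_{S_{t_{0}}}$. For non-negativity, I apply Stokes's theorem to the divergence-free energy current $J^{\mu} := T^{\mu}{}_{\nu}(\rd_{t})^{\nu}$ on $C_{[t_{0}, t_{1}]}$. The boundary decomposes into the spacelike caps $S_{t_{0}}, S_{t_{1}}$ (contributing $\calE_{S_{t_{1}}} - \calE_{S_{t_{0}}} = \EFlux_{\rd C_{[t_{0}, t_{1}]}}$) and the outgoing null hypersurface $\rd C_{[t_{0}, t_{1}]}$; parameterizing the latter by $(t, \Tht) \mapsto (t, t\Tht)$ with $L := \rd_{t} + \rd_{r}$ and measure $t^{3} \ud t \ud A_{\bbS^{3}}$, the null contribution equals $\int_{\rd C_{[t_{0}, t_{1}]}} T(\rd_{t}, L) \, t^{3} \, \ud t \, \ud A_{\bbS^{3}}$. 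A direct null-frame computation gives $T(\rd_{t}, L) = \tfrac{1}{2} \abs{\covD_{L} \phi}^{2} + \tfrac{1}{2} \abs{\scovD \phi}^{2} + (\text{non-negative null Maxwell components}) \geq 0$, whence non-negativity.

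For \eqref{eq:local-hardy:G}, I integrate by parts along $\rd C_{[t_{0}, t_{1}]}$ using that $\rd_{t}|_{N}$ pulls back to $L$ and that $L \abs{\phi}^{2} = 2 \Re(\br{\phi} \covD_{L} \phi)$:
\begin{equation*}
	\int_{\rd C_{[t_{0}, t_{1}]}} \frac{L \abs{\phi}^{2}}{t} \, t^{3} \, \ud t \, \ud A_{\bbS^{3}} = \G_{\rd S_{t_{1}}}[\phi] - \G_{\rd S_{t_{0}}}[\phi] - 2 \int_{t_{0}}^{t_{1}} \G_{\rd S_{t}}[\phi] \, \frac{\ud t}{t}.
\end{equation*}
Rearranging and applying Cauchy--Schwarz in the form $2 \abs{\phi} \abs{\covD_{L} \phi}/t \leq \abs{\phi}^{2}/t^{2} + \abs{\covD_{L} \phi}^{2}$, the $\abs{\phi}^{2}/t^{2}$ piece is absorbed into one copy of $\int \G_{\rd S_{t}}/t$ on the left, while $\int \abs{\covD_{L} \phi}^{2} \, t^{3} \ud t \ud A_{\bbS^{3}} \aleq \EFlux_{\rd C_{[t_{0}, t_{1}]}}$ by the flux formula from Part (1), yielding \eqref{eq:local-hardy:G}. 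For \eqref{eq:bound4G}, I apply the divergence theorem on the exterior $\bbR^{4} \setminus \overline{B_{t_{1}}(0)}$ to the vector field $V := \abs{\phi}^{2} \, x/\abs{x}^{2}$, using $\nb \cdot (x/\abs{x}^{2}) = 2/\abs{x}^{2}$ in dimension four and that $-x/\abs{x}$ is the outward normal for the exterior:
\begin{equation*}
	\G_{\rd S_{t_{1}}}[\phi] + 2 \int_{\abs{x} > t_{1}} \frac{\abs{\phi}^{2}}{\abs{x}^{2}} \, \ud x = - 2 \int_{\abs{x} > t_{1}} \frac{\Re(\br{\phi} \covD_{r} \phi)}{\abs{x}} \, \ud x.
\end{equation*}
A weighted Cauchy--Schwarz with parameter $\lambda = 2$ (i.e., $2 \abs{\phi}\abs{\covD_{r}\phi}/\abs{x} \leq 2 \abs{\phi}^{2}/\abs{x}^{2} + \tfrac{1}{2}\abs{\covD_{r}\phi}^{2}$) absorbs the Hardy-type term on the left, producing $\G_{\rd S_{t_{1}}} \leq \tfrac{1}{2} \int_{\abs{x} > t_{1}} \abs{\covD_{r} \phi}^{2} \, \ud x \leq \calE_{(\set{t} \times \bbR^{4}) \setminus S_{t_{1}}}[A, \phi]$.

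The principal technical subtlety is justifying these pointwise and boundary-oriented manipulations for admissible $C_{t} \calH^{1}$ solutions, where \eqref{eq:MKG} holds only in a weak/limiting sense. This is handled by approximation: the identities above are routine for the classical smooth approximants furnished by Definition~\ref{def:admSol}, and continuity of $\calE, \EFlux$, and $\G$ under $\calH^{1}$ convergence on compact subintervals transfers the inequalities to the admissible class. I expect this limiting step, together with making sure that the IBP identity on the null cone correctly handles the gauge-covariant nature of $\covD_L \phi$ via the identity $L|\phi|^2 = 2 \Re(\br{\phi} \covD_L \phi)$ (valid since $A_L$ is real), to be the only nontrivial point; the actual geometric inequalities follow from the arguments sketched above.
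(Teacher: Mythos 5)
Your proof is correct and takes essentially the same route as the paper: for the smooth case, use the null decomposition of the energy current $\vC{T}$ to express $\EFlux$ as a non-negative integral over $\rd C$ (which gives Part (1)), then obtain the two Hardy-type inequalities by integration by parts along $\rd C$ and on $\{t_1\}\times(\bbR^4\setminus B_{t_1})$ respectively, and finally pass to general admissible solutions by approximation. The main cosmetic difference is that the paper packages the radial integration-by-parts algebra in Lemma~\ref{lem:local-hardy} (an identity in the $(u,r,\Tht)$ coordinate system for \eqref{eq:local-hardy:G}, and the $r_2\to\infty$ limit for \eqref{eq:bound4G}), whereas you re-derive it inline; in both spots the content is identical. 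One small point in your favor: for \eqref{eq:bound4G} the weighted Cauchy--Schwarz (absorbing the Hardy term with weight $2$ and leaving $\tfrac12\abs{\covD_r\phi}^2$) directly produces the stated bound $\G_{\rd S_{t_1}}\leq\calE_{(\set{t_1}\times\bbR^4)\setminus S_{t_1}}$, which is slightly tighter than what one reads off from applying \eqref{eq:local-hardy} verbatim.
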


A consequence of Lemma~\ref{lem:flux-decay-prelim} is a simple but crucial decay result for the two quantities defined in \eqref{eq:F-G}. 
\begin{corollary} \label{cor:flux-decay}
Let $(A, \phi)$ be an admissible $C_{t} \calH^{1}$ solution to \eqref{eq:MKG} on $I \times \bbR^{4}$ where $I \subset \bbR^{4}$ is an open interval. Then the following statements hold.
\begin{enumerate}
\item If $(0, \dlt] \subseteq I$ for some $\dlt > 0$, then we have
\begin{equation} \label{eq:flux-decay:0}
	\lim_{t_{1} \to 0} \EFlux_{\rd C_{(0, t_{1}]}}[A, \phi] = 0, \quad \lim_{t_{1} \to 0} \G_{\rd S_{t_{1}}}[\phi] = 0.
\end{equation}
where $\EFlux_{\rd C_{(0, t_{1}]}}[A, \phi] := \lim_{t_{0} \to 0} \EFlux_{\rd C_{[t_{0}, t_{1}]}}[A, \phi]$.

\item If $[\dlt, \infty) \subseteq I$ for some $\dlt > 0$, then we have
\begin{equation} \label{eq:flux-decay:infty}
	\lim_{t_{0}, t_{1} \to \infty} \EFlux_{\rd C_{[t_{0}, t_{1}]}}[A, \phi] = 0, \quad \lim_{t_{1} \to \infty} \G_{\rd S_{t_{1}}}[\phi] = 0.
\end{equation}
\end{enumerate}
\end{corollary}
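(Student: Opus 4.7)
The plan is to derive both statements from Lemma~\ref{lem:flux-decay-prelim} by exploiting two monotonicity properties: monotonicity of the slice energy $t \mapsto \calE_{S_t}$, and of the combined quantity $\G_{\rd S_t} + \calE_{S_t}$ obtained by rearranging the local Hardy inequality \eqref{eq:local-hardy:G}.

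First I establish the flux limits in \eqref{eq:flux-decay:0} and \eqref{eq:flux-decay:infty}. Non-negativity of the flux (part (1) of Lemma~\ref{lem:flux-decay-prelim}) makes $t \mapsto \calE_{S_t}[A, \phi]$ non-decreasing, while energy conservation gives the uniform bound $\calE_{S_t} \leq E < \infty$. Hence the limits $E_0 := \lim_{t \to 0^+} \calE_{S_t}$ and $E_\infty := \lim_{t \to \infty} \calE_{S_t}$ exist whenever the corresponding half-neighborhoods lie in $I$, and the flux statements in both parts are immediate from the identity $\EFlux_{\rd C_{[t_0, t_1]}} = \calE_{S_{t_1}} - \calE_{S_{t_0}}$ together with \eqref{eq:flux-additive}.

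Next I show $\liminf \G_{\rd S_t}[\phi] = 0$ at both ends. The bound \eqref{eq:bound4G} combined with energy conservation gives $\G_{\rd S_t}[\phi] \leq E$, and $\EFlux \leq E$ as well. Substituting these into \eqref{eq:local-hardy:G} yields
\begin{equation*}
\int_{t_0}^{t_1} \G_{\rd S_t}[\phi] \, \frac{\ud t}{t} \leq 2E \quad \hbox{ for every } [t_0, t_1] \subset I.
\end{equation*}
Sending $t_0 \to 0^+$ or $t_1 \to \infty$ by monotone convergence, and recalling that $\frac{\ud t}{t}$ is non-integrable at $0$ and at $\infty$, the non-negative integrand $\G_{\rd S_t}[\phi]$ cannot stay bounded away from zero at either endpoint, so its $\liminf$ vanishes at both.

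Finally I upgrade $\liminf$ to $\lim$. The key observation, which is the only creative step, is that
\begin{equation*}
f(t) := \G_{\rd S_t}[\phi] + \calE_{S_t}[A, \phi]
\end{equation*}
is non-decreasing in $t$: using flux additivity and \eqref{eq:local-hardy:G},
\begin{equation*}
f(t_1) - f(t_0) = \G_{\rd S_{t_1}}[\phi] - \G_{\rd S_{t_0}}[\phi] + \EFlux_{\rd C_{[t_0, t_1]}}[A, \phi] \geq \int_{t_0}^{t_1} \G_{\rd S_t}[\phi] \, \frac{\ud t}{t} \geq 0.
\end{equation*}
Since $f$ is non-decreasing and bounded by $2E$, the limits $\lim_{t \to 0^+} f$ and $\lim_{t \to \infty} f$ exist. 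Evaluating them along subsequences that realize $\liminf \G = 0$ identifies these limits with $E_0$ and $E_\infty$, respectively. Subtracting the flux limits already established in the first step yields $\lim_{t \to 0^+} \G_{\rd S_t}[\phi] = 0 = \lim_{t \to \infty} \G_{\rd S_t}[\phi]$. There is no substantive obstacle beyond spotting the monotone quantity $f = \G + \calE_{S_t}$; the rest is routine soft analysis applied to the inputs from Lemma~\ref{lem:flux-decay-prelim}.
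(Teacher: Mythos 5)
Your proof is correct and uses exactly the ingredients the paper cites for this corollary, namely non-negativity and additivity of the flux, the boundedness of $\calE_{S_t}$ by the conserved energy, the local Hardy inequality \eqref{eq:local-hardy:G}, and the bound \eqref{eq:bound4G}. The paper omits the "straightforward details," and the monotone quantity $f = \G_{\rd S_t} + \calE_{S_t}$ you identify is a clean way to supply them: after verifying that $f(t_1)-f(t_0) = \G_{\rd S_{t_1}} - \G_{\rd S_{t_0}} + \EFlux_{\rd C_{[t_0,t_1]}} \geq \int_{t_0}^{t_1} \G \, \ud t/t \geq 0$, the remaining steps (existence of the one-sided limits of $f$, their identification with $E_0$ and $E_\infty$ via a subsequence on which $\G \to 0$, and subtraction of the already-known flux limits) are all routine. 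In short, this is the same approach as the paper's, carried out in full.
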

The statements concerning $\EFlux_{\rd C_{[t_{0}, t_{1}]}}$ follow from the monotonicity and boundedness of $\calE_{S_{t}}$, whereas those concerning $\G_{\rd S_{t_{1}}}$ follow from \eqref{eq:local-hardy:G}, \eqref{eq:bound4G}; we omit the straightforward details. 
%
%

The decay statements \eqref{eq:flux-decay:0} and \eqref{eq:flux-decay:infty} imply that the energy flux and the quantity $\G_{\rd S_{t}}[\phi]$ vanish as one approaches $(0, 0)$ or $t \to \infty$. In the ideal case when $\EFlux_{\rd C_{[t_{0}, t_{1}]}} = 0$ and $\G_{\rd S_{t_{1}}} = 0$, the solution $(A, \phi)$ enjoys an additional monotonicity formula, namely 
\begin{equation} \label{eq:monotonicity:exact}
	\int_{S_{t_{1}}} \mvC{X_{0}}_{T}[A, \phi] \, \ud x 
	+ \iint_{C_{[t_{0}, t_{1}]}} \frac{1}{\rho} \abs{\iota_{X_{0}} F}^{2}
			+ \frac{1}{\rho} \abs{(\covD_{X_{0}} + \frac{1}{\rho}) \phi}^{2} \, \ud t \ud x
	= \int_{S_{t_{0}}} \mvC{X_{0}}_{T}[A, \phi] \, \ud x 
\end{equation}
where $X_{0} = \rd_{\rho}$ in the hyperbolic coordinate system $(\rho, y, \Tht)$, $\abs{\iota_{X_{0}} F}^{2} := \met(\iota_{X_{0}} F, \iota_{X_{0}} F)$  (observe that $\abs{\iota_{X_{0}} F}^{2} \geq 0$) and $\mvC{X_{0}}_{T}[A, \phi]$ is to be defined below in Lemma~\ref{lem:monotonicity}. 
It turns out that the right-hand side is uniformly bounded by the conserved energy as $t_{0} \to 0$, thereby breaking the scaling invariance. More precisely, the first term on the left-hand side precludes null concentration of energy, whereas the second term implies that rescalings of $(A, \phi)$ are asymptotically self-similar.

In application, however, the quantities $\EFlux$ and $\G$ will be small but not necessarily zero. Hence we will rely on the following approximate version of \eqref{eq:monotonicity:exact} instead. Define
\begin{equation*}
\rho_{\veps} = \sqrt{(t+\veps)^{2} - r^{2}}, \quad X_{\veps} = \rho_{\veps}^{-1} ((t + \veps) \rd_{t} + r \rd_{r}), \quad 
\abs{\iota_{X_{\veps}} F}^{2} := \met(\iota_{X_{\veps}} F, \iota_{X_{\veps}} F).
\end{equation*}
\begin{proposition} \label{prop:monotonicity}
Let $(A, \phi)$ be an admissible $C_{t} \calH^{1}$ solution to \eqref{eq:MKG} on $[\veps, 1] \times \bbR^{4}$, where $\veps \in (0, 1)$. Suppose furthermore that $(A, \phi)$ satisfies
\begin{equation} \label{eq:monotonicity:hyp}
	\calE_{S_{1}}[A, \phi] \leq E, \quad \EFlux_{\rd C_{[\veps, 1]}} [A, \phi] \leq \veps^{\frac{1}{2}} E, \quad \G_{\rd S_{1}}[\phi] \leq \veps^{\frac{1}{2}} E.
\end{equation}
Then
\begin{equation} \label{eq:monotonicity}
\int_{S_{1}} \mvC{X_{\veps}}_{T}[A, \phi] \, \ud x 
+ \iint_{C_{[\veps, 1]}} \frac{1}{\rho_{\veps}} \abs{\iota_{X_{\veps}} F}^{2} + \frac{1}{\rho_{\veps}} \abs{(\covD_{X_{\veps}}  + \frac{1}{\rho_{\veps}}) \phi}^{2} \, \ud t \ud x \aleq E
\end{equation}
where the implicit constant is independent of $\veps, E$. We refer to Lemma~\ref{lem:monotonicity} for the computation of $\mvC{X_{\veps}}_{T}[A, \phi]$.
\end{proposition}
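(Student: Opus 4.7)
\medskip
\noindent\textbf{Proof plan.} The approach is the classical vector field (or stress--energy) method. By density of classical solutions in admissible $C_t\calH^1$ ones and a standard limiting procedure, it suffices to establish \eqref{eq:monotonicity} for classical $(A,\phi)$; this reduction avoids any regularity issues in carrying out the subsequent calculus. Write the MKG stress--energy tensor
\[
T_{\mu\nu}[A,\phi] := F_{\mu}{}^{\alpha} F_{\nu\alpha} - \tfrac14 \met_{\mu\nu} F^{\alpha\beta}F_{\alpha\beta} + \Re\!\left(\covD_{\mu}\phi\, \overline{\covD_{\nu}\phi}\right) - \tfrac12 \met_{\mu\nu}\, \covD^{\alpha}\phi\,\overline{\covD_{\alpha}\phi},
\]
which satisfies $\nb^{\mu} T_{\mu\nu} = F_{\nu\alpha}\Im(\phi\,\overline{\covD^{\alpha}\phi}) + \Re(\overline{\covD_{\nu}\phi}\, \Box_A \phi)=0$ on MKG solutions. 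Define the modified current
\[
\mvC{X_{\veps}}_{\mu} := T_{\mu\nu} X_{\veps}^{\nu} + \tfrac12 \rho_{\veps}^{-1} \rd_{\mu}(|\phi|^{2}) + (\text{zeroth order in }\phi\text{ with coefficient }\sim \rho_{\veps}^{-2}),
\]
where the lower order corrections are tuned to reflect the scaling weight of $\phi$, so that on contraction with $\rd^\mu$ one obtains
\[
\nb^{\mu} \mvC{X_{\veps}}_{\mu} = \tfrac12 T^{\mu\nu}\, {}^{(X_\veps)}\pi_{\mu\nu} + (\text{lower order}) = \rho_{\veps}^{-1}\bigl(\abs{\iota_{X_{\veps}} F}^{2} + \abs{(\covD_{X_{\veps}} + \rho_{\veps}^{-1})\phi}^{2}\bigr).
\]
The key computation is that the deformation tensor $\defT{X_\veps} = \calL_{X_\veps}\met$ has the same principal structure as that of the hyperbolic scaling $X_{0}$, which annihilates the transverse directions; the absorption of the remaining $\phi^2$ terms into perfect squares recovers the $(\covD_{X_\veps}+\rho_\veps^{-1})\phi$ expression.

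\medskip
\noindent Next, apply the divergence theorem on $C_{[\veps,1]}$ with respect to $\mvC{X_\veps}_\mu$. This yields the identity
\[
\int_{S_{1}} \mvC{X_{\veps}}_{T} \, dx + \EFlux^{(X_\veps)}_{\rd C_{[\veps,1]}} + \iint_{C_{[\veps,1]}} \rho_{\veps}^{-1}\bigl(\abs{\iota_{X_{\veps}} F}^{2} + \abs{(\covD_{X_{\veps}} + \rho_{\veps}^{-1})\phi}^{2}\bigr)\, dt\, dx = \int_{S_{\veps}} \mvC{X_{\veps}}_{T} \, dx,
\]
where $\EFlux^{(X_\veps)}_{\rd C_{[\veps,1]}}$ denotes the null flux of $\mvC{X_\veps}$ through $\rd C_{[\veps,1]}$. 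Since $|X_\veps|^2 = -1$ and $X_\veps$ is timelike, the principal quadratic part of $\mvC{X_\veps}_T$ is a non-negative weighted energy density; it is then enough to bound the right hand side and the error terms by $E$.

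\medskip
\noindent For the $S_\veps$ boundary term: on $S_\veps$ one has $\rho_\veps \in [\sqrt 3 \veps, 2\veps]$ and $X_\veps^0 = (t+\veps)/\rho_\veps \in [1, 2/\sqrt 3]$, so $X_\veps$ is uniformly close to $\rd_t$. The quadratic part of $\mvC{X_\veps}_T$ is therefore pointwise comparable to the standard energy density, while the $\rho_\veps^{-2}|\phi|^2$-type correction is controlled via Hardy's inequality (after noting $\rho_\veps \sim |x - 0|$ up to bounded factors on $S_\veps$) by the kinetic part of the standard energy. Thus
\[
\int_{S_\veps} \mvC{X_{\veps}}_{T}\, dx \lesssim \calE_{S_{\veps}}[A,\phi] \leq \calE_{S_{1}}[A,\phi] \leq E,
\]
the middle inequality following from the non-negativity of the standard energy flux. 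For the null flux term $\EFlux^{(X_\veps)}_{\rd C_{[\veps,1]}}$: decomposing $X_\veps$ at a point on $\rd C_{[\veps,1]}$ with $t=r$ gives $\rho_{\veps} = \sqrt{\veps(2r+\veps)}$ and $X_\veps \approx \sqrt{r/\veps}\, L$, so
\[
\mvC{X_{\veps}}(L) \lesssim \sqrt{r/\veps}\, T(L,L) + \rho_\veps^{-1}|L|\phi|^2|/2 + \rho_\veps^{-2}|\phi|^2.
\]
The $T(L,L)$ contribution integrates to $\lesssim \veps^{-1/2}\EFlux_{\rd C_{[\veps,1]}}\leq \veps^{-1/2}\cdot \veps^{1/2}E = E$, while the scalar correction terms are absorbed using the local Hardy inequality \eqref{eq:local-hardy:G} together with the hypothesis on $\G_{\rd S_1}$ and $\EFlux_{\rd C_{[\veps,1]}}$. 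Combining all boundary estimates yields \eqref{eq:monotonicity}.

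\medskip
\noindent The main obstacles are (i) arriving at the exact algebraic form of $\mvC{X_\veps}$ so that the divergence is a sum of squares with the stated $\rho_\veps$-weights, and (ii) calibrating the $\rho_\veps^{-1}$-weighted $|\phi|^2$ correction terms so that they remain controllable at the boundaries $S_\veps$ and $\rd C_{[\veps,1]}$. The precise factor $\veps^{1/2}$ in \eqref{eq:monotonicity:hyp} is designed exactly to absorb the blow-up of $|X_\veps|$ near $\rd C$ in the flux bound; without it, the null flux contribution would be of size $\veps^{-1/2}E$ rather than $E$. The non-sharp `$\aleq$' in \eqref{eq:monotonicity} reflects both the use of Hardy's inequality and this boundary matching.
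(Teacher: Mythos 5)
Your overall strategy is the same as the paper's: contract the stress--energy tensor with $X_\veps$, add lower-order scalar currents to complete the divergence to a sum of squares $\rho_\veps^{-1}(|\iota_{X_\veps}F|^2 + |(\covD_{X_\veps}+\rho_\veps^{-1})\phi|^2)$, apply the divergence theorem on $C_{[\veps,1]}$, and bound the $S_\veps$ and $\rd C_{[\veps,1]}$ boundary contributions by $E$ using Hardy's inequality, the flux hypothesis, and the fact that the $\veps^{1/2}$ in \eqref{eq:monotonicity:hyp} is calibrated against the $\veps^{-1/2}$ weight in $\mvC{X_\veps}_L$ at the lateral boundary.

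Two points worth flagging. First, your justification on $S_\veps$ contains a false claim: on $S_\veps = \{t=\veps\}\cap\{r\leq\veps\}$ one has $\rho_\veps = \sqrt{(2\veps)^2 - r^2} \in [\sqrt3\,\veps, 2\veps]$, which is \emph{not} comparable to $r = |x|$ (at $r=0$, $\rho_\veps = 2\veps$ but $r=0$). The inequality you actually need is $\rho_\veps \gtrsim \veps \geq r$, hence $\rho_\veps^{-2}|\phi|^2 \lesssim r^{-2}|\phi|^2$; then the local Hardy inequality \eqref{eq:local-hardy} controls $\int_{S_\veps} r^{-2}|\phi|^2$ by the energy and flux/$\G$ hypotheses. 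In the paper, the $r^{-2}|\phi|^2$ term enters directly via the closed-form expression \eqref{eq:monotonicity:mvC-L}--\eqref{eq:monotonicity:mvC-uL} for the $L,\uL$ components of the modified current.

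Second, you correctly identify the "tuning" of the modified current as the main obstacle, but this is in fact a substantial part of the argument that you do not carry out. The paper's current $\mvC{X_\veps}$ is $\vC{X_\veps} + \vC{w_\veps} + \vC{\Hardy_\veps} + \vC{\Null_\veps}$, where beyond the $w_\veps = \rho_\veps^{-1}$ multiplier and Hardy-type piece, the additional divergence-free null current $\vC{\Null_\veps}$ (see \eqref{eq:vC-Null}) is added precisely so that the $L$ and $\uL$ components become explicit non-negative sums of squares with the $(v_\veps/u_\veps)^{\pm 1/2}$ weights; without it the cross terms $\rho_\veps^{-1}\Re(\phi\,\overline{\covD_L\phi})$ do not obviously combine into perfect squares and the boundary estimates on $\rd C_{[\veps,1]}$ become opaque. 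Your sketch asserts the final form but does not produce the current whose boundary components have the needed sign and weight structure, which is where the real work lies.
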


Using Proposition~\ref{prop:monotonicity}, we can also establish a version of \eqref{eq:monotonicity:exact} that is localized away from the boundary of the cone. This statement will be useful for propagating lower bounds in a time-like region towards $(0, 0)$.

\begin{proposition} \label{prop:monotonicity:t-like}
Let $(A, \phi)$ be an admissible $C_{t} \calH^{1}$ solution to \eqref{eq:MKG} on $[\veps, 1] \times \bbR^{4}$, where $\veps \in (0, 1)$. Suppose furthermore that $(A, \phi)$ satisfies \eqref{eq:monotonicity:hyp}. Then for $2 \veps \leq \dlt_{0} < \dlt_{1} \leq t_{0} \leq 1$, we have
\begin{equation} \label{eq:monotonicity:t-like}
	\int_{S_{1}^{\dlt_{1}}} \mvC{X_{0}}_{T}[A, \phi] \, \ud x
	\leq \int_{S_{t_{0}}^{\dlt_{0}}} \mvC{X_{0}}_{T}[A, \phi] \, \ud x + C\bb( (\dlt_{1} / t_{0})^{\frac{1}{2}}+ \abs{\log (\dlt_{1} / \dlt_{0})}^{-1} \bb) E.
\end{equation}
\end{proposition}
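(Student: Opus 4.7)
My plan is to apply the divergence theorem to the Morawetz current $\mvC{X_0}$ on the spacetime region $\Omega_\alp := C^\alp_{[t_0, 1]}$ for a parameter $\alp \in [\dlt_0, \dlt_1]$ to be chosen, and then to control the resulting null-cone side flux via a pigeonhole argument. Since $X_0$ is smooth in the interior of $C$ and the pointwise Morawetz identity
\begin{equation*}
\nb^\mu \mvC{X_0}_\mu = -\rho^{-1} \abs{\iota_{X_0} F}^2 - \rho^{-1} \abs{(\covD_{X_0} + \rho^{-1}) \phi}^2 \leq 0
\end{equation*}
holds there (cf.\ Lemma~\ref{lem:monotonicity}), Stokes's theorem on $\Omega_\alp$ yields
\begin{equation*}
\int_{S_1^\alp} \mvC{X_0}_T \, \ud x
\leq \int_{S_{t_0}^\alp} \mvC{X_0}_T \, \ud x
+ \left\vert \int_{\rd C^\alp_{[t_0, 1]}} \mvC{X_0}_\mu L^\mu \, \ud S \right\vert.
\end{equation*}
Since $\mvC{X_0}_T \geq 0$, the inclusions $S_1^{\dlt_1} \subseteq S_1^\alp$ and $S_{t_0}^\alp \subseteq S_{t_0}^{\dlt_0}$ (valid for $\dlt_0 \leq \alp \leq \dlt_1$) then sandwich to give
\begin{equation*}
\int_{S_1^{\dlt_1}} \mvC{X_0}_T \, \ud x
\leq \int_{S_{t_0}^{\dlt_0}} \mvC{X_0}_T \, \ud x
+ \inf_{\alp \in [\dlt_0, \dlt_1]} \left\vert \int_{\rd C^\alp_{[t_0, 1]}} \mvC{X_0}_\mu L^\mu \, \ud S \right\vert.
\end{equation*}

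\textbf{Side flux estimate.} To bound the side flux I would decompose $X_0$ in the outgoing null frame on $\rd C^\alp$: with $u = \alp$, $v = 2t - \alp$, $\rho = \sqrt{\alp v}$, one has $X_0 = \tfrac{v}{2\rho} L + \tfrac{\alp}{2\rho} \uL$. The stress-energy piece $T_{\mu\nu} X_0^\nu L^\mu$ splits into an $L$-component (bounded by $\sqrt{\alp/t} \leq \sqrt{\dlt_1/t_0}$ times the outgoing null energy density $T(L,L)$) and a $\uL$-component (bounded by $\sqrt{v/\alp}$ times the transverse null flux density $T(\uL,L)$). The conformal correction pieces of $\mvC{X_0}$, involving $\Re(\phi \overline{\covD_\mu \phi})/\rho$ and $\abs{\phi}^2/\rho^2$, are controlled using a Hardy-type bound together with the smallness of $\G_{\rd S_1}[\phi] \leq \veps^{1/2} E$ from \eqref{eq:monotonicity:hyp}. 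Averaging the resulting pointwise bound over $\alp \in [\dlt_0, \dlt_1]$ against the scale-invariant measure $\ud\alp/\alp$, whose total mass is $\log(\dlt_1/\dlt_0)$, and plugging in the spacetime Morawetz bound $\iint_{C_{[\veps, 1]}} \rho_\veps^{-1}(\abs{\iota_{X_\veps} F}^2 + \abs{(\covD_{X_\veps} + \rho_\veps^{-1})\phi}^2) \lesssim E$ from Proposition~\ref{prop:monotonicity}, a pigeonhole argument produces
\begin{equation*}
\inf_{\alp} \left\vert \int_{\rd C^\alp_{[t_0, 1]}} \mvC{X_0}_\mu L^\mu \, \ud S \right\vert
\lesssim \left( \sqrt{\tfrac{\dlt_1}{t_0}} + \tfrac{1}{\log(\dlt_1/\dlt_0)} \right) E.
\end{equation*}
The first term captures the uniform-in-$\alp$ piece of the side flux, coming from the total energy and the boundary null-frame factor $X_0 \cdot L = -\alp/\rho$; the second is what remains after distributing the spacetime Morawetz bound $\lesssim E$ across the logarithmic range of admissible $\alp$.

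\textbf{Main obstacle.} The principal technical difficulty lies in controlling the conformal correction pieces of $\mvC{X_0}$. In the $4+1$-dimensional setting the MKG stress tensor is not trace-free, so $\mvC{X_0}$ must include correction terms for the pointwise monotonicity identity to hold with the favorable sign; these corrections contribute to the null side flux $\mvC{X_0} \cdot L$ without a definite sign, and their control requires the Hardy-type argument outlined above, harnessing the smallness of $\G_{\rd S_1}[\phi]$ from hypothesis \eqref{eq:monotonicity:hyp}. Producing the precise two-term shape of the error (rather than a single coarser bound) is what demands balancing the boundary-of-cone factor $\sqrt{\alp/t}$ against the logarithmic pigeonhole gain in $\alp$.
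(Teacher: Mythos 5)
Your overall architecture agrees with the paper's proof: integrate the divergence identity $\nabla^{\mathbf a}(\mvC{X_0}_{\mathbf a}) = \msC{X_0} \geq 0$ over a region $C^{\alpha}_{[t_0,1]}$, discard the spacetime bulk, use the inclusions $S_1^{\dlt_1} \subseteq S_1^{\alp}$ and $S_{t_0}^{\alp}\subseteq S_{t_0}^{\dlt_0}$ together with nonnegativity of $\mvC{X_0}_T$, and then pigeonhole the choice of $\alp$ over $[\dlt_0,\dlt_1]$ with the scale-invariant measure $\ud\alp/\alp$. The final two-term error bound is exactly the paper's. However, your description of the side-flux estimate has the null-frame weights inverted, and this matters. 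On $\rd C^{\alp}$ (where $u = \alp$), writing $X_0 = \tfrac{v}{2\rho}L + \tfrac{u}{2\rho}\uL$ gives $T(X_0,L) = \tfrac12\sqrt{v/\alp}\,T(L,L) + \tfrac12\sqrt{\alp/v}\,T(\uL,L)$. So the \emph{outgoing} null energy $T(L,L)$ carries the \emph{large} weight $\sqrt{v/\alp}$, and the \emph{transverse} flux $T(\uL,L)$ carries the \emph{small} weight $\sqrt{\alp/v}\lesssim\sqrt{\dlt_1/t_0}$ — the opposite of what you wrote. This is not cosmetic: if you tried to bound $T(L,L)$ directly by $\sqrt{\dlt_1/t_0}\cdot E$ the argument would fail, and if you tried to pigeonhole $T(\uL,L)$ against the Morawetz bound it would also fail, since that bound controls $\iota_{X_\veps}F$ and $(\covD_{X_\veps}+\rho_\veps^{-1})\phi$, not the transverse components $\scovD\phi,\varrho,\sgm$.

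What your sketch is missing, then, is the key algebraic step that makes the large-weight piece accessible to the Morawetz bulk bound. The paper rewrites the good-null-direction derivatives as
\begin{equation*}
  r^{-1}\covD_L(r\phi) = 2\bb(\tfrac{u_\veps}{v_\veps}\bb)^{\frac12}(\covD_{X_\veps}+\tfrac{1}{\rho_\veps})\phi - \tfrac{u_\veps}{v_\veps}\covD_{\uL}\phi + \tfrac{u_\veps}{v_\veps}\tfrac{\phi}{r},\qquad
  \alp_{\mfa} = 2\bb(\tfrac{u_\veps}{v_\veps}\bb)^{\frac12}F(X_\veps,e_{\mfa}) - \tfrac{u_\veps}{v_\veps}F(\uL,e_{\mfa}),
\end{equation*}
and then uses $u\leq u_\veps\leq 2u$ on $\rd C^{\alp}$ (this is where the hypothesis $2\veps\leq\dlt_0$ enters). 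Plugging these in, the $(v/u)^{1/2}$-weighted flux splits into a $\tfrac{u}{\rho_\veps}$-weighted piece involving the Morawetz-controlled quantities, which is integrable in $\ud u/u$ over $[\dlt_0,\dlt_1]$ with total $\aleq E$ and hence admits a $|\log(\dlt_1/\dlt_0)|^{-1}E$ pigeonhole; and a $(u/v)^{3/2}$-weighted error involving $\covD_{\uL}\phi$, $\phi/r$, $\ualp$, which is bounded uniformly by $(\dlt_1/t_0)^{1/2}E$ via energy and localized Hardy. The transverse $\sqrt{u/v}$-weighted piece is handled separately and directly by energy plus $\G_{S_1}[\phi]$. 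Your two-term conclusion is right, but as written the route to it has the uniform and pigeonhole mechanisms attached to the wrong terms, and the re-expression that links the $L$-flux to the $X_\veps$-Morawetz integrand is absent.
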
 

The rest of this section is devoted to the proofs of the above statements, and is organized as follows. 
In Section~\ref{subsec:noether}, we discuss ways of generating divergence identities for proving the above conservation laws and monotonicity formulae. We also introduce null decomposition, which will assist our computations below. In Section~\ref{subsec:energy}, we use to prove \eqref{eq:energyConsv} and Proposition~\ref{prop:energy-H-rho}. In Section~\ref{subsec:hardy}, we introduce and prove a local version of Hardy's inequality and use it establish Lemma~\ref{lem:flux-decay-prelim}. Lastly, Section~\ref{subsec:monotonicity} is devoted to the proof of \eqref{eq:monotonicity:exact} and Propositions~\ref{prop:monotonicity}, \ref{prop:monotonicity:t-like}.

\subsection{Divergence identities and null
  decomposition} \label{subsec:noether} The goal of this subsection is
two-fold. First, we introduce methods for generating useful divergence
identities for solutions to \eqref{eq:MKG} that essentially arise from
N\"other's principle. Second, we define the notion of a null frame and
the associated null decomposition of $F$ and $\covD \phi$, which will
be useful for the computations below.

We first present the \emph{energy-momentum tensor formalism} for
generating divergence identities. This formalism is a way to exploit
N\"other's principle (continuous symmetries lead to conserved
quantities in a Lagrangian field theory) for external symmetries,
i.e., symmetries of the base manifold $\bbR^{1+4}$ of \eqref{eq:MKG}.
Let $(A, \phi)$ be a smooth solution to \eqref{eq:MKG} on an open
subset $\calO \subseteq \bbR^{1+4}$.  We define the
\emph{energy-momentum tensor} associated to $(A, \phi)$ as
\begin{equation} \label{} \EM_{\bfa \bfb}[A, \phi] =
  {}^{(\M)}\EM_{\bfa \bfb}[A]_{\bfa \bfb} + {}^{(\KG)}\EM_{\bfa
    \bfb}[A, \phi]
\end{equation}
where
\begin{align}
  {}^{(\M)}\EM_{\bfa \bfb}[A] =& \tensor{F}{_{\bfa}^{\bfc}} F_{\bfb \bfc} - \frac{1}{4} \met_{\bfa \bfb }F_{\bfc \bfd} F^{\bfc \bfd} \\
  {}^{(\KG)}\EM_{\bfa \bfb}[A, \phi] = & \Re (\covD_{\bfa} \phi
  \overline{\covD_{\bfb} \phi}) - \frac{1}{2} \met_{\bfa \bfb}
  \covD^{\bfc} \phi \overline{\covD_{\bfc} \phi}
\end{align}

Note that $\EM$ is a symmetric 2-tensor, which is gauge invariant at
each point. Moreover, since $(A, \phi)$ is a smooth solution to
\eqref{eq:MKG}, the energy-momentum tensor satisfies
\begin{equation} \label{eq:div4EMT} \nb^{\bfa} \EM_{\bfa \bfb}[A,
  \phi] = 0.
\end{equation}

Given a vector field $X$ on $\calO$, we define its \emph{deformation
  tensor} to be the Lie derivative of the metric with respect to $X$,
i.e., $\defT{X} := \calL_{X} \bfm$. Using covariant derivatives,
$\defT{X}$ also takes the form
\begin{equation*}
  \defT{X}_{\bfa \bfb} = \nb_{\bfa} X_{\bfb} + \nb_{\bfa} X_{\bfb}
\end{equation*}
We will denote the metric dual of $\defT{X}$ by $\defT{X}^{\sharp}$,
i.e., $(\defT{X}^{\sharp})^{\bfa \bfb} = \met^{\bfa \bfc} \met^{\bfb
  \bfd} \defT{X}_{\bfc \bfd}$.  From its Lie derivative definition,
the following formula for $\defT{X}_{\mu \nu}$ in coordinates can be
immediately derived:
\begin{equation} \label{eq:defTcoord} \defT{X}_{\mu \nu} = X(\bfm_{\mu
    \nu}) + \rd_{\mu} (X^{\alp}) \bfm_{\alp \nu} + \rd_{\nu}
  (X^{\alp}) \bfm_{\alp \mu}
\end{equation}

Using the deformation tensor, we now define the associated $1$- and
$0$-currents of $(A, \phi)$ as
\begin{equation} \label{eq:vsC-X}
  \begin{aligned}
    \vC{X}_{\bfa} [A, \phi] :=& \EM_{\bfa \bfb}[A, \phi] X^{\bfb}, \\
    \sC{X}[A, \phi] :=& \EM_{\bfa \bfb}[A, \phi] (\frac{1}{2}
    \defT{X}^{\sharp})^{\bfa \bfb}.
  \end{aligned}
\end{equation}
Then by \eqref{eq:div4EMT} and the symmetry of $\EM[A, \phi]_{\bfa
  \bfb}$, we obtain
\begin{equation} \label{eq:vsC-X:div} \nb^{\bfa}
  (\vC{X}_{\bfa}[A,\phi]) = \sC{X}[A, \phi].
\end{equation}

\begin{remark} 
  Taking $X = T = \rd_{t}$ in the rectilinear coordinates $(t, x^{1},
  \ldots, x^{4})$, we have $\defT{T} = 0$ (in other words, $T$ is a
  Killing vector field) and hence $\sC{T} = 0$. In fact,
  \eqref{eq:vsC-X:div} is a local form of the standard conservation of
  energy \eqref{eq:energyConsv}. We refer to
  Section~\ref{subsec:energy} for more details.
\end{remark}

For a (smooth) scalar field $\phi$ satisfying the gauge covariant wave
equation $\Box_{A} \phi = 0$, we introduce another way of generating
divergence identities. This method corresponds to using N\"other's
principle for the symmetry of the equation under the action of $\bbC$
viewed as the complexification of the gauge group $U(1)$. Given a
$\bbC$-valued function $w$ on an open subset of $\bbR^{1+4}$, we
define its associated $1$- and $0$-currents by
\begin{equation}
  \begin{aligned}
    \vC{w}_{\bfa}[A, \phi] =& (\Re \, w) \Re(\phi
    \overline{\covD_{\bfa} \phi}) - (\Im \, w) \Im(\phi
    \overline{\covD_{\bfa} \phi })
    - \frac{1}{2} \nb_{\bfa} (\Re \, w) \abs{\phi}^{2}, \\
    \sC{w}[A, \phi] =& (\Re \, w) \covD_{\bfa} \phi
    \overline{\covD^{\bfa}\phi} - \frac{1}{2} \Box (\Re\, w)
    \abs{\phi}^{2} - \nb_{\bfa} (\Im \, w) \Im(\phi
    \overline{\covD^{\bfa} \phi}).
  \end{aligned}
\end{equation}
A simple computation\footnote{Alternatively, the identity below can be
  derived by multiplying the covariant wave equation for $\phi$ by
  $\overline{w \phi}$, taking the real part and differentiating by
  parts.} shows that the following conservation law holds:
\begin{equation} \label{eq:consv2} \nb^{\bfa} (\vC{w}_{\bfa}[A, \phi])
  = \sC{w}[A, \phi].
\end{equation}
\begin{remark}
  Taking $w = -i$, we have
  \begin{equation*}
    \vC{w}_{\bfa} = \Im(\phi \overline{\covD_{\bfa} \phi}), \quad
    \sC{w} = 0,
  \end{equation*}
  and \eqref{eq:consv2} reduces to the well-known \emph{local
    conservation of charge}.
\end{remark}

Finally, we introduce the notion of a null frame and the associated
null decomposition of $\covD \phi$ and $F$, which are useful for
computations concerning the energy-momentum tensor.  At each point $p
= (t_{0}, x_{0}) \in \bbR^{1+4}$, consider orthonormal vectors
$\set{e_{\mfa}}_{\mfa=1,\ldots, 3}$ which are orthogonal to $L$ and
$\uL$. Observe that each $e_{\mfa}$ is tangent to the sphere $\rd
B_{t_{0}, r_{0}} := \set{t_{0}} \times \rd B_{r_{0}}(0)$ where $r_{0}
= \abs{x_{0}}$. The set of vectors $\set{L, \uL, e_{1}, e_{2}, e_{3}}$
at $p$ is called a \emph{null frame at $p$ associated to $L, \uL$}.

The $\bbC$-valued 1-form $\covD \phi$ can be decomposed with respect
to the null frame $\set{L, \uL, e_{\mfa}}$ as $\covD_{L} \phi$,
$\covD_{\uL} \phi$ and $\scovD_{\mfa} \phi := \covD_{e_{\mfa}} \phi$, which
is the \emph{null decomposition} of $\covD \phi$. A simple computation
shows that
\begin{equation} \label{eq:null-decomp:Dphi} {}^{(\KG)}\EM[A, \phi](L,
  L) = \abs{\covD_{L} \phi}^{2}, \ {}^{(\KG)}\EM[A, \phi](\uL, \uL) =
  \abs{\covD_{\uL} \phi}^{2}, \ {}^{(\KG)}\EM[A, \phi](L, \uL) =
  \abs{\scovD \phi}^{2}
\end{equation}
where $\abs{\scovD \phi}^{2} := \sum_{\mfa=1, \ldots, 3} \abs{\scovD_{\mfa}
  \phi}^{2}$.

Next, we define the \emph{null decomposition} of the 2-form $F$ with
respect to $\set{L, \uL, e_{\mfa}}$ as
\begin{equation*}
  \alp_{\mfa} := F(L, e_{\mfa}), \quad
  \ualp_{\mfa} := F(\uL, e_{\mfa}), \quad
  \varrho := \frac{1}{2} F(L, \uL), \quad
  \sgm_{\mfa\mfb} := F(e_{\mfa}, e_{\mfb}).
\end{equation*}
Note that $\varrho$ is a function, $\alp_{\mfa}, \ualp_{\mfb}$ are 1-forms
on $\rd B_{t_{0}, r_{0}}$ and $\sgm_{\mfa\mfb}$ is a 2-form on $\rd
B_{t_{0}, r_{0}}$. We define their pointwise absolute values as
\begin{equation*}
  \abs{\alp}^{2} := \sum_{\mfa=1, \ldots, 3} \alp_{\mfa}^{2}, \quad
  \abs{\ualp}^{2} := \sum_{\mfa=1, \ldots, 3} \ualp_{\mfa}^{2}, \quad
  \abs{\sgm}^{2} := \sum_{1 \leq \mfa < \mfb \leq 3} \sgm_{\mfa\mfb}^{2}.
\end{equation*}
This decomposition leads to the following simple formulae for the $L,
\uL$ components of ${}^{(\M)} \EM$:
\begin{equation} \label{eq:null-decomp:F} {}^{(\M)}\EM[A](L, L) =
  \abs{\alp}^{2}, \quad {}^{(\M)}\EM[A](\uL, \uL) = \abs{\ualp}^{2},
  \quad {}^{(\M)}\EM[A](L, \uL) = \abs{\varrho}^{2} + \abs{\sgm}^{2}.
\end{equation}

\subsection{The standard energy identity and proof of
  Proposition~\ref{prop:energy-H-rho}} \label{subsec:energy} Consider
the vector field $T$, which is equal to the coordinate vector field
$\rd_{t}$ in the rectilinear coordinates $(t, x^{1}, \ldots,
x^{4})$. It can be easily checked that $T$ is Killing, i.e., $\defT{T}
= 0$.  Contracting $T$ with the energy-momentum tensor $\EM[A, \phi]$,
we then obtain the \emph{local conservation of energy}, i.e., given a
smooth solution $(A, \phi)$ to \eqref{eq:MKG} on an open subset $\calO
\subseteq \bbR^{1+4}$, we have
\begin{equation} \label{eq:energyConsv:local} \nb^{\bfa}
  (\vC{T}_{\bfa}[A, \phi] )= 0 \quad \hbox{ on } \calO.
\end{equation}
Since $T = \frac{1}{2}(L + \uL)$, we have
\begin{align}
  \vC{T}_{L} [A, \phi]=& \EM[A, \phi](T, L) = \frac{1}{2} (\abs{\covD_{L} \phi}^{2} + \abs{\scovD \phi}^{2}) + \frac{1}{2} (\abs{\alp}^{2} + \abs{\varrho}^{2} + \abs{\sgm}^{2}),  \label{eq:vC-T:L} \\
  \vC{T}_{\uL} [A, \phi]=& \EM[A, \phi](T, \uL) = \frac{1}{2}
  (\abs{\covD_{\uL} \phi}^{2} + \abs{\scovD \phi}^{2}) + \frac{1}{2}
  (\abs{\ualp}^{2} + \abs{\varrho}^{2} +
  \abs{\sgm}^{2}). \label{eq:vC-T:uL}
\end{align}
Given a (measurable) subset $S \subseteq \set{t} \times \bbR^{4}$ for
some $t \in \bbR$, the above computation implies
\begin{equation*}
  \calE_{S}[A, \phi] = \int_{S} \vC{T}_{T}[A, \phi]\, \ud x.
\end{equation*}

We are now ready to give a quick proof of \eqref{eq:energyConsv}. For
a classical solution $(A, \phi)$ in the class $C_{t} \calH^{1}
([t_{0}, t_{1}] \times \bbR^{4})$, the standard energy conservation
\eqref{eq:energyConsv} follows by integrating
\eqref{eq:energyConsv:local} over $(t_{0}, t_{1}) \times \bbR^{4}$ and
applying the divergence theorem. The case of an admissible solution
then easily follows by approximation.

We conclude this subsection with a proof of
Proposition~\ref{prop:energy-H-rho}.
\begin{proof} [Proof of Proposition~\ref{prop:energy-H-rho}]
  Note that $X_{0} = \rd_{\rho}$ and $T = \cosh y \rd_{\rho} - \sinh y
  (\rho^{-1} \rd_{y})$ in the hyperbolic coordinates $(\rho, y,
  \Tht)$.
  In the following computation, we use the orthonormal frame
  $\set{\rd_{\rho}, \rho^{-1} \rd_{y}, e_{\mfa}}$ at each point, where
  $\set{e_{\mfa}}_{\mfa=1,2,3}$ is an orthonormal frame tangent to the
  constant $\rho, y$ sphere as before. Then we compute
  \begin{align*}
    {}^{(\KG)}\EM[A, \phi](\rd_{\rho}, \rd_{\rho})
    =& \frac{1}{2} \bb( \abs{\covD_{\rho} \phi}^{2} + \abs{\rho^{-1} \covD_{y} \phi}^{2} + \abs{\scovD \phi}^{2} \bb) \\
    {}^{(\KG)}\EM[A, \phi](\rd_{\rho}, \rho^{-1} \rd_{y})
    =& \Re(\covD_{\rho} \phi \overline{\rho^{-1} \covD_{y} \phi}), \\
    {}^{(\M)} \EM[A, \phi](\rd_{\rho}, \rd_{\rho})
    =& \frac{1}{2} F(\rd_{\rho}, \rho^{-1} \rd_{y})^{2}
    + \frac{1}{2} \sum_{\mfa = 1, \ldots, 3} F(\rd_{\rho}, e_{\mfa})^{2} \\
    & + \frac{1}{2} \sum_{\mfa = 1, \ldots, 3} \rho^{-2} F(\rd_{y},
    e_{\mfa})^{2}
    + \frac{1}{2} \sum_{1 \leq \mfa < \mfb \leq 3} F(e_{\mfa}, e_{\mfb})^{2}, \\
    {}^{(\M)} \EM[A, \phi](\rd_{\rho}, \rho^{-1} \rd_{y})
    =& \sum_{\mfa = 1, \ldots, 3} F(\rd_{\rho}, e_{\mfa}) F(\rho^{-1}
    \rd_{y}, e_{\mfa}).
  \end{align*}
  By the self-similarity conditions $\iota_{\rd_{\rho}} F =
  F(\rd_{\rho}, \cdot) = 0$ and $(\covD_{\rho} + \frac{1}{\rho}) \phi
  = 0$, we have
  \begin{align*}
    \vC{T}_{\rho}[A, \phi]
    =& \cosh y \EM[A, \phi](\rd_{\rho}, \rd_{\rho}) - \sinh y \EM[A, \phi](\rho^{-1} \rd_{y}, \rd_{\rho}) \\
    =& \frac{1}{2} \bb( \frac{\cosh y}{\rho^{2}}\abs{\phi}^{2} + 2
    \frac{\sinh y}{\rho^{2}} \Re(\phi \overline{\covD_{y} \phi}) +
    \cosh y (\abs{\covD \phi}_{\calH_{\rho}}^{2} +
    \abs{F}_{\calH_{\rho}}^{2}) \bb)
  \end{align*}
  where
  \begin{equation} \label{eq:norms-H-rho} \abs{\covD
      \phi}_{\calH_{\rho}}^{2} := (g_{\calH_{\rho}}^{-1})^{\bfa \bfb}
    \covD_{\bfa} \phi \overline{\covD_{\bfb} \phi}, \quad
    \abs{F}_{\calH_{\rho}}^{2} := \frac{1}{2}
    (g_{\calH_{\rho}}^{-1})^{\bfa \bfc} (g_{\calH_{\rho}}^{-1})^{\bfb
      \bfd} F_{\bfa \bfb} F_{\bfc \bfd},
  \end{equation}
  and $g_{\calH_{\rho}}^{-1} = \rho^{-2} \rd_{y} \cdot \rd_{y} +
  \sum_{\mfa=1,2,3} e_{\mfa} \cdot e_{\mfa}$ is the induced metric on
  $\calH_{\rho}$.

  We are ready to complete the proof. Denote by $\calH_{>\rho}$ the
  region $\set{(\rho', y', \Tht') : \rho' > \rho}$. Integrate
  \eqref{eq:energyConsv:local} over the region $C_{(0, t)} \cap
  \calH_{> \rho}$, whose boundary is $S_{t} \cup (\calH_{\rho} \cap
  C_{(0, t)})$, and apply the divergence theorem. Then taking $t \to
  \infty$, the desired estimate \eqref{eq:energy-H-rho} on
  $\calH_{\rho}$ follows. \qedhere
\end{proof}

\subsection{A localized Hardy's inequality and proof of Lemma~\ref{lem:flux-decay-prelim}} \label{subsec:hardy}
We begin by stating a very general identity (valid for any dimension $d \geq 3$), which can be thought of as Hardy's inequality with all the errors terms explicit.
\begin{lemma} \label{lem:local-hardy}
Let $\phi$ be a smooth $\bbC$-valued function and $A$ be a smooth 1-form on $\bbR^{d}$ $(d \geq 3)$. Then for $0 < r_{1} < r_{2}$, we have
\begin{equation} \label{eq:local-hardy:general}
\begin{aligned}
& \hskip-2em
	\int_{r_{1}}^{r_{2}} \int \frac{1}{r^{2}} \abs{\phi}^{2} r^{d-1} \, \ud \sgm_{\bbS^{d-1}} \, \ud r 
	+ \int_{r_{1}}^{r_{2}} \int \abs{\frac{2}{d-2} \covD_{r} \phi + \frac{1}{r} \phi}^{2} r^{d-1} \, \ud \sgm_{\bbS^{d-1}} \, \ud r \\
	= & \bb( \frac{2}{d-2} \bb)^{2} \int_{r_{1}}^{r_{2}} \int \abs{\covD_{r} \phi}^{2} r^{d-1} \, \ud \sgm_{\bbS^{d-1}} \, \ud r + \frac{2}{d-2} \int \abs{\phi}^{2} r^{d-2} \, \ud \sgm_{\bbS^{d-1}} \bb\vert_{r = r_{1}}^{r_{2}} .
\end{aligned}
\end{equation}
\end{lemma}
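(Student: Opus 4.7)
The identity is purely algebraic plus one integration by parts in the radial variable, so the plan is to expand, collect, and integrate. I will first expand the second term on the left-hand side,
\[
\Bigl| \tfrac{2}{d-2} \covD_{r} \phi + \tfrac{1}{r} \phi \Bigr|^{2}
= \bigl(\tfrac{2}{d-2}\bigr)^{2} |\covD_{r} \phi|^{2}
+ \tfrac{4}{(d-2) r} \Re\bigl(\phi \, \overline{\covD_{r} \phi}\bigr)
+ \tfrac{1}{r^{2}} |\phi|^{2},
\]
and move the $\bigl(\tfrac{2}{d-2}\bigr)^{2} |\covD_{r} \phi|^{2}$ term to the right. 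The identity then reduces to showing
\[
\int_{r_{1}}^{r_{2}} \!\!\int \Bigl( \tfrac{2}{r^{2}} |\phi|^{2}
+ \tfrac{4}{(d-2) r} \Re(\phi \, \overline{\covD_{r} \phi}) \Bigr) r^{d-1} \, \ud \sgm_{\bbS^{d-1}} \, \ud r
= \tfrac{2}{d-2} \int |\phi|^{2} r^{d-2} \, \ud \sgm_{\bbS^{d-1}} \Bigl\vert_{r_{1}}^{r_{2}}.
\]

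The key point is that $A$ is real-valued, so $\covD_{r} \phi = \rd_{r} \phi + i A_{r} \phi$ satisfies $\Re(\phi \, \overline{\covD_{r} \phi}) = \Re(\phi \, \overline{\rd_{r} \phi}) = \tfrac{1}{2} \rd_{r} |\phi|^{2}$; this is the gauge invariance that makes the magnetic derivatives disappear from the cross term. Substituting this and writing $r^{d-1}$ out of the way, the integrand of the cross term becomes
\[
\tfrac{2}{d-2} r^{d-2} \rd_{r} |\phi|^{2}
= \tfrac{2}{d-2} \rd_{r}\bigl( r^{d-2} |\phi|^{2} \bigr) - 2 r^{d-3} |\phi|^{2},
\]
and the $-2 r^{d-3} |\phi|^{2}$ cancels exactly the $\tfrac{2}{r^{2}} |\phi|^{2} \cdot r^{d-1}$ contribution.

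The remaining expression is a total radial derivative, so I would conclude by applying the fundamental theorem of calculus in $r$ (with the measure $\ud \sgm_{\bbS^{d-1}}$ treated as a fixed spherical measure), which produces exactly the boundary term on the right. There is no analytic obstacle; the only things to be careful about are the coefficient $\tfrac{2}{d-2}$ (hence the hypothesis $d \geq 3$) and the fact that the identity is stated for smooth $\phi$, so no regularization is required. The statement is an exact equality rather than an inequality, which reflects that this is really an integration-by-parts identity packaged so that its non-negative left-hand side yields Hardy-type bounds via \eqref{eq:local-hardy:general} after discarding the non-negative second term on the left.
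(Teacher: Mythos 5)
Your proof is correct and is exactly the computation the paper has in mind when it writes that the lemma follows from "simple algebra plus an application of the fundamental theorem of calculus in $r$": expand the square, use that $A_r$ is real so $\Re(\phi\,\overline{\covD_r\phi})=\tfrac12\rd_r|\phi|^2$, rewrite $r^{d-2}\rd_r|\phi|^2$ as a total derivative plus $-(d-2)r^{d-3}|\phi|^2$, cancel, and integrate in $r$. Nothing is missing.
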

We omit the proof, which is a simple algebra plus an application of the fundamental theorem of calculus in $r$. Specializing to $d = 4$ and rearranging some terms, we obtain
\begin{equation} \label{eq:local-hardy}
\begin{aligned}
& \hskip-2em
	\int_{\set{r=r_{1}}} \frac{\abs{\phi}^{2}}{r} r^{3} \ud \sgm_{\bbS^{3}} + \int_{r_{1}}^{r_{2}} \int \frac{1}{r^{2}} \abs{\phi}^{2} r^{3} \, \ud \sgm_{\bbS^{3}} \, \ud r 
	+ \int_{r_{1}}^{r_{2}} \int \abs{r^{-1} \covD_{r} (r \phi)}^{2} r^{3} \, \ud \sgm_{\bbS^{3}} \, \ud r \\
	= & 	\int_{\set{r=r_{2}}} \frac{\abs{\phi}^{2}}{r} r^{3} \ud \sgm_{\bbS^{3}} + \int_{r_{1}}^{r_{2}} \int \abs{\covD_{r} \phi}^{2} r^{3} \, \ud \sgm_{\bbS^{3}} \, \ud r.
\end{aligned}
\end{equation}

The last term on the left-hand side of \eqref{eq:local-hardy} is always non-negative; moreover, for $\phi \in \calS(\bbR^{4})$, the first term on the right-hand side vanishes as $r_{2} \to \infty$. By approximation, the following gauge invariant version of Hardy's inequality on $\bbR^{4}$ follows.
\begin{corollary} \label{cor:Hardy:const-t}
Let $\phi, A \in \dot{H}^{1}(\bbR^{4})$. Then $r^{-1} \phi \in L^{2}(\bbR^{4})$ and $\phi \rst_{\rd B_{r}} \in L^{2}(\rd B_{r})$ for every $r >0$. Moreover, we have
\begin{equation} \label{eq:Hardy:const-t}
	\nrm{\frac{\phi}{r}}_{L^{2}(\bbR^{4})}^{2}
	+ \sup_{r > 0} \frac{1}{r} \nrm{\phi}_{L^{2}(\rd B_{r})}^{2}
	\leq \nrm{\covD_{r} \phi}_{L^{2}(\bbR^{4})}^{2}.
\end{equation}
\end{corollary}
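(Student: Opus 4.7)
My plan is to derive the corollary directly from the pointwise identity \eqref{eq:local-hardy} of Lemma~\ref{lem:local-hardy}, by choosing two convenient limiting regimes for $r_{1}, r_{2}$, followed by a standard density argument. I would first establish both bounds for $\phi \in C^{\infty}_{0}(\bbR^{4})$ with $A$ smooth on a neighborhood of $\supp \phi$, and then pass to the general case.

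The first summand I would obtain by taking $r_{1} \to 0^{+}$ and $r_{2} \to \infty$ in \eqref{eq:local-hardy}: the boundary term at $r_{2}$ vanishes by compact support of $\phi$, while the one at $r_{1}$ equals $r_{1}^{2} \int_{\bbS^{3}} \abs{\phi(r_{1}, \cdot)}^{2} \ud \sgm$ and tends to $0$ by continuity of $\phi$. Dropping the non-negative term $\int \abs{r^{-1} \covD_{r}(r\phi)}^{2} \ud x$ on the LHS yields $\nrm{\phi/r}_{L^{2}(\bbR^{4})}^{2} \leq \nrm{\covD_{r} \phi}_{L^{2}(\bbR^{4})}^{2}$. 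For the trace bound, I would fix $r > 0$ and take $r_{1} = r$, $r_{2} \to \infty$; after the boundary term at $r_{2}$ is discarded and the two non-negative interior integrals on the LHS corresponding to $\int_{\bbR^{4} \setminus B_{r}} \abs{\phi}^{2}/r'^{2} \, \ud x$ and $\int_{\bbR^{4} \setminus B_{r}} \abs{r'^{-1}\covD_{r'}(r'\phi)}^{2} \, \ud x$ are dropped, the identity reduces to $\frac{1}{r}\nrm{\phi}_{L^{2}(\rd B_{r})}^{2} \leq \nrm{\covD_{r'}\phi}_{L^{2}(\bbR^{4} \setminus B_{r})}^{2} \leq \nrm{\covD_{r}\phi}_{L^{2}(\bbR^{4})}^{2}$, and taking the supremum over $r > 0$ produces the second summand.

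To extend to general $\phi, A \in \dot{H}^{1}(\bbR^{4})$, I would choose smooth compactly supported sequences $\phi^{(n)}, A^{(n)}$ converging to $\phi, A$ in $\dot{H}^{1}$. The Sobolev embedding $\dot{H}^{1}(\bbR^{4}) \hookrightarrow L^{4}(\bbR^{4})$ ensures that $A\phi \in L^{2}$, so $\covD_{r} \phi$ is well-defined in $L^{2}$ and $\covD_{r} \phi^{(n)} \to \covD_{r} \phi$ in $L^{2}(\bbR^{4})$. Applying the smooth-case bounds to the differences $\phi^{(n)} - \phi^{(m)}$ shows that $\phi^{(n)}/r$ is Cauchy in $L^{2}(\bbR^{4})$ and that $\phi^{(n)} \rst_{\rd B_{r}}$ is Cauchy in $L^{2}(\rd B_{r})$ with constants uniform in $r > 0$. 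This uniformity---the main technical point, though it follows for free from the quantitative trace bound applied to differences---allows the supremum over $r$ to commute with the limit $n \to \infty$, and the inequality for $\phi$ follows by passing to the limit.
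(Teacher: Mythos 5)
Your argument follows the same route as the paper's one-line sketch preceding the corollary: take the identity \eqref{eq:local-hardy}, discard the non-negative term $\int |r^{-1}\covD_r(r\phi)|^2 r^3\,\ud\sgm\,\ud r$, send $r_2\to\infty$ (with $r_1\to 0$ for the volume term and $r_1=r$ fixed for the trace term), and pass to general $\dot H^1$ data by density, using $\dot H^1\hookrightarrow L^4$ to control the magnetic term $A_r\phi$. The density step is handled correctly: applying the smooth-case bound to $(A^{(n)},\phi^{(n)}-\phi^{(m)})$ gives the needed Cauchy property because $\|A^{(n)}_r(\phi^{(n)}-\phi^{(m)})\|_{L^2}\lesssim \|A^{(n)}\|_{L^4}\|\phi^{(n)}-\phi^{(m)}\|_{L^4}$.

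One caveat, which you inherit from the paper's own sketch rather than introduce: this argument produces each of the two terms on the left of \eqref{eq:Hardy:const-t} bounded \emph{separately} by $\|\covD_r\phi\|_{L^2}^2$ (so their sum is bounded only with constant $2$), not the literal sum with constant $1$. The identity with $r_1=r$, $r_2\to\infty$ yields $\frac{1}{r}\|\phi\|^2_{L^2(\rd B_r)}+\int_{\bbR^4\setminus B_r}|\phi|^2/|x|^2\,\ud x\le\|\covD_r\phi\|^2_{L^2}$, with the integral only over the exterior; adding the $r_1\to 0$, $r_2=r$ identity reproduces the boundary term on the right as well, so the two cannot be combined to control the global $\|\phi/r\|^2_{L^2}$ plus the trace term at once. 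Indeed, a radial test with $A=0$ and $f(r)=(1+r^2)^{-a/2}$ for $1<a<2$ violates the sum bound with constant $1$. Since the paper only ever invokes the two inequalities individually, this is a harmless imprecision in the statement of \eqref{eq:Hardy:const-t} and not a flaw specific to your proof.
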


We are ready to establish Lemma~\ref{lem:flux-decay-prelim}.
\begin{proof} [Proof of Lemma~\ref{lem:flux-decay-prelim}]
We first consider the case when $(A, \phi)$ is smooth. Then by local conservation of energy, we have
\begin{equation*}
	\EFlux_{\rd C_{[t_{0}, t_{1}]}} = \frac{1}{2} \int_{\rd C_{[t_{0}, t_{1}]}} \vC{T}_{L}[A, \phi] r^{3} \, \ud v \ud \sgm_{\bbS^{3}}
\end{equation*}
and hence the non-negativity and additivity are obvious. The first local Hardy's inequality \eqref{eq:local-hardy:G} is a consequence of \eqref{eq:local-hardy} applied to the hypersurface $\rd C_{[t_{0}, t_{1}]} = \set{u = 0, \, r \in [t_{0}, t_{1}]}$ in the coordinate system $(u, r, \Tht)$, whereas the second local Hardy's inequality \eqref{eq:bound4G} follows from a similar argument used to derive Corollary~\ref{cor:Hardy:const-t}.

Now we turn to the general case. Since $(A, \phi)$ is an admissible $C_{t} \calH^{1}$ solution, there exists a sequence of smooth solutions converging to $(A, \phi)$ in $C_{t} \calH^{1}(I \times \bbR^{4})$. Since all quantities in the conclusions of the lemma are continuous with respect to the $C_{t} \calH^{1}(I \times \bbR^{4})$ topology, the general case follows from the smooth case by approximation. \qedhere
\end{proof}

\subsection{Monotonicity formulae and proofs of Propositions~\ref{prop:monotonicity}, \ref{prop:monotonicity:t-like}} \label{subsec:monotonicity}
Here we derive monotonicity formulae associated with the vector fields $X_{\veps}$, which are defined in the polar coordinates as
\begin{equation} \label{eq:X-rho-eps}
	X_{\veps} = \frac{1}{\rho_{\veps}} ((t+\veps) \rd_{t} + r \rd_{r}), \quad
	\rho_{\veps} = \sqrt{(t+\veps)^{2} - r^{2}},
\end{equation}
where $\veps \geq 0$, $t > -\veps$. 

The starting point for derivation of the monotonicity formula \eqref{eq:monotonicity:exact}, as well as Propositions~\ref{prop:monotonicity} and~\ref{prop:monotonicity:t-like}, is to contract the energy-momentum tensor $\EM$ with one of the vector fields $X_{\veps}$. Due to the unfavorable contribution of ${}^{(\KG)} \EM$, however, several additional modifications are necessary. To simplify the discussion, we first restrict to the case $\veps = 0$. 
The reader should keep in mind that the general case follows simply by translating in time by $\veps$.

Using the formula \eqref{eq:defTcoord}, we compute
\begin{equation*}
	\frac{1}{2} \defT{X_{0}}^{\sharp} 
	= \frac{1}{\rho^{3}} \bb(  \rd_{y} \cdot \rd_{y} + \frac{1}{\sinh^{2} y} (g^{-1}_{\bbS^{3}}) \bb) 
	= \frac{1}{\rho} (\met^{-1} + X_{0} \cdot X_{0}).
\end{equation*}
Hence we have
\begin{align}
	\sC{X_{0}} 
=& 	{}^{(\M)}\EM_{\bfa \bfb} (\frac{1}{2} \defT{X_{0}}^{\sharp})^{\bfa \bfb} 
	+ {}^{(\KG)}\EM_{\bfa \bfb} (\frac{1}{2} \defT{X_{0}}^{\sharp})^{\bfa \bfb} \notag \\
=&	 \frac{1}{\rho} \abs{\iota_{X_{0}} F}^{2} + \frac{1}{\rho} \abs{\covD_{X_{0}} \phi}^{2} - \frac{1}{\rho} \covD_{\bfa} \phi \overline{\covD^{\bfa} \phi}. \label{eq:monotonicity:deriv:1}
\end{align}
where $\abs{\iota_{X_{0}} F}^{2} = \met(\iota_{X_{0}} F, \iota_{X_{0}} F) \geq 0$, since $X_{0}$ is time-like. 
The first term on \eqref{eq:monotonicity:deriv:1} is satisfactory in view of our goal \eqref{eq:monotonicity:exact}, but the rest is not. To remove the last term, we use the currents $\vC{w_{0}}$ and $\sC{w_{0}}$ with $w_{0} = \frac{1}{\rho}$ and compute
\begin{align}
	\sC{X_{0}} + \sC{w_{0}}
	=& \frac{1}{\rho} \abs{\iota_{X_{0}} F}^{2} + \frac{1}{\rho} \abs{\covD_{X_{0}} \phi}^{2} 
		- \frac{1}{\rho^{3}}  \abs{\phi}^{2}. \label{eq:monotonicity:deriv:2}
\end{align}

Now we introduce an auxiliary divergence identity, which is related to Hardy's inequality in the $\rho$ variable. Define $\vC{\Hardy_{0}}[\phi]$ in the hyperbolic coordinates $(\rho, y, \Tht)$ by
\begin{equation} \label{eq:vC-Hardy}
	\vC{\Hardy_{0}}_{\rho}[\phi] := - \frac{\abs{\phi}^{2}}{\rho^{2}}, 
\end{equation}
where the remaining components are set to be zero. Define also
\begin{equation} \label{eq:sC-Hardy}
	\sC{\Hardy_{0}}[\phi] := \frac{2}{\rho^{3}} \abs{\phi}^{2} + \frac{1}{\rho^{2}} \rd_{\rho} \abs{\phi}^{2}.
\end{equation}
Then a simple computation shows that
\begin{equation} \label{eq:div-Hardy}
	\nb^{\bfa} (\vC{\Hardy_{0}}_{\bfa}[\phi]) = \sC{\Hardy_{0}}[\phi].
\end{equation}
Since $\rd_{\rho} \abs{\phi}^{2} = 2 \Re(\phi \overline{\covD_{\rho} \phi})$ and $X_{0} = \rd_{\rho}$, we arrive at
\begin{equation} \label{eq:monotonicity:deriv:3}
	\sC{X_{0}} + \sC{w_{0}} + \sC{\Hardy_{0}}
=	\frac{1}{\rho} \abs{\iota_{X_{0}} F}^{2} + \frac{1}{\rho} \abs{(\covD_{X_{0}} + \frac{1}{\rho}) \phi}^{2},
\end{equation}
which is precisely the integrand in the space-time integral in \eqref{eq:monotonicity:exact}. 

The preceding computation suggests that we should define a new 1- and 0-currents by $\vC{X_{0}} + \vC{w_{0}} + \vC{\Hardy_{0}}$ and $\sC{X_{0}} + \sC{w_{0}} + \sC{\Hardy_{0}}$, respectively. To make the $L$ and $\uL$ components of the 1-current look more favorable, however, it turns out to be convenient to add in an auxiliary current $\vC{\Null_{0}}$ defined by 
\begin{equation} \label{eq:vC-Null}
	\vC{\Null_{0}}_{L}[\phi] = \frac{1}{2 r^{3}} L( r^{3} \frac{t}{\rho r} \abs{\phi}^{2} ), \quad
	\vC{\Null_{0}}_{\uL}[\phi] = - \frac{1}{2 r^{3}} \uL( r^{3} \frac{t}{\rho r} \abs{\phi}^{2} ), 
\end{equation}
where the remaining components are set to be zero. By equality of mixed partials $L \uL = 4 \rd_{v} \rd_{u} = 4 \rd_{u} \rd_{v} = \uL L$, it follows that
\begin{equation} \label{eq:div-Null}
	\nb^{\bfa} (\vC{\Null_{0}}_{\bfa}[\phi]) = 0.
\end{equation}

For $\mvC{X_{0}} := \vC{X_{0}} + \vC{w_{0}} + \vC{\Hardy_{0}} + \vC{\Null_{0}}$, we claim that
\begin{align} 
	\mvC{X_{0}}_{L} 
=&	\frac{1}{2} \bb(\frac{v}{u}\bb)^{\frac{1}{2}} (\abs{r^{-1}\covD_{L}(r \phi)}^{2} + \abs{\alp}^{2})
	+ \frac{1}{2} \bb(\frac{u}{v}\bb)^{\frac{1}{2}} \bb( \abs{\scovD \phi}^{2} + \frac{\abs{\phi}^{2}}{r^{2}} + \abs{\varrho}^{2} + \abs{\sgm}^{2} \bb), 
						\label{eq:monotonicity:mvC-0-L} \\
	\mvC{X_{0}}_{\uL}
=&	\frac{1}{2} \bb(\frac{u}{v}\bb)^{\frac{1}{2}} (\abs{r^{-1}\covD_{\uL}(r \phi)}^{2} + \abs{\ualp}^{2})
	+ \frac{1}{2} \bb(\frac{v}{u}\bb)^{\frac{1}{2}} \bb( \abs{\scovD \phi}^{2} + \frac{\abs{\phi}^{2}}{r^{2}} + \abs{\varrho}^{2} + \abs{\sgm}^{2} \bb). 
						\label{eq:monotonicity:mvC-0-uL} 
\end{align}
We will prove \eqref{eq:monotonicity:mvC-0-L}, leaving the task of verifying \eqref{eq:monotonicity:mvC-0-uL} to the reader. Using the relations
\begin{equation*}
\rho^{2} = uv, \quad X_{0} = \frac{1}{2} (\frac{v}{\rho} L + \frac{u}{\rho} \uL), 
\end{equation*}
and the null decomposition formulae \eqref{eq:null-decomp:Dphi}, \eqref{eq:null-decomp:F}, we have
\begin{align*}
	\vC{X_{0}}_{L}[A, \phi]
=&	\frac{1}{2} \bb( \frac{v}{\rho} \abs{\covD_{L} \phi}^{2} + \frac{u}{\rho} \abs{\scovD \phi}^{2} \bb)
	+ \frac{1}{2} \bb( \frac{v}{\rho} \abs{\alp}^{2} + \frac{u}{\rho} (\abs{\varrho}^{2} + \abs{\sgm}^{2}) \bb).
\end{align*}
On the other hand, we compute
\begin{align*}
	\vC{w_{0}}_{L}[A, \phi]
=	\frac{1}{\rho} \Re(\phi \overline{\covD_{L} \phi}) + \frac{1}{2} \frac{1}{\rho v} \abs{\phi}^{2}, \quad
	\vC{\Hardy_{0}}_{L}[\phi] 
=	- \frac{1}{\rho v} \abs{\phi}^{2}.
\end{align*}
To prove \eqref{eq:monotonicity:mvC-0-L}, it suffices to verify
\begin{equation} \label{eq:monotonicity:mvC-0-L:key}
	\frac{1}{2} \frac{v}{\rho} \abs{\covD_{L} \phi}^{2}
	+ \vC{w_{0}}_{L}[A, \phi] + \vC{\Hardy_{0}}_{L}[\phi] + \vC{\Null_{0}}_{L}[\phi]
= \frac{1}{2} \frac{v}{\rho} \abs{r^{-1} \covD_{L} (r \phi)}^{2} + \frac{1}{2} \frac{u}{\rho} \frac{\abs{\phi}^{2}}{r^{2}}.
\end{equation}
For this purpose, it is convenient to work with $\psi = r \phi$. We have
\begin{align*}
	\hbox{LHS of }\eqref{eq:monotonicity:mvC-0-L:key}
	=& \frac{1}{2} \frac{v}{\rho} \abs{\covD_{L}(\psi/r)}^{2} 
		+ \frac{1}{\rho r} \Re(\psi \overline{\covD_{L}(\psi/r)}) + \frac{1}{2} \frac{1}{\rho v} \frac{\abs{\psi}^{2}}{r^{2}} 
		- \frac{1}{\rho v} \frac{\abs{\psi}}{r^{2}}
		+ \frac{1}{2 r^{3}} L(\frac{t}{\rho} \abs{\psi}^{2}) \\
	=& \frac{1}{2} \frac{v}{\rho} \abs{r^{-1} \covD_{L} \psi}^{2} + \frac{1}{2} \bb(  \frac{v}{\rho r^{2}} - \frac{2}{\rho r} - \frac{1}{\rho v} + \frac{1}{r} L(t/\rho) \bb) \frac{\abs{\psi}^{2}}{r^{2}}
\end{align*}
Since $r^{-1} L(t/\rho) = 1/(\rho r) - t/(\rho r v) = 1/(\rho v)$, we see that
\begin{align*}
\frac{v}{\rho r^{2}} - \frac{2}{\rho r} - \frac{1}{\rho v} + \frac{1}{r} L(t/\rho)
= \frac{v}{\rho r^{2}} - \frac{2}{\rho r} 
= \frac{u}{\rho r^{2}},
\end{align*}
which establishes \eqref{eq:monotonicity:mvC-0-L:key}, and hence \eqref{eq:monotonicity:mvC-0-L}.

We now return to the general case $\veps \geq 0$. Define $\vC{X_{\veps}}$, $\vC{w_{\veps}}$, $\vC{\Hardy_{\veps}}$, $\vC{\Null_{\veps}}$ and their 0-current counterparts by pulling back the $\veps = 0$ versions defined above along the map $(t, r, \Tht) \mapsto (t+\veps, r, \Tht)$. For $\vC{X_{\veps}}$, $\vC{w_{\veps}}$, $\sC{X_{\veps}}$ and $\sC{w_{\veps}}$, note that this definition agrees with that from Section~\ref{subsec:noether} using $X_{\veps}$ as in \eqref{eq:X-rho-eps} and $w_{\veps} := 1/\rho_{\veps}$.
Let
\begin{equation} \label{eq:monotonicity:mvC-msC}
\begin{aligned}
	\mvC{X_{\veps}}[A, \phi] :=& \vC{X_{\veps}}[A, \phi] + \vC{w_{\veps}}[A, \phi] + \vC{\Hardy_{\veps}}[\phi] + \vC{\Null_{\veps}}[\phi], \\
	\msC{X_{\veps}}[A, \phi] :=& \sC{X_{\veps}}[A, \phi] + \sC{w_{\veps}}[A, \phi] + \sC{\Hardy_{\veps}}[\phi].
\end{aligned}
\end{equation}
We summarize the discussion so far in the following lemma, which follows easily by pulling back the above computations along $(t, r, \Tht) \mapsto (t+\veps, r, \Tht)$.
\begin{lemma} \label{lem:monotonicity}
Let $(A ,\phi)$ be a smooth solution to \eqref{eq:MKG} on an open subset $\calO \subseteq C_{(0, \infty)}$.
The 1- and 0-currents $\mvC{X_{\veps}}[A, \phi]$ and $\msC{X_{\veps}}$ obeys the divergence identity
\begin{equation} \label{eq:monotonicity:local}
	\nb^{\bfa} (\mvC{X_{\veps}}_{\bfa}[A, \phi]) = \msC{X_{\veps}}[A, \phi],
\end{equation}
where $\msC{X_{\veps}} = \msC{X_{\veps}}[A, \phi]$ takes the form
\begin{equation} \label{eq:monotonicity:msC}
	\msC{X_{\veps}} = \frac{1}{\rho_{\veps}} \abs{\iota_{X_{\veps}} F}^{2} + \frac{1}{\rho_{\veps}} \abs{(\covD_{X_{\veps}} + \frac{1}{\rho_{\veps}}) \phi}^{2}.
\end{equation}
Here, $\abs{\iota_{X_{\veps}} F}^{2} = \met(\iota_{X_{\veps}} F, \iota_{X_{\veps}} F) \geq 0$. Moreover, the $L$ and $\uL$ components of $\mvC{X_{\veps}} = \mvC{X_{\veps}}[A, \phi]$ take the form
\begin{align} 
	\mvC{X_{\veps}}_{L}
=&	\frac{1}{2} \bb(\frac{v_{\veps}}{u_{\veps}}\bb)^{\frac{1}{2}} (\abs{r^{-1}\covD_{L}(r \phi)}^{2} + \abs{\alp}^{2})
	+ \frac{1}{2} \bb(\frac{u_{\veps}}{v_{\veps}}\bb)^{\frac{1}{2}} \bb( \abs{\scovD \phi}^{2} + \frac{\abs{\phi}^{2}}{r^{2}} + \abs{\varrho}^{2} + \abs{\sgm}^{2} \bb),  	\label{eq:monotonicity:mvC-L}\\
	\mvC{X_{\veps}}_{\uL}
=&	\frac{1}{2} \bb(\frac{u_{\veps}}{v_{\veps}}\bb)^{\frac{1}{2}} (\abs{r^{-1}\covD_{\uL}(r \phi)}^{2} + \abs{\ualp}^{2})
	+ \frac{1}{2} \bb(\frac{v_{\veps}}{u_{\veps}}\bb)^{\frac{1}{2}} \bb( \abs{\scovD \phi}^{2} + \frac{\abs{\phi}^{2}}{r^{2}} + \abs{\varrho}^{2} + \abs{\sgm}^{2} \bb),		\label{eq:monotonicity:mvC-uL}
\end{align}
where $v_{\veps} := (t+\veps) + r$ and $u_{\veps} := (t+\veps) - r$.
\end{lemma}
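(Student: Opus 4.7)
The plan is to prove the $\veps = 0$ case first, then reduce the general $\veps \geq 0$ case to it via time translation. For $\veps = 0$, the entire lemma is obtained by summing four separately established divergence identities:
\begin{equation*}
\nb^\bfa \vC{X_0}_\bfa = \sC{X_0}, \quad \nb^\bfa \vC{w_0}_\bfa = \sC{w_0}, \quad \nb^\bfa \vC{\Hardy_0}_\bfa = \sC{\Hardy_0}, \quad \nb^\bfa \vC{\Null_0}_\bfa = 0,
\end{equation*}
from \eqref{eq:vsC-X:div}, \eqref{eq:consv2} (with $w_0 = 1/\rho$), \eqref{eq:div-Hardy}, and \eqref{eq:div-Null} respectively. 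Adding these gives \eqref{eq:monotonicity:local}; the content is then to compute the resulting right- and left-hand sides explicitly.

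For the right-hand side, I would compute $\frac{1}{2}\defT{X_0}^\sharp = \frac{1}{\rho}(\met^{-1} + X_0 \otimes X_0)$ directly from \eqref{eq:defTcoord} in hyperbolic coordinates, then contract with $\EM$ to obtain $\sC{X_0} = \frac{1}{\rho}|\iota_{X_0} F|^2 + \frac{1}{\rho}|\covD_{X_0}\phi|^2 - \frac{1}{\rho}\covD^\bfa\phi \overline{\covD_\bfa\phi}$. Adding $\sC{w_0}$ (computed from \eqref{eq:consv2} with $\Box(1/\rho) = -2/\rho^3$ in $(4+1)$ dimensions) cancels the trace term and produces $-\frac{1}{\rho^3}|\phi|^2$. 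Finally, using $X_0 = \rd_\rho$ and $\rd_\rho|\phi|^2 = 2\Re(\phi \overline{\covD_\rho \phi})$ in \eqref{eq:sC-Hardy}, the Hardy correction $\sC{\Hardy_0}$ combines the $|\covD_{X_0}\phi|^2$ and $-\frac{1}{\rho^2}|\phi|^2$ contributions into the complete square $|(\covD_{X_0}+\frac{1}{\rho})\phi|^2$, yielding \eqref{eq:monotonicity:msC}. The $\vC{\Null_0}$ piece plays no role here, since it is divergence-free.

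For the $L$ and $\uL$ components, I would use $\rho^2 = uv$, $X_0 = \frac{1}{2}(\frac{v}{\rho}L + \frac{u}{\rho}\uL)$, and the null decompositions \eqref{eq:null-decomp:Dphi}, \eqref{eq:null-decomp:F} to expand each of the four contributions along $L$. Substituting $\psi = r\phi$ and expanding $\covD_L(\psi/r)$ gives an explicit polynomial in $|\psi|^2/r^2$ with coefficient $\frac{v}{\rho r^2} - \frac{2}{\rho r} - \frac{1}{\rho v} + \frac{1}{r} L(t/\rho)$; the role of the auxiliary $\vC{\Null_0}$ is precisely to produce the last term, and using $r^{-1} L(t/\rho) = 1/(\rho v)$ collapses this coefficient to $u/(\rho r^2)$, yielding \eqref{eq:monotonicity:mvC-0-L}. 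The $\uL$ component is then obtained by the symmetry $L \leftrightarrow \uL$, $u \leftrightarrow v$ of all four currents, noting that $\vC{\Null_0}_{\uL}$ differs from $\vC{\Null_0}_L$ by precisely the sign needed for the same cancellation.

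To pass from $\veps = 0$ to general $\veps \geq 0$, I would observe that the time translation $\tau_\veps : (t, r, \Tht) \mapsto (t+\veps, r, \Tht)$ is an isometry of Minkowski space taking $C_{(-\veps, \infty)}$ onto $C_{(0, \infty)}$, and that the gauge-invariant quantities entering all four currents transform naturally under $\tau_\veps$. Pulling back a solution $(A, \phi)$ on $\calO$ to $\tau_\veps^{-1}(\calO)$ and applying the $\veps=0$ identities there, then pushing forward, gives the result: by construction $\tau_\veps^* X_0 = X_\veps$, $\tau_\veps^* \rho = \rho_\veps$, and the currents defined by pullback in \eqref{eq:monotonicity:mvC-msC} satisfy the stated formulae with $u, v$ replaced by $u_\veps, v_\veps$. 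The main obstacle is purely computational — the algebraic verification of \eqref{eq:monotonicity:mvC-0-L:key}, and in particular identifying the correct $\vC{\Null_0}$ that makes the $L$-component boundary term a manifestly positive square; everything else is bookkeeping.
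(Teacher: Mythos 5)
Your proposal follows the paper's own proof essentially step for step: the same four-current decomposition $\vC{X_0}+\vC{w_0}+\vC{\Hardy_0}+\vC{\Null_0}$, the same computation of $\frac12\defT{X_0}^\sharp$, the same substitution $\psi=r\phi$ and the identity $r^{-1}L(t/\rho)=1/(\rho v)$ to collapse the $L$-component into a square, and the same time-translation argument to pass to general $\veps$.

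One slip to fix: with the paper's convention $\Box=\covD^{\mu}\covD_{\mu}$ and the $(-,+,+,+,+)$ signature, one has $\Box(1/\rho)=+2/\rho^{3}$ (not $-2/\rho^{3}$). This is not cosmetic: the term $-\tfrac12\Box(1/\rho)\,|\phi|^{2}$ in $\sC{w_0}$ must come out as $-\tfrac{1}{\rho^{3}}|\phi|^{2}$ so that, together with $\sC{\Hardy_0}=\tfrac{2}{\rho^{3}}|\phi|^{2}+\tfrac{1}{\rho^{2}}\rd_{\rho}|\phi|^{2}$, the three pieces combine to the complete square $\tfrac{1}{\rho}|(\covD_{X_0}+\tfrac{1}{\rho})\phi|^{2}$. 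Your stated conclusion ($-\tfrac{1}{\rho^{3}}|\phi|^{2}$) is correct, but it is inconsistent with the value of $\Box(1/\rho)$ you quote; with your sign the square would not close.
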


Here we give a quick proof of \eqref{eq:monotonicity:exact} for a smooth solution $(A, \phi)$ on $\bbR^{1+4}$. By $\calF_{\rd C_{[t_{0}, t_{1}]}} = 0$, $\G_{\rd S_{t_{1}}} = 0$ and Lemma~\ref{lem:flux-decay-prelim}, note that $F = 0$ and $\phi = 0$ on the boundary $\rd C_{[t_{0}, t_{1}]}$. Integrate \eqref{eq:monotonicity:local} with $\veps = 0$ over $C_{[t_{0}, t_{1}]}$ and apply the divergence theorem. The boundary term on $\rd C_{[t_{0}, t_{1}]}$ vanishes thanks to $F, \phi = 0$, and thus \eqref{eq:monotonicity:exact} follows. 

In the preceding proof, however, note from \eqref{eq:monotonicity:mvC-0-L} that there is a weight $(\frac{v}{u})^{1/2}$ in the boundary term, which would blow up if $\covD_{L}(r\phi)$ and $\alp_{A}$ were not exactly zero on $\rd C_{[t_{0}, t_{1}]}$. We now turn to the proof of Proposition~\ref{prop:monotonicity}, whose goal is exactly to deal with this issue.
\begin{proof} [Proof of Proposition~\ref{prop:monotonicity}]
As the hypothesis \eqref{eq:monotonicity:hyp} and the conclusion \eqref{eq:monotonicity} only involve quantities which are continuous with respect to the $C_{t} \calH^{1}(I \times \bbR^{4})$ topology, it suffices to consider the case when $(A, \phi)$ is smooth. Integrating \eqref{eq:monotonicity:local} with $\veps > 0$ over $C_{[\veps, 1]}$ and integrating by parts, we obtain
\begin{equation} \label{eq:monotonicity:pf:1}
\begin{aligned}
& \hskip-2em
 \int_{S_{1}} \mvC{X_{\veps}}_{T}[A, \phi] \, \ud x 
+ \iint_{C_{[\veps, 1]}} \frac{1}{\rho_{\veps}} \abs{\iota_{X_{\veps}} F}^{2} + \frac{1}{\rho_{\veps}} \abs{(\covD_{X_{\veps}}  + \frac{1}{\rho_{\veps}}) \phi}^{2} \, \ud t \ud x \\
= & \int_{S_{\veps}} \mvC{X_{\veps}}_{T}[A, \phi] \, \ud x 
		+ \frac{1}{2} \int_{\rd C_{[\veps, 1]}} \mvC{X_{\veps}}_{L} [A, \phi] r^{3} \, \ud v \ud \sgm_{\bbS^{3}}.
\end{aligned}
\end{equation}
We claim that the right-hand side is bounded from above by $\aleq E$. We begin with the first term. On $S_{\veps}$, we have the pointwise bound
\begin{equation*}
	\mvC{X_{\veps}}_{T}[A, \phi]
	\aleq \mvC{T}_{T}[A, \phi] + \frac{1}{r^{2}} \abs{\phi}^{2},
\end{equation*}
since $(u_{\veps}, v_{\veps}) \aeq 1$ and $(v_{\veps}, u_{\veps}) \aeq 1$ on $S_{\veps}$. By \eqref{eq:monotonicity:hyp}, Lemma~\ref{lem:flux-decay-prelim} and \eqref{eq:local-hardy} applied to $\phi$ on $S_{\veps}$ with $r_{1} = 0$, $r_{2} = \veps$, it follows that the first term on the right-hand side of \eqref{eq:monotonicity:pf:1} is bounded by $\aleq E$.

We now consider the last term in \eqref{eq:monotonicity:pf:1}. On $\rd C_{[\veps, 1]}$, we have
\begin{equation*}
\mvC{X_{\veps}}_{L}[A, \phi]
\aleq	\veps^{-\frac{1}{2}} \bb( \abs{\covD_{L}\phi}^{2} + \frac{1}{r^{2}}\abs{\phi}^{2} + \abs{\alp}^{2} \bb) + \vC{T}_{L}[A, \phi],
\end{equation*}
Then by \eqref{eq:monotonicity:hyp}, Lemma~\ref{lem:flux-decay-prelim} and the fact that $t = r$ on $\rd C$, the last term in \eqref{eq:monotonicity:pf:1} is bounded by $\aleq E$ as desired.
\end{proof}

We end this section with a proof of Proposition~\ref{prop:monotonicity:t-like}.
\begin{proof} [Proof of Proposition~\ref{prop:monotonicity:t-like}]
As before, by approximation, it suffices to consider the case when $(A, \phi)$ is smooth. Let $\dlt \in [\dlt_{0}, \dlt_{1}]$ be a number to be determined below. Integrating \eqref{eq:monotonicity:local} with $\veps = 0$ over $C^{\dlt}_{[t_{0}, 1]}$ and using the divergence theorem, we see that \eqref{eq:monotonicity:t-like} would follow if there exists $\dlt \in [\dlt_{0}, \dlt_{1}]$ such that
\begin{equation} \label{eq:monotonicity:t-like-a}
	\int_{\rd C^{\dlt}_{[t_{0}, 1]}} \mvC{X_{0}}_{L}[A, \phi] \, r^{3} \, \ud v \ud \sgm_{\bbS^{3}} \aleq \bb( (\dlt_{1}/t_{0})^{\frac{1}{2}} + \abs{\log(\dlt_{1}/\dlt_{0})}^{-1} \bb) E.
\end{equation}
The contribution of the term with the weight $(u_{0} / v_{0})^{1/2}$ in \eqref{eq:monotonicity:mvC-L} is easy to treat; indeed, using localized Hardy's inequality and local conservation of energy, we have
\begin{align*}
& \hskip-2em
	\int_{\rd C^{\dlt}_{[t_{0}, 1]}} \frac{1}{2} \bb(\frac{u}{v}\bb)^{\frac{1}{2}} \bb( \abs{\scovD \phi}^{2} + \frac{\abs{\phi}^{2}}{r^{2}} + \abs{\varrho}^{2} + \abs{\sgm}^{2} \bb) \, r^{3} \, \ud v \ud \sgm_{\bbS^{3}} \\
	\aleq & \bb(\frac{\dlt_{1}}{t_{0}} \bb)^{1/2}\bb( \int_{\rd C^{\dlt}_{[t_{0}, 1]}} \vC{T}_{L}[A, \phi] \, r^{3} \, \ud v \ud \sgm_{\bbS^{3}} + \calE_{S_{1} \setminus S_{1}^{\dlt}}[A, \phi] + \G_{S_{1}}[\phi] \bb) \aleq \bb( \frac{\dlt_{1}}{t_{0}} \bb)^{1/2} E.
\end{align*}

It remains to treat the term with the weight $(v_{0} / u_{0})^{1/2}$ in \eqref{eq:monotonicity:mvC-L}. Note that
\begin{align*}
	r^{-1} \covD_{L}(r \phi) 
	= & (\covD_{L} + \frac{1}{r}) \phi
	= 2 \bb( \frac{u_{\veps}}{v_{\veps}} \bb)^{\frac{1}{2}} (\covD_{X_{\veps}} + \frac{1}{\rho_{\veps}}) \phi
		 - \bb( \frac{u_{\veps}}{v_{\veps}} \bb) \covD_{\uL} \phi
		 + \bb( \frac{u_{\veps}}{v_{\veps}} \bb) \frac{1}{r} \phi,\\
	\alp_{\mathfrak{a}} 
	= & F(L, e_{\mathfrak{a}}) 
	= 2 \bb(\frac{u_{\veps}}{v_{\veps}} \bb)^{\frac{1}{2}} F(X_{\veps}, e_{\mathfrak{a}}) - \bb( \frac{u_{\veps}}{v_{\veps}} \bb) F(\uL, e_{\mathfrak{a}}).
\end{align*}
Note that $u \leq u_{\veps}$ and $v \leq v_{\veps}$. Furthermore $u_{\veps} \leq 2 u$ on $\rd C^{\dlt}_{[t_{0}, 1]}$ since $2 \veps \leq \dlt_{0}$. Hence,
\begin{align}
& \hskip-2em
	\int_{\rd C^{\dlt}_{[t_{0}, 1]}} \frac{1}{2} \bb(\frac{v}{u}\bb)^{\frac{1}{2}} ( \abs{r^{-1} \covD_{L} (r \phi)}^{2} + \abs{\alp}^{2} ) \, r^{3} \, \ud v \ud \sgm_{\bbS^{3}}  \label{eq:monotonicity:t-like:pf:1} \\
	\aleq & \int_{\rd C^{\dlt}_{[t_{0}, 1]}} \frac{u}{\rho_{\veps}} \bb( \abs{(\covD_{X_{\veps}} + \frac{1}{\rho_{\veps}}) \phi}^{2} + \abs{\iota_{X_{\veps}} F}^{2} \bb) 
				+  \frac{u^{\frac{3}{2}}}{v^{\frac{3}{2}}}  \bb( \abs{\covD_{\uL} \phi}^{2} + \frac{1}{r^{2}} \abs{\phi}^{2} + \abs{\ualp}^{2} \bb) 
			\, r^{3} \, \ud v \ud \sgm_{\bbS^{3}}. \notag
\end{align}
We claim that the integral of the right-hand side over $\dlt_{0} \leq u \leq \dlt_{1}$ with respect to $u^{-1} \ud u$ is bounded by $E$. Then by the pigeonhole principle, there would exist $\dlt \in [\dlt_{0}, \dlt_{1}]$ such that the left-hand side of \eqref{eq:monotonicity:t-like:pf:1} is bounded by $\aleq \abs{\log(\dlt_{1} / \dlt_{0})}^{-1} E$, as desired.

For the contribution of the first term, the claim follows directly from Proposition~\ref{prop:monotonicity}. For the second term, we have
\begin{equation*}
\iint_{C^{\dlt_{0}}_{[t_{0}, 1]} \setminus C^{\dlt_{1}}_{[t_{0}, 1]}} \frac{u^{\frac{1}{2}}}{v^{\frac{3}{2}}}  \bb( \abs{\covD_{\uL} \phi}^{2} + \frac{1}{r^{2}} \abs{\phi}^{2} + \abs{\ualp}^{2} \bb) \, \ud t \ud x
\aleq \iint_{C^{\dlt_{0}}_{[t_{0}, 1]} \setminus C^{\dlt_{1}}_{[t_{0}, 1]}} \frac{\dlt_{1}^{\frac{1}{2}}}{t^{\frac{3}{2}}} \vC{T}_{T}[A, \phi] \, \ud t \ud x
\aleq \bb(\frac{\dlt_{1}}{t_{0}} \bb)^{\frac{1}{2}} E,
\end{equation*}
which is sufficient to prove the claim. \qedhere
\end{proof}

\section{Local strong compactness and weak solutions to
  \eqref{eq:MKG}} \label{sec:cpt} The first goal of this section is to
establish the following local strong compactness result for
asymptotically stationary (see \eqref{eq:cpt:vanishingX} below)
sequences of solutions to \eqref{eq:MKG} with small energy.
\begin{proposition} \label{prop:cpt} There exists a universal constant
  $\eps_{0} > 0$ such that the following holds.  Let $B = B_{1}(x_{0}) \subseteq
  \bbR^{4}$ be an open ball of unit radius centered at $x_{0}$, and
  let $(A^{(n)}, \phi^{(n)})$ be a sequence of admissible $C_{t}
  \calH^{1}$ solutions to \eqref{eq:MKG} in $(-2, 2) \times 8B$ such that
  \begin{equation} \label{eq:cpt:hyp} 
\calE_{\set{0} \times 8B}[A^{(n)}, \phi^{(n)}] +   \nrm{ \phi^{(n)}(0, x)}_{L^{2}_{x}(8 B)}^{2}
 \leq \eps_{0}^{2}.
  \end{equation}
  Suppose furthermore that $(A^{(n)}, \phi^{(n)})$ is
  \emph{asymptotically stationary} in the sense that
  \begin{equation} \label{eq:cpt:vanishingX} \iint_{(-2, 2) \times 2
      B} \abs{\iota_{X} F^{(n)}}^{2} + \abs{(\covD^{(n)}_{X} + b)
      \phi^{(n)}}^{2} \, \ud t \ud x \to 0 \quad \hbox{ as } n \to
    \infty,
  \end{equation}
  where $X$ is a smooth time-like vector field and $b$ is a smooth
  real-valued function. Then there exists a pair $(A, \phi)$ in
  $L^{2}_{t,x}((- 1, 1) \times B)$ such that the following statements
  hold:
  \begin{enumerate}
  \item There exists a sequence of gauge transforms $\chi^{(n)} \in
    C_{t} \calG^{2}((-1, 1) \times B)$ such that, after passing to a
    subsequence, we have
    \begin{align}
      (A_{\mu}^{(n)} - \rd_{\mu} \chi^{(n)}, e^{i \chi^{(n)}}
      \phi^{(n)} )
      \to & (A_{\mu}, \phi) \quad \hbox{ strongly in } L^{2}_{t,x}((-1, 1) \times B),	 \label{eq:cpt:converge:non-cov} \\
      (F_{\mu \nu}^{(n)}, e^{i \chi^{(n)}} \covD_{\mu}^{(n)}
      \phi^{(n)}) \to & (F_{\mu \nu}, \covD_{\mu} \phi) \quad \hbox{
        strongly in } L^{2}_{t,x}((-1, 1) \times
      B), \label{eq:cpt:converge:cov}
    \end{align}
    where $F_{\mu \nu} = \rd_{\mu} A_{\nu} - \rd_{\nu} A_{\mu}$ and
    $\covD_{\mu} \phi = \rd_{\mu} \phi + i A_{\mu} \phi$ are defined
    in the sense of distributions.
  \item The limiting pair $(A, \phi)$ is a \emph{weak solution} to
    \eqref{eq:MKG} on $(-1, 1) \times B$, in the sense of
    Definition~\ref{def:weakSol} below. The connection 1-form $A$
    obeys, in the sense of distributions, the Coulomb gauge condition
    \begin{equation} \label{eq:cpt:coulomb} \rd^{\ell} A_{\ell} = 0
      \quad \hbox{ on } (-1, 1) \times B.
    \end{equation}
  \item The pair $(A ,\phi)$ possesses the following additional
    regularity:
    \begin{equation} \label{eq:cpt:reg} A \in H^{1}_{t,x}((-1, 1)
      \times B), \quad F_{\mu \nu}, \in H^{\frac{1}{2}}_{t,x}((-1, 1)
      \times B), \quad \phi \in H^{\frac{3}{2}}_{t,x}((- 1, 1) \times
      B).
    \end{equation}
  \item Moreover, the pair $(A, \phi)$ is \emph{stationary with
      respect to $X$}, in the sense that
    \begin{equation} \label{eq:cpt:stationary} \iota_{X} F = 0, \quad
      (\covD_{X} + b) \phi = 0 \quad \hbox{ on } (-1, 1) \times B.
    \end{equation}
  \end{enumerate}
\end{proposition}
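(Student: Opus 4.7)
\emph{Strategy and gauge fixing.} The plan is to realize each $(A^{(n)}, \phi^{(n)})$, after a gauge transformation $\chi^{(n)}$, as the restriction of a globally defined Coulomb small-energy solution produced by initial-data excision/gluing together with Theorem~\ref{thm:smallEnergy}. Restricting the initial data of $(A^{(n)}, \phi^{(n)})$ at $t=0$ to $8B$ and applying the extension operator $E^{\extr}$ of Theorem~\ref{thm:gluing} with auxiliary ball $B_{\ast} := 3B$ (so that the annulus $2 B_{\ast} \setminus \overline{B_{\ast}} \subset 8B$) produces a global $\calH^{1}$ initial data set on $\bbR^{4}$ which agrees with the original on $\frac{9}{2} B$; the bounds \eqref{eq:gluing:energy} and Lemma~\ref{l:hardy+}, combined with the hypothesis \eqref{eq:cpt:hyp}, show that the total energy of the glued data is $\aleq \eps_{0}^{2} < \thE^{2}$ for $\eps_{0}$ sufficiently small. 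Solving a Poisson equation for an initial gauge transformation $\chi^{(n)}_{0}$ places the glued data in the global Coulomb gauge, and Theorem~\ref{thm:smallEnergy} then yields global admissible $C_{t} \calH^{1}$ Coulomb solutions $(\tilde{A}^{(n)}, \tilde{\phi}^{(n)})$ satisfying
\[
\nrm{\tilde{A}^{(n)}_{0}}_{Y^{1}} + \nrm{\tilde{A}^{(n)}_{x}}_{S^{1}} + \nrm{\tilde{\phi}^{(n)}}_{S^{1}} \aleq \eps_{0}.
\]
Since the two data sets coincide on $\frac{9}{2} B$, the local geometric uniqueness result (Proposition~\ref{prop:geomUni}), applied both forward and backward in time, yields gauge transforms $\chi^{(n)} \in C_{t} \calG^{2}$ on an open neighborhood of $[-1,1] \times \overline{B}$ such that $\tilde{A}^{(n)} = A^{(n)} - \ud \chi^{(n)}$ and $\tilde{\phi}^{(n)} = e^{i \chi^{(n)}} \phi^{(n)}$ there.

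\emph{Compactness.} The $S^{1}$/$Y^{1}$ bounds above, via Remark~\ref{rem:SY-norms} together with the MKG-CG system \eqref{eq:OT2-simple:MKG-Coulomb}, give uniform control
\[
\tilde{A}^{(n)}, \tilde{\phi}^{(n)} \in L^{\infty}_{t}(\dot{H}^{1}_{x}), \quad \rd_{t,x} \tilde{A}^{(n)}_{0} \in L^{2}_{t}(\dot{H}^{1/2}_{x}), \quad \rd_{t} \tilde{A}^{(n)}_{x}, \rd_{t} \tilde{\phi}^{(n)} \in L^{\infty}_{t} L^{2}_{x},
\]
together with $\Box \tilde{A}^{(n)}_{x}, \Box \tilde{\phi}^{(n)} \in L^{2}_{t} \dot{H}^{-1/2}_{x}$. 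Restricting to the bounded cylinder $(-1,1) \times B$, these yield uniform $H^{1+s}_{t,x}$ bounds for some $s > 0$ (exploiting the wave equations for $\tilde{A}^{(n)}_{x}$ and $\tilde{\phi}^{(n)}$ to gain fractional regularity in time). Rellich--Kondrachov then produces a subsequence for which $(\tilde{A}^{(n)}, \tilde{\phi}^{(n)}) \to (A, \phi)$ and $(\tilde{F}^{(n)}, \tilde{\covD}^{(n)} \tilde{\phi}^{(n)}) \to (F, \covD \phi)$ strongly in $L^{2}_{t,x}((-1,1) \times B)$. Gauge invariance of the curvature and of $e^{i \chi^{(n)}} \covD^{(n)}_{\mu} \phi^{(n)} = \tilde{\covD}^{(n)}_{\mu} \tilde{\phi}^{(n)}$ yields \eqref{eq:cpt:converge:non-cov} and \eqref{eq:cpt:converge:cov}.

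\emph{Passage to the limit and main obstacle.} Weak convergence of $\rd_{t,x}$ in $L^{2}_{t,x}$ preserves the linear MKG constraints and the Coulomb condition \eqref{eq:cpt:coulomb}. The nonlinear source $\Im(\tilde{\phi}^{(n)} \overline{\tilde{\covD}^{(n)}_{\nu} \tilde{\phi}^{(n)}})$ and the quadratic term $A^{(n)}_{\mu} \rd^{\mu} \phi^{(n)}$ in $\Box_{A} \phi$ converge in $L^{1}_{t,x}$ as products of strongly $L^{2}_{t,x}$-convergent factors, identifying $(A, \phi)$ as a weak solution in the sense of Definition~\ref{def:weakSol}; the regularity \eqref{eq:cpt:reg} follows from lower semicontinuity of the $S^{1}$/$Y^{1}$ norms under weak convergence. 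The stationarity \eqref{eq:cpt:stationary} is then immediate from \eqref{eq:cpt:converge:cov}, the hypothesis \eqref{eq:cpt:vanishingX}, and uniqueness of limits. The principal difficulty is two-fold: first, in the gauge-fixing step, the quantitative control of the Hardy-type term $\nrm{f/\abs{x - x_{0}}}_{L^{2}}$ in \eqref{eq:gluing:energy} is precisely why the hypothesis \eqref{eq:cpt:hyp} includes the plain $L^{2}_{x}$ norm of $\phi^{(n)}(0,\cdot)$ in addition to the energy; second, in the compactness step, the upgrade from uniform $H^{1}_{t,x}$ bounds to the slightly stronger regularity needed for Rellich compactness of $\rd_{t,x}$ in $L^{2}_{t,x}$ must be extracted carefully from the wave equations and the fine structure of the $S^{1}$/$Y^{1}$ norms, working with gauge-invariant quantities ($F$, $\covD \phi$) throughout so as not to be hampered by the uncontrolled oscillations of $e^{i \chi^{(n)}}$.
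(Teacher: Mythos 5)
Your gauge-fixing step (excision/gluing via Theorem~\ref{thm:gluing}, passing to the global Coulomb gauge, invoking Theorem~\ref{thm:smallEnergy} and then Proposition~\ref{prop:geomUni}) matches the paper's Step~1 and is fine, as is your observation that the plain $L^2_x$ term in \eqref{eq:cpt:hyp} is needed to control the Hardy-type term in \eqref{eq:gluing:energy}.

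The compactness step, however, has a genuine gap. You assert that the uniform $S^1/Y^1$ bounds, together with the wave equations, give uniform $H^{1+s}_{t,x}$ bounds on $(-1,1)\times B$ for some $s>0$, so that Rellich--Kondrachov directly yields strong $L^2_{t,x}$ convergence of $F^{(n)}_{\mu\nu}$ and $\tilde{\covD}^{(n)}_\mu\tilde{\phi}^{(n)}$. This is false: the $S^1$ norm is scaling critical at the $\dot H^1\times L^2$ level, and the extra input $\Box\tilde\phi^{(n)}\in L^2_t\dot H^{-1/2}_x$ only improves regularity in the elliptic region $\{|\tau|\lesssim|\xi|\}$ of space-time frequency; it provides no gain near the characteristic cone $\{|\tau|=|\xi|\}$, where free waves concentrate. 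A sequence of high-frequency, unit-energy free waves has uniformly bounded $S^1$ norm but is not bounded in $H^{1+s}_{t,x}$ for any $s>0$ and has no $L^2_{t,x}$-convergent subsequence of derivatives. Consequently the strong convergence \eqref{eq:cpt:converge:cov} and the improved regularity \eqref{eq:cpt:reg} do \emph{not} follow from the $S^1/Y^1$ bounds alone, and the claim that \eqref{eq:cpt:reg} follows by ``lower semicontinuity of $S^1/Y^1$'' is likewise unavailable.

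What is missing is precisely the point where the asymptotic stationarity hypothesis \eqref{eq:cpt:vanishingX} must enter: it is needed in the compactness step itself, not merely afterwards to identify the limit as stationary. The paper's argument decomposes a cutoff $\eta$ via a pseudo-differential partition of identity
\[
\eta = q_{-1}(t,x,D_{t,x})\,\eta\,X^\mu\rd_\mu + q_0\,\eta + r_{-1}(t,x,D_{t,x})\,\eta_1,
\]
where $q_0$ localizes to the elliptic region $\{|\tau|\le(1-\delta/2)|\xi|\}$ (so the $S^1/Y^1$ wave bounds give $H^{1/2}_{t,x}$ gain), $r_{-1}\in S^{-1}$ is smoothing, and $X^\mu\rd_\mu$ is elliptic on the remaining (cone-adjacent) part of phase space because $X$ is time-like. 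Applied to $F^{(n)}$ and to $\tilde\phi^{(n)}$, this decomposition shows that the only piece that is not precompact by higher regularity is exactly the piece controlled by $\iota_X F^{(n)}$ and $(\covD^{(n)}_X+b)\tilde\phi^{(n)}$, which tends to zero in $L^2$ by \eqref{eq:cpt:vanishingX}. Without some version of this ellipticity-plus-stationarity argument, the jump from uniform $H^1_{t,x}$ bounds to strong $L^2_{t,x}$ convergence of the first derivatives cannot be made.
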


As a result of taking limits, the notion of \emph{weak solutions} to
\eqref{eq:MKG} arises naturally from Proposition~\ref{prop:cpt}.  For
our application in Section~\ref{sec:proof}, we also need to formulate
the notion of locally defined weak solutions $(A_{[\alp]},
\phi_{[\alp]})$ that can be pieced together to form a global pair
(weak compatible pairs). Developing a theory of these objects is
another goal of this section.

\begin{remark} 
  We remark that weak solutions and their gauge structure play only an
  auxiliary role in our work. Indeed, the stationarity equation
  \eqref{eq:cpt:stationary}, combined with \eqref{eq:MKG} and the
  additional regularity \eqref{eq:cpt:reg} of $(A, \phi)$, allow us to
  infer \emph{smoothness} of $(A, \phi)$ via elliptic regularity. This
  issue is considered in Section~\ref{sec:stationary-self-sim}, where
  we study stationary and self-similar solutions to \eqref{eq:MKG}.
\end{remark}

\begin{remark} 
It is in fact possible to obtain stronger convergence than \eqref{eq:cpt:converge:non-cov}
namely $A_{\mu}^{(n)} - \rd_{\mu} \chi^{(n)} \to A_{\mu}$ and $e^{i \chi^{(n)}} \phi^{(n)} \to \phi$
in $H^{1}_{t,x}((-1, 1) \times B)$. Moreover, the limit $A_{\mu}$ obeys the additional 
regularity $H^{3/2-\veps}_{t,x}((-1, 1) \times B)$ for any $\veps > 0$.  As these facts are not
necessary for the proof of our main theorem, we omit their proofs to avoid lengthening the paper.
\end{remark}

The rest of this section is structured as follows. We first give a
proof of Proposition~\ref{prop:cpt} in Section~\ref{subsec:cpt-proof},
except the statement that the limit $(A, \phi)$ is a \emph{weak}
solution to \eqref{eq:MKG}. In Section~\ref{subsec:weak-sol}, we
formulate a notion of weak solutions to \eqref{eq:MKG} that will be
used in our proof. Finally, in Section~\ref{subsec:cp}, we introduce
and discuss the notions of smooth and weak compatible pairs, which are
local descriptions of smooth and weak solutions to \eqref{eq:MKG},
respectively.

\subsection{Proof of
  Proposition~\ref{prop:cpt}} \label{subsec:cpt-proof} Here we prove
Proposition~\ref{prop:cpt} modulo the assertion that the limit $(A,
\phi)$ is a weak solution to \eqref{eq:MKG}, which would be clear once
we define the notion of a weak solution in
Definition~\ref{def:weakSol} below.
\begin{proof}
  The basic idea behind proof is as in
  \cite[Proposition~5.1]{MR2657818}: Small energy \eqref{eq:cpt:hyp}
  implies local uniform $S^{1}$ bound on $(-2, 2) \times 2B$, which can be
  combined with asymptotic stationarity \eqref{eq:cpt:vanishingX} via
  a microlocal decomposition to conclude strong convergence in $(-1,
  1) \times B$. In implementing this strategy, we need to take into
  account the presence of the constraint equation and the system
  nature of \eqref{eq:MKG} (especially the Maxwell part). Our proof
  proceeds in several steps.


  \pfstep{Step 1} In this step, we use the excision and gluing
  technique to produce gauge equivalent Coulomb solutions on the
  smaller region $(-2, 2) \times 2B$, which enjoy a uniform $S^{1}$ bound.

  Let $(a_{j}^{(n)}, e_{j}^{(n)}, f^{(n)}, g^{(n)}) = (A_{j}^{(n)},
  F_{0j}, \phi^{(n)}, \covD_{t}^{(n)} \phi^{(n)}) \rst_{\set{t=0}}$ be
  the data for $(A, \phi)$ on $\set{t = 0}$. Applying
  Theorem~\ref{thm:gluing} to $8 B \setminus 4\overline{B}$, we
  obtain an initial data set $(\widetilde{a}^{(n)},
  \widetilde{e}^{(n)}, \widetilde{f}^{(n)}, \widetilde{g}^{(n)}) \in
  \calH^{1}(\bbR^{4})$ such that $(\widetilde{a}^{(n)},
  \widetilde{e}^{(n)}, \widetilde{f}^{(n)}, \widetilde{g}^{(n)}) =
  (a^{(n)}, e^{(n)}, f^{(n)}, g^{(n)})$ on $4B$ and
  \begin{equation*}
    \calE[\widetilde{a}^{(n)}, \widetilde{e}^{(n)}, \widetilde{f}^{(n)}, \widetilde{g}^{(n)}]
    \lesssim \eps_{0}^{2}.
  \end{equation*}
  by \eqref{eq:gluing:energy} and \eqref{eq:cpt:hyp}. Choosing
  $\eps_{0}$ appropriately, we may
  ensure that the left-hand side is smaller than $\thE^{2}$, which is
  the threshold for Theorem~\ref{thm:smallEnergy}.

  To pass to the global Coulomb gauge, consider the gauge
  transformation $\underline{\chi}^{(n)} \in \calG^{2}(\bbR^{4})$
  defined by $\underline{\chi}^{(n)} = \lap^{-1} \rd^{\ell}
  \widetilde{a}_{\ell}^{(n)}$ and let
  \begin{equation*}
    (\check{a}^{(n)}, \check{e}^{(n)}, \check{f}^{(n)}, \check{g}^{(n)})
    := (\widetilde{a}^{(n)} - \ud \underline{\chi}^{(n)}, \widetilde{e}^{(n)}, e^{i \underline{\chi}^{(n)}} \widetilde{f}^{(n)}, e^{i \underline{\chi}^{(n)}} \widetilde{g}^{(n)}).
  \end{equation*}
  This initial data set agrees with $(a^{(n)}, e^{(n)}, f^{(n)},
  g^{(n)})$ on $4 B$ up to a gauge transformation, i.e.,
  \begin{equation}
    (\check{a}^{(n)}, \check{e}^{(n)}, \check{f}^{(n)}, \check{g}^{(n)}) 
    = (a^{(n)} - \ud \underline{\chi}^{(n)}, e^{(n)}, e^{i \underline{\chi}^{(n)}} f^{(n)}, e^{i \underline{\chi}^{(n)}} g^{(n)}) \quad \hbox{ on } 4B,
  \end{equation}
  and furthermore obeys the small energy condition
  \begin{equation}
    \calE[\check{a}^{(n)}, \check{e}^{(n)}, \check{f}^{(n)}, \check{g}^{(n)}] < \thE^{2}.
  \end{equation}
  By small energy global well-posedness
  (Theorem~\ref{thm:smallEnergy}), it follows that there exists a
  unique $C_{t} \calH^{1}$ admissible solution $(\check{A}^{(n)},
  \check{\phi}^{(n)})$ on $\bbR^{1+4}$ with initial data
  $(\check{a}^{(n)}, \check{e}^{(n)}, \check{f}^{(n)},
  \check{g}^{(n)})$, which obeys
  \begin{equation} \label{eq:cpt:cA-SY}
    \nrm{\check{A}^{(n)}_{0}}_{Y^{1}(\bbR^{1+4})} +
    \nrm{\check{A}^{(n)}_{x}}_{S^{1}(\bbR^{1+4})} +
    \nrm{\check{\phi}^{(n)}}_{S^{1}(\bbR^{1+4})} \aleq \thE.
  \end{equation}
  Moreover, by geometric uniqueness (Proposition~\ref{prop:geomUni})
  and the simple fact that
  \begin{equation*}
    (-2, 2) \times 2 B \subseteq \calD^{+}(\set{0} \times 4B) \cup \calD^{-}(\set{0} \times 4B), 
  \end{equation*}
  there exists $\chi^{(n)} \in C_{t} \calG^{2}((-2, 2) \times 2B)$
  such that
  \begin{equation}
    (\check{A}^{(n)}, \check{\phi}^{(n)}) = (A^{(n)} - \ud \chi^{(n)}, e^{i \chi^{(n)}} \phi^{(n)}) \quad \hbox{ on } (-2, 2) \times 2B.
  \end{equation}

  Let $\eta_{0}, \ldots , \eta_{3} \in C^{\infty}_{0}(\bbR^{1+4})$ be
  such that
  \begin{equation*}
    \eta_{j} = 1 \hbox{ on } (-1, 1) \times B, \quad 
    \supp \, \eta_{j} \subseteq (-2, 2) \times 2B, \quad
    \eta_{j} \eta_{j+1} = \eta_{j}.
  \end{equation*}
  for $j=0, 1, 2, 3$ (except for the last property, for which $j = 0,
  1, 2$), which will be fixed for the rest of the proof. We will also
  often write $\eta = \eta_{0}$ and $\widetilde{\eta} = \eta_{3}$. By
  \eqref{eq:cpt:cA-SY} and Remark~\ref{rem:SY-norms}, the solution
  $(\check{A}^{(n)}, \check{\phi}^{(n)})$ satisfies
  \begin{align}
    \nrm{\rd_{t,x} (\eta_{j} \check{A}^{(n)})}_{L^{\infty}_{t}
      L^{2}_{x}} + \nrm{\rd_{t,x} (\eta_{j}
      \check{\phi}^{(n)})}_{L^{\infty}_{t} L^{2}_{x}}
    & \aleq_{\eta_{j}} \eps_{0}, \label{eq:cpt:cA-E} \\
    \nrm{\rd_{t,x} (\eta_{j} \check{A}^{(n)}_{0})}_{L^{2}_{t}
      \dot{H}^{\frac{1}{2}}_{x}} + \nrm{\Box (\eta_{j}
      \check{A}^{(n)}_{x})}_{L^{2}_{t} \dot{H}^{-\frac{1}{2}}_{x}} +
    \nrm{\Box (\eta_{j} \check{\phi}^{(n)})}_{L^{2}_{t}
      \dot{H}^{-\frac{1}{2}}_{x}} & \aleq_{\eta_{j}}
    \eps_{0}. \label{eq:cpt:cA-X}
  \end{align}
  for any $j=0,1,2,3$. In particular, in view of \eqref{eq:cpt:cA-E} and
  H\"older's inequality, the sequence $(\widetilde{\eta}
  \check{A}^{(n)}, \widetilde{\eta} \check{\phi}^{(n)})$ is uniformly
  bounded in $H^{1}_{t,x}$. By the Rellich-Kondrachov theorem, there exists a
  subsequence, which we still denote by $(\widetilde{\eta}
  \check{A}^{(n)}, \widetilde{\eta} \check{\phi}^{(n)})$, and a pair
  $(A, \phi) \in H^{1}_{t,x}$ such that
  \begin{equation} \label{eq:cpt:converge:weak} (\widetilde{\eta}
    \check{A}^{(n)}, \widetilde{\eta} \check{\phi}^{(n)}) \weakto (A,
    \phi) \quad \hbox{ in } H^{1}_{t,x}, \quad (\widetilde{\eta}
    \check{A}^{(n)}, \widetilde{\eta} \check{\phi}^{(n)}) \to (A,
    \phi)\quad \hbox{ in } L^{2}_{t,x},
  \end{equation}
  as $n \to \infty$, where the notation $\weakto$ refers to weak
  convergence.

  \pfstep{Step 2} In this preparatory step, we make a microlocal
  decomposition of $\eta$ that will allows us to combine
  \eqref{eq:cpt:vanishingX} with the bound \eqref{eq:cpt:cA-X} on the
  sequence; see \eqref{eq:decomp4eta}.

  We use the classical pseudo-differential calculus. Let $q_{0}(\tau,
  \xi) \in S^{0}$ be a smooth cutoff such that $q_{0}=1$ to the region
  $\set{(\tau, \xi) : \abs{\tau} \leq (1-\dlt) \abs{\xi}}$ in Fourier
  space and $\supp \, q_{0} \subseteq \set{(\tau, \xi) : \abs{\tau}
    \leq (1-\dlt/2) \abs{\xi}}$, where $\dlt > 0$ is to be chosen
  shortly.  On the support of $q_{0}$, the norm on the left-hand side
  of \eqref{eq:cpt:cA-X} is effective. On the other hand, since $X =
  X^{\mu} \rd_{\mu}$ is a time-like vector field, we have
  $\abs{X^{0}(t,x)}^{2} > \sum_{j=1}^{4} \abs{X^{j}(t,x)}^{2}$
  everywhere. As $\supp \, \eta$ is compact, we may choose $\dlt > 0$
  sufficiently small so that
  \begin{equation*}
    \abs{X^{0}(t,x)} \geq (1-\dlt)^{2} \bb( \sum_{j=1}^{4} \abs{X^{j}(t,x)}^{2} \bb)^{\frac{1}{2}} \quad \hbox{ for } (t,x) \in \supp \, \eta.
  \end{equation*}
  With such a choice of $\dlt > 0$, the symbol $X^{0}(t,x) \tau +
  X^{\ell}(t,x) \xi_{\ell} \in S^{1}$ is \emph{elliptic} on the phase
  space support of $\eta (t,x) (1-q_{0})(\tau, \xi)$, in the sense
  that
  \begin{equation*}
    \abs{X^{0}(t,x) \tau + X^{\ell}(t,x) \xi_{\ell}} 
    \geq \abs{X^{0}(t,x) \tau} - \abs{X^{\ell}(t,x) \xi_{\ell}} 
    \geq c_{\dlt, \eta, X^{0}} (\abs{\tau} + \abs{\xi})
  \end{equation*}
  for $(t,x) \in \supp \, \eta$ and $(\tau, \xi) \in \supp \,
  (1-q_{0})$, where we may take
  \begin{equation*}
    c_{\dlt, \eta, X^{0}} = \frac{\dlt (1-\dlt)}{2} \inf_{\supp \, \eta} \abs{X^{0}} > 0.
  \end{equation*}
  Using the standard construction of a pseudo-differential elliptic
  parametrix, we may write
  \begin{equation*}
    \eta (1-q_{0})(D_{t,x}) = q_{-1} (t,x, D_{t,x}) \, \eta X^{\mu} \rd_{\mu} + \widetilde{r}_{-1}(t,x,D_{t,x})
  \end{equation*}
  where $q_{-1}, \widetilde{r}_{-1} \in S^{-1}$. Rearranging the
  terms, commuting $\eta(t,x)$ with $q_{0}$ and applying
  multiplication by $\eta_{1}$ on the right, we arrive at the
  decomposition
  \begin{equation} \label{eq:decomp4eta} \eta = q_{-1}(t,x, D_{t,x})
    \eta X^{\mu} \rd_{\mu} + q_{0} \eta + r_{-1}(t,x,D_{t,x})
    \eta_{1},
  \end{equation}
  where $r_{-1} \in S^{-1}$ is the sum of $\widetilde{r}_{-1}$ and the
  commutator between $\eta$ and $q_{0}$.

  \pfstep{Step 3} Here we show the strong convergence $\eta
  F^{(n)}_{\mu \nu} \to \eta F_{\mu \nu}$ in $L^{2}_{t,x}$, where we
  remind the reader that $F_{\mu \nu} = \hat{F}_{\mu \nu}$ by gauge
  invariance of the curvature 2-form. By \eqref{eq:decomp4eta}, we may
  write
  \begin{equation*}
    \eta F^{(n)}_{\mu \nu} = q_{-1}(t,x, D_{t,x}) \eta X^{\lmb} \rd_{\lmb} F^{(n)}_{\mu \nu} + q_{0}(D_{t,x}) \eta F^{(n)}_{\mu \nu} + r_{-1}(t,x,D_{t,x}) \eta_{1} F^{(n)}_{\mu \nu}.
  \end{equation*}
  Using $\ud F^{(n)} = 0$, we rewrite $\eta X^{\lmb} \rd_{\lmb}
  F^{(n)}_{\mu \nu}$ as
  \begin{equation*}
    \eta X^{\lmb} \rd_{\lmb} F_{\mu \nu}^{(n)} = \rd_{\mu} (\eta X^{\lmb} F^{(n)}_{\lmb \nu}) - \rd_{\nu} (\eta X^{\lmb} F^{(n)}_{\lmb \mu})
    - \rd_{\mu} (\eta X^{\lmb}) F^{(n)}_{\lmb \nu} + \rd_{\nu} (\eta X^{\lmb}) F^{(n)}_{\lmb \mu},
  \end{equation*}
  and hence we arrive at
  \begin{equation}
    \begin{aligned}
      \eta F^{(n)}_{\mu \nu} = & q_{-1} (t,x, D_{t,x}) \big[ \rd_{\mu}
      (\eta (\iota_{X} F^{(n)})_{\nu}) - \rd_{\nu} (\eta (\iota_{X}
      F^{(n)})_{\mu}) \big] + R_{\M}[F^{(n)}]_{\mu \nu}
    \end{aligned}
  \end{equation}
  where
  \begin{equation*}
\begin{split}
    R_{\M}[F^{(n)}]_{\mu \nu}
    = & \ q_{0} (D_{t,x}) \eta F^{(n)}_{\mu \nu} 
    - q_{-1}(t,x, D_{t,x}) \big[ \rd_{\mu} (\eta X^{\lmb}) F^{(n)}_{\lmb \nu} - \rd_{\nu}(\eta X^{\lmb}) F^{(n)}_{\lmb \mu} \big]
   \\ & \  + r_{-1}(t, x, D_{t,x}) \eta_{1} F^{(n)}_{\mu \nu} .
\end{split}
  \end{equation*}
  By \eqref{eq:cpt:vanishingX}, it follows that
  \begin{equation*}
    \nrm{q_{-1} (t,x, D_{t,x}) \big[ \rd_{\mu} (\eta (\iota_{X} F^{(n)})_{\nu}) - \rd_{\nu} (\eta (\iota_{X} F^{(n)})_{\mu}) \big]}_{L^{2}_{t,x}} \to 0.
  \end{equation*}
  Moreover, we claim that $R_{\M}[F^{(n)}]_{\mu \nu}$ enjoys improved
  regularity, i.e.,
  \begin{equation} \label{eq:cpt:converge:F-claim}
    \nrm{R_{\M}[F^{(n)}]_{\mu \nu}}_{H^{\frac{1}{2}}_{t,x}} \aleq
    \eps_{0} \quad \hbox{ uniformly in } n.
  \end{equation}
  By the Rellich-Kondrachov theorem, after passing to a subsequence of
  $(\check{A}^{(n)}, \check{\phi}^{(n)})$, the sequence
  $\widetilde{\eta} R_{\M}[F^{(n)}]_{\mu \nu}$ is strongly convergent
  in $L^{2}_{t,x}$; moreover, we can also ensure that the limit
  belongs to $H^{\frac{1}{2}}_{t,x}$. Combining these facts, as well
  as the identity $\eta \widetilde{\eta} = \eta$, we see that $\eta
  F^{(n)}_{\mu \nu}$ is strongly convergent in $L^{2}_{t,x}$ to a
  limit that belongs to $H^{\frac{1}{2}}_{t,x}$. Since
  $\widetilde{\eta} \check{A}_{\mu} \to A_{\mu}$ in $L^{2}_{t,x}$, the
  limit is equal to $\eta F_{\mu \nu}$. Hence the statements regarding
  $F$ in \eqref{eq:cpt:converge:cov} and \eqref{eq:cpt:reg} follow.

  It remains to verify the claim \eqref{eq:cpt:converge:F-claim}; it
  is at this point we use the uniform bound
  \eqref{eq:cpt:cA-X}. Expanding $F^{(n)} = \ud \check{A}^{(n)}$, it
  follows from \eqref{eq:cpt:cA-E} that $\nrm{\eta_{2}
    F^{(n)}}_{L^{2}_{t,x}} \aleq \eps_{0}$. Then by
  \eqref{eq:cpt:cA-X} and the support property of the symbol $q_{0}$,
  we have
  \begin{equation*}
    \nrm{q_{0}(D_{t,x}) \eta F^{(n)}}_{H^{\frac{1}{2}}_{t,x}} 
    \aleq \nrm{\eta_{2} F^{(n)}}_{L^{2}_{t,x}} 
    +\nrm{\rd_{t,x} (\eta_{2} \check{A}_{0}^{(n)})}_{L^{2}_{t} \dot{H}^{\frac{1}{2}}_{x}} 
    + \nrm{\Box (\eta_{2} \check{A}_{x}^{(n)})}_{L^{2}_{t} \dot{H}^{-\frac{1}{2}}_{x}}
    \aleq \eps_{0},
  \end{equation*}
  and for the remainder, we have
  \begin{equation*}
    \nrm{R_{\M}[F^{(n)}]_{\mu \nu} - q_{0}(D_{t,x}) \eta F^{(n)}_{\mu \nu}}_{H^{1}_{t,x}} \aleq \nrm{\eta_{2} F^{(n)}}_{L^{2}_{t,x}} \aleq \eps_{0},
  \end{equation*}
  which proves the claim.

  \pfstep{Step 4} In this intermediate step, we use strong
  $L^{2}_{t,x}$ convergence of $F^{(n)}_{\mu \nu}$ to prove
  \begin{equation} \label{eq:cpt:converge:A-L2H1} \eta
    \check{A}^{(n)}_{\mu} \to \eta A_{\mu} \quad \hbox{ strongly in }
    L^{2}_{t} H^{1}_{x} .
  \end{equation}
  as $n \to \infty$, up to a subsequence. We also prove improved
  regularity for the limit $A_{\mu}$, i.e.,
  \begin{equation} \label{eq:cpt:reg:A} \rd_{x} (\eta A_{\mu}) \in
    H^{\frac{1}{2}}_{t,x}.
  \end{equation}

  To begin with, observe that $\lap \check{A}^{(n)}_{\mu} = \rd^{\ell}
  F^{(n)}_{\ell \mu}$ by the Coulomb gauge condition. Therefore, for
  each spatial component $\mu = k \in \set{1,2,3,4}$, we have
  \begin{equation} \label{eq:cpt:A-from-F} \eta \check{A}^{(n)}_{k} =
    \lap^{-1} \bb( \rd^{\ell} (\eta F^{(n)}_{\ell k}) + [\lap, \eta]
    \check{A}^{(n)}_{k} + [\eta, \rd^{\ell}] F^{(n)}_{\ell k} \bb).
  \end{equation}
  For any $j \in \set{1,2,3,4}$, note that $\rd_{j} \lap^{-1}
  \rd^{\ell} (\eta F^{(n)}_{\ell k})$ is strongly convergent in
  $L^{2}_{t,x}$, thanks to the previous step. Writing out $F^{(n)} =
  \ud \check{A}^{(n)}$ and using the strong $L^{2}_{t,x}$ convergence
  of $\widetilde{\eta} \check{A}^{(n)}_{k}$, it follows that the
  remainder $\rd_{j} \lap^{-1} ([\lap, \eta] \check{A}^{(n)}_{k} +
  [\eta, \rd^{\ell}] F^{(n)}_{\ell k} )$ is strongly convergent in
  $L^{2}_{t,x}$ as well. Hence \eqref{eq:cpt:converge:A-L2H1} holds
  for $\mu \in \set{1,2,3,4}$.

  In the case $\mu = 0$, note that \eqref{eq:cpt:cA-E} and
  \eqref{eq:cpt:cA-X} already imply
  \begin{equation} \label{eq:cpt:A0-n} \nrm{\rd_{x} (\widetilde{\eta}
      \check{A}^{(n)}_{0})}_{H^{\frac{1}{2}}_{t,x}} \aleq \eps_{0}
    \quad \hbox{ uniformly in } n.
  \end{equation}
  Therefore, after taking a suitable subsequence, the desired
  convergence \eqref{eq:cpt:converge:A-L2H1} (by the Rellich-Kondrachov theorem)
  as well as the improved regularity \eqref{eq:cpt:reg:A} follow.

  It only remains to prove the improved regularity
  \eqref{eq:cpt:reg:A} for $\mu = k \in \set{1,2,3,4}$.  First, by
  \eqref{eq:cpt:A-from-F} and the improved regularity $\eta F \in
  H^{\frac{1}{2}}_{t,x}$, $\widetilde{\eta} \check{A} \in
  H^{1}_{t,x}$, it follows that $\eta \check{A}^{(n)}_{k} \in
  L^{2}_{t} H^{\frac{3}{2}}_{x}$. Then using the identity
  \begin{equation*}
    \rd_{t} (\eta A_{k}) - \rd_{k} ( \eta A_{0})= \eta F_{0 k} + [\rd_{j}, \eta] A_{k} - [\rd_{k}, \eta] A_{0},
  \end{equation*}
  and the improved regularity $\rd_{x} (\eta A_{0}) \in
  H^{\frac{1}{2}}_{t,x}$, as well as $\eta F \in
  H^{\frac{1}{2}}_{t,x}$, $\widetilde{\eta} \check{A} \in
  H^{1}_{t,x}$, we have $\rd_{t} (\eta \check{A}_{k}^{(n)}) \in
  H^{\frac{1}{2}}_{t,x}$. It follows that $\eta \check{A}^{(n)}_{k}
  \in H^{\frac{3}{2}}_{t,x}$, which is better than what we need.

  \pfstep{Step 5} In this step, we show that $\eta \check{\covD}^{(n)}
  \check{\phi}^{(n)} \to \eta \covD \phi$ in $L^{2}_{t,x}$ and $\eta
  \phi \in H^{\frac{3}{2}}_{t,x}$. For the former, from the
  decomposition
  \begin{equation*}
    \eta \check{\covD}^{(n)}_{\mu} \check{\phi}^{(n)} = \eta \rd_{\mu} \check{\phi}^{(n)} + i \eta \check{A}^{(n)}_{\mu} \check{\phi}^{(n)},
  \end{equation*}
  the convergence $\eta \check{A}^{(n)}_{\mu} \to \eta A$ in
  $L^{2}_{t} H^{1}_{x}$ and \eqref{eq:cpt:cA-E}, we see that it
  suffices to prove
  \begin{equation} \label{eq:cpt:rd-phi} \eta \rd_{\mu}
    \check{\phi}^{(n)} \to \eta \rd_{\mu} \phi \quad \hbox{ in }
    L^{2}_{t,x}.
  \end{equation}

  By \eqref{eq:decomp4eta}, we have
  \begin{equation*}
    \eta \check{\phi}^{(n)} = q_{-1}(t,x, D_{t,x}) \eta X^{\mu} \rd_{\mu} \check{\phi}^{(n)} + q_{0} \eta \check{\phi}^{(n)} + r_{-1}(t,x,D_{t,x}) \eta_{1} \check{\phi}^{(n)}
  \end{equation*}
  To use \eqref{eq:cpt:vanishingX}, we rewrite $\eta X^{\mu} \rd_{\mu}
  \check{\phi}^{(n)}$ as
  \begin{equation*}
    \eta X^{\mu} \rd_{\mu} \check{\phi}^{(n)} = \eta (\check{\covD}^{(n)}_{X} +b)\check{\phi}^{(n)} - i X^{\nu} \check{A}^{(n)}_{\nu} \eta \check{\phi}^{(n)} - \eta b \check{\phi}^{(n)}.
  \end{equation*}
  where $\check{\covD}^{(n)} = \ud + i \check{A}^{(n)}$. Expanding
  $\eta \check{A}^{(n)} = \eta (\check{A}^{(n)} - A^{(n)}) + \eta
  A^{(n)}$, we arrive at
  \begin{equation} \label{eq:cpt:pf:decomp4phi}
    \begin{aligned}
      \eta \check{\phi}^{(n)} = & q_{-1}(t, x, D_{t,x}) \eta
      (\check{\covD}^{(n)}_{X} + b) \check{\phi}^{(n)}
      - i q_{-1} (t,x, D_{t,x}) X^{\nu} \eta (\check{A}^{(n)}_{\nu} - A_{\nu}) \check{\phi}^{(n)} \\
      & - i q_{-1} (t,x, D_{t,x}) X^{\nu} \eta A_{\nu}
      \check{\phi}^{(n)} + R_{\KG}[\check{\phi}^{(n)}]
    \end{aligned}
  \end{equation}
  where
  \begin{equation*}
    R_{\KG}[\check{\phi}^{(n)}] := q_{0} \eta \check{\phi}^{(n)}  + r_{-1}(t, x, D_{t,x}) \eta_{1} \check{\phi}^{(n)} - b q_{-1}(t, x, D_{t,x}) \eta \check{\phi}^{(n)}.
  \end{equation*}
  As in Step 2, for the first term we have
  \begin{equation*}
    \nrm{q_{-1}(t, x, D_{t,x}) \eta (\check{\covD}^{(n)}_{X} + b) \check{\phi}^{(n)}}_{H^{1}_{t,x}} \to 0
  \end{equation*}
  as $n \to \infty$, thanks to \eqref{eq:cpt:vanishingX}. For the
  second term, we have
  \begin{equation*}
    \nrm{q_{-1} (t,x, D_{t,x}) X^{\nu} \eta (\check{A}^{(n)}_{\nu} - A_{\nu}) \check{\phi}^{(n)} }_{H^{1}_{t,x}}
    \aleq \nrm{\eta (\check{A}^{(n)}_{\nu} - A_{\nu})}_{L^{2}_{t} L^{4}_{x}} \nrm{\check{\phi}^{(n)}}_{L^{\infty}_{t} L^{4}_{x}} \to 0 
  \end{equation*}
  as $n \to \infty$, by H\"older, Sobolev in $x$, $L^{2}_{t}
  H^{1}_{x}$ convergence of $\eta \check{A}^{(n)}_{\nu}$ to $\eta
  A_{\nu}$ and \eqref{eq:cpt:cA-E}.  On the other hand, for the third
  term, we have
  \begin{equation*}
    \nrm{q_{-1} (t,x, D_{t,x}) X^{\nu} \eta A_{\nu} \check{\phi}^{(n)}}_{H^{\frac{3}{2}}_{t,x}}
    \aleq \eps_{0} \nrm{\brk{D_{x}} \brk{D_{t,x}}^{\frac{1}{2}} (\eta A)}_{L^{2}_{t,x}} \quad \hbox{ uniformly in } n.
  \end{equation*}
  where we used Lemma~\ref{lem:paraproduct} below with $f = \eta
  A_{\nu}$ and $g = \check{\phi}^{(n)}$. We also used the obvious
  bound $\nrm{\eta A_{\nu} \check{\phi}^{(n)}}_{L^{2}_{t,x}} \aleq
  \eps_{0} \nrm{\brk{D_{x}} (\eta A)}_{L^{2}_{t,x}}$, which follows
  from H\"older, Sobolev in $x$ and \eqref{eq:cpt:cA-E}, to control
  the $L^{2}_{t,x}$ norm of the left-hand side.  Finally, for
  $R_{\KG}[\check{\phi}^{(n)}]$ we have, as in Step 3,
  \begin{equation*}
    \nrm{R_{\KG}[\check{\phi}^{(n)}]}_{H^{\frac{1}{2}}_{t,x}} \aleq \eps_{0} \quad \hbox{ uniformly in } n.
  \end{equation*}
  By the Rellich-Kondrachov theorem, there exists a subsequence (which we still
  denote by $\check{\phi}^{(n)}$) such that
  \begin{equation*}
    \widetilde{\eta} (- i q_{-1} (t,x, D_{t,x}) X^{\nu} \eta A_{\nu} \check{\phi}^{(n)} 
    + R_{\KG}[\check{\phi}^{(n)}])
  \end{equation*}
  is strongly convergent in $H^{1}_{t,x}$ to a limit that belongs to
  $H^{\frac{3}{2}}_{t,x}$. As a consequence of these facts, as well as
  the identity $\eta \widetilde{\eta} = \eta$, it follows that $\eta
  \check{\phi}^{(n)}$ is strongly convergent in $H^{1}_{t,x}$ to a
  limit in $H^{\frac{3}{2}}_{t,x}$. Finally, since $\widetilde{\eta}
  \check{\phi}^{(n)} \to \phi$ in $L^{2}_{t,x}$, the limit is equal to
  $\eta \phi$.  \qedhere
\end{proof}

\begin{lemma} \label{lem:paraproduct} For $f, g \in
  \calS(\bbR^{1+4})$, we have
  \begin{equation} \label{eq:paraproduct}
    \nrm{fg}_{\dot{H}^{\frac{1}{2}}_{t,x}} \aleq
    \nrm{\abs{D_{t,x}}^{\frac{1}{2}} f}_{L^{2}_{t} \dot{H}^{1}_{x}}
    \nrm{D_{t,x} g}_{L^{\infty}_{t} L^{2}_{x}} .
  \end{equation}
\end{lemma}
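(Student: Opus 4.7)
The plan is to decompose $fg$ via spatial Littlewood--Paley paraproducts and estimate each piece using Bernstein's inequality together with the elementary Fourier-domain comparisons $|\xi| \leq |(\tau,\xi)|$ and $|\tau| \leq |(\tau,\xi)|$. First I would reduce the $\dot{H}^{1/2}_{t,x}$ norm to the two separate bounds on $\nrm{fg}_{L^{2}_{t} \dot{H}^{1/2}_{x}}$ and $\nrm{fg}_{\dot{H}^{1/2}_{t} L^{2}_{x}}$, using the equivalence
\begin{equation*}
	\nrm{u}_{\dot{H}^{1/2}_{t,x}}^{2} \aeq \nrm{u}_{L^{2}_{t} \dot{H}^{1/2}_{x}}^{2} + \nrm{u}_{\dot{H}^{1/2}_{t} L^{2}_{x}}^{2},
\end{equation*}
which follows from the pointwise equivalence $(|\tau|^{2}+|\xi|^{2})^{1/2} \aeq |\tau| + |\xi|$.

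Next, write $fg = \Pi_{f} g + \Pi_{g} f + R(f,g)$ with $\Pi_{f} g = \sum_{k} P_{<k-c} f \cdot P_{k} g$ and $R(f,g) = \sum_{|k_{1}-k_{2}| \leq c} P_{k_{1}} f \cdot P_{k_{2}} g$, where $P_{k}$ denotes the spatial Littlewood--Paley projection at frequency $2^{k}$. The core auxiliary estimates for the spatial piece are
\begin{equation*}
	\nrm{P_{<k-c} f}_{L^{2}_{t} L^{\infty}_{x}} \aleq 2^{k/2} \nrm{|D_{t,x}|^{1/2} f}_{L^{2}_{t} \dot{H}^{1}_{x}}, \quad
	\nrm{P_{<k-c} g}_{L^{\infty}_{t,x}} \aleq 2^{k} \nrm{D_{t,x} g}_{L^{\infty}_{t} L^{2}_{x}}.
\end{equation*}
The first follows from Bernstein $\nrm{P_{j} f}_{L^{\infty}_{x}} \aleq 2^{2j} \nrm{P_{j} f}_{L^{2}_{x}}$ in $\bbR^{4}$, combined with the frequency-localized comparison $\nrm{P_{j} f}_{L^{2}_{t,x}} \aleq 2^{-3j/2} \nrm{|D_{t,x}|^{1/2} P_{j} f}_{L^{2}_{t} \dot{H}^{1}_{x}}$ (using $|\xi| \leq |(\tau,\xi)|$ on $|\xi| \aeq 2^{j}$) and Cauchy--Schwarz over $j < k - c$; the second is direct Bernstein. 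Pairing these bounds with $\nrm{P_{k} g}_{L^{\infty}_{t} L^{2}_{x}}$ in $\Pi_{f} g$ and with $\nrm{P_{k} f}_{L^{2}_{t,x}}$ in $\Pi_{g} f$, then square-summing in $k$, closes the spatial bound via $\sum_{k} 2^{2k} \nrm{P_{k} g}_{L^{\infty}_{t} L^{2}_{x}}^{2} \aeq \nrm{D_{x} g}_{L^{\infty}_{t} L^{2}_{x}}^{2}$. The HH remainder $R(f,g)$ is controlled by H\"older with $L^{4}_{x}$-based exponents, Bernstein, and a Schur-test argument on the off-diagonal sum $k_{1} \aeq k_{2} \geq k$.

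For the temporal piece, refine each spatial paraproduct with an additional temporal Littlewood--Paley decomposition $Q_{\ell}$. The guiding principle is that a factor of $|\tau|^{1/2}$ on the Fourier side can be absorbed into the $|D_{t,x}|^{1/2}$ factor of the $f$-norm via $|\tau|^{1/2} \leq |(\tau,\xi)|^{1/2}$; alternatively, a factor of $|\tau|$ can be distributed onto $g$, where it is controlled by $\partial_{t} g \in L^{\infty}_{t} L^{2}_{x}$. The hard part will be the bookkeeping: since $|D_{t}|^{1/2}$ is a nonlocal time operator that cannot be naively pushed inside $L^{\infty}_{t}$-based norms, one must carefully distribute the half time-derivative across each trilinear dyadic piece, either exploiting the extra $|(\tau,\xi)|^{1/2}$ already present in the $f$-norm or paying a Cauchy--Schwarz summation to convert $\partial_{t} g \in L^{\infty}_{t} L^{2}_{x}$ into the required time-frequency summability. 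Once this distribution is executed correctly, the same Bernstein-plus-Cauchy--Schwarz structure as in the spatial part closes the estimate.
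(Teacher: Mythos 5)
Your proposal is a genuinely different route from the paper's. The paper works directly with the space-time Littlewood--Paley projections $S_{j}$, splits $S_{j}(fg) = S_{j}((S_{>j-10}f)g) + S_{j}(S_{\leq j-10} f \, S_{[j-5, j+5]} g)$, estimates each piece with Sobolev and H\"older keeping $g$ whole in the first term and the low-frequency factor of $f$ decomposed in the second, and closes via a Schur-type square-sum using the exponential off-diagonal decay $2^{-|j-j_{1}|/2}$. You instead split $\nrm{\cdot}_{\dot H^{1/2}_{t,x}}$ into $L^{2}_{t}\dot H^{1/2}_{x}$ and $\dot H^{1/2}_{t}L^{2}_{x}$ norms, run spatial paraproducts for the first, and add a temporal decomposition for the second.

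Your spatial piece, as written, does not close, and the failure is not cosmetic. For $\Pi_{f}g$ you pair $\nrm{P_{<k-c}f}_{L^{2}_{t}L^{\infty}_{x}} \aleq 2^{k/2}\nrm{|D_{t,x}|^{1/2}f}_{L^{2}_{t}\dot H^{1}_{x}}$ with $\nrm{P_{k}g}_{L^{\infty}_{t}L^{2}_{x}}$, which gives $\nrm{P_{k}\Pi_{f}g}_{L^{2}_{t}\dot H^{1/2}_{x}} \aleq 2^{k}\nrm{|D_{t,x}|^{1/2}f}\,\nrm{P_{k}g}_{L^{\infty}_{t}L^{2}_{x}}$; square-summing then demands
\begin{equation*}
\sum_{k} 2^{2k}\nrm{P_{k}g}_{L^{\infty}_{t}L^{2}_{x}}^{2} \aleq \nrm{D_{x}g}_{L^{\infty}_{t}L^{2}_{x}}^{2},
\end{equation*}
which is false --- the true inequality is reversed, since $\sup_{t}\sum_{k} \leq \sum_{k}\sup_{t}$. (Counterexample: $g = \sum_{k}\chi(t-k)\psi_{k}(x)$ with $\psi_{k}$ supported at spatial frequency $\approx 2^{k}$ and $\nrm{\psi_{k}}_{L^{2}_{x}}=2^{-k}$; the left side diverges, the right side is $O(1)$.) The root cause is that your auxiliary bound on $P_{<k-c}f$ already applied Cauchy--Schwarz over $j<k-c$, destroying the $\ell^{2}_{j}$-summability you need later. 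The correct maneuver is to keep the $j$-sum open, i.e.\ write
\begin{equation*}
\nrm{P_{k}\Pi_{f}g}_{L^{2}_{t}\dot H^{1/2}_{x}} \aleq \nrm{D_{x}g}_{L^{\infty}_{t}L^{2}_{x}}\sum_{j<k-c} 2^{(j-k)/2}\nrm{|D_{t,x}|^{1/2}P_{j}f}_{L^{2}_{t}\dot H^{1}_{x}},
\end{equation*}
then close via Schur's lemma on $2^{-|j-k|/2}$. This is exactly what the paper's proof does, except it applies $\ell^{2}$-Schur at the level of space-time blocks $S_{j}$ rather than spatial ones. Finally, the temporal piece as you describe it remains a sketch of `delicate bookkeeping' --- this is a signal that the time/space split is artificial here. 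Working with $S_{j}$ directly, as the paper does, avoids the need to push $|D_{t}|^{1/2}$ through $L^{\infty}_{t}$ at all.
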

\begin{proof}
  We use the Littlewood-Paley projections $\set{S_{j}}$ in
  $\bbR^{1+4}$. For every $j \in \bbZ$, we decompose
  \begin{align*}
    S_{j} (fg) = S_{j}((S_{> j - 10} f) g) + S_{j}(S_{\leq j - 10} f
    S_{[j-5, j+5]} g)
  \end{align*}
  Using Sobolev and H\"older, we estimate each term on the right-hand
  side as follows:
  \begin{align*}
    \nrm{S_{j}((S_{> j - 10} f) g)}_{\dot{H}^{\frac{1}{2}}_{t,x}}
    \aleq & \sum_{j_{1} > j - 10} 2^{\frac{1}{2} j}  \nrm{S_{j_{1}} f}_{L^{2}_{t} L^{4}_{x}} \nrm{g}_{L^{\infty}_{t} L^{4}_{x}} \\
    \aleq &\nrm{D_{t,x} g}_{L^{\infty}_{t} L^{2}_{x}}\sum_{j_{1} > j - 10} 2^{\frac{1}{2} (j - j_{1})} \nrm{\abs{D_{t,x}}^{\frac{1}{2}} S_{j_{1}} f}_{L^{2}_{t} \dot{H}^{1}_{x}}, \\
    \nrm{S_{j}(S_{\leq j - 10} f S_{[j-5, j+5]}
      g)}_{\dot{H}^{\frac{1}{2}}_{t,x}}
    \aleq & \sum_{j_{1} \leq j-10} 2^{\frac{1}{2} j} \nrm{S_{j_{1}} f}_{L^{2}_{t} L^{\infty}_{x}} \nrm{S_{[j-5, j+5]} g}_{L^{\infty}_{t} L^{2}_{x}} \\
    \aleq & \nrm{D_{t,x} g}_{L^{\infty}_{t} L^{2}_{x}} \sum_{j_{1}
      \leq j-10} 2^{\frac{1}{2}(j_{1} - j)}
    \nrm{\abs{D_{t,x}}^{\frac{1}{2}} S_{j_{1}} f}_{L^{2}_{t}
      \dot{H}^{1}_{x}} .
  \end{align*}
  Thanks to the exponential gain $2^{- \frac{1}{2}\abs{j - j_{1}}}$,
  we have
  \begin{equation*}
    \sum_{j} \nrm{S_{j}(fg)}_{\dot{H}^{\frac{1}{2}}_{t,x}}^{2}
    \aleq \nrm{D_{t,x} g}_{L^{\infty}_{t} L^{2}_{x}}^{2} \sum_{j_{1}} \nrm{\abs{D_{t,x}}^{\frac{1}{2}} S_{j_{1}} f}_{L^{2}_{t} \dot{H}^{1}_{x}}^{2}.
  \end{equation*}
  The desired estimate is now a consequence of almost orthogonality of
  $\set{S_{j}}_{j \in \bbZ}$ in $L^{2}_{t,x}$. \qedhere
\end{proof}

\subsection{Weak solutions to \eqref{eq:MKG}} \label{subsec:weak-sol}
We first define a function space that is suitable for a weak
formulation of \eqref{eq:MKG}.
\begin{definition} \label{def:weakSolSp} Let $\calO \subseteq
  \bbR^{1+4}$ be an open set. We define $\Xw(\calO)$ to be the linear
  space of pairs $(A, \phi)$, where $A$ is a real-valued 1-form and
  $\phi$ is a $\bbC$-valued function on $\calO$, such that
  \begin{equation} \label{eq:Xw:def} A_{\mu}, \phi \in
    L^{2}_{t,x}(\calO), \ F_{\mu \nu}, \covD_{\mu} \phi \in
    L^{2}_{t,x}(\calO) \quad \hbox{ for all } \mu, \nu = 0, 1, \ldots,
    4,
  \end{equation} 
  where $F_{\mu \nu} = \rd_{\mu} A_{\nu} - \rd_{\nu} A_{\mu}$ and
  $\covD_{\mu} \phi = \rd_{\mu} \phi + i A_{\mu} \phi$ in the sense of
  distributions.
\end{definition}

We may now define a notion of weak solutions to \eqref{eq:MKG} as
follows.
\begin{definition}[Weak solutions to
  \eqref{eq:MKG}] \label{def:weakSol} Let $\calO \subseteq \bbR^{1+4}$
  be an open set, and let $(A, \phi) \in \Xw(\calO)$. We say that $(A,
  \phi)$ is a \emph{weak solution} to \eqref{eq:MKG} on $\calO$ if for
  every real-valued 1-form $\omg \in C^{\infty}_{0}(\calO)$ and
  complex-valued function $\varphi \in C^{\infty}_{0}(\calO)$, we have
  \begin{align}
    \iint_{\calO} F_{\nu \mu} \rd^{\mu} \omg^{\nu} + \Im(\phi \overline{\covD_{\nu} \phi}) \omg^{\nu} \, \ud t \ud x =& 0,  \label{eq:weakSol:M} \\
    \iint_{\calO} \Re (\covD_{\mu} \phi \overline{\rd^{\mu} \varphi})
    + \Im(A^{\mu} \covD_{\mu} \phi \overline{\varphi}) \, \ud t \ud x
    =& 0. \label{eq:weakSol:KG}
  \end{align}
\end{definition}

By an integration by parts argument, it may be readily verified that
admissible and classical solutions to \eqref{eq:MKG} are indeed weak
solutions. In the converse direction, if $(A, \phi)$ is a weak
solution to \eqref{eq:MKG} that is furthermore smooth, then $(A,
\phi)$ solves \eqref{eq:MKG} in the usual, classical sense.

Next, we discuss the gauge structure of weak solutions to
\eqref{eq:MKG}. We first define the space of gauge transformations
between pairs in $\Xw$.
\begin{definition} \label{def:Yw:def} Given an open set $\calO
  \subseteq \bbR^{1+4}$, let $\Yw(\calO)$ be the space of real-valued
  functions $\chi$ on $\calO$ such that $\chi \in H^{1}_{t,x}(\calO)$.
\end{definition}
Indeed, note that if $(A, \phi) \in \Xw$ and $\chi \in \Yw$, then the
gauge transform $(\widetilde{A}, \widetilde{\phi}) := (A - \ud \chi,
e^{i \chi})$ also belongs to $\Xw$. Moreover, if $(A, \phi)$ is a weak
solution to \eqref{eq:MKG} then so is $(\widetilde{A},
\widetilde{\phi})$, as the next lemma demonstrates.

\begin{lemma} \label{lem:gt4weakSol} Let $\calO \subseteq \bbR^{1+4}$
  be an open set, and let $(A, \phi) \in \Xw(\calO)$ be a weak
  solution to \eqref{eq:MKG}. Then for every $\chi \in \Yw(\calO)$,
  the gauge transform $(\widetilde{A}, \widetilde{\phi}) := (A - \ud
  \chi, e^{i \chi} \phi)$ also belongs to $\Xw(\calO)$ and is a weak
  solution to \eqref{eq:MKG}.
\end{lemma}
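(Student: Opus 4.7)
The plan is to reduce the statement to the smooth case by approximation in $H^1_{t,x}$: I fix a sequence $\chi_n \in C^\infty(\calO) \cap H^1_{t,x}(\calO)$ with $\chi_n \to \chi$ in $H^1_{t,x}(\calO)$ and, after passing to a subsequence, pointwise almost everywhere. Then $e^{i\chi_n} \to e^{i\chi}$ a.e.\ with the uniform bound $|e^{i\chi_n}| \equiv 1$, so by dominated convergence $e^{i\chi_n} \to e^{i\chi}$ in $L^p_{\mathrm{loc}}(\calO)$ for every $p < \infty$. For smooth $\chi_n$ the gauge covariance of \eqref{eq:MKG} is a classical Leibniz computation; the content of the lemma is therefore passage to the limit in this approximation.

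First I verify $(\widetilde{A}, \widetilde{\phi}) \in \Xw(\calO)$. The $L^2_{t,x}$ bounds on $\widetilde{A}_\mu = A_\mu - \rd_\mu \chi$ and $|\widetilde{\phi}| = |\phi|$ are immediate, and equality of mixed distributional partial derivatives yields $\widetilde{F}_{\mu\nu} = F_{\mu\nu}$ as distributions, hence in $L^2_{t,x}(\calO)$. For the covariant derivative, the Leibniz rule applied to the smooth $\chi_n$ gives the pointwise identity
\begin{equation*}
\rd_\mu\bigl(e^{i\chi_n}\phi\bigr) + i(A_\mu - \rd_\mu\chi_n)\bigl(e^{i\chi_n}\phi\bigr) = e^{i\chi_n}\covD_\mu\phi,
\end{equation*}
and each side converges as a distribution as $n \to \infty$, using $e^{i\chi_n}\phi \to e^{i\chi}\phi$ in $L^2_{t,x}$, $e^{i\chi_n} \to e^{i\chi}$ in $L^p_{\mathrm{loc}}$, $A_\mu \in L^2_{t,x}$, $\rd_\mu\chi_n \to \rd_\mu\chi$ in $L^2_{t,x}$, and $\covD_\mu\phi \in L^2_{t,x}$. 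This yields $\widetilde{\covD}_\mu\widetilde{\phi} = e^{i\chi}\covD_\mu\phi \in L^2_{t,x}(\calO)$.

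Next I turn to the weak solution property. For the Maxwell equation \eqref{eq:weakSol:M}, the gauge invariance identity
\begin{equation*}
\Im\bigl(\widetilde{\phi}\,\overline{\widetilde{\covD}_\nu\widetilde{\phi}}\bigr) = \Im\bigl(e^{i\chi}\phi \cdot e^{-i\chi}\overline{\covD_\nu\phi}\bigr) = \Im\bigl(\phi\,\overline{\covD_\nu\phi}\bigr),
\end{equation*}
combined with $\widetilde{F}_{\mu\nu} = F_{\mu\nu}$, shows that the integrand of \eqref{eq:weakSol:M} for $(\widetilde{A}, \widetilde{\phi})$ against $\omg \in C^\infty_0(\calO)$ coincides a.e.\ with that for $(A, \phi)$ against the same $\omg$, so the identity follows from the hypothesis. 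For \eqref{eq:weakSol:KG}, fix $\widetilde{\varphi} \in C^\infty_0(\calO)$ and set $\varphi_n := e^{-i\chi_n}\widetilde{\varphi} \in C^\infty_0(\calO)$, which has support in $\supp \widetilde{\varphi}$. A direct algebraic expansion using $\overline{\rd^\mu\varphi_n} = e^{i\chi_n}\bigl(\overline{\rd^\mu\widetilde{\varphi}} + i(\rd^\mu\chi_n)\overline{\widetilde{\varphi}}\bigr)$ shows that, pointwise a.e.,
\begin{equation*}
\Re\bigl(\covD_\mu\phi\,\overline{\rd^\mu\varphi_n}\bigr) + \Im\bigl(A^\mu\covD_\mu\phi\,\overline{\varphi_n}\bigr) = \Re\bigl(\widetilde{\covD}^{(n)}_\mu\widetilde{\phi}^{(n)}\,\overline{\rd^\mu\widetilde{\varphi}}\bigr) + \Im\bigl(\widetilde{A}^{(n),\mu}\widetilde{\covD}^{(n)}_\mu\widetilde{\phi}^{(n)}\,\overline{\widetilde{\varphi}}\bigr),
\end{equation*}
where $(\widetilde{A}^{(n)}, \widetilde{\phi}^{(n)}) := (A - \ud\chi_n, e^{i\chi_n}\phi)$ and I have used $\widetilde{\covD}^{(n)}_\mu\widetilde{\phi}^{(n)} = e^{i\chi_n}\covD_\mu\phi$. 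Integrating, the left side vanishes by the hypothesis applied to $\varphi_n$; passing $n \to \infty$ on the right, with the convergences recorded above together with $F, \covD\phi \in L^2_{\mathrm{loc}}$, produces \eqref{eq:weakSol:KG} for $(\widetilde{A}, \widetilde{\phi})$ against $\widetilde{\varphi}$.

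The main obstacle is purely one of low regularity: since $\chi \in H^1_{t,x}$ is not essentially bounded in five spacetime dimensions, one cannot manipulate $e^{i\chi}$ pointwise or apply the Leibniz rule directly to $e^{i\chi}\phi$ at the distributional level, and test functions of the form $e^{-i\chi}\widetilde{\varphi}$ lie only in $H^1_0$ rather than $C^\infty_0$. Both difficulties are dealt with by the a priori bound $|e^{i\chi_n}| \equiv 1$, which combined with a.e.\ convergence supplies the dominated-convergence envelope needed in every bilinear limit above.
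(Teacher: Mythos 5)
Your proposal is correct and follows essentially the same route as the paper: approximate $\chi$ by smooth $\chi_n \to \chi$ in $H^1_{t,x}$ and pointwise a.e., exploit the exact gauge identity for smooth $\chi_n$ (with $|e^{\pm i\chi_n}| \equiv 1$ as the dominating bound), and pass to the limit via dominated convergence and H\"older. The only difference is organizational — the paper rewrites the test integral for $(\widetilde{A},\widetilde{\phi})$ directly as one for $(A,\phi)$ against $e^{-i\chi}\varphi$ before approximating, whereas you introduce the intermediate gauge transforms $(\widetilde{A}^{(n)},\widetilde{\phi}^{(n)})$ and match integrands pointwise — but these are the same manipulation.
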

\begin{proof}
  We need to verify \eqref{eq:weakSol:M} and \eqref{eq:weakSol:KG} for
  $(\widetilde{A}, \widetilde{\phi})$. For \eqref{eq:weakSol:M} there
  is nothing to verify, as both $F$ and $\Im(\phi \overline{\covD
    \phi})$ are invariant under gauge transformation. For
  \eqref{eq:weakSol:KG}, we have
  \begin{align*}
    & \hskip-2em
    \iint_{\calO} \Re(\widetilde{\covD}_{\mu} \widetilde{\phi} \, \overline{\rd^{\mu} \varphi}) + \Im(\widetilde{A}^{\mu} \widetilde{\covD_{\mu}} \widetilde{\phi} \, \overline{\varphi}) \, \ud t \ud x \\
    = & \iint_{\calO} \Re(\covD_{\mu} \phi \overline{\rd^{\mu} (e^{-i
        \chi} \varphi)}) + \Im(A^{\mu} \covD_{\mu} \phi \,
    \overline{e^{-i \chi} \varphi}) \, \ud t \ud x.
  \end{align*}
  Observe that if $\chi \in C^{\infty}(\calO)$, then the last line
  would be equal to zero by \eqref{eq:weakSol:KG} for $(A,
  \phi)$. Considering a sequence $\chi^{(n)} \in C^{\infty}(\calO)$
  such that $\chi^{(n)} \to \chi$ in the $H^{1}_{t,x}(\calO)$ topology
  and also pointwise almost everywhere, it can be seen that the last
  line is indeed zero, by the dominated convergence theorem, Leibniz's
  rule and H\"older's inequality. \qedhere
%
\end{proof}

\subsection{Local description of solutions to
  \eqref{eq:MKG}} \label{subsec:cp} Here we discuss how to describe a
solution to \eqref{eq:MKG} by local data. More precisely, given an
open cover $\calQ = \set{Q_{\alp}}$ of an open set $\calO \subseteq
\bbR^{1+4}$, we would like to describe a solution to \eqref{eq:MKG} on
$\calO$ by local solutions on $Q_{\alp}$ satisfying certain
compatibility conditions, which ensure that the local solutions
combine to form a single solution on $\calO$. This idea is made
precise by the ensuing definition.
\begin{definition}[Smooth compatible pairs] \label{def:smth-cp} Let
  $\calO \subseteq \bbR^{1+4}$ be an open set and let $\calQ =
  \set{Q_{\alp}}$ be a locally finite open covering of $\calO$. For
  each index $\alp$, consider a pair $(A_{[\alp]}, \phi_{[\alp]}) \in
  C^{\infty}_{t,x}(Q_{\alp})$, where $A_{[\alp]}$ is a real-valued
  1-form and $\phi_{[\alp]}$ is a $\bbC$-valued function on
  $Q_{\alp}$. We say that $(A_{[\alp]}, \phi_{[\alp]})$ are
  \emph{smooth compatible pairs} if for every $\alp, \bt$, there
  exists a gauge transformation $\chi_{[\alp \bt]} \in
  C^{\infty}_{t,x}(Q_{\alp} \cap Q_{\bt})$ such that the following
  properties hold:
  \begin{enumerate}
  \item \label{item:cp:1} For every $\alp$, we have $\chi_{[\alp
      \alp]} = 0$.
  \item \label{item:cp:2} For every $\alp, \bt$, we have
    \begin{equation}
      (A_{[\bt]}, \phi_{[\bt]}) = (A_{[\alp]} - \ud \chi_{[\alp \bt]}, e^{i \chi_{[\alp \bt]}} \phi_{[\alp]}) 
      \quad \hbox{ on } Q_{\alp} \cap Q_{\bt}.
    \end{equation}
  \item \label{item:cp:3} For every $\alp, \bt, \gmm$, the following
    \emph{cocycle condition} is satisfied:
    \begin{equation}
      \chi_{[\alp \bt]} + \chi_{[\bt \gmm]} + \chi_{[\gmm \alp]} \in 2 \pi \bbZ 
      \quad \hbox{ on } Q_{\alp} \cap Q_{\bt} \cap Q_{\gmm}.
    \end{equation}
  \end{enumerate}
\end{definition}

The notion of (gauge-)\emph{equivalence} of compatible pairs is
defined as follows.
\begin{definition}[Equivalence of smooth compatible
  pairs] \label{def:smth-cp:equiv} Let $\calO \subseteq \bbR^{1+4}$ be
  an open set, and let $\calQ = \set{Q_{\alp}}$, $\calQ' =
  \set{Q_{\bt}'}$ be locally finite open coverings of
  $\calO$. Consider two sets of smooth compatible pairs $(A_{[\alp]},
  \phi_{[\alp]})$ and $(A'_{[\bt]}, \phi'_{[\bt]})$ on $\calQ$ and
  $\calQ'$, respectively.  When $\calQ'$ is a refinement of $\calQ$
  (i.e., for every $\bt$ there exists $\alp(\bt)$ such that $Q'_{\bt}
  \subseteq Q_{\alp}$), we say that $(A_{[\alp]}, \phi_{[\alp]})$ and
  $(A'_{[\bt]}, \phi'_{[\bt]})$ are (gauge-)\emph{equivalent} if for
  every $\bt$ there exists $\chi_{[\bt]} \in
  C^{\infty}_{t,x}(Q'_{\bt})$ such that $(A'_{[\bt]}, \phi'_{[\bt]})
  =(A_{[\alp]} - \ud \chi_{[\bt]}, \phi_{[\alp]} e^{i \chi_{[\bt]}})$.
  In the general case, we say that $(A_{[\alp]}, \phi_{[\alp]})$ and
  $(A'_{[\bt]}, \phi'_{[\bt]})$ are (gauge-)\emph{equivalent} if there
  exists a common refinement $\calQ''$ of $\calQ$, $\calQ'$ and a set
  of smooth compatible pairs $(A''_{[\gmm]}, \phi''_{[\gmm]})$ on
  $\calQ''$ which is equivalent to both $(A_{[\alp]}, \phi_{[\alp]})$
  and $(A'_{[\bt]}, \phi'_{[\bt]})$.
\end{definition}

\begin{remark} 
  In more geometric terms, compatible pairs $(A_{[\alp]},
  \phi_{[\alp]})$ on $Q_{\alp}$ are precisely expressions of a
  connection $A$ and a section $\phi$ of a complex line bundle $L$ in
  local trivializations $L \rst_{Q_{\alp}} \simeq Q_{\alp} \times
  \bbC$.  Moreover, equivalent sets of compatible pairs are
  alternative expressions of the same global pair $(A, \phi)$.
\end{remark}

In fact, expression of connections and sections in local
trivializations in the fashion of Definition~\ref{def:smth-cp} is
necessary if the complex line bundle $L$ under consideration is
topologically nontrivial (i.e., $L$ is \emph{not} homeomorphic to the
product of $\bbC$ and the base space). In our setting, however, there
is no loss of generality in simply identifying connections and
sections of $L$ with real-valued 1-forms and complex-valued functions,
respectively, as all base spaces we consider (e.g., $\calO = I \times
\bbR^{4}$ or $C^{T}_{[T, \infty)}$ for some $T > 0$) are contractible
and hence all complex line bundles over such spaces are topologically
trivial. In this case, every smooth compatible pairs on $\calO$ is
equivalent to a global smooth pair $(A, \phi)$ on $\calO$.

\begin{remark} 
We emphasize that no delicate patching is needed for smooth compatible pairs
in this paper, since all we need is merely the soft fact that the energy argument in 
Section~\ref{sec:energy} and the stress tensor argument in Section~\ref{sec:stationary-self-sim} 
(which are both gauge invariant) can be justified. 
In contrast, in \cite{OT1} an elaborate patching argument had to be developed 
in order to control the $S^{1}$ norm of the equivalent global pair in the Coulomb gauge.
\end{remark}

Based on the spaces introduced for the weak formulation of
\eqref{eq:MKG} discussed above, we can also formulate the notion of
\emph{weak compatible pairs}.
\begin{definition}[Weak compatible pairs] \label{def:weak-cp} Let
  $\calO \subseteq \bbR^{1+4}$ be an open set and let $\calQ =
  \set{Q_{\alp}}$ be a locally finite covering of $\calO$. For each
  index $\alp$, consider a pair $(A_{[\alp]}, \phi_{[\alp]}) \in
  \Xw(Q_{\alp})$. We say that $(A_{[\alp]}, \phi_{[\alp]})$ are
  \emph{weak compatible pairs} if for every $\alp, \bt$, there exists
  a gauge transformation $\chi_{[\alp \bt]} \in \Yw(Q_{\alp} \cap
  Q_{\bt})$ such that the properties
  (\ref{item:cp:1})--(\ref{item:cp:3}) in Definition~\ref{def:smth-cp}
  hold almost everywhere.
\end{definition}

The notion of equivalent sets of weak compatible pairs is defined as
in Definition~\ref{def:smth-cp:equiv}, where the space
$C_{t,x}(Q'_{\bt})$ is replaced by $\Yw(Q'_{\bt})$.

Geometrically, weak compatible pairs $(A_{[\alp]}, \phi_{[\alp]})$ may
be thought of as local descriptions of a connection and a section
defined on a \emph{rough} complex line bundle $L$.  A simple but
crucial observation is that smoothness of the pairs $(A_{[\alp]},
\phi_{[\alp]})$ implies smoothness of the gauge transformations
$\chi_{[\alp \bt]}$. Indeed, simply note that $\ud \chi_{[\alp \bt]} =
A_{[\alp]} - A_{[\bt]}$ by the property (\ref{item:cp:2}) in
Definition~\ref{def:smth-cp}. As this fact will play an important role
in our argument (see Proposition~\ref{prop:reg}), we record it as a
separate lemma.
\begin{lemma} \label{lem:smooth-weak-cp} Let $\calQ = \set{Q_{\alp}}$
  be an open cover of $\calO \subseteq \bbR^{1+4}$, and let
  $(A_{[\alp]}, \phi_{[\alp]})$ on $Q_{\alp}$ be weak compatible
  pairs. If $A_{[\alp]}, \phi_{[\alp]} \in C^{\infty}(Q_{\alp})$ for
  every $\alp$, then $(A_{[\alp]}, \phi_{[\alp]})$ form \emph{smooth
    compatible pairs} in the sense of Definition~\ref{def:smth-cp}.
\end{lemma}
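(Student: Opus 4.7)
The plan is to show that the weak gauge transformations $\chi_{[\alp\bt]} \in \Yw(Q_{\alp}\cap Q_{\bt})$ provided by Definition~\ref{def:weak-cp} are in fact smooth, after which all three properties in Definition~\ref{def:smth-cp} will follow by continuity from their almost-everywhere counterparts.

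\textbf{Step 1 (Smoothness of the gradient of $\chi_{[\alp\bt]}$).} Fix indices $\alp,\bt$. By property (\ref{item:cp:2}) of Definition~\ref{def:smth-cp}, applied in the weak sense (Definition~\ref{def:weak-cp}), we have
\begin{equation*}
A_{[\bt]} = A_{[\alp]} - \ud \chi_{[\alp\bt]} \quad \text{a.e.\ on } Q_{\alp}\cap Q_{\bt},
\end{equation*}
where the identity is interpreted in the sense of distributions. Rearranging,
\begin{equation*}
\ud \chi_{[\alp\bt]} = A_{[\alp]} - A_{[\bt]} \in C^{\infty}_{t,x}(Q_{\alp}\cap Q_{\bt}),
\end{equation*}
by the smoothness hypothesis on the pairs. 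Thus the distributional differential of $\chi_{[\alp\bt]}$ coincides with a smooth $1$-form.

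\textbf{Step 2 (Smoothness of $\chi_{[\alp\bt]}$ itself).} Since $\chi_{[\alp\bt]}\in H^{1}_{t,x}(Q_{\alp}\cap Q_{\bt})$ is a real-valued distribution whose distributional gradient is smooth, a standard local regularity argument gives $\chi_{[\alp\bt]}\in C^{\infty}_{t,x}(Q_{\alp}\cap Q_{\bt})$. Concretely, on any open ball $B\subset Q_{\alp}\cap Q_{\bt}$ the Poincar\'e lemma produces a smooth function $\widetilde\chi$ on $B$ with $\ud\widetilde\chi = \ud\chi_{[\alp\bt]}$; then $\chi_{[\alp\bt]}-\widetilde\chi$ has vanishing distributional differential on $B$ and is therefore (a.e.\ equal to) a constant. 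Hence $\chi_{[\alp\bt]}$ agrees a.e.\ with a smooth function on $B$, and by redefining $\chi_{[\alp\bt]}$ on a null set we may take it to be smooth on all of $Q_{\alp}\cap Q_{\bt}$.

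\textbf{Step 3 (Promoting the compatibility conditions to everywhere).} With $\chi_{[\alp\bt]}$ now smooth, all three relations in Definition~\ref{def:smth-cp} extend from holding almost everywhere to holding pointwise. Indeed, property (\ref{item:cp:1}), $\chi_{[\alp\alp]}=0$, and property (\ref{item:cp:2}) both assert equality of continuous functions, hence transfer immediately. For the cocycle condition (\ref{item:cp:3}), the function $\chi_{[\alp\bt]}+\chi_{[\bt\gmm]}+\chi_{[\gmm\alp]}$ is continuous and takes values in the discrete set $2\pi\bbZ$ almost everywhere, so by continuity it is locally constant and takes values in $2\pi\bbZ$ everywhere on $Q_{\alp}\cap Q_{\bt}\cap Q_{\gmm}$. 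This verifies Definition~\ref{def:smth-cp} and completes the proof.

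The argument contains no real obstacle: the only point requiring minimal care is Step~2, where one must invoke elliptic regularity (or, as indicated, a direct Poincar\'e-lemma argument) to upgrade a priori $H^{1}$ regularity of $\chi_{[\alp\bt]}$ to smoothness from smoothness of its weak differential.
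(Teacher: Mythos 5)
Your proof is correct and takes the same route as the paper: the key point, stated just before the lemma in the paper, is that $\ud\chi_{[\alp\bt]} = A_{[\alp]} - A_{[\bt]}$ is smooth, and hence so is $\chi_{[\alp\bt]}$; your Steps 2 and 3 merely fill in the routine details of upgrading an $H^1$ function with smooth distributional gradient to a smooth representative and then promoting the a.e.\ compatibility relations to pointwise ones by continuity.
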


We end this subsection with another simple lemma, which will be used
later to show that the local solutions obtained from
Proposition~\ref{prop:cpt} in the limit form weak compatible pairs.
\begin{lemma} \label{lem:weak-cp-limit} Let $Q_{1}, Q_{2} \subseteq
  \bbR^{1+4}$ be open sets such that $Q_{1} \cap Q_{2} \neq \0$ is an
  open bounded set with a piecewise smooth boundary. Consider
  sequences $(A_{[\alp]}^{(n)}, \phi_{[\alp]}^{(n)}) \in
  \Xw(Q_{\alp})$ ($\alp = 1,2$) and $\chi_{[12]}^{(n)} \in \Yw(Q_{1}
  \cap Q_{2})$ such that
  \begin{equation} \label{eq:weak-cp-seq} (A_{[2]}^{(n)},
    \phi_{[2]}^{(n)}) = (A_{[1]}^{(n)} - \ud \chi_{[12]}^{(n)},
    \phi_{[1]}^{(n)} e^{i \chi_{[12]}^{(n)}}) \quad \hbox{ a.e. on }
    Q_{1} \cap Q_{2}.
  \end{equation}
  In other words, $(A_{[\alp]}^{(n)}, \phi_{[\alp]}^{(n)})$ are weak
  compatible pairs for each $n$. Suppose furthermore that each
  sequence $(A_{[\alp]}^{(n)}, \phi_{[\alp]}^{(n)})$ has a limit
  $(A_{[\alp]}, \phi_{[\alp]})$ in $\Xw(Q_{\alp})$ as $n \to \infty$.
  Then the limits $(A_{[\alp]}, \phi_{[\alp]})$ ($\alp=1,2$) also form
  weak compatible pairs, i.e., there exists $\chi_{[12]} \in \Yw(Q_{1}
  \cap Q_{2})$ such that
  \begin{equation} \label{eq:weak-cp-limit} (A_{[2]}, \phi_{[2]} ) =
    (A_{[1]} - \ud \chi_{[12]}, \phi_{[1]} e^{i \chi_{[12]}}) \quad
    \hbox{ a.e. on } Q_{1} \cap Q_{2}.
  \end{equation}
  Moreover, there exists a subsequence of $\chi_{[12]}^{(n)}$ that
  converges\footnote{That is, there exists $k_{m} \in \bbZ$ such that
    $\chi_{[12]}^{(n_{m})} + 2 \pi k_{m} \to \chi_{[12]}$ in
    $\Yw(Q_{1} \cap Q_{2})$.} to $\chi_{[12]}$ in $\Yw(Q_{1} \cap
  Q_{2})$ up to integer multiples of $2 \pi$.
\end{lemma}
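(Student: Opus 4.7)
The key observation is that the first component of \eqref{eq:weak-cp-seq} gives, almost everywhere on $Q_{1} \cap Q_{2}$,
\[
\ud \chi^{(n)}_{[12]} = A^{(n)}_{[1]} - A^{(n)}_{[2]}.
\]
Since $A^{(n)}_{[\alp]} \to A_{[\alp]}$ in $L^{2}(Q_{\alp})$ (by $\Xw$-convergence), the sequence $\nb_{t,x} \chi^{(n)}_{[12]}$ is Cauchy in $L^{2}(Q_{1} \cap Q_{2})$ and therefore converges strongly in $L^{2}$ to $A_{[1]} - A_{[2]}$. This already controls the gradient of $\chi^{(n)}_{[12]}$, but not its mean.

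To extract an $L^{2}$-limit for $\chi^{(n)}_{[12]}$ itself I exploit the $2\pi \bbZ$-freedom in the definition of a gauge transformation. Let $c_{n}$ denote the mean of $\chi^{(n)}_{[12]}$ over the bounded domain $Q_{1} \cap Q_{2}$, choose $k_{n} \in \bbZ$ so that $c_{n} - 2\pi k_{n} \in [0, 2\pi)$, and set $\tld{\chi}^{(n)} := \chi^{(n)}_{[12]} - 2\pi k_{n}$; this substitution leaves both $\ud \chi^{(n)}_{[12]}$ and $e^{i\chi^{(n)}_{[12]}}$ invariant, so \eqref{eq:weak-cp-seq} continues to hold with $\tld{\chi}^{(n)}$ in place of $\chi^{(n)}_{[12]}$. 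The Poincar\'e--Wirtinger inequality on the bounded domain $Q_{1} \cap Q_{2}$ yields
\[
\nrm{\tld{\chi}^{(n)} - (c_{n} - 2\pi k_{n})}_{L^{2}} \aleq \nrm{\nb_{t,x} \tld{\chi}^{(n)}}_{L^{2}},
\]
and together with $c_{n} - 2\pi k_{n} \in [0, 2\pi)$ this shows that $\tld{\chi}^{(n)}$ is bounded in $H^{1}(Q_{1}\cap Q_{2}) = \Yw(Q_{1}\cap Q_{2})$. By Rellich--Kondrachov, after passing to a subsequence, $\tld{\chi}^{(n)} \weakto \chi_{[12]}$ in $H^{1}$ and strongly in $L^{2}$ for some $\chi_{[12]} \in \Yw(Q_{1} \cap Q_{2})$; extracting a further subsequence I also obtain pointwise a.e.\ convergence.

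It then remains to pass to the limit in the two relations in \eqref{eq:weak-cp-seq}. By uniqueness of (weak) limits in $L^{2}$, the first paragraph identifies $\nb_{t,x}\chi_{[12]} = A_{[1]} - A_{[2]}$, i.e.\ $A_{[2]} = A_{[1]} - \ud \chi_{[12]}$. For the $\phi$-relation I decompose
\[
\phi^{(n)}_{[1]} e^{i \tld{\chi}^{(n)}} - \phi_{[1]} e^{i \chi_{[12]}} = (\phi^{(n)}_{[1]} - \phi_{[1]}) e^{i \tld{\chi}^{(n)}} + \phi_{[1]} (e^{i \tld{\chi}^{(n)}} - e^{i \chi_{[12]}});
\]
the $L^{2}$-norm of the first term tends to $0$ since $\abs{e^{i \tld\chi^{(n)}}} = 1$ and $\phi^{(n)}_{[1]} \to \phi_{[1]}$ in $L^{2}$, while the second tends to $0$ by the dominated convergence theorem (the integrand is dominated pointwise by $4\abs{\phi_{[1]}}^{2} \in L^{1}$ and converges to $0$ a.e.). Since also $\phi^{(n)}_{[2]} \to \phi_{[2]}$ in $L^{2}$, we conclude $\phi_{[2]} = \phi_{[1]} e^{i \chi_{[12]}}$ a.e. Finally, the weak $H^{1}$-convergence of $\tld{\chi}^{(n)}$ upgrades to strong $\Yw$-convergence because $\nb_{t,x} \tld{\chi}^{(n)} \to \nb_{t,x} \chi_{[12]}$ strongly in $L^{2}$ (first paragraph) and $\tld{\chi}^{(n)} \to \chi_{[12]}$ strongly in $L^{2}$; this is exactly the subsequence convergence asserted in the lemma, up to the integer multiples $k_{n}$. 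The only delicate point in the argument is the $2\pi\bbZ$ normalization that compensates for the lack of a uniform $L^{2}$ bound on $\chi^{(n)}_{[12]}$ itself; the remaining steps are standard compactness and continuity manipulations.
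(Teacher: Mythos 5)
Your proof is correct and follows essentially the same strategy as the paper's: both exploit the $2\pi\bbZ$-freedom to normalize the mean of $\chi_{[12]}^{(n)}$ to a bounded set, use Poincar\'e--Wirtinger on the bounded domain $Q_1\cap Q_2$ together with the identity $\ud\chi_{[12]}^{(n)} = A_{[1]}^{(n)} - A_{[2]}^{(n)}$ to control the mean-zero part, and then pass to the limit in the compatibility relations. The only cosmetic difference is that the paper argues directly that the mean-zero part is Cauchy in $H^1_{t,x}$ (strong convergence, no compactness needed) and separately extracts a convergent subsequence of $e^{i\overline{\chi}^{(n)}_{[12]}}$, whereas you route through Rellich--Kondrachov to get a weakly convergent subsequence and then upgrade to strong convergence \emph{a posteriori}; the Rellich step is a harmless detour but not actually needed, since strong $L^2$-convergence of the gradient already gives Cauchy directly via Poincar\'e. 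Your explicit dominated-convergence argument for the $\phi$-relation, which the paper leaves to the reader, is also correct.
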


\begin{proof} 
  Let $\overline{\chi}_{[12]}^{(n)} := \int_{Q_{1} \cap Q_{2}}
  \chi_{[12]}^{(n)}$ denote the mean of $\chi_{[12]}^{(n)}$. By
  Poincar\'e's inequality, the identity $\ud \chi_{[12]}^{(n)} =
  A_{[1]}^{(n)} - A_{[2]}^{(n)}$ and the $L^{2}_{t,x}$ convergence of
  $A_{[\alp]}^{(n)}$ ($\alp = 1, 2$), the mean-zero part
  $\hat{\chi}_{[12]}^{(n)} := \chi_{[12]}^{(n)} -
  \overline{\chi}_{[12]}^{(n)}$ converges to a limit
  $\hat{\chi}_{[12]}$ in $\Yw(Q_{1} \cap Q_{2}) = H^{1}_{t,x}(Q_{1}
  \cap Q_{2})$. On the other hand, we can easily extract a convergent
  subsequence from the bounded sequence $e^{i
    \overline{\chi}_{[12]}^{(n)}}$; abusing the notation a bit, we
  denote the subsequence still by $e^{i
    \overline{\chi}_{[12]}^{(n)}}$, and the limit by $e^{i
    \overline{\chi}_{[12]}}$ for some $\overline{\chi}_{[12]} \in
  \bbR$. It follows that $\chi_{[12]}^{(n)}$ converges to $\chi_{[12]}
  := \hat{\chi}_{[12]} + \overline{\chi}_{[12]}$ in $\Yw(Q_{1} \cap
  Q_{2})$ as $n \to \infty$ up to integer multiples of $2 \pi$. The
  desired gauge equivalence in the limit \eqref{eq:weak-cp-limit} is
  now an easy consequence of \eqref{eq:weak-cp-seq} and the above
  convergences.
%
\end{proof}

\section{Stationary / self-similar solutions with finite energy} \label{sec:stationary-self-sim}
In the context of the blow-up analysis to be performed in Section~\ref{sec:proof}, the local strong compactness result (Proposition~\ref{prop:cpt}) will give rise to two types of solutions to \eqref{eq:MKG}: 
\begin{itemize}
\item A \emph{stationary solution} $(A, \phi)$, which is defined by the property 
\begin{equation} \label{eq:stationary}
	\iota_{Y} F = 0, \quad \covD_{Y} \phi = 0 
\end{equation}
for some constant time-like vector field $Y$; or
\item A \emph{self-similar solution} $(A, \phi)$, defined by the property
\begin{equation} \label{eq:self-sim}
	\iota_{X_{0}} F = 0, \quad (\covD_{X_{0}} + \frac{1}{\rho}) \phi = 0.
\end{equation}
\end{itemize}

In Sections~\ref{subsec:st} and \ref{subsec:ss}, we show that such solutions must be \emph{trivial} under the finite energy assumption. We use the method of stress tensor, which is the elliptic version of the energy-momentum-stress tensor considered in Section~\ref{sec:energy}. In Section~\ref{subsec:reg}, we establish an elliptic regularity result for these solutions under the improved regularity assumption \eqref{eq:cpt:reg} ensured by Proposition~\ref{prop:cpt}.

\subsection{Triviality of finite energy stationary solutions} \label{subsec:st}
As any unit constant time-like vector field $Y$ can be transformed to the vector field $T = \rd_{t}$ in the rectilinear coordinates, we may assume that $Y = T$.  Our main result in this case is as follows.
\begin{proposition} \label{prop:trivial:st}
Let $(A, \phi)$ be a smooth solution to \eqref{eq:MKG} on $\bbR^{1+4}$ with $\iota_{T} F = 0$ and $\covD_{T} \phi = 0$. Suppose furthermore that $(A, \phi)$ has finite energy, i.e., $\calE_{\set{0} \times \bbR^{4}}[A, \phi] < \infty$. Then $\calE_{\set{0} \times \bbR^{4}}[A, \phi] = 0$.
\end{proposition}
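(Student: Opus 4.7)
My strategy is to apply the method of stress tensor on a single constant-time slice, reducing the problem to an elliptic Pohozaev-type identity that forces $\covD \phi \equiv 0$, after which a finite-energy Liouville argument handles the curvature.

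First, I would unpack the stationarity. Gauge-invariantly, $\iota_{T} F = 0$ gives $F_{0j} = 0$, and $\covD_{T} \phi = 0$ gives $\covD_{0} \phi = 0$; differentiating these and using $[\covD_{T}, \covD_{\mu}] \phi = i F_{T \mu} \phi = 0$ shows that $F$ and each $\covD_{\mu} \phi$ are time-independent in magnitude. Hence $\EM_{\mu \nu}[A, \phi]$ has time-independent components, $\EM_{0 j}[A, \phi] = 0$ for spatial $j$, and $\calE_{\set{0} \times \bbR^{4}}[A, \phi] = \int_{\bbR^{4}} \EM_{00}\, \ud x$.

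Next, I would contract $\EM$ with the spatial scaling field $S := x^{j} \rd_{j}$. A direct calculation as in Section~\ref{subsec:noether} (with $\defT{S}_{jk} = 2 \dlt_{jk}$ and all other components zero) yields the divergence identity $\nb^{\mu} \vC{S}_{\mu} = \sC{S} = \sum_{j} \EM_{jj}$. The critical computation is that $\sum_{j} {}^{(\M)} \EM_{jj} = 0$ identically (the conformal invariance of Maxwell in four spatial dimensions), so only the Klein--Gordon piece survives,
\begin{equation*}
  \sC{S}[A, \phi] = - \sum_{j=1}^{4} \abs{\covD_{j} \phi}^{2}.
\end{equation*}
Integrating over $\set{0} \times B_{R}(0)$ and using $\vC{S}_{0} = \EM_{0j} x^{j} = 0$ to drop the time component of the divergence, I obtain
\begin{equation*}
  \int_{\rd B_{R}(0)} \EM_{jk} x^{k} \nu^{j} \, \ud \sigma = - \int_{B_{R}(0)} \sum_{j=1}^{4} \abs{\covD_{j} \phi}^{2} \, \ud x.
\end{equation*}

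To pass to $R \to \infty$, I would invoke a pigeonhole argument: the function $f(R) := \int_{\rd B_{R}(0)} \abs{\EM} \, \ud \sigma$ lies in $L^{1}((0, \infty); \ud R)$ with total mass $\aleq \calE < \infty$, which forces $\liminf_{R \to \infty} R f(R) = 0$ (otherwise $f(R) \gtrsim 1/R$ would produce a logarithmic divergence). Along such a sequence $R_{n} \to \infty$ the left-hand side vanishes, so $\covD_{j} \phi \equiv 0$ on $\set{0} \times \bbR^{4}$, and by time-independence on all of $\bbR^{1+4}$. Combined with $\covD_{0} \phi = 0$, this yields $\covD \phi \equiv 0$. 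The Maxwell source $\Im(\phi \overline{\covD_{\nu} \phi})$ then vanishes, so $F_{jk}$ satisfies $\rd^{\ell} F_{j \ell} = 0$ together with the Bianchi identity; these give $\lap F_{jk} = 0$ on $\bbR^{4}$ with $F_{jk} \in L^{2}(\bbR^{4})$ by finite energy, and a standard Fourier-analytic Liouville argument then forces $F_{jk} \equiv 0$, hence $\calE = 0$. The main technical obstacle is controlling the boundary term at infinity in the Pohozaev step, which the $L^{1}$-pigeonhole trick handles using only the finite energy hypothesis; the Maxwell-part cancellation $\sum_{j} {}^{(\M)} \EM_{jj} = 0$ is what makes the whole scheme work, since otherwise the scaling identity would be polluted by indefinite curvature terms.
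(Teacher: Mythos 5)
Your proposal is correct and follows essentially the same stress-tensor/Pohozaev argument as the paper: contract the (trace-free) Maxwell + Klein--Gordon stress tensor with the spatial radial field, observe that the interior integrand reduces to $-c\,|\covD_x\phi|^2$, kill the boundary term at infinity by the same $L^1$-in-$R$ pigeonhole, and then handle $F$ as a harmonic $L^2$ two-form. The only cosmetic difference is in the final Liouville step, where you use a Fourier-analytic argument for $L^2$ harmonic functions whereas the paper uses a mean-value/maximum-principle monotonicity estimate; both are standard and equally valid.
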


\begin{proof} 
We use the rectilinear coordinates $(t= x^{0}, x^{1}, \ldots, x^{4})$, in which $T = \rd_{t}$. By the stationarity assumptions $(\iota_{T} F)(\rd_{j}) = F_{0j}= 0$ and $\covD_{T} \phi = \covD_{0} \phi = 0$, \eqref{eq:MKG} reduces to the following elliptic system on each constant $t$ hypersurface:
\begin{equation} \label{eq:stMKG}
\left\{
\begin{aligned}
	\rd^{\ell} F_{j \ell} =& \Im (\phi \overline{\covD_{j} \phi}),  \\
	\covD^{\ell} \covD_{\ell} \phi =& 0.
\end{aligned}
\right.
\end{equation}
Henceforth, we work with $F, \phi$ restricted to the hypersurface $\set{t = 0}$. 

For the purpose of showing $\calE[A, \phi] = 0$, consider the following stress tensor associated to \eqref{eq:stMKG}:
\begin{equation}
	\EM_{j k} [A, \phi] := \Re(\covD_{j} \phi \overline{\covD_{k} \phi}) - \frac{1}{2} \dlt_{jk} \Re(\covD_{k} \phi \overline{\covD^{k} \phi})
					+ F_{j \ell} \tensor{F}{_{k}^{\ell}} - \frac{1}{4} \dlt_{jk} F_{\ell m} F^{\ell m}.
\end{equation}
Given a vector field $S$ on $\bbR^{4}$, we define as before the associated $1$-and $0$-currents
\begin{equation*}
	\vC{S}_{j}[A, \phi] := \EM_{jk}[A,\phi] S^{k}, \quad
	\sC{S}[A, \phi] := \EM_{jk}[A, \phi] \defT{S}^{jk}
\end{equation*}
which, thanks to \eqref{eq:stMKG}, satisfy the divergence identity
\begin{equation} \label{eq:stMKG:i-by-p}
	\nb^{\bfa} (\vC{S}_{\bf}[A, \phi]_{\bfa}) = \sC{S}[A, \phi].
\end{equation}
Choosing $S$ to be the scaling vector field on $\bbR^{4}$ so that, in the rectilinear coordinates
\begin{equation*}
S^{k} = x^{k}, \quad \defT{S}^{jk} = 2 \dlt^{jk},
\end{equation*}
we have
\begin{align*}
	\sC{S}[A, \phi] = - 2 \abs{\covD \phi}^{2}, \quad
	\abs{\vC{S}_{j}[A, \phi]} \aleq \abs{x} \abs{\covD \phi}^{2} + \abs{x} \abs{F}^{2}.
\end{align*}
where $\abs{\covD \phi}^{2} = \sum_{j=1}^{4} \abs{\covD_{j} \phi}^{2}$ and $\abs{F}^{2} = \sum_{1 \leq j < k \leq 4} \abs{F_{jk}}^{2}$. 

We now integrate \eqref{eq:stMKG:i-by-p} by parts on a ball $B_{R} \subseteq \bbR^{4}$ of radius $R > 1$ centered at $0$. Then we see that
\begin{equation} \label{}
	- 2 \int_{B_{R}} \abs{\covD u}^{2} \, \ud x
	= \int_{\rd B_{R}} \vC{S}[A, u]_{\bfa} \bfn^{\bfa}, \quad  \hbox{ where } \bfn = \frac{x^{\ell}}{\abs{x}} \rd_{\ell}.
\end{equation}
By the finite energy condition, we have $\abs{\covD \phi}, \abs{F} \in L^{2}(\bbR^{4})$; this fact is enough to deduce the existence of a sequence of radii $R_{n} \to \infty$ along which the boundary integral vanishes. 
Hence it follows that $\covD_{x} \phi = 0$.

It only remains to show that $F = 0$. Note that $F$ is now a harmonic 2-form in $L^{2}(\bbR^{4})$, as $\ud F = \ud^{2} A = 0$ and the right-hand side of the first equation in \eqref{eq:stMKG} vanishes. Therefore, each component $F_{jk}$ is a harmonic function. By the non-existence\footnote{This fact can be proved using the monotonicity \eqref{eq:harmonic-monotonicity}, which holds for all $0 < r_{1} < r_{2}$ for harmonic functions on $\bbR^{4}$.} of nontrivial harmonic functions in $L^{2}(\bbR^{4})$, it follows that $F =0$, which completes the proof.   \qedhere

\end{proof}

\subsection{Triviality of finite energy self-similar solutions} \label{subsec:ss}
In the case of a self-similar solution with finite energy, our main result is as follows.
\begin{proposition}  \label{prop:trivial:ss}
Let $(A, \phi)$ be a smooth solution to \eqref{eq:MKG} on the forward light cone $C_{(0, \infty)}$ with $\iota_{X_{0}} F = 0$ and $\covD_{X_{0}} \phi + \frac{1}{\rho} \phi = 0$. Suppose furthermore that $(A, \phi)$ has finite energy, i.e., $\sup_{t \in (0, \infty)} \calE_{S_{t}}[A, \phi] < \infty$.
Then $\calE_{S_{t}}[A, \phi] = 0$ for all $t > 0$.
\end{proposition}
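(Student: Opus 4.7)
The plan is to reduce the problem to an elliptic eigenvalue equation on hyperbolic space $\bbH^{4} = \calH_{1}$, and then exploit the spectral gap of the Laplace--Beltrami operator on $\bbH^{4}$ together with the weighted energy bound from Proposition~\ref{prop:energy-H-rho}. In hyperbolic coordinates $(\rho, y, \Tht)$ with $X_{0} = \rd_{\rho}$, the self-similarity conditions $\iota_{X_{0}} F = 0$ and $(\covD_{X_{0}} + \frac{1}{\rho})\phi = 0$ encode precisely the existence of a gauge in which $A_{\rho} = 0$, $\tilde A := A \rst_{\calH_{\rho}}$ is $\rho$-independent, and $\tilde \phi := \rho\phi$ depends only on $(y, \Tht)$, so both descend to objects on $\bbH^{4}$. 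Plugging $\phi = \tilde\phi/\rho$ into $\Box_{A}\phi = 0$ and matching powers of $\rho$ reduces the wave equation to the eigenvalue equation
\begin{equation*}
\lap_{\bbH^{4}, \tilde A}\tilde\phi + 2 \tilde\phi = 0,
\end{equation*}
where $\lap_{\bbH^{4},\tilde A}$ is the gauge-covariant Laplace--Beltrami operator on $\bbH^{4}$, while the Maxwell equations descend to an elliptic Maxwell-type system for $\tilde F$ on $\bbH^{4}$ with source quadratic in $\tilde\phi$.

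Next I would convert \eqref{eq:energy-H-rho} into a weighted $L^{2}$ estimate on $\bbH^{4}$ controlling $\tilde\phi$, $\covD_{\bbH^{4}}\tilde\phi$, and $\tilde F$ with weight $\cosh y$ --- which grows exponentially toward the ideal boundary of $\bbH^{4}$. Pairing the eigenvalue equation with a suitable multiplier (either $\overline{\tilde\phi}$, or a weighted version $\cosh y \, \overline{\tilde\phi}$) and integrating by parts on $\bbH^{4}$ yields an integral identity relating $\int \abs{\covD_{\bbH^{4}}\tilde\phi}^{2}$ and $\int \abs{\tilde\phi}^{2}$. The key input is the Hardy--Poincar\'e spectral bound $\int_{\bbH^{4}} \abs{\rd_{\bbH^{4}} u}^{2} \geq \frac{(d-1)^{2}}{4}\int_{\bbH^{4}} u^{2}$ (with $d=4$, giving $9/4$), combined with the diamagnetic inequality $\abs{\rd_{\bbH^{4}}\abs{\tilde\phi}} \leq \abs{\covD_{\bbH^{4}}\tilde\phi}$ to transfer the spectral bound from the scalar to the gauge-covariant Laplacian. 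Since the eigenvalue $2$ of our reduced equation sits strictly below the spectral bottom $9/4$, the only $H^{1}$-type solution is $\tilde\phi \equiv 0$, hence $\phi = 0$ on all of $C_{(0,\infty)}$.

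With $\phi = 0$, the Maxwell source vanishes and $\tilde F$ becomes a harmonic $2$-form on $\bbH^{4}$ with the weighted $L^{2}$ control $\int_{\bbH^{4}} \cosh y \, \abs{\tilde F}^{2}_{\bbH^{4}} < \infty$. Although $\bbH^{4}$ admits nontrivial $L^{2}$ harmonic $2$-forms (by Dodziuk's theorem), the extra exponential weight $\cosh y$ forces faster decay at infinity than those harmonic forms allow, so $\tilde F = 0$, concluding $\calE_{S_{t}}[A,\phi] = 0$. The main obstacle is the delicate matching between the eigenvalue $2$ in the reduced Schr\"odinger equation, the spectral bottom $9/4$ of $-\lap_{\bbH^{4}}$, and the precise form of the weighted bound coming from Proposition~\ref{prop:energy-H-rho}: all three ingredients must combine with the correct signs, which requires carefully tracking the cross term $2\sinh(y)\Re(\phi\,\overline{\covD_{y}\phi}) = \sinh(y)\rd_{y}\abs{\phi}^{2}$ against $\cosh y$ (producing a $-3\cosh y \abs{\tilde\phi}^{2}$ contribution after integration by parts), and rigorously justifying all integrations by parts on the non-compact space $\bbH^{4}$ using the exponential decay imposed by the weight. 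The harmonic $2$-form step is the most technical part, since unlike on $\bbR^{4}$ where $L^{2}$ harmonic forms automatically vanish, here one must exploit the enhanced decay beyond plain $L^{2}$.
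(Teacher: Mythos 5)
Your approach --- reducing to the eigenvalue equation $(-\lap_{\bbH^{4},A}-2)\phi = 0$ on $\bbH^{4}$ and invoking the spectral gap $\mathrm{spec}(-\lap_{\bbH^{4}}) = [\tfrac{9}{4},\infty)$ together with the diamagnetic inequality --- is a genuinely different route from the paper, which instead makes a conformal change of variables from $\bbH^{4}$ to the flat Euclidean disk $\bbD^{4}$ and then runs a Pohozaev/stress-tensor (scaling vector field) argument followed by an elementary harmonic-function monotonicity argument. Your idea is attractive, but it has a concrete gap that the paper's conformal approach is specifically designed to sidestep.

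The gap is in your Step~2: the claim that \eqref{eq:energy-H-rho} yields a $\cosh y$-weighted $L^{2}$ bound separately on $\abs{\phi}^{2}$ and $\abs{\covD\phi}_{\bbH^4}^{2}$ is not correct. The integrand in \eqref{eq:energy-H-rho} is a sum
\begin{equation*}
\cosh y\,\abs{\phi}^{2} + 2\sinh y\,\Re(\phi\overline{\covD_{y}\phi}) + \cosh y\,\abs{\covD\phi}_{\bbH^{4}}^{2},
\end{equation*}
which is nonnegative, but the cross term can be large and negative, and the full expression does \emph{not} control $\cosh y\,\abs{\phi}^{2}$ or even $\abs{\phi}^{2}$: if $\covD_{y}\phi\approx -\phi$ (i.e., $\phi$ decaying like $e^{-y}$), then the $\rd_{y}$-part of the integrand collapses to $\approx e^{-y}\abs{\phi}^{2}$. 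The paper's computation, carried out in the disk model, shows that the expression equals a sum of squares
\begin{equation*}
\frac{1}{1-r^{2}}\abs{r\covD_{r}u+2u}^{2} + \frac{1}{1-r^{2}}\abs{\covD_{r}u}^{2} + \frac{1+r^{2}}{(1-r^{2})r^{2}}\abs{\scovD u}^{2},
\end{equation*}
which, translated back to $\bbH^{4}$, yields only $\int_{\bbH^{4}}(\cosh y)^{-1}\abs{\phi}^{2}\,\ud\sgm_{\bbH^{4}}<\infty$, a $(\cosh y)^{-1}$-\emph{decaying} weight, not the $\cosh y$-growing weight you need. In particular $\phi$ need not be in $L^{2}(\bbH^{4})$, so the spectral gap argument --- which fundamentally requires $\phi$ in the $L^{2}$ domain of $-\lap_{\bbH^{4}}$ --- does not apply. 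Indeed the spherical function for eigenvalue $2$ decays like $e^{-y}$ and lies \emph{outside} $L^{2}(\bbH^{4})$; the hypothesis rules it out only by the more delicate, borderline combination that appears after the integration by parts, which is exactly what the Pohozaev identity on $\bbD^{4}$ exploits. A cutoff version of the spectral gap inequality runs into the same problem: the error terms $\int\abs{\nb\chi}^{2}\abs{\phi}^{2}$ cannot be made small using only $(\cosh y)^{-1}$-weighted control because the measure grows like $e^{3y}$. Finally, the Maxwell step on $\bbH^{4}$ via ruling out weighted $L^{2}$ harmonic $2$-forms is substantially more technical than the paper's elementary mean-value monotonicity for harmonic functions on the disk, and its correctness hinges on precise decay rates for middle-degree $L^{2}$ harmonic forms that would need to be proved rather than cited.
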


\begin{proof} 
We use the hyperbolic coordinates $(\rho, y, \Tht)$, in which $X_{0} = \rd_{\rho}$. By the self-similarity assumption $\iota_{X_{0}} F (\cdot) =  F(\rd_{\rho}, \cdot) = 0$ and $\covD_{\rd_{\rho}} \phi = - \frac{1}{\rho} \phi$, it follows that the pullback of $(A, \phi)$ to $\calH_{1} = \set{\rho = 1} = \bbH^{4}$, which we still denote by $(A, \phi)$, solves the system
\begin{equation} \label{eq:ssMKG}
	\left\{
\begin{aligned}
	- \mathrm{div}_{\bbH^{4}} F =& \Im ( \phi \overline{\covD_{\bbH^{4}} \phi}), \\
	(-\lap_{\bbH^{4}, A} - 2) \phi =& 0,
\end{aligned}
	\right.
\end{equation} 
where $F = \ud A$, $(\mathrm{div}_{\bbH^{4}} F)_{\bfa} = \nb_{\bbH^{4}}^{\bfb} F_{\bfb \bfa}$, $\covD_{\bbH^{4}} = \nb_{\bbH^{4}} + i A$ and $\lap_{\bbH^{4}, A} = \covD_{\bbH^{4}}^{\bfa} \covD_{\bbH^{4}, \bfa}$.
Furthermore, by Proposition~\ref{prop:energy-H-rho} applied to $\calH_{1} = \bbH^{4}$, we have
\begin{gather} 
	\int_{\bbH^{4}} \frac{1}{2} \cosh y \, \abs{F}_{\bbH^{4}}^{2} \, \ud \sgm_{\bbH^{4}} < \infty, \label{eq:ssEnergy:M} \\
	\int_{\bbH^{4}} \frac{1}{2} \bb[ \cosh y \abs{\phi}^{2} 
	+ 2 \sinh y \Re [ \phi \overline{\covD_{y} \phi} ] 
	+ \cosh y \abs{\covD \phi}_{\bbH^{4}}^{2} \bb] 
	\ud \sgm_{\bbH^{4}} < \infty. \label{eq:ssEnergy:KG}
\end{gather}
where $\abs{F}_{\bbH^{4}}^{2} = \frac{1}{2} (g_{\bbH^{4}}^{-1})^{\bfa \bfc} (g_{\bbH^{4}}^{-1})^{\bfb \bfd} F_{\bfa \bfb} F_{\bfc \bfd}$ and $\abs{\covD \phi}_{\bbH^{4}}^{2} = (g_{\bbH^{4}}^{-1})^{\bfa \bfb} \covD_{\bfa} \phi \overline{\covD_{\bfb} \phi}$.

In order to proceed, we reformulate the system on $\bbD^{4}$ using the conformal equivalence of $\bbD^{4}$ and $\bbH^{4}$. 
Consider the following map from $\bbD^{4}$ to $\bbH^{4}$:
\begin{equation*}
	\Phi : \bbD^{4} \to \bbH^{4}, \quad
	(r, \Tht) \mapsto (y, \Tht) = (2 \tanh^{-1} r, \Tht)
\end{equation*}
The map $\Phi$ is a \emph{conformal isometry}, i.e.,
\begin{equation*}
	\Phi^{\ast} g_{\bbH^{4}} 
	= \Phi^{\ast} (\ud y^{2} + \sinh^{2} y \, g_{\bbS^{3}})
	= \Omg^{2} (\ud r^{2} + r^{2} \, g_{\bbS^{3}})
	= \Omg^{2} g_{\bbD^{4}},
\end{equation*}
where $\Phi^{\ast}$ denotes the pullback along $\Phi$ to $\bbD^{4}$, and $\Omg := \frac{2}{1-r^{2}}$. For the pulled-back pair $(\Phi^{\ast} A, \Omg \, \Phi^{\ast} \phi)$ on $\bbD^{4}$, which (slightly abusing the notation) we will denote by $(A, u)$,
we have
\begin{equation} \label{eq:ssMKG:D}
	\left\{
\begin{aligned}
		\rd^{\ell} F_{j \ell} =& \Im(u \overline{\covD_{j} u}) \\
		\covD^{\ell} \covD_{\ell} u =& 0.
\end{aligned}	
	\right.
\end{equation}
where $F = \ud A$ and $\covD = \nb + i A$. Moreover, the bounds \eqref{eq:ssEnergy:M} and \eqref{eq:ssEnergy:KG} then translate to
\begin{gather}
\int_{\bbD^{4}}
\frac{1}{2} \frac{1+r^{2}}{1-r^{2}} \abs{F}_{\bbD^{4}}^{2} \, \ud \sgm_{\bbD^{4}} < \infty, \label{eq:ssEnergy:M:D}\\
\int_{\bbD^{4}}
	\frac{1}{2} \bb[ 
	\frac{1}{1-r^{2}} \abs{r \covD_{r} u + 2 u}^{2}
	+ \frac{1}{1-r^{2}} \abs{\covD_{r} u}^{2} 
	+ \frac{1+r^{2}}{(1-r^{2})r^{2}} \abs{\scovD u}^{2}  \bb]
	\ud \sgm_{\bbD^{4}} < \infty. \label{eq:ssEnergy:KG:D}
\end{gather}
where $\abs{\scovD u}^{2} = (g_{\bbS^{3}}^{-1})^{\bfa \bfb} \covD_{\bfa} u \overline{\covD_{\bfb} u}$.
Indeed, note that
\begin{equation*}
	\Phi^{\ast} \ud \sgm_{\bbH^{4}} = \Omg^{4} \ud \sgm_{\bbD^{4}}, \quad 
	\Phi^{\ast} (\cosh y) = \frac{1 + r^{2}}{1 - r^{2}}, \quad
	\Phi^{\ast} (\sinh y) = \frac{2 r}{1 - r^{2}}.
\end{equation*}
From these identities and \eqref{eq:ssEnergy:M}, we immediately see that \eqref{eq:ssEnergy:M:D} holds. Moreover, \eqref{eq:ssEnergy:KG:D} follows from \eqref{eq:ssEnergy:KG} and the following computation:
\begin{align*}
&\hskip-1em
	\int_{\bbH^{4}} \frac{1}{2} \bb[ \cosh y \abs{\phi}^{2} 
	+ 2 \sinh y \Re [ \phi \overline{\covD_{y} \phi} ] 
	+ \cosh y \abs{\covD \phi}_{\bbH^{4}}^{2} \bb] 
	\ud \sgm_{\bbH^{4}} \\
=&
\int_{\bbD^{4}}
	\frac{1}{2} \bb[ \frac{1+r^{2}}{1-r^{2}} \Omg^{2} \abs{u}^{2}
	+ \frac{4 r}{1-r^{2}} \Re [\Omg u \overline{\Omg \covD_{r} (\Omg^{-1} u)}]
	+ \frac{1+r^{2}}{1-r^{2}} \bb( \abs{\Omg \covD_{r} (\Omg^{-1} u)}^{2} + \frac{1}{r^{2}} \abs{\scovD u}^{2} \bb) \bb]
	\ud \sgm_{\bbD^{4}} \\	
=&
\int_{\bbD^{4}}
	\frac{1}{2} \bb[ \frac{4}{1-r^{2}} \abs{u}^{2}
	+ \frac{4r}{1-r^{2}} \Re [u \overline{\covD_{r} u}] 
	+ \frac{r^{2}+1}{1-r^{2}} \abs{\covD_{r} u}^{2}
	+ \frac{1+r^{2}}{(1-r^{2})r^{2}} \abs{\scovD u}^{2}  \bb]
	\ud \sgm_{\bbD^{4}} \\
=&
\int_{\bbD^{4}}
	\frac{1}{2} \bb[ 
	\frac{1}{1-r^{2}} \abs{r \covD_{r} u + 2 u}^{2}
	+ \frac{1}{1-r^{2}} \abs{\covD_{r} u}^{2} 
	+ \frac{1+r^{2}}{(1-r^{2})r^{2}} \abs{\scovD u}^{2}  \bb]
	\ud \sgm_{\bbD^{4}}.
\end{align*}
%

We will now show that \eqref{eq:ssMKG:D}, \eqref{eq:ssEnergy:M:D} and \eqref{eq:ssEnergy:KG:D} imply $u = 0$ on $\bbD^{4}$. 
Since the system \eqref{eq:ssMKG:D} coincides with \eqref{eq:stMKG} restricted to $\bbD^{4}$, the divergence identity \eqref{eq:stMKG:i-by-p} can be used in the present context as well.
Integrating \eqref{eq:stMKG:i-by-p} by parts on a ball $B_{R} \subseteq \bbD^{4}$ of radius $R < 1$ centered at $0$, we see that
\begin{equation} \label{}
	- 2 \int_{B_{R}} \abs{\covD u}^{2} \, \ud \sgm_{\bbD^{4}}
	= \int_{\rd B_{R}} \vC{S}[A, u]_{\bfa} \bfn^{\bfa}, \quad  \hbox{ where } \bfn = \frac{x^{\ell}}{\abs{x}} \rd_{\ell}.
\end{equation}
Observe that \eqref{eq:ssEnergy:M:D} and \eqref{eq:ssEnergy:KG:D} imply the existence of a sequence $R_{n} \to 1$ such that
\begin{equation*}
	\int_{\rd B_{R_{n}}} \abs{\vC{S}[A, u]_{\bfa} \bfn^{\bfa} } \to 0,
\end{equation*}
which shows that $\covD u = 0$ on $\bbD^{4}$. Plugging this information into \eqref{eq:ssEnergy:KG:D}, it follows that $u = 0$ on $\bbD^{4}$, as desired.

To complete the proof, it only remains to show that $F = 0$. As before, $F$ is now a harmonic 2-form in $L^{2}(\bbD^{4})$ by \eqref{eq:ssMKG}; hence each component $F_{j k}$ is a harmonic function on $\bbD^{4}$. Fix $j, k \in \set{1,2,3,4}$ and observe that $\varphi := F_{j k}$, viewed as a real-valued function, obeys the following monotonicity property:
\begin{equation} \label{eq:harmonic-monotonicity}
	\frac{1}{r_{1}^{3}} \int_{\rd B_{r_{1}}} \abs{\varphi}^{2} \leq \frac{1}{r_{2}^{3}} \int_{\rd B_{r_{2}}} \abs{\varphi}^{2} \quad \hbox{ where } 0 < r_{1} < r_{2} < 1.
\end{equation}
Indeed, \eqref{eq:harmonic-monotonicity} is a consequence of interpolating the inequalities
\begin{equation*}
	\frac{1}{r_{1}^{3}} \int_{\rd B_{r_{1}}} \abs{\varphi} \leq \frac{1}{r_{2}^{3}} \int_{\rd B_{r_{2}}} \abs{\varphi} , \qquad
	\sup_{\rd B_{r_{1}}} \abs{\varphi} \leq \sup_{\rd B_{r_{2}}} \abs{\varphi} \qquad \hbox{ where } 0 < r_{1} < r_{2} < 1,
\end{equation*}
which follow from the mean-value property and the weak maximum principle for the subharmonic function $\abs{\varphi}$ on $\bbD^{4}$, respectively.
By \eqref{eq:ssEnergy:M:D}, it follows that $F_{jk} = \varphi = 0$ on $\bbD^{4}$. \qedhere
\end{proof}

\subsection{Regularity of stationary and self-similar weak solutions to \eqref{eq:MKG}} \label{subsec:reg}
We end this section with a regularity result, which applies to weak solutions obtained by Proposition~\ref{prop:cpt}.
\begin{proposition} \label{prop:reg}
Let $(A, \phi)$ be a weak solution to \eqref{eq:MKG} on an open set $\calO \subseteq \bbR^{1+4}$ such that
\begin{equation} \label{eq:reg:hyp}
\begin{aligned}
	A_{\mu} \in H^{1}_{t,x}(\calO), \quad 
	\phi \in H^{\frac{3}{2}}_{t,x}(\calO).
\end{aligned}
\end{equation}
Suppose furthermore that one of the following holds: 
\begin{enumerate}
\item \label{item:reg:st} Either $(A, \phi)$ is \emph{stationary} on $\calO$ in the sense of \eqref{eq:stationary}; or 
\item \label{item:reg:ss} The set $\calO$ is a subset of the cone $C_{(0, \infty)} = \set{0 \leq r < t}$ and $(A, \phi)$ is \emph{self-similar} on $\calO$ in the sense of \eqref{eq:self-sim}.
\end{enumerate}
Then for every $p \in \calO$, there exists an open neighborhood $p \in Q_{p} \subseteq \calO$ and a gauge transformation $\chi_{[p]} \in \Yw(Q_{p})$ such that $(A_{[p]}, \phi_{[p]}) = (A - \ud \chi_{[p]}, \phi e^{i \chi_{[p]}})$ is smooth on $Q_{p}$. 
\end{proposition}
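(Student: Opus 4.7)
\medskip
\noindent\textbf{Proof plan.} I would treat the stationary and self-similar cases in parallel, reducing each to a quasilinear elliptic system on a spacelike or hyperbolic slice, respectively, and then bootstrapping. Fix $p \in \calO$.

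\medskip
\noindent\emph{Stationary case.} After a Lorentz transformation I may assume $Y = T = \rd_t$ in rectilinear coordinates, so $F_{0j} = 0$ and $\rd_t \phi + i A_0 \phi = 0$ weakly. Choose a small cube $Q_p = (t_0 - \dlt, t_0 + \dlt) \times B_{\dlt}(x_0)$ with $\overline{Q_p} \subseteq \calO$ and $p = (t_0, x_0)$. Define
\[
\chi_1(t, x) := \int_{t_0}^{t} A_0(s, x)\, \ud s.
\]
Since $A_0 \in H^{1}_{t,x}$, the function $\chi_1$ belongs to $\Yw(Q_p) = H^{1}_{t,x}(Q_p)$, and in the gauge $(A^{(1)}, \phi^{(1)}) := (A - \ud \chi_1, e^{i\chi_1} \phi)$ we have $A^{(1)}_0 \equiv 0$. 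The stationarity conditions then become $\rd_t A^{(1)}_j = 0$ and $\rd_t \phi^{(1)} = 0$ in the sense of distributions, so $A^{(1)}_j(t,x) = a_j(x)$ and $\phi^{(1)}(t,x) = u(x)$ for distributions $a_j, u$ on $B_{\dlt}(x_0)$; from $A \in H^{1}_{t,x}$ and $\phi \in H^{3/2}_{t,x}$ one extracts $a \in H^{1}_{x}$ and $u \in H^{3/2}_{x}$. The weak formulation \eqref{eq:weakSol:M}--\eqref{eq:weakSol:KG} restricted to time-independent test forms then reduces to the elliptic system
\[
\rd^{\ell}(\rd_\ell a_j - \rd_j a_\ell) = -\Im(u\,\overline{\covD_j u}), \qquad \covD^\ell \covD_\ell u = 0.
\]
Next I would apply a time-independent Coulomb gauge transformation $\chi_2(x) := \lap^{-1}(\rd^\ell a_\ell)$ on $B_{\dlt}(x_0)$, which is in $H^{2}_{x}$ (hence in $\Yw$), producing the system
\[
\lap a_j = \Im(u\,\overline{\covD_j u}), \qquad \lap u = 2 i a^\ell \rd_\ell u + |a|^2 u.
\]

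\medskip
\noindent\emph{Elliptic bootstrap.} Starting from $a \in H^{1}_{x}$, $u \in H^{3/2}_{x}$, standard Sobolev embeddings in $\bbR^{4}$ (e.g.\ $H^{1} \hookrightarrow L^{4}$, $H^{3/2} \hookrightarrow L^{\infty-}$), together with product estimates in Sobolev spaces on a slightly smaller cube, give that the right-hand sides belong to $H^{s}_{x}$ for some $s > 0$; elliptic regularity for $\lap$ then upgrades $a, u$ to $H^{2+s}_{x}$, and iteration yields $a, u \in C^{\infty}$. Setting $\chi_{[p]} := \chi_1 + \chi_2$ (now viewed on $Q_p$) gives an element of $\Yw(Q_p)$ for which $(A - \ud\chi_{[p]}, e^{i\chi_{[p]}}\phi) = (a, u)$ is smooth. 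For the self-similar case I would run the same scheme in hyperbolic coordinates $(\rho, y, \Tht)$ on the cone, with $X_0 = \rd_\rho$: eliminating $A_\rho$ via $\chi_1 := \int A_\rho\, \ud\rho$ reduces $(\covD_{\rho} + \rho^{-1})\phi = 0$ to $\rd_\rho (\rho \phi') = 0$, so $\phi'(\rho, y, \Tht) = \rho^{-1} u(y, \Tht)$, while $A'_j(\rho, y, \Tht) = a_j(y, \Tht)$. Plugging into \eqref{eq:MKG} produces a quasilinear elliptic system on $\bbH^{4}$ (the additional $\rho^{-1}$ scaling of $\phi'$ yields a positive zeroth-order term, as is visible already in the system \eqref{eq:ssMKG:D}), and after a Coulomb gauge on $\bbH^{4}$ the same bootstrap applies.

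\medskip
\noindent\emph{Main obstacle.} The delicate point is the initial reduction from the weak formulation to pointwise elliptic equations on the slice, which requires justifying that the distributional identities $\rd_t A^{(1)}_j = 0$ and $\rd_t \phi^{(1)} = 0$ produce \emph{genuine} functions on the slice with the expected regularity ($a_j \in H^{1}_{x}$, $u \in H^{3/2}_{x}$ rather than merely $L^{2}$), and that the reduced elliptic system is solved in the standard weak sense. This uses that $\chi_1 \in H^{1}_{t,x}$ (checked by Minkowski's inequality in $t$) and Fubini-type arguments for distributions that are constant in one direction. The first step of the bootstrap is also the most technically demanding, since the quadratic cross-term $a^\ell \rd_\ell u$ sits on the borderline of what elementary Sobolev products handle in $\bbR^{4}$; once one gains $\veps$ derivatives the iteration is routine. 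The self-similar bootstrap on $\bbH^{4}$ additionally requires local elliptic regularity for $\lap_{\bbH^{4}}$, which is standard.
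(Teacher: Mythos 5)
Your proposal is essentially the paper's argument: reduce to the temporal--Coulomb gauge, pass to a fixed time slice (or constant-$\rho$ slice), obtain the elliptic system \eqref{eq:stMKG} or \eqref{eq:ssMKG:D}, and bootstrap using that $\phi$ is \emph{subcritical}. Your two-step gauge construction ($\chi_1$ then $\chi_2$) is equivalent to the paper's single $\chi_{[p]}$, which solves $\rd_t\chi_{[p]} = A_0$ with initial data chosen so that the slice is in Coulomb gauge. The main places to be careful, both of which the paper handles more explicitly than you do, are:

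First, the slice must be chosen by Fubini, not arbitrarily. For a generic $t_0$ the trace of $\phi \in H^{3/2}_{t,x}$ is only $H^{1}_{x}$, which is the critical regularity and would not start the bootstrap. The paper first converts $\phi \in H^{3/2}_{t,x}(Q)$ to $\phi \in W^{1,q}_{t,x}(Q)$ with $q = 5/2 > 2$ by Sobolev embedding in space-time, and then applies Fubini to $W^{1,q}_{t,x} \subset L^{q}_{t}W^{1,q}_{x}$ to find a good slice $\bar{t}$ with $\phi\rst_{\bar{t}} \in W^{1,q}(\dlt B)$. Your claim ``$u \in H^{3/2}_{x}$'' is also recoverable (since $H^{3/2}_{t,x} \subset L^{2}_{t}H^{3/2}_{x}$ one can Fubini to $H^{3/2}_{x}$ on a.e.\ slice, which then embeds into $W^{1,8/3}_{x}$), but your write-up treats the chosen $t_0$ as arbitrary, which is where the argument would break. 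Incidentally, $H^{3/2}(\bbR^{4}) \hookrightarrow L^{8}$, not $L^{\infty-}$; it is $H^{2}$ that is the borderline for $L^{\infty}$ in 4D.

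Second, the first bootstrap step only improves $a$, not $u$. With $a \in H^{1}_{x}$ and $u \in W^{1,q}_{x}$, $q > 2$, the right-hand side of the $a$-equation lands in $L^{r}$ with $r > 4/3$, giving $a \in H^{2-}$; but the right-hand side of the $u$-equation stays at the same level (e.g.\ $a\,\rd u \in L^{8/5} \subset H^{-1/2}$, no gain). Only after the improvement of $a$ (then $a \in W^{1,p}$ with $p > 2$) does the $u$-equation start to close. The paper's phrasing --- that the system is $H^{1}$-critical and \emph{every nonlinear term contains at least one factor of $\phi$}, which is subcritical --- is the precise observation that makes the iteration work, alternating $a$ and $u$. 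Your phrasing ``the right-hand sides belong to $H^{s}_{x}$ for some $s > 0$; elliptic regularity then upgrades $a, u$ to $H^{2+s}_{x}$'' is inaccurate in the first round, though the eventual conclusion is correct.

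Aside from these two points (and two inconsequential sign errors in the reduced elliptic system), your plan is the paper's own. The self-similar case is handled identically with hyperbolic coordinates, just as you describe.
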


\begin{proof} 
The idea is to derive an elliptic system as in \eqref{eq:stMKG} [resp. \eqref{eq:ssMKG}] using stationarity [resp. self-similarity], and then use its regularity theory. To get rid of the non-local operator $\brk{D_{t,x}}^{\frac{3}{2}}$ in the norm, we begin with the following simple maneuver: For any open bounded subset $Q \subseteq \calO$ with smooth boundary, by Sobolev  and \eqref{eq:reg:hyp}, we have
\begin{equation} \label{eq:reg:hyp:W1q}
	A_{\mu} \in H^{1}_{t,x}(Q), \quad
	\phi \in W^{1, q}_{t,x}(Q)
\end{equation} 
where $q = \frac{5}{2}$. The important point is that $q > 2$, which will make this bound \emph{subcritical}. Hence we would be able to conclude regularity via a simple elliptic bootstrap argument.

We first treat Case~\ref{item:reg:st}. Applying a suitable Lorentz transformation, it suffices to consider the case $Y = \rd_{t}$ in the rectilinear coordinates $(t = x^{0}, x^{1}, \ldots, x^{4})$. Moreover, applying an appropriate space-time translation, we may assume that $p$ is the origin. Let $Q_{p} := (-\dlt, \dlt) \times \dlt B$, where $\dlt B$ is the open ball of radius $\dlt$ centered at the origin. Choosing $\dlt > 0$ small enough, we have $Q_{p} \subseteq \calO$. By \eqref{eq:reg:hyp:W1q} and Fubini, there exists $\overline{t} \in (-\dlt, \dlt)$ such that 
\begin{equation} \label{eq:reg:fixed-t}
	A \rst_{\overline{t} \times \dlt B} \in H^{1}(\dlt B), \quad
	\phi \rst_{\overline{t} \times \dlt B} \in W^{1, q}(\dlt B),
\end{equation}
where the shorthand $\overline{t} = \set{\overline{t}}$ is used for simplicity.
We claim that there exists $\chi_{[p]} \in \Yw((-\dlt, \dlt) \times \dlt B)$ so that $\chi_{[p]} \rst_{\overline{t} \times \dlt B} \in H^{2}(\dlt B)$ and
\begin{equation} \label{eq:reg:chi-st}
	\rd_{t} \chi_{[p]} = A_{0} \hbox{ in } (-\dlt, \dlt) \times \dlt B, \quad 
	\lap \chi_{[p]} \rst_{\overline{t} \times \dlt B}= \rd^{\ell} (A \rst_{\overline{t} \times \dlt B})_{\ell}.
\end{equation} 
Indeed, we may simply define $\underline{\chi}_{[p]} = \lap^{-1} \rd^{\ell} (\eta A \rst_{\set{t = \overline{t} }})_{\ell}$, where $\eta \in C^{\infty}_{0}(\bbR^{4})$ satisfies $\eta = 1$ on $\dlt B$ and $\supp \, \eta \subseteq \calO$, then solve the transport equation $\rd_{t} \chi_{[p]} = A_{0} \hbox{ in } (-\dlt, \dlt) \times \dlt B$ with initial data $\chi_{[p]} \rst_{\overline{t} \times \dlt B} = \underline{\chi}_{[p]}$. That this $\chi_{[p]}$ belongs to $\Yw((-\dlt, \dlt) \times \dlt B)$ and $\chi_{[p]} \rst_{\overline{t} \times \dlt B} \in H^{2}(\dlt B)$ easily follow from the bounds for $A$ in \eqref{eq:reg:hyp:W1q} and \eqref{eq:reg:fixed-t}.

Consider now the gauge transform $(A_{[p]}, \phi_{[p]}) = (A - \ud \chi_{[p]}, \phi e^{i \chi_{[p]}})$. By \eqref{eq:reg:chi-st}, we have
\begin{equation} \label{eq:reg:Ap:st}
	A_{[p] 0} = 0 \hbox{ in } (-\dlt, \dlt) \times \dlt B, \quad 
	\rd^{\ell} (A_{[p]} \rst_{\overline{t} \times \dlt B})_{\ell} = 0  \hbox{ in } \dlt B.
\end{equation} 
By the stationarity assumption $\iota_{\rd_{t}} F = 0$ and $\covD_{\rd_{t}} \phi = 0$, it follows that 
\begin{equation*}
	\rd_{t} A_{[p] j} = F_{0j} = 0, \quad \rd_{t} \phi_{[p]} = 0 \hbox{ in } (-\dlt, \dlt) \times \dlt B.
\end{equation*} 
Hence to prove that $(A_{[p]}, \phi_{[p]})$ is smooth in $Q_{p}$, it suffices to show that $(A_{[p]}, \phi_{[p]}) \rst_{\overline{t} \times \dlt B}$ is smooth. 
Abusing the notation slightly for simplicity, we will henceforth write $A = A_{[p]} \rst_{\overline{t} \times \dlt B}$ and $\phi = \phi_{[p]} \rst_{\overline{t} \times \dlt B}$. By \eqref{eq:stMKG} and \eqref{eq:reg:Ap:st} (in particular, the Coulomb condition for $A$), $(A, \phi)$ satisfies an elliptic system on $\dlt B$ of the schematic form
\begin{align*}
	\lap A =& \phi \rd \phi + \phi A \phi, \\
	\lap \phi = & A \rd \phi + A A \phi.
\end{align*}
Moreover, $(A, \phi)$ belongs to $A \in H^{1}(\dlt B)$ and $\phi \in W^{1, q}(\dlt B)$, thanks to \eqref{eq:reg:fixed-t} and $\chi_{[p]} \rst_{\overline{t} \times \dlt B}\in H^{2}(\dlt B)$. As this system is $H^{1}$-critical and every nonlinear term has at least one factor of $\phi$, which obeys a \emph{subcritical} bound $\phi \in W^{1, q}(\dlt B)$, we can perform a standard elliptic bootstrap argument to conclude that $(A, \phi)$ is smooth on $\dlt B$ with uniform bounds on compact subsets. This concludes the proof in Case~\ref{item:reg:st}.

The proof in Case~\ref{item:reg:ss} is entirely analogous to Case~\ref{item:reg:st}, so we only give a brief outline. Here, instead of the rectilinear coordinates, we use the hyperbolic coordinates $(\rho, y, \Tht)$, in which $X = \rd_{\rho}$. Applying a suitable Lorentz transformation and scaling transformation, we may assume that $p$ coincides with the point $\rho = 1$, $y = 0$. Let $Q_{p} = (-\dlt, \dlt) \times D_{\dlt}$, where $D_{\dlt} := \set{(y, \Tht) : \abs{y} < \dlt}$, which is contained in $\calO$ if $\dlt > 0$ is sufficiently small. By \eqref{eq:reg:hyp:W1q} and Fubini, there exists $\overline{\rho} \in (-\dlt, \dlt)$ such that
\begin{equation} \label{eq:reg:const-rho}
	A \rst_{\overline{\rho} \times D_{\dlt}} \in H^{1}(D_{\dlt}), \quad 
	\phi \rst_{\overline{\rho} \times D_{\dlt}} \in W^{1, p}(D_{\dlt}).
\end{equation}
Proceeding as before, we can find $\chi_{[p]} \in \Yw ((-\dlt, \dlt) \times D_{\dlt})$ so that $\chi_{[p]} \rst_{\overline{\rho} \times D_{\dlt}} \in H^{2}(D_{\dlt})$ and
\begin{equation*}
	\rd_{\rho} \chi_{[p]} = 0 \hbox{ in } (-\dlt, \dlt) \times D_{\dlt}, \quad
	\lap_{\calH_{\overline{\rho}}} \, \chi_{[p]} \rst_{\overline{\rho} \times D_{\dlt}} = \nb_{\calH_{\overline{\rho}}}^{\bfa} (A \rst_{\overline{\rho} \times D_{\dlt}})_{\bfa}  \, .
\end{equation*}
Then the gauge transform $(A_{[p]}, \phi_{[p]}) = (A - \ud \chi_{[p]}, \phi e^{i \chi_{[p]}})$ obeys
\begin{equation*}
	A_{[p]}(\rd_{\rho}) = 0 \hbox{ in } (-\dlt, \dlt) \times D_{\dlt}, \quad
	\nb_{\calH_{\overline{\rho}}}^{\bfa} (A_{[p]} \rst_{\overline{\rho} \times D_{\dlt}})_{\bfa} = 0 \hbox{ in } D_{\dlt}.
\end{equation*}
By self-similarity, we have $\calL_{\rd_{\rho}} A_{[p]} = 0$ and $\rd_{\rho} (\rho \phi_{[p]}) = 0$, so it only remains to prove that the pullback of $(A_{[p]}, \phi_{[p]})$ on $\overline{\rho} \times D_{\dlt}$, which we will refer to as $(A, \phi)$, is smooth. As in the previous case, this is a consequence of the fact that $(A, \phi)$ obeys an elliptic system (thanks to \eqref{eq:ssMKG} and the Coulomb gauge condition on $\calH_{\overline{\rho}}$), the bounds $A \in H^{1}(D_{\dlt})$ and $\phi \in W^{1, q}(D_{\dlt})$ with $q > 2$ (by \eqref{eq:reg:const-rho} and $\chi_{[p]} \rst_{\overline{\rho} \times D_{\dlt}} \in H^{2}(D_{\dlt})$), and a standard elliptic bootstrap argument.  \qedhere
\end{proof}

\section{Proof of global well-posedness and scattering} \label{sec:proof}
Here we carry out the proof of Theorem~\ref{thm:main} using the tools developed in the earlier parts.
\subsection{Finite time blow-up/non-scattering scenarios and initial
  reduction} \label{subsec:ini-red} Our overall strategy for proving
Theorem~\ref{thm:main} is by contradiction.  Suppose that
Theorem~\ref{thm:main} fails for an initial data set $(a, e, f, g) \in
\calH^{1}$ in the global Coulomb gauge. By time reversal symmetry, it
suffices to consider the forward evolution. Let $(A, \phi)$ be the
admissible $C_{t} \calH^{1}$ solution to the Cauchy problem in the
global Coulomb gauge defined on the maximal forward time interval $I =
[0, T_{+})$ for some $T_{+} > 0$ constructed by
Theorem~\ref{thm:lwp4MKG}. By Theorem~\ref{thm:finite-S}, the solution
$(A, \phi)$ exhibits one of the following behaviors:
\begin{enumerate}
\item ({\bf Finite time blow-up}) We have $T_{+} < \infty$ and
  \begin{equation} \label{eq:dich:blow-up} \nrm{A_{0}}_{Y^{1}[0,
      T_{+})} + \nrm{A_{x}}_{S^{1}[0, T_{+})} + \nrm{\phi}_{S^{1}[0,
      T_{+})} = \infty.
  \end{equation}
\item ({\bf Non-scattering}) We have $T_{+} = \infty$, but
  \begin{equation} \label{eq:dich:non-scatter} \nrm{A_{0}}_{Y^{1}[0,
      \infty)} + \nrm{A_{x}}_{S^{1}[0, \infty)} + \nrm{\phi}_{S^{1}[0,
      \infty)} = \infty.
  \end{equation}
\end{enumerate}

In the case of finite time blow-up, we may use the energy
concentration scale $r_{c}$ in Theorem~\ref{thm:lwp4MKG} to show that
the energy must concentrate at a point.
\begin{lemma} \label{lem:EC-continue} Let $(A, \phi)$ be an admissible
  $C_{t} \calH^{1}$ solution to \eqref{eq:MKG} on $[0, T_{+}) \times
  \bbR^{4}$ with $T_{+} < \infty$ in the global Coulomb gauge. Then
  either $(A, \phi)$ can be continued past $T_{+}$ as an admissible
  $C_{t} \calH^{1}$ solution in the global Coulomb gauge (as in
  Theorem~\ref{thm:lwp4MKG}), or there exists a point $x_{0} \in
  \bbR^{4}$ such that
  \begin{equation} \label{eq:EC-at-x0} \limsup_{t \to T_{+}}
    \calE_{\set{t} \times B_{(T_{+} - t)}(x_{0})} [A, \phi] > 0.
  \end{equation}
\end{lemma}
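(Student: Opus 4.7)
The plan is to argue by contradiction using the energy concentration scale $r_{c}$ from Theorem~\ref{thm:lwp4MKG}. Suppose $(A, \phi)$ cannot be continued past $T_{+}$. If at some time $t_{1} < T_{+}$ we had $r_{c}[A, \phi](t_{1}) > T_{+} - t_{1}$, then applying Theorem~\ref{thm:lwp4MKG} with initial time $t_{1}$ (using time-translation invariance of \eqref{eq:MKG}) would yield an admissible $C_{t}\calH^{1}$ Coulomb solution on $[t_{1} - r_{c}(t_{1}), t_{1} + r_{c}(t_{1})]$, which by uniqueness agrees with $(A, \phi)$ backward from $t_{1}$ and thus produces an admissible extension past $T_{+}$. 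Hence we must have $r_{c}(t) \leq T_{+} - t$ for all $t$ sufficiently close to $T_{+}$, and in particular $r_{c}(t) \to 0$. By the definition \eqref{eq:EC:def} of $r_{c}$, for every such $t$ there exists $x(t) \in \bbR^{4}$ with $\calE_{B_{r_{c}(t)}(x(t))}[A, \phi](t) \geq \dlt_{0}(E, \thE^{2})$.

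Pick any sequence $t_{n} \to T_{+}$ and set $r_{n} := r_{c}(t_{n})$, $x_{n} := x(t_{n})$, so that $r_{n} \leq T_{+} - t_{n} \to 0$. The first substantive step is to show $\{x_{n}\}$ is bounded. I would use backward finite speed of propagation: the divergence identity \eqref{eq:energyConsv:local}, integrated over the truncated backward domain of dependence $\set{(t,y) : 0 \leq t \leq t_{n}, \ \abs{y - x_{n}} \leq r_{n} + (t_{n} - t)}$ and combined with the non-negativity of $\vC{T}_{L}[A, \phi]$ (see \eqref{eq:vC-T:L}) on its lateral null boundary, gives
\begin{equation*}
\calE_{B_{r_{n} + t_{n}}(x_{n})}[A, \phi](0) \geq \calE_{B_{r_{n}}(x_{n})}[A, \phi](t_{n}) \geq \dlt_{0}(E, \thE^{2}).
\end{equation*}
Since $r_{n} + t_{n}$ stays bounded and the initial energy density lies in $L^{1}(\bbR^{4})$, a sequence $\abs{x_{n}} \to \infty$ would force the left-hand side to tend to $0$, a contradiction. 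Extract a subsequence with $x_{n} \to x_{0} \in \bbR^{4}$.

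For the conclusion, fix $s \in (0, T_{+})$. For every $n$ large enough that $t_{n} > s$, the same local energy inequality applied on the slab $[s, t_{n}]$ yields
\begin{equation*}
\calE_{B_{r_{n} + (t_{n} - s)}(x_{n})}[A, \phi](s) \geq \calE_{B_{r_{n}}(x_{n})}[A, \phi](t_{n}) \geq \dlt_{0}(E, \thE^{2}).
\end{equation*}
The constraint $r_{n} + (t_{n} - s) \leq T_{+} - s$ and the convergence $x_{n} \to x_{0}$ give the inclusion $B_{r_{n} + (t_{n} - s)}(x_{n}) \subseteq B_{T_{+} - s + \abs{x_{n} - x_{0}}}(x_{0})$, and hence
\begin{equation*}
\calE_{B_{T_{+} - s + \abs{x_{n} - x_{0}}}(x_{0})}[A, \phi](s) \geq \dlt_{0}(E, \thE^{2}).
\end{equation*}
Since the energy density at the fixed time $s$ is in $L^{1}(\bbR^{4})$, dominated convergence applied to the decreasing sequence of balls lets one pass to the limit $n \to \infty$ to obtain $\calE_{B_{T_{+} - s}(x_{0})}[A, \phi](s) \geq \dlt_{0}(E, \thE^{2})$. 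Taking $\limsup$ as $s \to T_{+}$ then delivers \eqref{eq:EC-at-x0} with $x_{0}$ as the concentration point.

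The only mildly technical point will be justifying the local energy inequality in the admissible $C_{t} \calH^{1}$ class, since the divergence identity \eqref{eq:energyConsv:local} is a pointwise identity for classical solutions only. I would handle this by approximating $(A, \phi)$ in $C_{t} \calH^{1}$ by classical solutions (as in Definition~\ref{def:admSol}), applying the divergence theorem to each approximant on the truncated backward cone, and passing to the limit using continuity of the relevant surface and solid energy integrals in the $C_{t} \calH^{1}$ topology. Everything else is a standard compactness argument based on the $L^{1}$ absolute continuity of the energy density.
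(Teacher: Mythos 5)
Your proof is correct and relies on the same ingredients as the paper's: the dichotomy based on the energy concentration scale $r_c$ from Theorem~\ref{thm:lwp4MKG}, the forward-in-time monotonicity of the energy in shrinking balls (local conservation of energy with nonnegative flux on backward truncated cones), and $L^1$-tightness of the initial energy density to prevent escape to spatial infinity. The only difference is cosmetic: the paper realizes the compactness step as the intersection of nested, nonempty, compact level sets $D_t = \set{x : \calE_{\set{t}\times B_{(T_+-t)}(x)}[A,\phi] \geq \dlt_0}$, while you extract a convergent subsequence of concentration points $x_n$ and pass to the limit with dominated convergence --- two interchangeable phrasings of the same argument.
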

\begin{proof}
For $ t < T_+$ and $x \in \bbR^4$ we define the function
\[
E(t,x) = \calE_{\set{t_{0}} \times B_{(T_{+} - t)}(x)} [A, \phi] 
\]
This is continuous in $x$, and, by the nonnegativity  of the flux in the energy relation \eqref{eq:F-G},
it is nonincreasing in $t$. Further, by the same relation, we have 
\[
\lim_{x \to \infty} E(t,x) = 0, \qquad \text{uniformly in } t \in [0,T_+).
\]
Then we have two alternatives:

(i) Either $\lim_{t \to T_+} \sup_{x\in \bbR^4} E(t,x) < \dlt_{0}(E, \thE^{2})$,
which implies that there exists $t_0$ so that  energy concentration scale $r_{c}$ at $t =
  t_{0}$ as in \eqref{eq:EC:def} is greater than $T_{+} - t_{0}$.  By
  Theorem~\ref{thm:lwp4MKG} we can then extend $(A, \phi)$ past
  $T_{+}$, as claimed. 

(ii) Or,  $\lim_{t \to T_+} \sup_{x\in \bbR^4} E(t,x) \geq \dlt_{0}(E, \thE^{2})$. Then the sets 
$D_t = \{ x \in \bbR^4; E(t,x) \geq \dlt_{0}(E, \thE^{2})\}$ are nonempty, compact, and decreasing in $t$.
Thus they must intersect. Any  $x_0$ in the intersection will provide the second alternative in the lemma.  
 \qedhere
\end{proof}

Theorem~\ref{thm:ED} provides additional information about the nature
of the singularity in both scenarios, which is crucial to our proof of
Theorem~\ref{thm:main}.  To utilize this information, we introduce a
smooth function $\zt$ satisfying the following properties:
\begin{itemize}
\item $\displaystyle{\supp \, \zt \subseteq B_{1}(0)}$ and $\int \zt =
  1$.

\item There exists a function $\widetilde{\zt} \in
  C^{\infty}_{0}(\bbR^{4})$ with $\widetilde{\zt} \geq 0$ such that
  $\zt = \widetilde{\zt} \ast \widetilde{\zt}$.
\end{itemize}
Then we define the physical space version of energy dispersion as
follows:
\begin{equation}
  \covED[A, \phi](I) := \sup_{k \in \bbZ} \bb( 2^{-k} \nrm{\zt_{2^{-k}} \ast \phi (t,x)}_{L^{\infty}_{t,x}(I \times \bbR^{4})} + 2^{-2k} \nrm{\zt_{2^{-k}} \ast \covD_{t} \phi (t,x)}_{L^{\infty}_{t,x}(I \times \bbR^{4})} \bb)
\end{equation}
where $\zt_{2^{-k}} := 2^{4k} \zt(2^{k} \cdot)$.  The first property
makes $\covED[A, \phi]$ simpler to use in physical space arguments; on
the other hand, the second property is helpful in connection with the
\emph{diamagnetic inequality}, which we state here.
\begin{lemma}[Diamagnetic inequality] \label{lem:diamagnetic} Let $O
  \subseteq \bbR^{4}$ be an open set and $\phi, A \in H^{1}(O)$. Then
  for any smooth vector $X$, $\abs{\rd_{X} \abs{\phi}} \leq
  \abs{\covD_{X} \phi}$ in the sense of distributions. More precisely,
  for any smooth $\eta \geq 0$ with $\supp \, \eta \, \subseteq O$, we
  have
  \begin{equation*}
    \int \eta \abs{\rd_{X} \abs{\phi}} \, \ud x \leq \int \eta \abs{\covD_{X} \phi} \, \ud x.
  \end{equation*}
\end{lemma}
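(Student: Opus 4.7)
The plan is to first establish the inequality pointwise for smooth $\phi$ away from its zero set, then use a standard regularization to extend it to all smooth $\phi$, and finally pass to $H^1$ via approximation. The gauge invariance of $|\mathbf{D}_X \phi|$ under the transformation $\phi \mapsto e^{i\chi}\phi$ will not be needed: the key algebraic fact is that $A$ contributes only an imaginary phase to $\mathbf{D}_X \phi$ relative to $\partial_X \phi$, and so disappears when we pair with $\phi$ and take real parts.

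First, I would assume that $\phi, A$ are smooth on $O$ and that $\phi$ is nonvanishing on the support of $\eta$. Then $|\phi| = \sqrt{\phi \bar\phi}$ is smooth, and a direct computation gives
\begin{equation*}
  \partial_X |\phi| = \frac{\mathrm{Re}(\bar\phi \, \partial_X \phi)}{|\phi|} = \frac{\mathrm{Re}(\bar\phi \, \mathbf{D}_X \phi)}{|\phi|},
\end{equation*}
where the second equality uses $\mathrm{Re}(\bar\phi \cdot i A_X \phi) = -A_X \mathrm{Im}(\bar\phi \phi)/\ldots = 0$, i.e.\ that the correction $iA_X|\phi|^2$ is purely imaginary. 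The Cauchy--Schwarz inequality $|\mathrm{Re}(\bar\phi \, \mathbf{D}_X \phi)| \leq |\phi|\, |\mathbf{D}_X \phi|$ then yields the pointwise bound $|\partial_X |\phi|| \leq |\mathbf{D}_X \phi|$.

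Next, to remove the nonvanishing assumption on $\phi$, I would regularize by introducing, for $\veps > 0$, the smooth approximation $|\phi|_\veps := \sqrt{|\phi|^2 + \veps^2}$. The same computation gives
\begin{equation*}
  \partial_X |\phi|_\veps = \frac{\mathrm{Re}(\bar\phi \, \mathbf{D}_X \phi)}{|\phi|_\veps},
\end{equation*}
and hence $|\partial_X |\phi|_\veps| \leq \frac{|\phi|}{|\phi|_\veps} |\mathbf{D}_X \phi| \leq |\mathbf{D}_X \phi|$ pointwise. Multiplying by a smooth nonnegative cutoff $\eta$ supported in $O$, integrating, and using the dominated convergence theorem (noting that $|\phi|_\veps \to |\phi|$ pointwise and the right-hand side is $\veps$-independent) yields the claimed inequality for smooth $\phi$.

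Finally, for $\phi, A \in H^1(O)$, I would pick smooth $\phi^{(n)}, A^{(n)}$ converging to $(\phi, A)$ in $H^1_{\mathrm{loc}}(O)$, apply the smooth case on $\supp\,\eta$, and pass to the limit. The right-hand side $\int \eta |\mathbf{D}_X^{(n)} \phi^{(n)}|\,dx$ converges to $\int \eta |\mathbf{D}_X \phi|\,dx$ by the product estimates $A^{(n)} \phi^{(n)} \to A \phi$ in $L^1_{\mathrm{loc}}$ (which follows from $H^1 \hookrightarrow L^4$ in four dimensions, since $\supp\,\eta$ has finite measure and we only need $L^2_{\mathrm{loc}}$ convergence after Cauchy--Schwarz) together with $\partial_X \phi^{(n)} \to \partial_X \phi$ in $L^2_{\mathrm{loc}}$. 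The left-hand side is controlled by the elementary inequality $||\phi^{(n)}| - |\phi|| \leq |\phi^{(n)} - \phi|$, which gives $|\phi^{(n)}| \to |\phi|$ in $L^2_{\mathrm{loc}}$ and, after testing against $\partial_X \eta$ to move the derivative off the absolute value, $\int \eta\, |\partial_X |\phi^{(n)}||\,dx \to \int \eta\, |\partial_X |\phi||\,dx$ in the sense required. The main (mild) obstacle is the limit passage on the left, where $|\phi|$ need not be $H^1$ a priori; however, the uniform bound from the smooth case shows that $\partial_X |\phi^{(n)}|$ is bounded in the sense of measures on $\supp\,\eta$, so a subsequence converges weakly to a limit which must equal $\partial_X |\phi|$ distributionally, and lower semicontinuity of the total variation norm under weak-$\ast$ convergence delivers the final bound.
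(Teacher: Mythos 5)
Your proof is correct and rests on exactly the computation the paper singles out as the key step, namely $\partial_X|\phi| = |\phi|^{-1}\Re(\bar\phi\,\covD_X\phi)$ followed by Cauchy--Schwarz; the paper explicitly omits the "standard details" (the $|\phi|_\veps=\sqrt{|\phi|^2+\veps^2}$ regularization near the zero set and the $H^1$ approximation argument) which you have filled in in the expected way.
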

The key to the proof is the formal computation $\abs{\rd_{X}
  \abs{\phi}} = \abs{\abs{\phi}^{-1} \brk{\phi, \covD_{X} \phi}} \leq
\abs{\covD_{X} \phi}$; we omit the standard details.  We fix the
choice of functions $\zt, \widetilde{\zt}$ here, and henceforth we
will suppress the dependence of constants on these functions for
simplicity.

The physical space version $\covED[A, \phi]$ is related to the earlier
Littlewood-Paley version $\ED[\phi]$ defined in \eqref{eq:EDC:def} as
follows.
\begin{lemma} \label{lem:EDC-covED} Let $(A, \phi)$ be an admissible
  $C_{t} \calH^{1}$ solution to \eqref{eq:MKG} on $I \times \bbR^{4}$
  in the global Coulomb gauge with $\calE_{\set{t} \times \bbR^{4}}[A,
  \phi] \leq E$. Then there exists $C = C(E)$ such that
  \begin{equation*}
    \ED[\phi](I) \leq C \, \covED[A, \phi](I) + \frac{1}{100} \thED(E),
  \end{equation*}
  where $\thED(E)$ is as in Theorem~\ref{thm:ED}.
\end{lemma}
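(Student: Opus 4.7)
The plan is to bound $\ED[\phi]$ scale-by-scale using the two-scale decomposition
\begin{equation*}
  \phi = \zt_{2^{-k-M}} \ast \phi + (\phi - \zt_{2^{-k-M}} \ast \phi),
\end{equation*}
for an integer parameter $M \geq 1$ that I will fix at the end as a function of $E$. Writing $S_{k+M} := \zt_{2^{-k-M}} \ast (\cdot)$ and applying $P_k$, the first summand is immediately controlled by $\covED$, since $\|S_{k+M}\phi\|_{L^\infty_x} \leq 2^{k+M}\covED[A,\phi]$ and $\|P_k\|_{L^\infty \to L^\infty} \aleq 1$ uniformly in $k$, yielding $2^{-k}\|P_k S_{k+M}\phi\|_{L^\infty_x} \aleq 2^M \covED[A,\phi]$. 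For the second summand I will observe that, taking $\widetilde\zt$ symmetric (which we may assume), $\hat\zt$ is real-valued with $\hat\zt(0)=1$ and vanishing first-order Taylor coefficient, so $|1 - \hat\zt(\xi/2^{k+M})| \aleq 2^{-2M}$ on the Fourier support $|\xi| \sim 2^k$ of $P_k$. Combined with the a-priori bound $\|P_k\phi\|_{L^2_x} \aleq 2^{-k}\sqrt{E}$ (from energy $\leq E$) and the Bernstein inequality $\|P_k u\|_{L^\infty_x} \aleq 2^{2k}\|P_k u\|_{L^2_x}$ in dimension four, this gives $2^{-k}\|P_k(\phi - S_{k+M}\phi)\|_{L^\infty_x} \aleq 2^{-2M}\sqrt E$, which is the source of smallness.

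The bound for $2^{-2k}\|P_k \rd_t \phi\|_{L^\infty_x}$ requires an extra step, since $\covED$ involves $\covD_t\phi$ rather than $\rd_t \phi$. I would split $P_k \rd_t \phi = P_k \covD_t \phi - i P_k(A_0 \phi)$ and treat the two terms separately. The first is handled by the same two-scale decomposition applied to $\covD_t \phi$ in place of $\phi$, now using $\|\covD_t\phi\|_{L^2_x} \aleq \sqrt E$ on the error piece, producing a bound $\aleq 2^{2M}\covED + 2^{-2M}\sqrt E$. For the remainder $P_k(A_0 \phi)$, I will upgrade $A_0$ via the Coulomb-gauge constraint $-\lap A_0 = \Im(\phi \overline{\covD_t \phi})$, which together with the duality $L^{4/3} \hookrightarrow \dot H^{-1}(\bbR^4)$ and the Sobolev embedding $\dot H^1 \hookrightarrow L^4$ yields $\|A_0(t,\cdot)\|_{L^4_x} \aleq E$. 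Applying the same two-scale decomposition to $\phi$ inside the product $A_0 \phi$, using H\"older ($L^4 \cdot L^\infty_x \to L^4$ and $L^4 \cdot L^2 \to L^{4/3}$) and the Bernstein embeddings $L^4 \to L^\infty$ and $L^{4/3} \to L^\infty$ at frequency $2^k$, should give a bound of the form $C \cdot 2^M E \, \covED + C \cdot 2^{-M} E^{3/2}$.

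Summing all contributions, I expect an estimate of the shape
\begin{equation*}
  \ED[\phi](I) \leq C_0\,(2^{2M} + 2^M E)\, \covED[A,\phi](I) + C_0\,(2^{-2M}\sqrt E + 2^{-M}E^{3/2})
\end{equation*}
with a universal constant $C_0$. The last step is then to choose $M = M(E)$ large enough that the remainder is bounded by $\thED(E)/100$; with that choice the $\covED$-coefficient becomes $C = C(E)$, completing the proof. The main obstacle is the $A_0\phi$ term: the naive passage from $\rd_t\phi$ to $\covD_t\phi$ introduces a factor of $A_0$ that is not controlled in any $L^\infty$-type norm, and it is only by exploiting the elliptic equation for $A_0$ in the Coulomb gauge to place $A_0 \in L^4_x$, and then applying the same two-scale decomposition inside the product, that this error can be split into a $\covED$-controlled piece and a genuinely small remainder.
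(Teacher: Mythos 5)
Your argument is correct, and it takes a genuinely (though not wildly) different route from the paper's. The paper proceeds in two separate steps: it first asserts a \emph{lossless} comparison $\sup_k 2^{-k}\nrm{P_k\phi}_{L^\infty} \aleq \sup_k 2^{-k}\nrm{\zt_{2^{-k}}\ast\phi}_{L^\infty}$ (and similarly for $\covD_t\phi$), and then isolates the only genuine difficulty, namely $P_k(A_0\phi)$, which it handles with a Littlewood-Paley split $\phi = P_{\leq k+m_1}\phi + P_{>k+m_1}\phi$, absorbing the $L^\infty$ piece into $\sup_\ell 2^{-\ell}\nrm{P_\ell\phi}_{L^\infty}$ and gaining $2^{-m_1}$ on the high-high piece via a-priori $L^2$ bounds. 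Your approach instead uses a single physical-space two-scale decomposition $\phi = \zt_{2^{-k-M}}\ast\phi + (\phi - \zt_{2^{-k-M}}\ast\phi)$ everywhere, incurring a $2^{O(M)}$ loss on the $\covED$-controlled piece and a $2^{-O(M)}$ gain on the error, in exactly the same places where the paper uses the lossless comparison or the frequency split. This buys you a self-contained argument (you do not need to prove the direct $P_k$-vs.-$\zt_{2^{-k}}$ comparison, which the paper leaves as ``straightforward'') at the cost of a slightly worse constant; the dependence $C=C(E)$ at the end is the same. Two small remarks: (i) the symmetry assumption on $\widetilde\zt$ is not stated by the paper, but you are free to add it since $\zt$ is simply chosen in Section~5, and in any case the linear vanishing $|1-\hat\zt(\eta)|\aleq|\eta|$ already suffices, giving a $2^{-M}$ gain rather than $2^{-2M}$; (ii) in the $A_0(\phi - S_{k+M}\phi)$ estimate the required $L^2$ smallness does not follow from a $P_k$-localization (none is present there) but rather from the vanishing of $1-\hat\zt(\cdot/2^{k+M})$ at the origin combined with $\phi\in\dot H^1$, giving $\nrm{\phi - S_{k+M}\phi}_{L^2}\aleq 2^{-(k+M)}\nrm{\phi}_{\dot H^1}$; this is what you implicitly use, and it works, but you should state it since the mechanism is different from the $P_k$-localized error terms.
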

\begin{proof}
  All norms in this proof will be taken over $I \times \bbR^{4}$.  The
  following estimates are straightforward to establish:
  \begin{align}
    \sup_{k} 2^{-k} \nrm{P_{k} \phi}_{L^{\infty}_{t,x}} \aleq & \sup_{k} 2^{-k} \nrm{\zt_{2^{-k}} \ast \phi}_{L^{\infty}_{t,x}}, \label{eq:EDC-covED:pf:1} \\
    \sup_{k} 2^{-2k} \nrm{P_{k} (\covD_{t} \phi)}_{L^{\infty}_{t,x}}
    \aleq & \sup_{k} 2^{-2k} \nrm{\zt_{2^{-k}} \ast (\covD_{t}
      \phi)}_{L^{\infty}_{t,x}}.\label{eq:EDC-covED:pf:2}
  \end{align}
  In view of \eqref{eq:EDC-covED:pf:1} and \eqref{eq:EDC-covED:pf:2},
  the lemma would follow once we prove that, for any $m_{1} > 10$,
  \begin{equation*}
    \sup_{k} 2^{-2k} \nrm{P_{k} \rd_{t} \phi}_{L^{\infty}_{t,x}}
    \aleq_E 2^{ m_{1}} \sup_{k} \bb( 2^{-2k} \nrm{P_{k} (\covD_{t} \phi)}_{L^{\infty}_{t,x}} + 2^{-k} \nrm{P_{k} \phi}_{L^{\infty}_{t,x}}\bb) + 2^{-m_{1}}.
  \end{equation*}
  By the relation $\rd_{t} = \covD_{t} - iA_{0}$, it suffices to show
  that
  \begin{equation} \label{eq:EDC-covED:pf:3} \sup_{k} 2^{-2k}
    \nrm{P_{k} (A_{0} \phi)}_{L^{\infty}_{t,x}} \aleq 2^{m_{1}}
    E^{\frac{1}{2}} \sup_{k} 2^{-k} \nrm{P_{k}
      \phi}_{L^{\infty}_{t,x}} + 2^{-m_{1}} (E+E^{\frac{3}{2}}).
  \end{equation}
  Thanks to the global Coulomb condition, we have
  \begin{equation*}
    \nrm{A_{0}}_{L^{\infty}_{t} \dot{H}^{1}_{x}} \aleq E^{1/2}, \quad 
    \nrm{\phi}_{L^{\infty}_{t} \dot{H}^{1}_{x}} \aleq E^{1/2} + E.
  \end{equation*}
  For each $k \in \bbZ$, we split $\phi = P_{\leq k + m_{1}} \phi +
  P_{> k + m_{1}} \phi$. For the former, we have
  \begin{align*}
    2^{-2k} \nrm{P_{k} (A_{0} P_{\leq
        k+m_{1}}\phi)}_{L^{\infty}_{t,x}} \aleq \sum_{\ell \leq k +
      m_{1}} 2^{\ell - k} \nrm{A_{0}}_{L^{\infty}_{t} L^{4}_{x}}
    2^{-\ell} \nrm{P_{\ell}\phi}_{L^{\infty}_{t,x}} \aleq 2^{m_{1}}
    E^{\frac{1}{2}} \sup_{\ell} 2^{-\ell}
    \nrm{P_{\ell}\phi}_{L^{\infty}_{t,x}}.
  \end{align*}
  For the latter, by the properties of frequency supports, note that
  \begin{equation*}
    P_{k} (A_{0} P_{>k + m_{1}} \phi) = \sum_{\ell > k+m_{1}} (P_{[\ell-3, \ell+3]} A_{0} P_{\ell} \phi).
  \end{equation*} 
  Hence \eqref{eq:EDC-covED:pf:3} follows from the estimate
  \begin{align*}
    2^{-2k} \nrm{P_{k} (A_{0} P_{>k + m_{1}}\phi)}_{L^{\infty}_{t,x}}
    \aleq & \sum_{\ell > k + m_{1}} 2^{2k} \nrm{P_{[\ell-3, \ell+3]}A_{0}}_{L^{\infty}_{t} L^{2}_{x}} \nrm{P_{\ell}\phi}_{L^{\infty}_{t} L^{2}_{x}} \\
    \aleq & 2^{-2m_{1}} (E + E^{3/2}).  \qedhere
  \end{align*}

\end{proof}

As a result, there exists a function $\thcovED = \thcovED(E) > 0$ such
that Theorem~\ref{thm:ED} holds with the condition
\eqref{eq:ED:small-EDC} replaced by
\begin{equation} \label{eq:ED:small-covED}
  \tag{$\ref{eq:ED:small-EDC}'$} \covED[A, \phi](I) \leq \thcovED(E).
\end{equation}

Let $\veps > 0$ be a small parameter to be chosen below. We have the
following result, which unifies the proof of Theorem~\ref{thm:main} in
both finite time blow-up and non-scattering scenarios from here on.
\begin{lemma} \label{lem:ini-seq} Suppose that Theorem~\ref{thm:main}
  fails for some initial data $(a,e,f,g)$ of energy $E$.  Then for
  every $\veps > 0$ there exists a sequence $\veps_{n} \to 0$
  and a sequence of admissible $C_{t} \calH^{1}$ solutions $(A^{(n)},
  \phi^{(n)})$ on $[\veps_{n}, 1] \times \bbR^{4}$ in the global
  Coulomb gauge that satisfy the following properties:
  \begin{enumerate}
  \item Bounded energy in the cone
    \begin{equation} \label{eq:ini-seq:energy} \calE_{S_{t}} [A^{(n)},
      \phi^{(n)}] \leq 2E \quad \hbox{ for every } t \in [\veps_{n},
      1],
    \end{equation}
  \item Small energy outside the cone
    \begin{equation} \label{eq:ini-seq:extr} \calE_{(\set{t} \times
        \bbR^{4}) \setminus S_{t}} [A^{(n)}, \phi^{(n)}] \leq
      \veps^{8} E \quad \hbox{ for every } t \in [\veps_{n}, 1],
    \end{equation}
  \item Decaying flux on $\rd C$
    \begin{equation} \label{eq:ini-seq:flux} \EFlux_{[\veps_{n}, 1]}
      [A^{(n)}, \phi^{(n)}] + \G_{S_{1}}[\phi^{(n)}] \leq
      \veps_{n}^{\frac{1}{2}} E,
    \end{equation}
  \item Pointwise concentration at $t = 1$
    \begin{equation} \label{eq:ini-seq:conc} 2^{-k_{n}}
      \abs{\zt_{2^{-k_{n}}} \ast \phi^{(n)}(1,x_{n})} + 2^{-2k_{n}}
      \abs{\zt_{2^{-k_{n}}} \ast \covD_{t}^{(n)} \phi^{(n)}(1, x_{n})}
      > \thcovED(E)
    \end{equation}
    for some $k_{n} \in \bbZ$ and $x_{n} \in \bbR^{4}$.
  \end{enumerate}

\end{lemma}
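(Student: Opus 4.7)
First, I would reduce both scenarios to producing a sequence of concentration points. In the finite time blow-up case ($T_{+} < \infty$), Lemma~\ref{lem:EC-continue} gives a point $x_{0}$ at which energy concentrates as $t \to T_{+}$; translating $(T_{+}, x_{0}) \mapsto (0, 0)$ and applying the time reversal symmetry of MKG places the solution on a half-neighborhood of $t = 0^{+}$, singular as $t \to 0^{+}$. In the non-scattering case I keep the solution on $[0, \infty)$ as is. In both cases, the $S^{1}$ norm of $(A, \phi)$ is infinite on every half-neighborhood of the blow-up time (but finite on compact subintervals of the lifespan by Theorem~\ref{thm:lwp4MKG}), so the contrapositive of Theorem~\ref{thm:ED} combined with Lemma~\ref{lem:EDC-covED} produces a sequence $(t_{n}, x_{n}, k_{n})$ with $t_{n} \to 0^{+}$ (respectively $t_{n} \to \infty$) satisfying the pointwise $\thcovED(E)$ lower bound. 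Applying the scaling symmetry with $\lambda_{n} = 1/t_{n}$ then moves the concentration to $(1, x_{n}/t_{n}, k_{n} + \log_{2} t_{n})$, preserves the pointwise bound, and yields a solution on $[\veps_{n}, 1]$ for some sequence $\veps_{n} \to 0^{+}$.

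Next, at time $\veps_{n}$ in the rescaled coordinates, I would modify the initial data using the excision and gluing theorem (Theorem~\ref{thm:gluing}) applied with a ball $B = B_{\sigma}(0)$, $\sigma \in [\tfrac{3}{2}, 3]$. The specific choice of $\sigma$ is made by a pigeonhole argument over annular shells $2B \setminus \overline{B}$ partitioned into $N = N(\veps)$ equal pieces: since the total annular energy is bounded by $E$, at least one shell has annular energy $\leq E/N$, and Lemma~\ref{l:hardy+} simultaneously controls the $\nrm{f/|x - x_{0}|}_{L^{2}(2B \setminus \overline{B})}^{2}$ term appearing in \eqref{eq:gluing:energy}, both being $\leq \veps^{10} E$ for $N$ large. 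The operator $E^{\mathrm{ext}}$ then produces an $\calH^{1}$ initial data set that agrees with the rescaled original on $\tfrac{3}{2} B$ and whose energy on $\bbR^{4} \setminus \overline{B}$ is bounded by $\veps^{10} E$. I would then solve $\lap \chi = \rd^{\ell} \tilde{a}_{\ell}$ to impose the global Coulomb condition and invoke Theorem~\ref{thm:lwp4MKG} combined with continuation (Theorem~\ref{thm:finite-S}) to produce an admissible $C_{t} \calH^{1}$ Coulomb solution $(A^{(n)}, \phi^{(n)})$ on $[\veps_{n}, 1] \times \bbR^{4}$; by local geometric uniqueness (Proposition~\ref{prop:geomUni}), this solution agrees, up to a gauge transformation, with the rescaled original on the forward domain of dependence of $\set{\veps_{n}} \times \tfrac{3}{2} B$, which contains $C_{[\veps_{n}, 1]}$ provided $\sigma \geq 4/3$.

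The four properties are then verified as follows. The bounded energy \eqref{eq:ini-seq:energy} is immediate from energy conservation \eqref{eq:energyConsv} and the total energy $\leq (1 + \veps^{10}) E$ of the glued data. The small energy outside the cone \eqref{eq:ini-seq:extr} follows since at $t = \veps_{n}$ the glued data has energy $\leq \veps^{10} E$ outside $\tfrac{3}{2} B \supset S_{\veps_{n}}$, and this bound propagates in time by gauge invariance of $\calE$ and the fact that inside the cone the solution is gauge-equivalent to the rescaled original. The flux decay \eqref{eq:ini-seq:flux} follows from Corollary~\ref{cor:flux-decay}: the limits in \eqref{eq:flux-decay:0} (blow-up case) or \eqref{eq:flux-decay:infty} (non-scattering case) translate through the rescaling to $\EFlux_{\rd C_{[\veps_{n}, 1]}}[A^{(n)}, \phi^{(n)}], \G_{\rd S_{1}}[\phi^{(n)}] \to 0$, and passing to a subsequence with a suitable relabeling of $\veps_{n}$ yields the quantitative $\veps_{n}^{1/2} E$ bound. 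Finally, the pointwise concentration \eqref{eq:ini-seq:conc} persists because $|\phi^{(n)}|$ and $|\covD_{t}^{(n)} \phi^{(n)}|$ are gauge-invariant and the concentration point $(1, x_{n}/t_{n})$ must lie inside $C_{[\veps_{n}, 1]}$ for large $n$ once $\veps$ is taken small enough that $\veps^{8} E \ll \thcovED(E)^{2}$.

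The principal technical obstacle is the coordination in the second paragraph between the gluing radius selection, the Hardy-type bound of Lemma~\ref{l:hardy+}, and the global Coulomb gauge re-imposition, so that the resulting admissible solution simultaneously (i) is in the global Coulomb gauge, (ii) is gauge-equivalent to the rescaled original inside the cone, (iii) has small energy outside the cone, and (iv) persists on the full interval $[\veps_{n}, 1]$ despite the energy concentration scale at time $\veps_{n}$ being only of order $\veps_{n}$.
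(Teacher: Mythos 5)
The ordering of operations is wrong in a way that breaks the key property \eqref{eq:ini-seq:extr}. The paper (in the finite-time blow-up case) performs the excision/gluing \emph{before} rescaling, at a single fixed small time $\bar{t}$, with a ball $\bar{t}B$ whose radius equals that of the cone cross-section. It then shows exterior smallness
\begin{equation*}
\calE_{(\set{t}\times\bbR^4)\setminus S_t}[\check A,\check\phi]\le \tfrac12\min(\dlt_0,\veps^8 E)
\quad\hbox{for all } t\in(0,\bar t]
\end{equation*}
by combining: (i) the glued data has small energy outside $\bar t B = S_{\bar t}$ (using the collar bound and Lemma~\ref{l:hardy+}, where the collar width $r_0$ is \emph{fixed} while $\bar t\to 0$, so the ratio $\to\infty$), and (ii) the identity $\calE_{\text{ext}}(t)=\calE_{\text{ext}}(\bar t)+\EFlux_{\rd C_{[t,\bar t]}}$ together with the flux decay of Corollary~\ref{cor:flux-decay} to propagate the smallness \emph{backward} in time (exterior energy is nonincreasing forward in time, so the flux term is the only obstacle). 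Only then are concentration times extracted from the glued solution and rescaled; the exterior bound on $(0,\bar t]$ rescales to $(0,\bar t/t_n]\supset[\veps_n,1]$.

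Your proposal instead extracts the concentration from the \emph{original} solution, rescales, and then glues at rescaled time $\veps_n$ with a ball $B_\sigma$, $\sigma\in[3/2,3]$. This ball is far larger than $S_{\veps_n}=B_{\veps_n}(0)$, and the gluing only kills energy outside $B_\sigma$; the original energy in $B_\sigma\setminus S_{\veps_n}$ at $t=\veps_n$ is uncontrolled and could be $O(E)$. Moreover the forward propagation of exterior smallness only moves it \emph{outward}: smallness outside $B_\sigma$ at $t=\veps_n$ propagates to smallness outside $B_{\sigma+t-\veps_n}$, and $B_{\sigma+1-\veps_n}\supsetneq S_1=B_1$. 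So \eqref{eq:ini-seq:extr} is not obtained at any time in $[\veps_n,1]$. The natural fix --- gluing at a ball of radius $\sim\veps_n$ at time $\veps_n$ --- would require small annular energy at radius $\sim\veps_n$, which is exactly what the paper arranges via the fixed-width collar plus flux decay; a pigeonhole over shells in $[3/2,3]$ does not do this (and to get $\le\veps^{10}E$ you would need $N\ge\veps^{-10}$ shells, producing thin shells where Lemma~\ref{l:hardy+} gives no gain since the $\sigma$-parameter there cannot be taken large without leaving the region of controlled energy).

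Two secondary gaps in the same step: (a) you assert that the glued solution exists on all of $[\veps_n,1]$ by ``continuation,'' but the glued data at $\veps_n$ differs from the original, so one must rule out earlier blow-up; the paper does this by the argument of Lemma~\ref{lem:EC-continue} using precisely the exterior smallness that is in question. (b) You argue the pointwise concentration transfers from the original to the glued solution because $|\phi|$ and $|\covD_t\phi|$ are gauge-invariant, but the quantity in \eqref{eq:ini-seq:conc} is $|\zt_{2^{-k}}\ast\phi|$, which is \emph{not} gauge-invariant (the gauge factor $e^{i\chi}$ does not commute with convolution). The paper avoids this by extracting the concentration sequence from the already-glued, Coulomb-gauge solution $(\check A,\check\phi)$ directly. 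Finally, in the non-scattering case the paper's time shift by $R_0$ is what produces the exterior smallness; keeping the solution on $[0,\infty)$ ``as is'' loses that.
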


\begin{remark} 
  The small parameter $\veps > 0$ will be specified near the end of
  the proof of Theorem~\ref{thm:main}, precisely in
  Lemma~\ref{lem:final-rescale}, depending only on $E$.
\end{remark}
\begin{remark} \label{rem:ini-seq:g-dep-bnd} By the global Coulomb
  gauge condition $\rd^{\ell} A^{(n)}_{\ell} = 0$, the following gauge
  dependent uniform bounds for $A^{(n)}$ and $\phi^{(n)}$ hold:
  \begin{equation} \label{eq:ini-seq:g-dep-bnd} \nrm{\rd_{t,x}
      A^{(n)}}_{L^{\infty}_{t} ([\veps_{n}, 1]; L^{2}_{x})} \aleq
    E^{\frac{1}{2}}, \quad \nrm{\rd_{t,x} \phi^{(n)}}_{L^{\infty}_{t}
      ([\veps_{n}, 1]; L^{2}_{x})} \aleq
    (1+E^{\frac{1}{2}})E^{\frac{1}{2}}.
  \end{equation}
\end{remark}

\begin{proof} 
  Suppose that Theorem~\ref{thm:main} fails. Then by the discussion at
  the beginning of the section, there exists an admissible $C_{t}
  \calH^{1}$ solution $(A, \phi)$ of energy $E$ to \eqref{eq:MKG} on
  $[0, T_{+}) \times \bbR^{4}$ which satisfies either $0 < T_{+} <
  \infty$ and \eqref{eq:dich:blow-up} (finite time blow-up) or $T_{+}
  = \infty$ and \eqref{eq:dich:non-scatter} (non-scattering). We treat these two
  cases separately.
 
  \pfstep{Case 1: Finite time blow-up} By Lemma~\ref{lem:EC-continue},
  there exists a point $x_{0} \in \bbR^{4}$ such that
  \eqref{eq:EC-at-x0} holds with $T = T_{+}$.  By translation in
  space-time and reversing time, we may assume that $x_{0} = 0$ and we
  have energy concentration at the space-time origin as $t \to 0$,
  i.e.,
  \begin{equation} \label{eq:ini-seq:EC-at-0} \limsup_{t \to 0}
    \calE_{S_{t}}[A, \phi] > 0.
  \end{equation}

  Our next course of action is to use the excision and gluing
  technique (Theorem~\ref{thm:gluing}) to cut away the part of $(A,
  \phi)$ outside the cone of influence of $(0, 0)$.  In what follows,
  we denote the ball $B_{1}(0)$ by $B$, so that $rB = B_{r}(0)$ for
  any $r > 0$.

By Corollary~\ref{cor:flux-decay} there exists $t_{0} > 0$ such that
  \begin{equation*}
    \EFlux_{\rd C_{(0, t_{0}]}}[A, \phi] \ll \min \set{ \dlt_{0}(E, \thE^{2}), \veps^{8} E}
  \end{equation*}
  where $\dlt_{0}(E, \thE^{2})$ is as in \eqref{eq:EC:def}.  Furthermore, we
  can find a collar of radius $r_{0} > 0$ around $S_{t_{0}} =
  \set{t_{0}} \times t_{0} B$ with small energy, i.e.,
  \begin{equation*}
    \calE_{\set{t_{0}} \times ((t_{0} + r_{0}) B \setminus {t_{0} B})}[A, \phi] \ll 
\min \set{ \dlt_{0}(E, \thE^{2}), \veps^{8} E}.
  \end{equation*}
  By local conservation of energy, we then have
  \begin{equation*}
    \calE_{\set{t} \times ((t + r_{0}) B \setminus {t B})}[A, \phi] \ll  \min \set{ \dlt_{0}(E, \thE^{2}), \veps^{8} E} 
\quad  \hbox{ for every } t \in (0, t_{0}].
  \end{equation*}
Observe that the ratio $(t+r_0)/t$ goes to $\infty$ as $t \to 0$. 
Hence, by the improved Hardy estimate in Lemma~\ref{l:hardy+}, for sufficiently
small $0 < \bar t < r_0$ we also obtain
  \begin{equation*}
  \nrm{\frac{1}{\abs{x}} \phi(\bar{t}, \cdot)}_{L^{2}_{x} (2\bar t B \setminus \bar t B)}^{2}    \ll  \min \set{ \dlt_{0}(E, \thE^{2}), \veps^{8} E} 
\quad  \hbox{ for every } t \in (0, t_{0}].
  \end{equation*} 
 We may now apply Theorem~\ref{thm:gluing} to $(a, e, f, g) = (A_{j},
  F_{0j}, \phi, \covD_{t} \phi) \rst_{\set{t = \bar{t}}}$   to
  obtain a new data set $(\widetilde{a}, \widetilde{e}, \widetilde{f},
  \widetilde{g})$ that coincides with $(a, e, f, g)$ on $\bar{t}
  B$ and obeys
  \begin{equation*}
    \calE_{\bbR^{4} \setminus \bar{t} B}[\widetilde{a}, \widetilde{e}, \widetilde{f}, \widetilde{g}] 
    \leq  \frac{1}{2} \min \set{ \dlt_{0}(E, \thE^{2}), \veps^{8} E}.
  \end{equation*}
  
  To pass to the global Coulomb gauge, we define the gauge
  transformation $\underline{\chi} \in \calG^{2}(\bbR^{4})$ by
  $\underline{\chi} = \lap^{-1} \rd^{\ell} \widetilde{a}_{\ell}$ and
  let $(\check{a}, \check{e}, \check{f}, \check{g})$ be the gauge
  transform of $(\widetilde{a}, \widetilde{e}, \widetilde{f},
  \widetilde{g})$ by $\underline{\chi}$. Let $(\check{A},
  \check{\phi})$ be the admissible $C_{t} \calH^{1}$ solution to the
  Cauchy problem in the global Coulomb gauge given by
  Theorem~\ref{thm:lwp4MKG}, defined on the maximal time interval $I
  \ni \bar{t}$.

  As a consequence of the construction and local conservation of
  energy, the energy outside the cone $C$ is always tiny, i.e.,
  \begin{equation}
    \calE_{(\set{t} \times \bbR^{4}) \setminus S_{t}}[\check{A}, \check{\phi}] \leq \frac{1}{2} \min \set{\dlt_{0}(E, \thE^{2}), \veps^{8} E} \quad \hbox{ for every } t \in I.
  \end{equation}
  Then by an argument similar to the proof of
  Lemma~\ref{lem:EC-continue}, it follows that $(\check{A},
  \check{\phi})$ can be always continued to the past until $0$, i.e.,
  $(0, \bar{t}] \subseteq I$. Furthermore, there exist sequences
  $(t_{n}, x_{n}) \in I \times \bbR^{4}$ and $k_{n} \in \bbZ$ with
  $t_{n} \to 0$ such that
  \begin{equation}
    2^{-k_{n}} \abs{\zt_{2^{-k_{n}}} \ast \check{\phi}(t_{n}, x_{n})}
    + 2^{-2k_{n}} \abs{\zt_{2^{-k_{n}}} \ast \check{\covD}_{t} \check{\phi}(t_{n}, x_{n})} > \thcovED(E).
  \end{equation}
  For otherwise, there exists $\dlt > 0$ such that
  \eqref{eq:ED:small-covED} holds on $(0, \dlt)$. Then by
  Theorem~\ref{thm:ED} (with \eqref{eq:ED:small-EDC} replaced by
  \eqref{eq:ED:small-covED}) and Theorem~\ref{thm:finite-S}, the
  solution $(\check{A}, \check{\phi})$ can be extended past
  $t=0$. Hence $\limsup_{t \to 0} \calE_{S_{t}}[\check{A},
  \check{\phi}] = 0$, but this fact contradicts
  \eqref{eq:ini-seq:EC-at-0} as $\calE_{S_{t}}[\check{A},
  \check{\phi}] = \calE_{S_{t}}[A, \phi]$ for every $t \in I$.

  Applying Corollary~\ref{cor:flux-decay} to $(\check{A},
  \check{\phi})$, we may choose a sequence $\veps_{n} \to 0$ such that
  \begin{equation*}
    \EFlux_{[\veps_{n} t_{n}, t_{n}]} [A, \phi] + \G_{S_{t_{n}}}[\phi] \leq \veps_{n}^{\frac{1}{2}} E.
  \end{equation*}
  Then it follows that the sequence of rescaled solutions
  \begin{equation*}
    (A^{(n)}, \phi^{(n)})(t,x) := t_{n}^{-1} (\check{A}, \check{\phi})(t_{n}^{-1} t, t_{n}^{-1} x)
  \end{equation*}
  obeys the desired properties.

  \pfstep{Case 2: Non-scattering} This case follows by a simple
  rescaling argument. Let $R_{0} > 0$ be a large radius such that
  $\calE_{\set{0} \times (\bbR^{4} \setminus B_{R_{0}}(0))} [A, \phi]
  \leq \veps^{8} E$. Translating in time by $R_{0}$ and using the
  local conservation of energy, we may assume that $(A, \phi)$ obeys
  \begin{equation*}
    \calE_{(\set{t} \times \bbR^{4}) \setminus S_{t}} [A, \phi] \leq \veps^{8} E \quad \hbox{ for every } t \in [R_{0}, \infty).
  \end{equation*}
  By Theorem~\ref{thm:ED} with \eqref{eq:ED:small-EDC} replaced by
  \eqref{eq:ED:small-covED} and \eqref{eq:dich:non-scatter}, there
  exist sequences $(t_{n}, x_{n}) \in [R_{0}, \infty) \times \bbR^{4}$
  and $k_{n} \in \bbZ$ with $t_{n} \to \infty$ such that
  \begin{equation*}
    2^{-k_{n}} \abs{\zt_{2^{-k_{n}}} \ast \phi(t_{n}, x_{n})}
    + 2^{-2k_{n}} \abs{\zt_{2^{-k_{n}}} \ast \covD_{t} \phi(t_{n}, x_{n})} > \thcovED(E)
  \end{equation*}
  By Corollary~\ref{cor:flux-decay}, we may then choose a sequence
  $\veps_{n} \to 0$ such that $\veps_{n} t_{n} \to \infty$ and
  \begin{equation*}
    \EFlux_{[\veps_{n} t_{n}, t_{n}]} [A, \phi] + \G_{S_{t_{n}}}[\phi] \leq \veps_{n}^{\frac{1}{2}} E.
  \end{equation*}
  Defining $(A^{(n)}, \phi^{(n)})(t,x) := t_{n}^{-1} (A,
  \phi)(t_{n}^{-1} t, t_{n}^{-1} x)$, we obtain a desired sequence.
\end{proof}

\subsection{Elimination of the null concentration scenario} \label{subsec:no-null}
Using Proposition~\ref{prop:monotonicity}, in particular the weighted energy estimate on $S_{1}$, we show that \emph{null concentration} cannot happen. The precise statement is as follows.
\begin{lemma}[No null concentration] \label{lem:no-null}
Let $(A^{(n)}, \phi^{(n)})$ be a sequence of admissible $C_{t} \calH^{1}$ solutions to \eqref{eq:MKG} satisfying the conclusions of Lemma~\ref{lem:ini-seq} with the sequences $\veps_{n}$, $k_{n}$ and $x_{n}$. There exist $K = K(E) > 0$ and $\gmm = \gmm(E) \in (0, 1)$ such that 
if $k_{n} > K(E)$ and $\abs{x_{n}} > \gmm(E)$ for all sufficiently large $n$, and $\veps > 0$ is sufficiently small depending on $E$, then 
\begin{equation} \label{eq:no-null}
	\limsup_{n \to \infty} \, 
	2^{-k_{n}} \abs{\zt_{2^{-k_{n}}} \ast \phi(1, x_{n})}
	+ 2^{-2k_{n}} \abs{\zt_{2^{-k_{n}}} \ast \covD_{t}^{(n)} \phi^{(n)} (1,x_{n})} \leq \thcovED(E).
\end{equation}
\end{lemma}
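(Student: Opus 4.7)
My plan is to exploit the strong weight $(v_{\veps_{n}}/u_{\veps_{n}})^{1/2}$ near the null cone $\rd C$ provided by the monotonicity formula in Proposition~\ref{prop:monotonicity}, combined with the diamagnetic inequality to reduce to a purely gauge-invariant scalar problem for $u^{(n)} := |\phi^{(n)}|$. The overall architecture parallels the wave maps argument in \cite{MR2657818}, with the diamagnetic reduction serving to bypass the gauge-dependent factor $A^{(n)}$ present in the covariant derivative.

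First, for $\veps > 0$ sufficiently small depending on $E$, the hypotheses \eqref{eq:monotonicity:hyp} of Proposition~\ref{prop:monotonicity} are satisfied for all large $n$ by \eqref{eq:ini-seq:energy} and \eqref{eq:ini-seq:flux}. This yields the weighted energy bound
\[
\int_{S_{1}} \mvC{X_{\veps_{n}}}_{T}[A^{(n)}, \phi^{(n)}] \, \ud x \aleq E,
\]
and by Lemma~\ref{lem:monotonicity} the integrand contains a non-negative $(v_{\veps_{n}}/u_{\veps_{n}})^{1/2}$-weighted sum of $|r^{-1}\covD_{L}(r\phi^{(n)})|^{2}$, $|\scovD \phi^{(n)}|^{2}$, and $|\phi^{(n)}|^{2}/r^{2}$, all of which are large close to $\rd C$.

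The next step is to transfer these bounds to the scalar $u^{(n)}$ using the diamagnetic inequality in the forms $|\snb |\phi|| \leq |\scovD \phi|$ and the key identity
\[
|\covD_{L}(r\phi)|^{2} = |L(r|\phi|)|^{2} + r^{2}(|\covD_{L}\phi|^{2} - (L|\phi|)^{2}) \geq |L(r|\phi|)|^{2},
\]
which follows from the pointwise identity $|\phi| L|\phi| = \Re(\bar\phi \covD_{L}\phi)$ and Cauchy--Schwarz (this is the gauge-invariant analogue of the standard diamagnetic inequality applied to the ``rescaled section'' $r\phi$). This yields the scalar statement
\[
\int_{S_{1}}(v_{\veps_{n}}/u_{\veps_{n}})^{1/2}\bigl(|r^{-1} L(ru^{(n)})|^{2} + |\snb u^{(n)}|^{2} + (u^{(n)})^{2}/r^{2}\bigr)\, \ud x \aleq E.
\]
From this point onwards the argument mimics the wave maps setting. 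On a ball $B_{n} := B_{2^{-k_{n}}}(x_{n})$ in the null region (with $k_{n} > K$ so $2^{-k_{n}}$ is small, and $|x_{n}| > \gmm$ so $B_{n}$ sits close to $\rd S_{1}$), the weight is bounded below by $((1-|x_{n}|)+\veps_{n})^{-1/2}$; this forces
\[
\|u^{(n)}\|_{L^{2}(B_{n})}^{2} \aleq ((1-|x_{n}|)+\veps_{n})^{1/2} E
\]
together with analogous bounds on $\rd u^{(n)}$. Combining these with a scale-invariant Sobolev estimate on $B_{n}$ (using $\dot{H}^{1}(\bbR^{4}) \hookrightarrow L^{4}(\bbR^{4})$ and the diamagnetic inequality) produces an upper bound on $\zt_{2^{-k_{n}}} \ast u^{(n)}(1, x_{n})$, and the inequality $|\zt_{2^{-k_{n}}} \ast \phi^{(n)}| \leq \zt_{2^{-k_{n}}} \ast u^{(n)}$ (valid because $\zt \geq 0$) transfers this to $\phi^{(n)}$. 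The bound for $\covD_{t}^{(n)}\phi^{(n)}$ is handled analogously since the corresponding quantities also appear in $\mvC{X_{\veps_{n}}}_{T}$.

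The main technical obstacle is the delicate interplay of the three small parameters $2^{-k_{n}}$, $(1-|x_{n}|)+\veps_{n}$, and $\thcovED(E)$: localising Sobolev at scale $2^{-k_{n}}$ introduces a factor of $2^{k_{n}}$ that must be absorbed by the gain $((1-|x_{n}|)+\veps_{n})^{1/4}$ coming from the weighted energy. Verifying that one can balance these factors by choosing $K = K(E)$ and $\gmm = \gmm(E)$ appropriately is what dictates the precise form of the threshold and is where the boundary smallness $\G_{S_{1}}[\phi^{(n)}] \leq \veps_{n}^{1/2}E$ enters---it controls $u^{(n)}$ directly on $\rd S_{1}$ and enables a fundamental-theorem-of-calculus type estimate in the radial direction to compensate in the critical regime where $2^{-k_{n}}$ is comparable to $1-|x_{n}|$.
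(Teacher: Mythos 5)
Your first two steps match the paper: you invoke Proposition~\ref{prop:monotonicity} to get the $(v_{\veps_{n}}/u_{\veps_{n}})^{1/2}$-weighted energy bound on $S_{1}$, and you correctly reduce to the gauge-invariant scalar $u^{(n)} = |\phi^{(n)}|$ via the diamagnetic inequality (and your identity $|\covD_{L}(r\phi)|^{2} \geq |L(r|\phi|)|^{2}$ is a correct and nice observation). The gap lies in the passage from the weighted energy bound to the pointwise mollified bound.

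Two issues make your ``localize to a ball $B_{n} = B_{2^{-k_{n}}}(x_{n})$ and apply a scale-invariant Sobolev'' step untenable. First, you do not have weighted control on all of $\rd u^{(n)}$. On $S_{1}$ the radial spatial derivative satisfies $\rd_{r} = (\rd_{L} - \rd_{\uL})/2$, and $\mvC{X_{\veps}}_{T}$ carries $\covD_{\uL}\phi$ only with the \emph{unfavorable} weight $(u_{\veps}/v_{\veps})^{1/2}$ (see \eqref{eq:monotonicity:mvC-L}--\eqref{eq:monotonicity:mvC-uL}). Only $\covD_{L}$, the angular derivatives $\scovD$, and $\phi/r$ come with the good weight. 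Second, even if all of $\rd u^{(n)}$ were weighted, the arithmetic would not close: Cauchy--Schwarz or Sobolev on $B_{n}$ gives
\begin{equation*}
2^{-k_{n}} |\zt_{2^{-k_{n}}} \ast u^{(n)}(x_{n})| \aleq 2^{k_{n}}\bigl((1-|x_{n}|)+2^{-k_{n}}+\veps_{n}\bigr)^{1/4} E^{1/2},
\end{equation*}
which blows up like $2^{3k_{n}/4}$ in the critical regime $1-|x_{n}| \aeq 2^{-k_{n}}$ that the lemma must handle. Your proposed fundamental-theorem-of-calculus remedy in the radial direction cannot rescue this, precisely because it would require the radial derivative, which is not weighted.

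What the paper actually does is a Fourier-side anisotropic decomposition $Z_{k} = Z_{k}^{1}\rd_{1} + \sum_{j=2}^{4} Z_{k}^{j}\rd_{j}$ (after rotating $x_{n}$ to the $x^{1}$-axis), with symbols $z_{k}^{j}$ designed so that the $j\geq 2$ pieces have kernels supported in an anisotropic box $|y^{1}| \aleq 2^{-k}$, $|y'| \aleq 2^{-k/2}$. This box precisely matches the scales on which the weight $w_{k}(x) = ((|1-x^{1}| + |x'|^{2} + 2^{-k})^{1/2} + \veps^{8})^{-1}$ is slowly varying, so the near-kernel contribution can be bounded by $w_{k}^{-1/2}(x_{n})\nrm{w_{k}^{1/2}\rd_{j}u^{(n)}}_{L^{2}}$ with no loss, yielding the bound $((1-|x_{n}|)_{+} + 2^{-k})^{1/4}E^{1/2}$ \emph{without} the extraneous $2^{k}$ factor. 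Meanwhile the $Z_{k}^{1}\rd_{1}$ piece (the uncontrolled radial direction) is handled separately by a Bernstein-type estimate, gaining $2^{-3k/8}$ from the restricted angular frequencies $|\xi'| \aleq 2^{k/2}$. This anisotropic decomposition is the essential ingredient your proposal is missing; a straightforward localize-and-Sobolev estimate on an isotropic ball genuinely cannot produce the needed bound.
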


\begin{remark}  \label{rem:no-null:K-gmm}
As $K(E)$ in Lemma~\ref{lem:no-null} can be replaced {\it a posteriori} by any number greater than $K(E)$. Hence given any $m = m(E)$ depending only on $E$, we may assume in addition to the statement of Lemma~\ref{lem:no-null} that
\begin{equation} \label{eq:no-null:K-gmm}
	2^{-K} \leq \frac{1}{100 \, m(E)} (1 - \gmm).
\end{equation}
This observation will be useful in the proof of Lemma~\ref{lem:t-like-e:t=1} below.
\end{remark}
\begin{proof} 
The idea of the proof is similar to that of \cite[Lemma~6.2]{MR2657818} with additional ideas to deal with the presence of covariant derivatives.

\pfstep{Step 1}
The starting point is Proposition~\ref{prop:monotonicity} applied to $(A, \phi) = (A^{(n)}, \phi^{(n)})$ with $\veps = \veps_{n}$, more precisely the first term on the left-hand side of \eqref{eq:monotonicity}. Using Lemma~\ref{lem:monotonicity} to write out $\mvC{X_{\veps_{n}}}_{T}$, we see that the following a-priori estimate holds on $S_{1}$:
\begin{equation} 
	\int_{S_{1}} \frac{1}{(1 - \abs{x} +\veps_{n})^{\frac{1}{2}}} \bb( \abs{\covD_{L}^{(n)} \phi^{(n)}}^{2} + \abs{\scovD^{(n)} \phi^{(n)}}^{2} \bb) \, \ud x \aleq E.
\end{equation}
By the smallness of the energy outside $S_{1}$, as well as conservation energy, we then obtain the global bound
\begin{equation} \label{eq:no-null:key-w-est}
	\int_{\set{t = 1}} \frac{1}{((1 - \abs{x})_{+} + \veps_{n})^{\frac{1}{2}} + \veps^{8}} \bb( \abs{\covD_{L}^{(n)} \phi^{(n)}}^{2} + \abs{\scovD^{(n)} \phi^{(n)}}^{2} \bb) \, \ud x \aleq E,
\end{equation}
where $(\cdot)_{+} := \max \set{\cdot, 0}$.

\pfstep{Step 2} We claim that for any $k \in \bbZ$ the following estimate holds:
\begin{equation} \label{eq:no-null:g-inv}
	\limsup_{n \to \infty} \, 2^{-k} \abs{\zt_{2^{-k}} \ast \abs{\phi^{(n)}}(1,x)}
	\aleq \bb( 2^{-\frac{3}{8} k} + \big( (1-\abs{x})_{+} + 2^{-k} \big)^{\frac{1}{4}} + \veps^{4} \bb) E^{\frac{1}{2}}.
\end{equation}
The point of \eqref{eq:no-null:g-inv} is that $\abs{\phi^{(n)}}$ is \emph{gauge invariant}, and hence we can avoid estimating $A$. 
Henceforth, we will denote $\psi^{(n)} := \abs{\phi^{(n)}}(1, \cdot)$. We use the rotational symmetry to bring $x$ to the $x^{1}$-axis, so that $x = (\abs{x}, 0, 0, 0)$. Henceforth we will write $x = (x^{1}, x')$ where $x' = (x^{2}, x^{3}, x^{4})$.

By the diamagnetic inequality (Lemma~\ref{lem:diamagnetic}), conservation of energy implies
\begin{equation} \label{eq:no-null:energy:g-inv}
	\int \abs{\nb \psi^{(n)}}^{2} \, \ud x \aleq E.
\end{equation}
where $\abs{\nb \psi}^{2} := \sum_{\ell=1}^{4} \abs{\rd_{\ell} \psi}^{2}$. Note that \eqref{eq:no-null:energy:g-inv} and Young's inequality implies the trivial bound $2^{-k} \nrm{\zt_{2^{-k}} \ast \psi^{(n)}}_{L^{\infty}_{x}} \aleq E^{1/2}$, which allows us to restrict our attention to $x = (x^{1}, 0, 0, 0)$ with $1/2 < x^{1} < 2$.

We claim that for $n$ sufficiently large so that $\veps_{n}^{1/2} \leq \frac{1}{100} \veps^{8}$, the directional derivatives other than $\rd_{1}$ obey an improved estimate
\begin{equation} \label{eq:no-null:key-w-est:g-inv}
	\sum_{j=1}^{4} \int w_{k} \abs{\rd_{j} \psi^{(n)}}^{2} \, \ud x \aleq E,
\end{equation}
where $w_{k} > 0$ is defined as
\begin{equation} \label{eq:no-null:key-w}
	w_{k}(x) := \frac{1}{(\abs{1-x^{1}} + \abs{x'}^{2} + 2^{k})^{\frac{1}{2}} + \veps^{8}}.
\end{equation}

The estimate \eqref{eq:no-null:key-w-est:g-inv} is a consequence of \eqref{eq:no-null:key-w-est}. Indeed, the latter estimate combined with the diamagnetic inequality implies
\begin{equation} \label{eq:no-null:key-w-est:g-inv:pre}
	\int \frac{1}{((1 - \abs{x})_{+} + \veps_{n})^{\frac{1}{2}} + \veps^{8}} \abs{\snb \psi^{(n)}}^{2}  \, \ud x \aleq E.
\end{equation}
At $x = (1, 0, 0, 0)$ we have $\frac{1}{r^{2}} g_{\bbS^{3}}^{-1} = \sum_{j=2}^{4} \rd_{j} \cdot \rd_{j}$. Therefore, by smoothness, we have
\begin{equation*}
	\abs{\abs{\snb \psi}^{2} - \sum_{j=2}^{4}\abs{\rd_{j} \psi}^{2}} \aleq (\abs{1 - x^{1}} + \abs{x'}) \abs{\nb \psi}^{2}.
\end{equation*}
On the other hand, $(1-\abs{x})_{+} \aleq \abs{1 - x^{1}} + \abs{x'}^{2}$.
Therefore, combined with \eqref{eq:no-null:energy:g-inv} (to control $\nb \psi$ in the error), \eqref{eq:no-null:key-w-est:g-inv:pre} implies
\begin{equation*} 
	\sum_{j=2}^{4} \int \frac{1}{(\abs{1 - x^{1}} + \abs{x'}^{2} + \veps_{n})^{\frac{1}{2}} + \veps^{8}} \abs{\rd_{j} \psi^{(n)}}^{2} \, \ud x \aleq E.
\end{equation*}
Then under the assumption that $\veps_{n} \leq \frac{1}{100} \veps^{8}$, the desired estimate \eqref{eq:no-null:key-w-est:g-inv} follows.

Observe that we have put in an extra $2^{k}$ in the weight $w_{k}$. This maneuver ensures that $w$ is \emph{slowly varying} at scale $2^{k} \times 2^{k/2} \times \cdots \times 2^{k/2}$, i.e., for any $x, y \in \bbR^{4}$ we have
\begin{equation} \label{eq:no-null:w-slow-vary}
	\abs{\frac{w_{k}(x)}{w_{k}(x-y)}}
	\aleq e^{\sum_{j=1}^{4} \abs{y^{j}} \nrm{\rd_{j} \log w}_{L^{\infty}}}
	\aleq e^{2^{k} \abs{y^{1}} + 2^{k/2} \abs{y'}}.
\end{equation}

We now turn to the task of deriving \eqref{eq:no-null:g-inv} from \eqref{eq:no-null:energy:g-inv} and \eqref{eq:no-null:key-w-est:g-inv}. We introduce the notation $Z_{k} \psi := \zt_{2^{-k}} \ast \psi$ and write $z_{k}(\xi)$ for the symbol of the integral operator $Z_{k}$; of course, $z_{k}$ is nothing but the Fourier transform of $\zt_{2^{-k}}$. We furthermore decompose
\begin{equation*}
	Z_{k} = Z_{k}^{1} \rd_{1} + Z_{k}^{2} \rd_{2} + \cdots + Z_{k}^{4} \rd_{4}
\end{equation*}
where the symbols $z_{k}^{j}(\xi)$ of $Z_{k}^{j}$ are given by 
\begin{align*}
	z_{k}^{1} (\xi) =& z_{k}(\xi) \eta(2^{-\frac{k}{2}} \xi') \frac{1}{i \xi_{1}},  \\
	z_{k}^{j} (\xi) =& z_{k}(\xi) (1 - \eta(2^{-\frac{k}{2}} \xi')) \frac{\xi_{j}}{i \abs{\xi'}^{2}} \quad \hbox{ for } j = 2, 3, 4.
\end{align*}

The contribution of $Z_{k}^{1} \rd_{1}$ to \eqref{eq:no-null:g-inv} is easy to treat. Observe that $z_{k}^{1} (\xi) i \xi_{1}$ is a smooth symbol which is rapidly decaying at scale $2^{k}$ in the $\xi_{1}$-direction and compactly supported in the set $\set{\abs{\xi'} \aleq 2^{k/2}}$ in the other directions. By Bernstein's inequality, we have
\begin{equation*}
2^{-k} \abs{Z_{k}^{1} \rd_{1} \psi^{(n)}(x)} \aleq 2^{-\frac{3}{8} k} \nrm{\psi^{(n)}}_{\dot{H}^{1}_{x}} \aleq 2^{-\frac{3}{8} k} E^{\frac{1}{2}},
\end{equation*}
which is acceptable. 

It remains to treat the contribution of $Z_{k}^{j} \rd_{j}$ for $j = 2,3, 4$.
Denote by $\zt_{k}^{j}(x)$ the integral kernel of $Z_{k}^{j}$, which is simply the inverse Fourier transform of $z_{k}^{j}$. A straightforward computation shows that $\nrm{z^{j}_{k}}_{L^{2}_{\xi}} \aleq 2^{k}$. Therefore, by Plancherel,
\begin{equation} \label{eq:no-null:kernel-size}
	\nrm{\zt^{j}_{k}}_{L^{2}_{x}} \aleq 2^{k}.
\end{equation}
Moreover, for any $N \geq 1$, it is not difficult to see that
\begin{equation} \label{eq:no-null:kernel-decay}
	\abs{\sum_{j=2}^{4} \rd_{j} \zt_{k}^{j}(x)} \aleq_{N} \big( 2^{\frac{3}{2} k} + (2^{\frac{k}{2}}\abs{x'})^{-3} \big) (1 + 2^{k} \abs{x^{1}})^{-N} 2^{\frac{5}{2} k}
\end{equation}
where the implicit constant is independent of $k$. 
%
%
Hence we can split $\zt_{k}^{j} = \zt_{k, \near}^{j} + \zt_{k, \far}^{j}$, where
\begin{equation*}
\zt_{k, \near}^{j}(x) := \zt_{k}^{j} (x) 1_{\set{x : \abs{x^{1}} \leq L 2^{k}, \ \abs{x'} \leq L 2^{k/2}}}(x),
\end{equation*}
and $L > 0$ is chosen large enough (independent of $k$) so that, by \eqref{eq:no-null:kernel-decay}, we have
\begin{equation} \label{eq:no-null:Z-far}
	\nrm{\sum_{j=2}^{4} \rd_{j} \zt_{k, \far}^{j}}_{L^{\frac{4}{3}}_{x}} \leq 2^{k} \veps^{4}.
\end{equation}
We denote the corresponding splitting of $Z_{k}^{j}$ by $Z_{k, \near}^{j} + Z_{k, \far}^{j}$. 

We are now ready to complete the proof of \eqref{eq:no-null:g-inv}. The contribution of $Z_{k, \far}^{j} \rd_{j}$ is acceptable, thanks to \eqref{eq:no-null:energy:g-inv}, \eqref{eq:no-null:Z-far} and the Sobolev embedding $\dot{H}^{1}_{x} \subseteq L^{4}_{x}$.
For $\sum_{j=2}^{4} Z_{k, \near}^{j} \rd_{j}$, we have
\begin{align*}
	2^{-k} \abs{\sum_{j=2}^{4} Z_{k, \near}^{j} \rd_{j} \psi^{(n)}(x)}
	\leq & 2^{-k} \sum_{j=2}^{4} \int \abs{\zt^{j}_{k, \near}(y)} \abs{\rd_{j} \psi^{(n)}(x-y)} \, \ud y \\
	\aleq & M w^{-\frac{1}{2}}(x) \nrm{w^{\frac{1}{2}} \rd_{j} \psi^{(n)}}_{L^{2}_{x}}
\end{align*}
where, by \eqref{eq:no-null:w-slow-vary}, \eqref{eq:no-null:kernel-size} and the definition of $\zt_{k, \near}^{j}$, $M$ obeys the bound
\begin{align*}
M := & \  \bb( 2^{-2k}\sum_{j=2}^{4} \int \frac{w(x)}{w(x-y)} \abs{\zt_{k, \near}^{j}}^{2}(y) \, \ud y \bb)^{\frac{1}{2}}
\\ 
\aleq_{L} & \ \bb( 2^{-2k} \sum_{j=2}^{4} \int_{\set{\abs{y^{1}} \leq L 2^{k}, \ \abs{y'} \leq L 2^{k/2}}} \abs{\zt_{k}^{j}}^{2} \, \ud y \bb)^{\frac{1}{2}} \aleq 1,
\end{align*}
which proves \eqref{eq:no-null:g-inv}.

\pfstep{Step 2}
In this step we upgrade \eqref{eq:no-null:g-inv} to the following \emph{gauge dependent} estimate: 
\begin{equation} \label{eq:no-null:g-dep-sp}
	\limsup_{n \to \infty} 2^{-2k} \abs{\zt_{2^{-k}} \ast \covD_{j} \phi^{(n)}(1, x)} \aleq \bb( 2^{-\frac{3}{8} k} + \big( (1-\abs{x})_{+} + 2^{-k} \big)^{\frac{1}{4}} + \veps^{4} \bb) E^{\frac{1}{2}}.
\end{equation}
The idea is that \eqref{eq:no-null:g-inv} has already broken the scaling invariance, so we can easily incorporate $A$ using the trivial bound $\nrm{A}_{L^{\infty}_{t} \dot{H}^{1}_{x}} \aleq E^{1/2}$. 

We begin by applying Step 1 to $\widetilde{\zt}_{2^{-k}}$, where we recall that $\zt = \widetilde{\zt} \ast \widetilde{\zt}$. 
We again introduce the shorthand $\widetilde{Z}_{k} (\cdot):= \widetilde{\zt}_{2^{-k}} \ast (\cdot)$.
By the simple pointwise inequality $\abs{\widetilde{Z}_{k} \phi^{(n)}} \leq \abs{\widetilde{Z}_{k} \abs{\phi^{(n)}}}$, which holds since $\widetilde{\zt} \geq 0$, we have
\begin{equation} \label{eq:no-null:g-inv:2}
	\limsup_{n \to \infty} \, 2^{-k} \abs{\widetilde{Z}_{k} \phi^{(n)}(1,x)}
	\aleq \bb( 2^{-\frac{3}{8} k} + \big( (1-\abs{x})_{+} + 2^{-k} \big)^{\frac{1}{4}} + \veps^{4} \bb) E^{\frac{1}{2}}.
\end{equation}
Note furthermore that $Z_{k} = \widetilde{Z}_{k}^{2}$. For $j= 1, \ldots, 4$, we may write
\begin{align*}
	2^{-2k} \abs{Z_{k} \covD_{j}^{(n)} \phi^{(n)}(1,x)}
	& \leq 2^{-2k} \abs{Z_{k} \rd_{j} \phi^{(n)}(1,x)}
		+2^{-2k} \abs{Z_{k} (A_{j}^{(n)} \phi^{(n)})(1,x)} \\
	& \aleq 2^{-k} \sup_{\abs{x - x'} \aleq 2^{-k}} \abs{\widetilde{Z}_{k} \phi^{(n)}(1,x')} 
			+ 2^{-2k} \abs{Z_{k} (A_{j}^{(n)} \phi^{(n)})(1,x)}.
\end{align*}
The first term on the last line is acceptable, thanks to \eqref{eq:no-null:g-inv:2}. To treat the second term, we insert $1 = (1 - \widetilde{Z}_{2^{-k+m}}) + \widetilde{Z}_{k+m}$ in front of $A^{(n)}, \phi^{(n)}$ for some $m > 0$ to be determined. By the simple inequality $\nrm{(1 - \widetilde{Z}_{k+m}) f}_{L^{2}_{x}} \aleq 2^{-k-m} \nrm{f}_{\dot{H}^{1}_{x}}$, each term involving $1 - \widetilde{Z}_{2^{-k+m}}$ is bounded by
\begin{equation*}
	\aleq 2^{-m} \nrm{A^{(n)}(1, \cdot)}_{\dot{H}^{1}_{x}} \nrm{\phi^{(n)}(1, \cdot)}_{\dot{H}^{1}_{x}},
\end{equation*}
which can be made $\leq \veps^{4} E^{\frac{1}{2}}$ by choosing $m$ large enough. For the remaining term, we have
\begin{align*}
	2^{-2k} \abs{Z_{k} (\widetilde{Z}_{k+m} A_{j}^{(n)} \widetilde{Z}_{k+m} \phi^{(n)})(1,x)}
	& \aleq  2^{-k} \nrm{\widetilde{Z}_{k+m} A_{j}^{(n)}}_{L^{\infty}_{x}} 2^{-k} \sup_{\abs{x - x'} \aleq 2^{-k}} \abs{\widetilde{Z}_{k+m} \phi^{(n)}(1,x')} \\
	& \aleq_{E, m} 2^{-k+m} \sup_{\abs{x - x'} \aleq 2^{-k}} \abs{\widetilde{Z}_{k+m} \phi^{(n)}(1,x')}
\end{align*}
which is acceptable in view of \eqref{eq:no-null:g-inv:2}.

\pfstep{Step 3}
We are ready to conclude the proof of the lemma. By \eqref{eq:no-null:g-inv} and the pointwise inequality $\abs{\zt_{2^{-k}} \ast \phi} \leq \zt_{2^{-k}} \ast \abs{\phi}$, we can achieve the desired smallness as in \eqref{eq:no-null} of $\phi^{(n)}$ by taking $K$ very large, $\gmm$ close enough to $1$ and $\veps > 0$ sufficiently small. For $\covD^{(n)}_{t}$, we have
\begin{equation*} 
	2^{-2k} \abs{\zt_{2^{-k}} \ast \covD_{t}^{(n)} \phi^{(n)}(1, x)} \leq
	2^{-2k} \abs{\zt_{2^{-k}} \ast \covD_{L}^{(n)} \phi^{(n)}(1, x)}
	+ \sum_{j=1} 2^{-2k} \abs{\zt_{2^{-k}} \ast \covD_{j}^{(n)} \phi^{(n)}(1, x)}.
\end{equation*}
Using \eqref{eq:no-null:key-w-est} for the first term (also exploiting the fact that $\zt_{2^{-k}}$ is supported in a ball of radius $\aleq 2^{-k}$) and \eqref{eq:no-null:g-dep-sp} for the second term, \eqref{eq:no-null} now follows after adjusting $K$, $\gmm$ and $\veps$ if necessary. \qedhere

\end{proof}

\subsection{Nontrivial energy in a time-like region} \label{subsec:t-like-e}
An important consequence of Lemma~\ref{lem:no-null} is that there is a uniform lower bound for $\phi^{(n)}$ in a time-like region at $t = 1$.
\begin{lemma} \label{lem:t-like-e:t=1}
Let $(A^{(n)}, \phi^{(n)})$ be an admissible $C_{t} \calH^{1}$ solution to \eqref{eq:MKG} satisfying \eqref{eq:ini-seq:conc}. Let $K(E) > 0$ and $\gmm(E) \in (0, 1)$ be as in Lemma~\ref{lem:no-null} and Remark~\ref{rem:no-null:K-gmm}. Assume that either $(1)$ $k_{n} \leq K(E)$ or $(2)$ $k_{n} > K(E)$ and $\abs{x_{n}} \leq 1 - \gmm(E)$. Then there exist $E_{1} = E_{1}(E) > 0$ and $\gmm_{1} = \gmm_{1} (E) \in (0, 1)$ such that if $\veps > 0$ is sufficiently small depending on $E$, then
\begin{equation} \label{eq:t-like-e:t=1}
	\int_{S^{1-\gmm_{1}}_{1}} \sum_{\mu=0}^{4} \abs{\covD_{\mu}^{(n)} \phi^{(n)}}^{2} + \frac{1}{r^{2}} \abs{\phi^{(n)}}^{2} \, \ud x \geq E_{1}(E).
\end{equation}
\end{lemma}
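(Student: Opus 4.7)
The plan is to convert the pointwise concentration at $(1,x_n)$ in \eqref{eq:ini-seq:conc} into an integrated lower bound on $S_1^{1-\gmm_1}$. Set $r_n := 2^{-k_n}$. Passing to a subsequence, we may assume that one of the two terms on the left of \eqref{eq:ini-seq:conc} individually exceeds $\thcovED(E)/2$; Cauchy--Schwarz, using $\nrm{\zt_{r_n}}_{L^2_x} \aleq r_n^{-2}$ and $\supp \zt_{r_n} \subseteq B_{r_n}(0)$, then yields either
\[
\textup{(A)} \ \nrm{\phi^{(n)}(1,\cdot)}_{L^2(B_{r_n}(x_n))} \gtrsim r_n \thcovED(E),
\quad \text{or} \quad
\textup{(B)} \ \nrm{\covD_t^{(n)} \phi^{(n)}(1,\cdot)}_{L^2(B_{r_n}(x_n))} \gtrsim \thcovED(E).
\]
As a preliminary step, combining \eqref{eq:ini-seq:extr} (in the form $\nrm{\covD \phi^{(n)}}_{L^2(\bbR^{4} \setminus S_1)} \aleq \veps^{4} E^{1/2}$) with the improved Hardy estimate \eqref{eq:hardy+} shows that $\abs{x_n}$ cannot be far from the cone: if $\abs{x_n} \geq 1 + \sgm r_n$ for $\sgm := C E^{1/2}/\thcovED(E)$, Lemma~\ref{l:hardy+} applied on $B_{r_n/2}(x_n)$ would force the left sides of (A), (B) to be $\aleq \veps^{c} E^{1/2}$, contradicting the lower bounds for $\veps$ small. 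So $\abs{x_n} \leq 1 + C(E) r_n$.

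Case~(2), where $\abs{x_n} \leq 1 - \gmm(E)$, is the direct case. Using the freedom granted by Remark~\ref{rem:no-null:K-gmm} to enlarge $K(E)$, I may assume $\sgm r_n \leq \gmm/4$. Setting $\gmm_1 := 1 - \gmm/2$, the ball $B_{\sgm r_n/2}(x_n)$ is contained in $S_1^{1-\gmm_1}$. Sub-case~(B) is then immediate since $B_{r_n}(x_n) \subseteq S_1^{1-\gmm_1}$. For sub-case~(A), Lemma~\ref{l:hardy+} with $B = B_{r_n/2}(x_n)$ gives
\[
\thcovED(E) \aleq \nrm{\covD \phi^{(n)}}_{L^{2}(\sgm B)} + \sgm^{-1} E^{1/2},
\]
and the choice of $\sgm$ absorbs the second term into $\thcovED(E)/2$, yielding $\nrm{\covD \phi^{(n)}}_{L^{2}(S_1^{1-\gmm_1})} \gtrsim \thcovED(E)$. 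In either sub-case this establishes \eqref{eq:t-like-e:t=1} with $E_1 = c\, \thcovED(E)^{2}$.

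Case~(1) is where the main work lies: here $r_n \geq 2^{-K(E)}$ is only bounded below while $\abs{x_n}$ may lie close to $1$, so $B_{r_n}(x_n)$ may straddle $\rd S_1^{1-\gmm_1}$ or $\rd S_1$. The strategy is still to apply Lemma~\ref{l:hardy+} at the fixed scale $\sgm$, obtaining $\nrm{\covD \phi^{(n)}}_{L^{2}(B_{\sgm r_n/2}(x_n))} \gtrsim \thcovED(E)$, and then partition this integral into contributions from (i) $\bbR^{4} \setminus S_1$, bounded by $\veps^{4} E^{1/2}$ via \eqref{eq:ini-seq:extr}; (ii) the thin annulus $S_1 \setminus S_1^{1-\gmm_1}$; and (iii) $S_1^{1-\gmm_1}$, the desired region. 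For (ii), I plan to use the weighted energy estimate \eqref{eq:no-null:key-w-est} from the proof of Lemma~\ref{lem:no-null} together with the pointwise bound $1 \leq (1-\gmm_1)^{1/2}(1-\abs{x})^{-1/2}$, yielding $\nrm{\covD_L \phi^{(n)}, \scovD \phi^{(n)}}_{L^{2}(\mathrm{annulus})}^{2} \aleq (1-\gmm_1)^{1/2} E$, which vanishes as $\gmm_1 \to 1$. The hard part will be controlling the residual $\covD_{\uL} \phi^{(n)}$ component on the annulus, since the natural weight from $\mvC{X_\veps}_{\uL}$ in Lemma~\ref{lem:monotonicity} is $(u_0/v_0)^{1/2}$ and is itself \emph{small} on the annulus; to handle this I will combine the boundary smallness $\G_{\rd S_1}[\phi^{(n)}] \leq \veps_n^{1/2} E$ (from \eqref{eq:ini-seq:flux} and \eqref{eq:bound4G}) with a one-dimensional Hardy-type integration in the $\uL$ direction starting from $\rd S_1$, to bound $\nrm{\covD_{\uL} \phi^{(n)}}_{L^{2}(\mathrm{annulus})}^{2}$ by $(1-\gmm_1) E$ up to $o_{n \to \infty}(1)$ error. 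Choosing $\gmm_1$ sufficiently close to $1$ (depending on $K(E), E$) and then $\veps$ small, the contributions from (i) and (ii) become strictly smaller than $\thcovED(E)/2$, forcing the dominant part of the concentration into $S_1^{1-\gmm_1}$ and producing the desired lower bound with some $E_1 = E_1(E) > 0$ and $\gmm_1 = \gmm_1(E) \in (0, 1)$.
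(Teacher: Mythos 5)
Your treatment of Case~(2) matches the paper's in spirit: in the concentration-deep-inside-the-cone regime, the ball $B_{r_n}(x_n)$ lands fully inside $S_1^{1-\gmm_1}$ and Cauchy--Schwarz (plus Hardy for the potential-energy sub-case) finishes things immediately. The splitting into a ``kinetic'' and ``potential'' alternative is also what the paper does.

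For Case~(1), however, your route has a genuine gap, and it is one you yourself flag. You want to bound the annulus contribution $S_1 \setminus S_1^{1-\gmm_1}$ by invoking the weighted energy estimate \eqref{eq:no-null:key-w-est}, which does control $\covD_L\phi^{(n)}$ and $\scovD\phi^{(n)}$ with a gain of $(1-\gmm_1)^{1/2}$ near $\rd S_1$. But the time derivative $\covD_t\phi^{(n)} = \frac12(\covD_L + \covD_{\uL})\phi^{(n)}$ carries a $\covD_{\uL}$ component whose weight in $\mvC{X_\veps}_{\uL}$ (see \eqref{eq:monotonicity:mvC-uL}) is $(u_\veps/v_\veps)^{1/2}$, which is \emph{small} on the annulus rather than large — so the monotonicity formula gives you nothing there. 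Your proposed repair (a one-dimensional Hardy integration starting from $\rd S_1$ using $\G_{\rd S_1}[\phi^{(n)}] \leq \veps_n^{1/2}E$) controls only $\phi^{(n)}$ itself, not $\covD_{\uL}\phi^{(n)}$, on the annulus; it is not clear how to close this estimate, and you do not carry it through.

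The paper's argument for Case~(1) is in fact far more elementary and sidesteps the monotonicity formula entirely. The key observation is geometric: since $k_n \leq K(E)$, the mollifier $\zt_{2^{-k_n}}$ lives at a scale bounded below by $2^{-K}$, so the $L^2_y$-norm of $2^{-2k_n}\zt_{2^{-k_n}}$ restricted to (a translate of) the thin annulus $S_1 \setminus S_1^{1-\gmm_1}$ is $\aleq (1-\gmm_1)^{1/2}2^{-K/2}$, simply because the annulus has measure $\aleq (1-\gmm_1)$ within a ball of radius $\aleq 2^{-K}$. Plugging this into Cauchy--Schwarz (against the \emph{global} bound $\nrm{\covD_t\phi^{(n)}}_{L^2_x} \aleq E^{1/2}$, with no weights) makes the annulus contribution $\aleq (1-\gmm_1)^{1/2}E^{1/2}$, which is absorbed by choosing $\gmm_1$ close to $1$. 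No control of $\covD_{\uL}\phi^{(n)}$ near the boundary is needed at all. So the correct mechanism in Case~(1) is ``small kernel mass on a thin annulus,'' not ``small field near the boundary,'' and the latter is unavailable for the $\uL$ derivative.
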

\begin{proof} 
Since the whole proof will take place on $\set{t = 1}$, we will ignore the difference between $\set{t =1}$ and $\bbR^{4}$. Furthermore, as the argument is the same for each $n$, we will henceforth suppress $n$ for simplicity. 
There are two scenarios to consider: 
\begin{itemize}
\item[A.] {\bf Nontrivial kinetic energy.} $2^{-2k} \abs{\zt_{2^{-k}} \ast \covD_{t} \phi(x)} \geq \frac{1}{2} \thcovED(E)$, or 
\item[B.] {\bf Nontrivial potential energy.} $2^{-k} \abs{\zt_{2^{-k}} \ast \phi(x)} \geq \frac{1}{2} \thcovED(E)$.
\end{itemize}

We first treat Scenario A. By Cauchy-Schwarz,
\begin{align*}
	\frac{1}{2} \thcovED 
	\leq  \int 2^{-2k} \zt_{2^{-k}}(y) \abs{\covD_{t} \phi(x-y)} \, \ud y 
	\aleq  \bb( \int_{B_{2^{-k}}(x)} \abs{\covD_{t} \phi}^{2} \, \ud y \bb)^{1/2},
\end{align*}
where we also used $\supp \, \zt \subseteq B_{1}(0)$. Hence in Case 2, \eqref{eq:t-like-e:t=1} immediately follows by taking $\gmm_{1} \geq \gmm + 2^{-k}$ so that $B_{2^{-k}}(x) \subseteq S^{1-\gmm_{1}}_{1}$. Note that we may still ensure that $\gmm_{1} < 1$ thanks to \eqref{eq:no-null:K-gmm}. 

Now assume that Case 1 holds, i.e., $k \leq K$. Splitting the convolution integral into $\int_{S_{1}^{1-\gmm_{1}}} + \int_{S_{1} \setminus S^{1-\gmm_{1}}_{1}} + \int_{\bbR^{4} \setminus S_{1}}$, applying Cauchy-Schwarz and using \eqref{eq:ini-seq:energy}, \eqref{eq:ini-seq:extr}, we have
\begin{align*}
	\thcovED
	\aleq \bb( \int_{S_{1}^{1-\gmm_{1}}} \abs{\covD_{t} \phi}^{2} \, \ud y \bb)^{1/2} + c_{0}(\gmm_{1}) E^{1/2} + \veps^{8} E^{1/2},
\end{align*}
where
\begin{equation*}
	c_{0}(\gmm_{1}) 
	:= \bb( \int_{S_{1} \setminus S_{1}^{1-\gmm_{1}}} \abs{\zt(2^{-k} y)}^{2} 2^{-4k} \, \ud y \bb)^{1/2}
	\aleq 2^{-2k} \abs{(S_{1} \setminus S_{1}^{1-\gmm_{1}}) \cap B_{2^{-k}}(x)}^{1/2}.
\end{equation*}
By elementary geometry and the assumption $k \leq K$, it follows that the last term is bounded by $\aleq (1-\gmm_{1})^{1/2} 2^{-K/2}$ uniformly in $x$. Taking $\gmm_{1}$ sufficiently close to $1$, the desired conclusion follows. 

We now consider Scenario B. We repeat the above argument with $\covD_{t} \phi$ replaced by $\phi$, while putting $\zt_{2^{-k}}$ [resp. $\phi$] in $L^{4/3}$ [resp. $L^{4}$] instead of $L^{2}$ [resp. $L^{2}$]. Then in Case 1, 
\begin{equation}
	\thcovED \aleq \bb( \int_{B_{2^{-k}}(x)} \abs{\phi}^{4} \, \ud y \bb)^{1/4},
\end{equation}
whereas in Case 2, 
\begin{equation}
	\thcovED \aleq \bb( \int_{S_{1}^{1-\gmm_{1}}} \abs{\phi}^{4} \, \ud y \bb)^{\frac{1}{4}} + c_{1}(\gmm_{1}) \nrm{\phi}_{L^{4}_{x}(\bbR^{4})} + \nrm{\phi}_{L^{4}_{x}(\bbR^{4} \setminus S_{1})},
\end{equation}
with $c_{2}(\gmm_{1}) \aleq (1-\gmm_{1})^{3/4} 2^{-3K/4}$. The desired conclusion then follows from \eqref{eq:ini-seq:energy}, \eqref{eq:ini-seq:extr}, the diamagnetic inequality (Lemma~\ref{lem:diamagnetic}) and the \emph{localized Sobolev inequalities}
\begin{align*}
	\nrm{f}_{L^{4}_{x}(B_{r}(0))} \aleq & \bb( \sum_{j=1}^{4} \nrm{\rd_{j} f}_{L^{2}_{x}(B_{r}(0))}^{2} \bb)^{1/2} + \nrm{\frac{1}{r} f}_{L^{2}(B_{r}(0))}, \\
	\nrm{f}_{L^{4}_{x}(\bbR^{4} \setminus B_{r}(0))} \aleq & \bb( \sum_{j=1}^{4} \nrm{\rd_{j} f}_{L^{2}_{x}(\bbR^{4} \setminus B_{r}(0))}^{2} \bb)^{1/2},
\end{align*}
which hold with a uniform constant for any $\frac{1}{2} < r < 1$. Both inequalities follow from the usual Sobolev inequality on $\bbR^{4}$ by extending $f$ to $\bbR^{4}$. We remark that the norm $\nrm{r^{-1} \phi}_{L^{2}_{x}}$ is not needed for the second inequality, since we can use localized Hardy's inequality as in Corollary~\ref{cor:Hardy:const-t}. \qedhere
\end{proof}

The uniform lower bound in a time-like region can be propagated towards $t = 0$ using the localized monotonicity formula in Proposition~\ref{prop:monotonicity:t-like}.
\begin{lemma} \label{lem:t-like-e}
Let $(A^{(n)}, \phi^{(n)})$ be a sequence of admissible $C_{t} \calH^{1}$ solutions to \eqref{eq:MKG} satisfying the conclusions of Lemma~\ref{lem:ini-seq}. Assume furthermore that each $(A^{(n)}, \phi^{(n)})$ obeys \eqref{eq:t-like-e:t=1}. Then there exist $E_{2} = E_{2}(E) > 0$ and $\gmm_{2} = \gmm_{2}(E) \in (0, 1)$ such that
\begin{equation} \label{eq:t-like-e}
	\int_{S^{(1-\gmm_{2}) t}_{t}} \mvC{X_{0}}_{T}[A^{(n)}, \phi^{(n)}]  \, \ud x \geq E_{2}(E) \quad \hbox{ for every } t \in [\veps_{n}^{\frac{1}{2}}, \veps_{n}^{\frac{1}{4}}].
\end{equation}
\end{lemma}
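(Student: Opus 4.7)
The plan is to combine the lower bound at $t = 1$ provided by \eqref{eq:t-like-e:t=1} with the localized monotonicity formula of Proposition~\ref{prop:monotonicity:t-like}, using the latter to propagate the bound backward to every time $t_{0} \in [\veps_{n}^{1/2}, \veps_{n}^{1/4}]$. The hypothesis \eqref{eq:monotonicity:hyp} needed to invoke that proposition (with $E$ replaced by $2E$) is inherited uniformly in $n$ from \eqref{eq:ini-seq:energy} and \eqref{eq:ini-seq:flux}.

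The first step is to rephrase \eqref{eq:t-like-e:t=1} in terms of the weighted energy density $\mvC{X_{0}}_{T}$. Writing $T = \tfrac{1}{2}(L + \uL)$ and using the explicit formulae \eqref{eq:monotonicity:mvC-L} and \eqref{eq:monotonicity:mvC-uL} with $\veps = 0$, together with $r^{-1}\covD_{L}(r\phi) = \covD_{L}\phi + \phi/r$ and its $\uL$-analogue, I expect to obtain the pointwise lower bound
\begin{equation*}
\mvC{X_{0}}_{T}[A^{(n)}, \phi^{(n)}] \gtrsim_{\gmm_{1}} \sum_{\mu=0}^{4}\abs{\covD_{\mu}^{(n)}\phi^{(n)}}^{2} + \frac{1}{r^{2}}\abs{\phi^{(n)}}^{2}
\end{equation*}
throughout $S_{1}^{1-\gmm_{1}}$, since on this region the weights $(u/v)^{1/2}$, $(v/u)^{1/2}$ are bounded above and below in terms of $\gmm_{1}$ alone. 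Combined with \eqref{eq:t-like-e:t=1} and the non-negativity of $\mvC{X_{0}}_{T}$ (which allows us to enlarge the integration region), this yields
\begin{equation*}
\int_{S_{1}^{\dlt_{1}}} \mvC{X_{0}}_{T}[A^{(n)}, \phi^{(n)}]\, \ud x \geq c(E)\, E_{1}(E) \qquad \hbox{for every } 0 < \dlt_{1} \leq 1 - \gmm_{1}.
\end{equation*}

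Next, I fix a small parameter $\lmb = \lmb(E) \in (0,1)$ to be chosen, and for each $t_{0} \in [\veps_{n}^{1/2}, \veps_{n}^{1/4}]$ apply Proposition~\ref{prop:monotonicity:t-like} with the \emph{geometric-mean} choice
\begin{equation*}
\dlt_{0} = \lmb\, t_{0}, \qquad \dlt_{1} = \lmb^{1/2}\, t_{0}.
\end{equation*}
The required constraints $2\veps_{n} \leq \dlt_{0} < \dlt_{1} \leq t_{0} \leq 1$ and $\dlt_{1} \leq 1-\gmm_{1}$ are all met once $\lmb \leq (1-\gmm_{1})^{2}$ and $n$ is large enough that $\veps_{n} \leq (\lmb/2)^{2}$. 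Substituting the preceding lower bound into the monotonicity inequality gives
\begin{equation*}
c(E)\, E_{1}(E) \leq \int_{S_{t_{0}}^{\lmb t_{0}}} \mvC{X_{0}}_{T}[A^{(n)}, \phi^{(n)}]\, \ud x + C\bigl(\lmb^{1/4} + \abs{\log\lmb}^{-1}\bigr) E.
\end{equation*}

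The final step is to choose $\lmb = \lmb(E)$ small enough that the error term is at most $\tfrac{1}{2} c(E) E_{1}(E)$, and then set $\gmm_{2} := 1 - \lmb$ and $E_{2}(E) := \tfrac{1}{2} c(E) E_{1}(E)$; this yields \eqref{eq:t-like-e} for every $t_{0}$ in the stated range. The one subtlety of the argument is the tension between the two error terms in Proposition~\ref{prop:monotonicity:t-like}: $(\dlt_{1}/t_{0})^{1/2}$ demands $\dlt_{1} \ll t_{0}$, whereas $\abs{\log(\dlt_{1}/\dlt_{0})}^{-1}$ demands $\dlt_{1} \gg \dlt_{0} \geq 2\veps_{n}$, and at the worst endpoint $t_{0} \sim \veps_{n}^{1/2}$ the available logarithmic window $t_{0}/\veps_{n}$ is only $\veps_{n}^{-1/2}$. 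The geometric-mean scaling $\dlt_{1} = (\dlt_{0} t_{0})^{1/2}$ splits this window optimally, making both error terms depend only on $\lmb$ (not on $n$), so that they can be simultaneously absorbed by a single choice of $\lmb = \lmb(E)$.
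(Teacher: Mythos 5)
Your argument is correct and follows the same path as the paper: convert the $t=1$ lower bound \eqref{eq:t-like-e:t=1} into a lower bound for $\int_{S_1^{\dlt_1}} \mvC{X_0}_T\,\ud x$ via the explicit null-frame formulas \eqref{eq:monotonicity:mvC-L}--\eqref{eq:monotonicity:mvC-uL} (on $S_1^{1-\gmm_1}$ the weights $(u/v)^{\pm 1/2}$ are comparable to $1$ with constants depending only on $\gmm_1$, and $r^{-1}\covD_L(r\phi) = \covD_L\phi + \phi/r$, $r^{-1}\covD_{\uL}(r\phi) = \covD_{\uL}\phi - \phi/r$ recover the covariant derivatives up to the $|\phi|^2/r^2$ term which is also present), then propagate backward using Proposition~\ref{prop:monotonicity:t-like}, and finally absorb the error term by choosing parameters depending only on $E$. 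The only presentational difference is that the paper parameterizes by $M = \dlt_1/\dlt_0$ and $\gmm_2$ separately and then tunes the two of them, whereas you immediately take the one-parameter geometric-mean choice $\dlt_1 = \sqrt{\dlt_0 t_0}$, i.e.\ $M = (1-\gmm_2)^{-1/2}$, which simultaneously controls both error terms $\lmb^{1/4}$ and $2/|\log\lmb|$; this is exactly the coupling the paper implicitly makes, so it is a cosmetic simplification rather than a different method.
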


\begin{proof} 
Fix $n$ and $t_{0} \in [\veps_{n}^{1/2}, \veps_{n}^{1/4}]$. Applying Proposition~\ref{prop:monotonicity:t-like} with $\veps = \veps_{n}$, $\dlt_{0} = (1-\gmm_{2}) t_{0}$ and $\dlt_{1} = M \dlt_{0}$, where $\gmm_{2} \in (0, 1)$ and $M > 1$ will be chosen below, we obtain
\begin{equation} \label{eq:t-like-e:pf:1}
	\int_{S_{1}^{M (1-\gmm_{2}) t_{0}}} \mvC{X_{0}}_{T}[A, \phi] \, \ud x
	\leq \int_{S_{t}^{(1-\gmm_{2})t}} \mvC{X_{0}}_{T}[A, \phi] \, \ud x + C\bb( (M (1-\gmm_{2}))^{\frac{1}{2}}+ \abs{\log M}^{-1} \bb) E.
\end{equation}
On the other hand, by Lemma~\ref{lem:monotonicity} (in particular, the expression for $\mvC{X_{0}}_{T} = \frac{1}{2} (\mvC{X_{0}}_{L} + \mvC{X_{0}}_{\uL})$ and \eqref{eq:t-like-e:t=1}, we have
\begin{equation*}
	E_{1} \aleq (1-\gmm_{1})^{-\frac{1}{2}} \int_{S_{1}^{\dlt t}} \mvC{X_{0}}_{T}[A, \phi] \, \ud x.
\end{equation*}
Hence choosing $M$ sufficiently large and $\gmm_{2}$ close enough to $1$ to make the last term in \eqref{eq:t-like-e:pf:1} small, \eqref{eq:t-like-e} follows with $E_{2} = c E_{1} (1-\gmm_{1})^{\frac{1}{2}}$ for some $c > 0$. \qedhere 
\end{proof}

\subsection{Final rescaling} \label{subsec:final-rescale}
So far, under the assumption that Theorem~\ref{thm:main} fails, we have shown the existence of a sequence of solutions $(A^{(n)}, \phi^{(n)})$ that satisfies the conclusions of Lemma~\ref{lem:ini-seq} and a uniform lower bound \eqref{eq:t-like-e} in a time-like region. By Proposition~\ref{prop:monotonicity}, the sequence moreover obeys the uniform space-time bound
\begin{equation} \label{eq:asymp-ss:pre}
	\iint_{C_{[\veps_{n}, 1]}} \frac{1}{\rho_{\veps_{n}}} \abs{\iota_{X_{\veps_{n}}} F^{(n)}}^{2} 
			+ \frac{1}{\rho_{\veps_{n}}} \abs{(\covD^{(n)}_{X_{\veps_{n}}} + \frac{1}{\rho_{\veps_{n}}}) \phi^{(n)}}^{2}
			\, \ud t \ud x
	\aleq E.
\end{equation}
Our next goal is to upgrade \eqref{eq:asymp-ss:pre} to asymptotic self-similarity by a rescaling argument.
\begin{lemma} \label{lem:final-rescale}
Suppose that Theorem~\ref{thm:main} fails.
Then there exists a sequence of admissible $C_{t} \calH^{1}$ solutions $(A^{(n)}, \phi^{(n)})$ on $[1, T_{n}] \times \bbR^{4}$ with $T_{n} \to \infty$ satisfying the following properties:
\begin{enumerate}
\item Bounded energy in the cone
\begin{equation} \label{eq:final-rescale:energy}
	\calE_{S_{t}}[A^{(n)}, \phi^{(n)}] \leq E, \quad
	 \quad \hbox{ for every } t \in [1, T_{n}],
\end{equation}
\item Small energy outside the cone
\begin{equation} \label{eq:final-rescale:extr}
	\calE_{\set{t} \times \bbR^{4} \setminus S_{t}}[A^{(n)}, \phi^{(n)}] \leq \frac{1}{100} E \quad
	 \quad \hbox{ for every } t \in [1, T_{n}],
\end{equation}

\item Nontrivial energy in a time-like region
\begin{equation} \label{eq:final-rescale:nontrivial}
	\int_{S^{(1-\gmm_{2})t}_{t}} \mvC{X_{0}}_{T}[A^{(n)}, \phi^{(n)}] \, \ud x \geq E_{2} \quad \hbox{ for every } t \in [1, T_{n}],
\end{equation}
\item Asymptotic self-similarity
\begin{equation} \label{eq:final-rescale:asymp-ss}
	\iint_{K} \abs{\iota_{X_{0}} F^{(n)}}^{2} + \abs{(\covD^{(n)}_{X_{0}} + \frac{1}{\rho}) \phi^{(n)}}^{2} \, \ud t \ud x \to 0 \quad \hbox{ as } n \to \infty
\end{equation}
	for every compact subset $K$ of the interior of $C_{[1, \infty)}$.
\end{enumerate}
\end{lemma}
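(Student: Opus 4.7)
The plan is to start from the sequence $(A^{(n)}, \phi^{(n)})$ on $[\veps_n, 1] \times \bbR^4$ produced by Lemma~\ref{lem:ini-seq}, combine the time-like lower bound \eqref{eq:t-like-e} from Lemma~\ref{lem:t-like-e} with the space-time integrability \eqref{eq:asymp-ss:pre} from Proposition~\ref{prop:monotonicity}, and perform a single rescaling whose parameter is chosen by pigeonhole so that the resulting solution is defined on $[1, T_n]$ with $T_n \to \infty$ and is asymptotically self-similar on every compact subset of the interior of $C_{[1, \infty)}$. The global smallness parameter $\veps$ is taken small enough (depending only on $E$) to guarantee the hypotheses of Lemmas~\ref{lem:no-null}, \ref{lem:t-like-e:t=1}, \ref{lem:t-like-e}; Lemma~\ref{lem:no-null} excludes null concentration for $(k_n, x_n)$ in \eqref{eq:ini-seq:conc}, so Lemma~\ref{lem:t-like-e:t=1} applies and yields \eqref{eq:t-like-e} via Lemma~\ref{lem:t-like-e}.

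I would next carry out the pigeonhole. Writing
\[
J_n[t_1, t_2] := \iint_{C_{[t_1, t_2]}} \rho_{\veps_n}^{-1}\bb(\abs{\iota_{X_{\veps_n}} F^{(n)}}^2 + \abs{(\covD^{(n)}_{X_{\veps_n}} + \rho_{\veps_n}^{-1}) \phi^{(n)}}^2\bb) \, \ud t \ud x,
\]
Proposition~\ref{prop:monotonicity} gives $J_n[\veps_n, 1] \aleq E$. Partition the scale interval $[\veps_n^{1/2}, \veps_n^{1/4}]$, of logarithmic length $\tfrac{1}{4}\abs{\log_2 \veps_n}$, into $N_n$ consecutive windows of $k_n$ dyadic subintervals each, with $N_n = k_n = \lfloor \sqrt{\abs{\log_2 \veps_n}/4}\rfloor \to \infty$. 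Non-negativity and additivity of $J_n$ force some window $W_n = [\lmb_n, T_n \lmb_n]$, with $T_n := 2^{k_n}$ and $\lmb_n \in [\veps_n^{1/2}, \veps_n^{1/4} T_n^{-1}]$, to satisfy $J_n[W_n] \aleq E/N_n \to 0$. Note $T_n \to \infty$ and $\tilde\veps_n := \veps_n/\lmb_n \leq \veps_n^{1/2} \to 0$.

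Define the rescaled solutions $(\widetilde A^{(n)}, \widetilde \phi^{(n)})(t, x) := \lmb_n (A^{(n)}, \phi^{(n)})(\lmb_n t, \lmb_n x)$; these are admissible $C_t \calH^1$ global Coulomb solutions on $[\tilde\veps_n, \lmb_n^{-1}] \times \bbR^4 \supset [1, T_n] \times \bbR^4$. Properties \eqref{eq:final-rescale:energy} and \eqref{eq:final-rescale:extr} follow from \eqref{eq:ini-seq:energy}, \eqref{eq:ini-seq:extr} by the pointwise rescaling (and $\veps$ taken so $\veps^8 E \leq E/100$); property \eqref{eq:final-rescale:nontrivial} follows from \eqref{eq:t-like-e} and the inclusion $[1, T_n] \subseteq [\veps_n^{1/2}/\lmb_n, \veps_n^{1/4}/\lmb_n]$ arranged above, using scale invariance of $\mvC{X_0}_T$. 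For \eqref{eq:final-rescale:asymp-ss}, a short computation shows that the scale invariance of the integrand of $J_n$ identifies $J_n[W_n]$ exactly with
\[
\iint_{C_{[1, T_n]}} \rho_{\tilde\veps_n}^{-1}\bb(\abs{\iota_{X_{\tilde\veps_n}} \widetilde F^{(n)}}^2 + \abs{(\widetilde\covD^{(n)}_{X_{\tilde\veps_n}} + \rho_{\tilde\veps_n}^{-1}) \widetilde\phi^{(n)}}^2\bb) \, \ud t \ud x.
\]
On any compact $K$ in the interior of $C_{[1, \infty)}$ we have $\rho \geq c_K > 0$, and $\tilde\veps_n \to 0$ gives uniform convergence $\rho_{\tilde\veps_n}^{-1} \to \rho^{-1}$ and $X_{\tilde\veps_n} \to X_0$ on $K$; \eqref{eq:final-rescale:asymp-ss} then follows from $J_n[W_n] \to 0$, the error in swapping weights and vector fields being controlled by the uniform energy bound on $K$.

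The main obstacle is the pigeonhole calibration: the window length must diverge (so $T_n \to \infty$) while simultaneously the integral over the window vanishes, and the only resource is the logarithmic range of admissible scales $[\veps_n^{1/2}, \veps_n^{1/4}]$ dictated by Lemma~\ref{lem:t-like-e}. The balanced choice $N_n = k_n \sim \sqrt{\abs{\log \veps_n}}$ is essentially forced. A secondary but routine point is the uniform convergence $(X_{\tilde\veps_n}, \rho_{\tilde\veps_n}^{-1}) \to (X_0, \rho^{-1})$ on compacts in the interior of $C_{[1, \infty)}$, which is valid precisely because such compacts are separated from the cone boundary where $\rho_{\tilde\veps_n}$ degenerates.
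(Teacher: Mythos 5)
Your proof is correct and follows essentially the same strategy as the paper's: using the time interval $[\veps_n^{1/2},\veps_n^{1/4}]$ from Lemma~\ref{lem:t-like-e}, pigeonholing the space-time integral \eqref{eq:asymp-ss:pre} over geometric windows whose logarithmic length and number are both taken $\sim\sqrt{\abs{\log\veps_n}}$ (so that $T_n\to\infty$ while the selected window's integral tends to zero), and then rescaling and passing to the limit on interior compacts where the translated weight $\rho_{\tilde\veps_n}$ and vector field $X_{\tilde\veps_n}$ converge uniformly to $\rho, X_0$. The only cosmetic difference is notational: the paper indexes the windows as $I_n^j=[T_n^j\veps_n^{1/2},T_n^{j+1}\veps_n^{1/2}]$ and you write them as blocks of $k_n$ dyadic steps, but the balanced calibration $\log T_n\aeq\abs{\log\veps_n}^{1/2}$ is identical.
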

\begin{proof} 
Let $(A^{(n)}, \phi^{(n)})$ be a sequence of solutions satisfying the conclusions of Lemmas~\ref{lem:ini-seq} and \ref{lem:t-like-e}.
Consider the time interval $[\veps_{n}^{1/2}, \veps_{n}^{1/4}]$, on which \eqref{eq:t-like-e} applies. Given $T_{n} > 1$, we partition $\veps_{n}$ in to dyadic intervals of the form $I_{n}^{j} = [T_{n}^{j} \veps_{n}^{1/2}, T_{n}^{j+1} \veps_{n}^{1/2}]$; there are roughly $\abs{\log \veps_{n}}/ \log T_{n}$ many such intervals. We choose $T_{n}$ so that $\log T_{n} \aeq \abs{\log \veps_{n}}^{1/2}$. Observe that $T_{n} \to \infty$. Also, by the pigeonhole principle applied to \eqref{eq:asymp-ss:pre}, there exists $j(n)$ such that
\begin{equation} \label{eq:asymp-ss:pre-pigeonhole}
	\iint_{C_{I_{n}^{j(n)}}} \frac{1}{\rho_{\veps_{n}}} \abs{\iota_{X_{\veps_{n}}} F^{(n)}}^{2} 
			+ \frac{1}{\rho_{\veps_{n}}} \abs{(\covD^{(n)}_{X_{\veps_{n}}} + \frac{1}{\rho_{\veps_{n}}}) \phi^{(n)}}^{2}
			\, \ud t \ud x
	\aleq \frac{\log T_{n}}{\abs{\log \veps_{n}}} E \aeq \frac{1}{\abs{\log \veps_{n}}^{1/2}} E,
\end{equation}
which exhibits the desired decay as $n \to \infty$. 

We now rescale $C_{I_{n}^{j(n)}}$ to $C_{[1, T_{n}]}$; abusing the notation a bit (but conforming to the statement of the lemma), we denote the rescaled solutions again by $(A^{(n)}, \phi^{(n)})$. From \eqref{eq:ini-seq:energy} and \eqref{eq:ini-seq:extr} with $\veps^{8} \leq \frac{1}{100}$, \eqref{eq:final-rescale:energy} and \eqref{eq:final-rescale:extr} follow. Also, \eqref{eq:final-rescale:nontrivial} is a consequence of \eqref{eq:t-like-e}. 
Furthermore, \eqref{eq:asymp-ss:pre-pigeonhole} implies
\begin{equation} \label{eq:asymp-ss:pre-rescaled}
	\iint_{C_{[1, T_{n}]} } \frac{1}{\rho_{\veps'_{n}}} \abs{\iota_{X_{\veps'_{n}}} F^{(n)}}^{2} 
			+ \frac{1}{\rho_{\veps'_{n}}} \abs{(\covD^{(n)}_{X_{\veps'_{n}}} + \frac{1}{\rho_{\veps'_{n}}}) \phi^{(n)}}^{2}
			\, \ud t \ud x
	\to 0 \quad \hbox{ as } n \to \infty
\end{equation}
where $\veps'_{n} := (T_{n}^{j(n)} \veps_{n}^{1/2})^{-1} \veps_{n}$ obeys $\veps'_{n} \leq \veps_{n}^{1/2} \to 0$. For any compact subset $K$ of the interior of $C_{[1, \infty)}$, which is in particular situated away from the boundary $\rd C_{[1, \infty)}$, note that
\begin{equation*}
	\iint_{K} \bb( \frac{1}{\rho_{\veps'_{n}}} \abs{\iota_{X_{\veps'_{n}}} F^{(n)}}^{2} 
	- \frac{1}{\rho} \abs{\iota_{X_{0}} F^{(n)}}^{2} \bb) 
	+ 	\bb( \frac{1}{\rho_{\veps'_{n}}} \abs{(\covD^{(n)}_{X_{\veps'_{n}}} - \frac{1}{\rho_{\veps'_{n}}}) \phi^{(n)}}^{2} 
	- \frac{1}{\rho} \abs{(\covD^{(n)}_{X_{0}} + \frac{1}{\rho}) \phi^{(n)}}^{2} \bb)  \, \ud t \ud x \to 0 
\end{equation*}
by conservation of energy, localized Hardy's inequality and the dominated convergence theorem. Combined with \eqref{eq:asymp-ss:pre-rescaled}, the desired asymptotic self-similarity \eqref{eq:final-rescale:asymp-ss} follows. \qedhere
\end{proof}

\subsection{Concentration scales} \label{subsec:conc-scales}
Let $(A^{(n)}, \phi^{(n)})$ be a sequence of solutions given by Lemma~\ref{lem:final-rescale}. We now present a combinatorial result that establishes the following dichotomy: Either there is a uniform non-concentration of energy, or we can identify a sequence of points and decreasing scales at which energy concentrates.

To state the result, we need few definitions. For each $j = 1, 2, \cdots$ we define
\begin{align*}
	C_{j} :=& \set{(t,x) \in C^{1}_{[1, \infty)} : 2^{j} \leq t < 2^{j+1}}, \\
	\widetilde{C}_{j} :=& \set{(t,x) \in C^{1/2}_{[1/2, \infty)} : 2^{j} \leq t < 2^{j+1}}.
\end{align*}
In words, $C_{j}$ [resp. $\widetilde{C}_{j}$] is the set of points in the truncated cone $C_{[2^{j}, 2^{j+1})}$ at distance $\geq 1$ [resp. $\geq 1/2$] from the lateral boundary. For each $j \geq 1$, we have the following lemma.
\begin{lemma} \label{lem:conc-scales}
Let $(A^{(n)}, \phi^{(n)})$ be a sequence of admissible $C_{t} \calH^{1}$ solutions on $[1, T_{n}] \times \bbR^{4}$ with $T_{n} \to \infty$ satisfying \eqref{eq:final-rescale:energy}--\eqref{eq:final-rescale:asymp-ss} for some $E > 0$. Let $\eps_{0}$  be as in Proposition~\ref{prop:cpt}. Then for each $j = 1, 2, \cdots$, after passing to a subsequence, one of the following alternatives holds:
\begin{enumerate}
\item {\bf Concentration of energy}. There exist points $(t_{n}, x_{n}) \in \widetilde{C}_{j}$, scales $r_{n} \to 0$ and $0 < r = r(j) < 1/4$ such that the following bounds hold:
\begin{align} 
	\calE_{\set{t_{n}} \times B_{r_{n}}(x_{n})}[A^{(n)}, \phi^{(n)}] = & \  \frac{1}{c_{0}^{2}} \eps_{0}^{2}, \label{eq:conc-scales:conc:1} \\
	\sup_{x \in B_{r}(x_{n})} \calE_{\set{t_{n}} \times B_{r_{n}}(x)}[A^{(n)}, \phi^{(n)}] \leq & \ \frac{1}{c_{0}^{2}} \eps_{0}^{2}, \label{eq:conc-scales:conc:2} \\
	\frac{1}{4 r_{n}} \int_{t_{n}-2 r_{n}}^{t_{n}+2 r_{n}} \int_{B_{r}(x_{n})} 
				\abs{\iota_{X_{0}} F^{(n)}}^{2} + \abs{(\covD^{(n)}_{X_{0}} + \frac{1}{\rho}) \phi^{(n)}}^{2} \, \ud t \ud x 
	\to & \ 0 \quad \hbox{ as } n \to \infty. \label{eq:conc-scales:conc:3}
\end{align}

\item {\bf Uniform non-concentration of energy}. There exists $0 < r = r(j) < 1/4$ such that the following bounds hold:
\begin{align} 
\int_{S^{(1-\gmm_{2})t}_{t}} \mvC{X_{0}}_{T}[A^{(n)}, \phi^{(n)}] \, \ud x \geq  &\  E_{2} \quad \hbox{ for } t \in [2^{j}, 2^{j+1}), \label{eq:conc-scales:non-conc:1}\\
\sup_{(t, x) \in C_{j}} \calE_{\set{t} \times B_{r}(x)} [A^{(n)}, \phi^{(n)}] \leq & \ \frac{1}{c_{0}^{2}} \eps_{0}^{2}, \label{eq:conc-scales:non-conc:2}\\
\iint_{\widetilde{C}_{j}} \abs{\iota_{X_{0}} F^{(n)}}^{2} + \abs{(\covD^{(n)}_{X_{0}} + \frac{1}{\rho}) \phi^{(n)}}^{2} \, \ud t \ud x \to & \ 0 \quad \hbox{ as } n \to \infty. \label{eq:conc-scales:non-conc:3}
\end{align}
\end{enumerate}
Here $c_{0} > 0$ is a universal constant much larger than the implicit constants in Lemma~\ref{l:hardy+}.
\end{lemma}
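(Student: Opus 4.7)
The proof is a combinatorial dichotomy based on the local concentration scale of energy.

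\textbf{Setup and dichotomy.} For each $n$, define the local concentration radius
$$R_n(t, x) := \sup\set{r > 0 : \calE_{\set{t} \times B_r(x)}[A^{(n)}, \phi^{(n)}] \leq \tau}, \quad \tau := \tfrac{\eps_0^2}{c_0^2},$$
on a slight enlargement $C_j' \subseteq \widetilde{C}_j$ of $C_j$, chosen so that for every $(t, x) \in C_j$, the spatial ball $B_{r(j)}(x)$ at time $t$ lies in $C_j'$, for some fixed $0 < r(j) < 1/4$. Admissibility of $(A^{(n)}, \phi^{(n)})$ ensures the energy is continuous in $(t, x, r)$, from which $R_n$ is upper semi-continuous and bounded; this is enough to make the extraction arguments below go through. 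Set $\rho_n := \inf_{C_j'} R_n$. After passing to a subsequence, either (A) $\rho_n \geq r(j)$ for all large $n$, or (B) $\rho_n \to 0$.

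\textbf{Case A (non-concentration).} Taking $r := r(j)$, \eqref{eq:conc-scales:non-conc:2} is immediate from the definition of $\rho_n$. Condition \eqref{eq:conc-scales:non-conc:1} is already \eqref{eq:final-rescale:nontrivial}, and \eqref{eq:conc-scales:non-conc:3} follows from \eqref{eq:final-rescale:asymp-ss} applied to the closure $\overline{\widetilde{C}_j}$, which is a compact subset of the interior of $C_{[1, \infty)}$.

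\textbf{Case B, conditions (1) and (2).} For each fixed $n$ and $t$, let $x_n^*(t)$ be a spatial minimizer of $R_n(t, \cdot)$ and set $r_n^*(t) := R_n(t, x_n^*(t))$. From the supremum definition of $R_n$ together with continuity of energy in $r$, we have $\calE_{\set{t} \times B_{r_n^*(t)}(x_n^*(t))} = \tau$; from the minimizing property of $x_n^*(t)$, $\calE_{\set{t} \times B_{r_n^*(t)}(x)} \leq \tau$ for every $x$ in the spatial domain. Consequently, for any sequence of times $t_n$ with $r_n^*(t_n) \to 0$, the choice $x_n := x_n^*(t_n)$, $r_n := r_n^*(t_n)$ produces \eqref{eq:conc-scales:conc:1} and \eqref{eq:conc-scales:conc:2} automatically; since $\inf_t r_n^*(t) = \rho_n \to 0$, such times exist.

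\textbf{Case B, condition (3): the main obstacle.} Let $F_n := \abs{\iota_{X_0} F^{(n)}}^2 + \abs{(\covD^{(n)}_{X_0} + \tfrac{1}{\rho}) \phi^{(n)}}^2$ and $\eta_n := \iint_{\widetilde{C}_j} F_n \, \ud t\, \ud x$, which tends to $0$ by \eqref{eq:final-rescale:asymp-ss}. The crude bound for \eqref{eq:conc-scales:conc:3} is $\eta_n / (4 r_n)$, which vanishes only if $r_n \gg \eta_n$. We therefore optimize the choice of $t_n$ via a pigeonhole in time. Decompose $[2^j, 2^{j+1}]$ into dyadic annuli $E_n^{(k)} := \set{t : 2^{k-1} \rho_n < r_n^*(t) \leq 2^k \rho_n}$ for $k = 1, \dots, K_n \asymp \log(1/\rho_n)$; since $\sum_k |E_n^{(k)}| \leq 2^j$, some annulus $E_n^{(k_n)}$ has measure $\gtrsim 1/K_n$. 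Separately, setting $r_n^{\mathrm{trial}} := 2^{k_n} \rho_n$ and $G_n(t) := \tfrac{1}{4 r_n^{\mathrm{trial}}} \int_{t-2 r_n^{\mathrm{trial}}}^{t + 2 r_n^{\mathrm{trial}}} g_n(s) \, \ud s$ with $g_n(s) := \int F_n(s, \cdot)$, we have $\nrm{G_n}_{L^1} \lesssim \eta_n$, so the Chebyshev-bad set $B_n := \set{t : G_n(t) > \sqrt{\eta_n}}$ has measure $\leq \sqrt{\eta_n}$. After passing to a further subsequence on which $1/K_n \gg \sqrt{\eta_n}$, we get $|E_n^{(k_n)}| > |B_n|$, and any $t_n \in E_n^{(k_n)} \setminus B_n$ gives $r_n := r_n^*(t_n) \in (r_n^{\mathrm{trial}}/2, r_n^{\mathrm{trial}}]$, so that the quantity in \eqref{eq:conc-scales:conc:3} is bounded by $(r_n^{\mathrm{trial}}/r_n) G_n(t_n) \leq 2\sqrt{\eta_n} \to 0$. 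The residual regime where $\rho_n$ decays faster than $e^{-1/\sqrt{\eta_n}}$ (invalidating the dyadic pigeonhole) is the main technical challenge; it is dispatched by a refined continuous pigeonhole exploiting the Lipschitz-in-$t$ regularity of the local energy via finite-speed-of-propagation/energy-flux bounds, which ensures that the set $\set{t : r_n^*(t) \leq 2\rho_n}$ has a quantitative lower bound on its measure.
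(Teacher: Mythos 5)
Your high-level dichotomy on the concentration scale is the right framework and matches the paper's, but the proposal has two genuine gaps.

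\textbf{Condition \eqref{eq:conc-scales:conc:2} is not established.} Your spatial minimizer $x_n^*(t_n)$ is taken over (the slice of) $C'_j$, and the bound $\calE_{\set{t_n}\times B_{r_n}(x)}\le\tau$ only follows for $x$ in that slice. But the minimizer can land on the boundary of $C'_j$, in which case $B_{r(j)}(x_n)$ sticks out of $C'_j$ into a region over which you have no control. The geometric condition you impose (``for every $(t,x)\in C_j$, $B_{r(j)}(x)$ lies in $C'_j$'') protects points of $C_j$, not points of $C'_j$; enlarging $C'_j$ to fix this just pushes the problem outward in an infinite regress. This is precisely why the paper spends Step~1 constructing, for each $n,k$, a ``low energy barrier'' annulus $\widetilde{C}_j^{k,\ell(n,k)}$ inside $\widetilde{C}_j^k\setminus C_j^k$, and restricts the minimization to $\widetilde{C}_j^{k,<\ell(n,k)}$; then $B_{4r_0}(x_n)$ at time $t_n$ either stays in the control region or dips into the barrier, where energy is already small. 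Without some such device your condition~(2) argument does not close.

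\textbf{Condition \eqref{eq:conc-scales:conc:3}: the dyadic pigeonhole does not deliver $r_n\to 0$.} You pick the dyadic annulus $E_n^{(k_n)}$ of largest measure, but there is no reason for $k_n$ to be small: if $r_n^*(t)$ equals $r_0$ on most of $I_k$ and dips to $\rho_n$ only on a tiny set, the ``heavy'' annulus is the top one, $r_n^{\mathrm{trial}}=2^{k_n}\rho_n\approx r_0$ does not go to zero, and hence neither does your $r_n\in(r_n^{\mathrm{trial}}/2,r_n^{\mathrm{trial}}]$. You correctly identify the fix (Lipschitz continuity of the concentration scale from finite speed of propagation/energy flux) but defer it as a ``refined continuous pigeonhole'' rather than running it. The paper's argument uses this Lipschitz bound as the organizing principle rather than as a patch: take the minimizing time $\overline{t}_n$, note $r_{n,k}(\overline{t}_n)\to 0$, and use $|r_{n,k}(t)-r_{n,k}(\overline{t}_n)|\le|t-\overline{t}_n|$ to get an entire interval $K_n$ of half-width $\alp_n^{1/2}$ on which $r_{n,k}$ is still small; separately, the Hardy--Littlewood maximal inequality applied to $\bt_n^2(t):=\int F_n(t,\cdot)$ shows that the bad set $\set{M[\bt_n^2]>\alp_n}$ has measure $\lesssim\alp_n\ll\alp_n^{1/2}$, so $K_n$ meets the good set. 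Any $t_n$ in the intersection gives $r_n\to 0$, while $\frac{1}{4r_n}\int_{t_n-2r_n}^{t_n+2r_n}\bt_n^2\le M[\bt_n^2](t_n)\le\alp_n\to 0$. This combination (Lipschitz in $t$ + maximal function) is what you need in place of the dyadic decomposition in $r_n^*$.

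Your Case~A (non-concentration) and Case~B conditions~(1) are in line with the paper once condition~(2) is repaired.
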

\begin{proof} 
This lemma is essentially \cite[Lemma~6.3]{MR2657818}; for completeness we give a self-contained alternative proof, which relies on the use of the Hardy-Littlewood maximal function theorem to establish \eqref{eq:conc-scales:conc:3}.

\pfstep{Step 1}
Fix $j \in \set{1, 2, \ldots}$. We begin by identifying a `low energy barrier' around $C_{j}$ inside $\widetilde{C}_{j}$. Let $N > 0$ be a large integer to be determined later. We first partition the time interval $[2^{j}, 2^{j+1})$ into smaller intervals $I_{k}$, where 
\begin{equation*}
	I_{k} := [2^{j} + \frac{k-1}{10 N}, 2^{j} + \frac{k}{10 N }) \quad k = 1, \ldots, 10 N 2^{j}.
\end{equation*}
Accordingly, define $C_{j}^{k} := C_{j} \cap (I_{k} \times \bbR^{4})$ and $\widetilde{C}_{j}^{k} := \widetilde{C}_{j} \cap (I_{k} \times \bbR^{4})$. Next, we partition $\widetilde{C}_{j}^{k} \setminus C_{j}^{k}$ into $\cup_{\ell=1}^{N} \widetilde{C}^{k, \ell}_{j}$, where
\begin{equation*}
	\widetilde{C}^{k, \ell}_{j} = \set{(t,x) \in \widetilde{C}_{j}^{k} : \frac{1}{2} + \frac{\ell - 1}{2N} \leq t - \abs{x} < \frac{1}{2} + \frac{\ell}{2N}}, \quad \ell = 1, \ldots, N.
\end{equation*}
For each $n$ and $k$, we claim that there exists $1 \leq \ell(n, k) \leq N$ such that
\begin{equation} \label{eq:low-e-barrier:pre}
	\sup_{t \in I_{k}} \, \calE_{S_{t} \cap \widetilde{C}_{j}^{k, \ell(n, k)}}[A^{(n)}, \phi^{(n)}] \leq \frac{3}{N} E.
\end{equation}
Indeed, for each $k$ consider the left endpoint $\underline{t}_{k} := 2^{j} + (k-1)/(10N)$. The set $S_{\underline{t}_{k}} \cap (\widetilde{C}^{k}_{j} \setminus C^{k}_{j})$ is partitioned into $N$ annuli of the form $S_{\underline{t}_{k}} \cap \widetilde{C}^{k, \ell}_{j}$.
By the pigeonhole principle and the energy bound \eqref{eq:final-rescale:energy}, there exists $1 \leq \ell(n, k) \leq N-2$ such that
\begin{equation*}
	\sum_{\ell = \ell(n, k)}^{\ell(n, k) + 2} \calE_{S_{\underline{t}_{k}} \cap \widetilde{C}_{j}^{k, \ell}} [A^{(n)}, \phi^{(n)}] \leq \frac{3}{N} E.
\end{equation*}
As $\widetilde{C}_{j}^{k, \ell(n, k)}$ lies in the domain of dependence of $\cup_{\ell = \ell(n, k)}^{\ell(n,k)+2} S_{\underline{t}_{k}} \cap \widetilde{C}_{j}^{k, \ell}$, \eqref{eq:low-e-barrier:pre} now follows by the local conservation of energy.

We choose $N$ large enough so that
\begin{equation*} 
	\frac{3}{N} E \leq \frac{1}{c_{0}^{2}} \eps_{0}^{2}.
\end{equation*}
Hence, by \eqref{eq:low-e-barrier:pre}, $\widetilde{C}_{j}^{k, \ell(n, k)}$ serves as a `low energy barrier' that separates the behavior of the solution in 
 the interior $\widetilde{C}^{k, < \ell(n, k)}_{j} := (\cup_{\ell = 1}^{\ell(n,k)-1} \widetilde{C}_{j}^{k, \ell(n,k)}) \cup C_{j}^{k}$ from the outside. Fix $0 < r_{0} < 1/4$ (independent of $n$ and $k$) so that
\begin{equation} \label{eq:low-e-barrier}
	(t, x) \in \widetilde{C}_{j}^{k, < \ell(n, k)} \imp \set{t} \times B_{4 r}(x) \subseteq \widetilde{C}_{j}^{k, < \ell(n, k)} \cap \widetilde{C}_{j}^{k, \ell(n, k)}.
\end{equation}

\pfstep{Step 2}
For each $n$ and $k$, define $f_{n, k} : [0, r_{0}] \times I_{k} \to [0, \infty)$ by
\begin{equation*}
	f_{n, k}(r, t) := \sup \set{\calE_{\set{t} \times B_{r}(x)}[A^{(n)}, \phi^{(n)}] : (t, x) \in \widetilde{C}_{j}^{k, < \ell(n, k)}}.
\end{equation*}
We then define the \emph{lowest energy concentration scale} $r_{n, k}(t)$ as
\begin{equation} \label{}
	r_{n, k}(t) := 
	\left\{
	\begin{array}{ll}
	\inf \set{r \in [0, r_{0}] : f_{n}(t, r) \geq \frac{1}{c_{0}^{2}} \eps_{0}^{2}} &  \hbox{if } f_{n}(t, r_{0}) \geq \frac{1}{c_{0}^{2}} \eps_{0}^{2}, \\
	r_{0} &  \hbox{otherwise}.
	\end{array}
	\right.
\end{equation}
By the finite speed of propagation, each $r_{n, k}$ is Lipschitz continuous with constant $\leq 1$:
\begin{equation*}
	\abs{r_{n, k}(t_{1}) - r_{n,k}(t_{0})} \leq \abs{t_{1} - t_{0}}.
\end{equation*}

We first treat the case when there exists a common lower bound $0 < r(j) \leq r_{0}$ of $r_{n,k}$, i.e., $r_{n, k}(t) \geq r(j)$ for all $n, k$ and $t \in I_{k}$.
Unraveling the definition of $r_{n, k}$, we see that \eqref{eq:conc-scales:non-conc:2} holds. Moreover, \eqref{eq:conc-scales:non-conc:1} and \eqref{eq:conc-scales:non-conc:3} follow directly from \eqref{eq:final-rescale:nontrivial} and \eqref{eq:final-rescale:asymp-ss}, respectively. Thus we conclude that the second scenario (uniform non-concentration of energy) holds.

To complete the proof, it only remains to consider the alternative case and show that the first scenario (concentration of energy) holds. After passing to a subsequence, we may assume that there exists $k \in \set{1, \ldots, 10 N 2^{j}}$ such that
\begin{equation} \label{eq:conc-scale:scale-to-0}
\lim_{n \to \infty} \inf_{I_{k}} r_{n, k} = 0.
\end{equation}
Then we claim that there exist $(t_{n}, x_{n})$ and $r_{n}$ such that \eqref{eq:conc-scales:conc:1}--\eqref{eq:conc-scales:conc:3} hold with $r(j) = r_{0}$, up to passing to a subsequence. 

Define
\begin{equation*}
	\alp_{n}^{2} := \int_{2^{j-1}}^{2^{j+2}} \bt_{n}^{2}(t) \, \ud t, \quad \bt_{n}^{2}(t) := \int_{S_{t} \cap C^{1/2}_{[1/2,\infty)}} \abs{\iota_{X_{0}} F^{(n)}}^{2} + \abs{(\covD^{(n)}_{X_{0}} + \frac{1}{\rho}) \phi^{(n)}}^{2} \, \ud x.
\end{equation*}
Note that $\alp_{n}^{2} \to 0$ by \eqref{eq:final-rescale:asymp-ss}. By the Hardy-Littlewood maximal function theorem, for every $\alp > 0$ we have
\begin{equation} \label{eq:conc-scale:HLM}
	\abs{ \set{t \in [2^{j-1}, 2^{j+1}) : M [\bt_{n}^{2}](t) > \alp }} \aleq \frac{1}{\alp} \alp_{n}^{2},
\end{equation} 
where $M[\bt_{n}](t)$ is the Hardy-Littlewood maximal function on $[2^{j-1}, 2^{j+2})$, given by
\begin{equation*}
	M[\bt_{n}] (t):= \sup_{a > 0} \frac{1}{2a}\int_{(t-a, t+a) \cap [2^{j-1}, 2^{j+2})} \bt^{2}_{n}(t') \, \ud t'.
\end{equation*}

Roughly speaking, \eqref{eq:conc-scale:HLM} says that the desired conclusion \eqref{eq:conc-scales:conc:3} holds for `most of' $t \in I_{k}$. This fact, combined with the flexibility of the choice of $t_{n}$ such that $\lim_{n \to \infty} r_{n, k}(t_{n}) = 0$, will lead to the desired conclusions \eqref{eq:conc-scales:conc:1}--\eqref{eq:conc-scales:conc:3}.

More precisely, define the intervals $J_{n}, K_{n} \subseteq I_{k}$ by
\begin{equation*}
	J_{n} := \set{ t \in I_{k} : M[\bt_{n}^{2}] \leq \alp_{n}}, \quad
	K_{n} := (\overline{t}_{n} - \alp_{n}^{1/2}, \overline{t}_{n} + \alp_{n}^{1/2}) \cap I_{k},
\end{equation*}
where $\overline{t}_{n} \in I_{k}$ is a minimum of $r_{n, k}$, i.e., $r_{n, k}(\overline{t}_{n}) = \inf_{I_{k}} r_{n, k}$. By the uniform Lipschitz continuity of $r_{n,k}$ and the fact that $\alp_{n}^{2} \to 0$ as $n \to \infty$, we have
\begin{equation*}
	\sup_{t \in K_{n}} r_{n, k}(t) \to 0 \hbox{ as } n \to \infty.
\end{equation*}
Note that $\abs{I_{k} \setminus J_{n}} \aleq \alp_{n}$ by \eqref{eq:conc-scale:HLM} with $\alp = \alp_{n}$, whereas $\abs{K_{n}} = 2 \alp_{n}^{1/2}$. Using again the fact that $\alp_{n}^{2} \to 0$ as $n \to \infty$ and passing to a subsequence, it follows that $J_{n} \cap K_{n} \neq \0$ for all $n$. Choosing $t_{n}$ so that $t_{n} \in J_{n} \cap K_{n}$ and $r_{n} := r_{n, k}(t_{n})$, we have
\begin{equation*}
	\sup_{a > 0} \frac{1}{2a}\int_{t_{n}-a}^{t_{n}+a} \bt_{n}^{2}(t) \, \ud t \to 0, \quad r_{n} = r_{n, k}(t_{n}) \to 0 \quad \hbox{ as } n \to \infty.
\end{equation*}
In particular, \eqref{eq:conc-scales:conc:3} holds. Passing to a subsequence if necessary, we may assume that $r_{n, k}(t_{n}) < r_{0}$; then there exists $(t_{n}, x_{n}) \in \widetilde{C}_{j}^{k, <\ell(n, k)}$ such that \eqref{eq:conc-scales:conc:1} holds for all $n$ as well. Finally, thanks to the low energy barrier \eqref{eq:low-e-barrier} and the definition of $r_{n,k}$, \eqref{eq:conc-scales:conc:2} follows with $r(j) = r_{0}$. \qedhere
\end{proof}

\subsection{Compactness/rigidity argument} \label{subsec:cpt-rigid} We
are now ready to complete the proof of Theorem~\ref{thm:main}, by
using the tools developed in Sections~\ref{sec:cpt} and
\ref{sec:stationary-self-sim}.

\begin{proof} [Completion of proof of Theorem~\ref{thm:main}]
  Let $(A^{(n)}, \phi^{(n)})$ be a sequence of admissible $C_{t}
  \calH^{1}$ solutions on $[1, T_{n}] \times \bbR^{4}$ given by
  Lemma~\ref{lem:final-rescale}. We consider two cases according to
  Lemma~\ref{lem:conc-scales}, and show that both lead to
  contradictions.

  \pfstep{Case 1} Suppose that there exists $j \in \set{1, 2, \ldots}$
  such that the first scenario (concentration of energy) in
  Lemma~\ref{lem:conc-scales} holds. We need to set things up 
so that we can use  Proposition~\ref{prop:cpt}, and for that we 
also need local control of the $L^2$ norm of $\phi$. This is achieved 
via the improved form of Hardy's inequality in Lemma~\ref{l:hardy+}.
From  \eqref{eq:conc-scales:conc:2} we obtain 
\[
(\sigma r_n)^{-2} \|\phi^{(n)}(t_n) \|_{L^2_{x}(B_{\sigma r_n})}^2 \leq \frac{1}{10} \epsilon_0^2 
+ C \sigma^{2} E, \qquad \sigma < 1
\]
To eliminate the second term we choose 
\[
\sigma^{2} = c \epsilon_0^2 E^{-1}
\]
with a small universal constant $c$. Thus we have insured that the
hypothesis of Proposition~\ref{prop:cpt} are satisfied with respect to
the rescaled ball $B_{\sigma r_k}(x)$ with $x$ as in  \eqref{eq:conc-scales:conc:2}, i.e.,
\begin{equation}\label{eq:loc-rn}
 \calE_{\set{t_{n}} \times B_{8\sigma r_{n}}(x)}[A^{(n)}, \phi^{(n)}] +(\sigma r_n)^{-2} \|\phi^{(n)}(t_n) \|_{L^2_{x}(B_{8\sigma r_n}(x))}^2 \leq \epsilon_0^2 
\end{equation}

As $\widetilde{C}_{j}$ is
  pre-compact, we may assume that $(t_{n}, x_{n})$ has a limit
  $(t_{0}, x_{0})$ in the closure of $\widetilde{C}_{j}$ after passing
  to a subsequence. Consider the sequence
  \begin{equation*}
    (\widetilde{A}^{(n)}, \widetilde{\phi}^{(n)})(t,x) := r_{n} (A^{(n)}, \phi^{(n)}) (\sigma r_{n} t + t_{n}, \sigma r_{n} x + x_{n}).
  \end{equation*}
  By \eqref{eq:conc-scales:conc:1}, there is always a nontrivial
  amount of energy at the origin, i.e.,
  \begin{equation} \label{eq:end:conc:nontrivial} \calE_{\set{0}
      \times B_{\sigma^{-1}}(0)}[\widetilde{A}^{(n)}, \widetilde{\phi}^{(n)}] =
    \frac{1}{c_{0}^{2}} \eps_{0}^{2}.
  \end{equation}
  Fix any $x \in \bbR^{4}$. As $r_{n} \to 0$, observe that the point
  $r_{n} x + x_{n}$ belongs to $B_{r(j)}(x_{n})$ for sufficiently
  large $n$. Hence, by \eqref{eq:loc-rn}, we have
  \begin{equation} \label{eq:end:conc:smallness} \calE_{\set{0} \times
      B_{8}(x)}[\widetilde{A}^{(n)}, \widetilde{\phi}^{(n)}] + \| \widetilde{\phi}^{(n)}(0)\|_{L^2_{x}( B_{8}(x))}
    \leq \eps_{0}^{2} \quad \hbox{ for sufficiently large } n.
  \end{equation}
    Finally, by
  \eqref{eq:conc-scales:conc:3}, the convergence $(t_{n}, x_{n}) \to
  (t_{0}, x_{0})$ and smoothness of $X_{0}$, it follows that
  \begin{equation} \label{eq:end:conc:asymp-ss} \iint_{(-2, 2) \times
      B_{2}(x)} \abs{\iota_{Y} \widetilde{F}^{(n)}}^{2} +
    \abs{\widetilde{\covD}^{(n)}_{Y} \widetilde{\phi}^{(n)}}^{2} \,
    \ud t \ud x \to 0 \quad \hbox{ as } n \to \infty.
  \end{equation}
  where $Y = X_{0}(t_{0}, x_{0})$ is a constant time-like vector
  field. Note that the contribution of the term $\frac{1}{\rho}
  \phi^{(n)}$ dropped out by scaling.

  As a consequence, for each $x \in \bbR^{4}$ we can apply
  Proposition~\ref{prop:cpt} to obtain a weak solution $(A_{[x]},
  \phi_{[x]}) \in \Xw((-1, 1) \times B_{1}(x))$ to \eqref{eq:MKG} such
  that
  \begin{equation*}
    \iota_{Y} F_{[x]} = 0, \quad \covD_{[x] Y} \phi_{[x]} = 0,
  \end{equation*}
  and $(\widetilde{A}^{(n)}, \widetilde{\phi}^{(n)})$ converges to
  $(A_{[x]}, \phi_{[x]})$ up to gauge transformations on $(-1, 1)
  \times B_{1}(x)$ as in \eqref{eq:cpt:converge:non-cov},
  \eqref{eq:cpt:converge:cov}.
  By Lemma~\ref{lem:weak-cp-limit}, the weak solutions $(A_{[x]},
  \phi_{[x]})$ form weak compatible pairs (as in
  Definition~\ref{def:weak-cp}) on the open cover $\set{(-1, 1) \times
    B_{1}(x)}_{x \in (1/2)\bbZ^{4}}$ of $(-1, 1) \times
  \bbR^{4}$. Furthermore, by Proposition~\ref{prop:reg}, there exists
  an equivalent set of \emph{smooth} compatible pairs $(A_{[\alp]},
  \phi_{[\alp]})$ on some refined open cover $\calQ = \set{Q_{\alp}}$
  of $(-1, 1) \times \bbR^{4}$.

  Let $(A, \phi)$ be a global smooth pair on $(-1, 1) \times \bbR^{4}$
  equivalent to $(A_{[\alp]}, \phi_{[\alp]})$. We then extend $(A,
  \phi)$ to $\bbR^{1+4}$ as a smooth solution to \eqref{eq:MKG}
  satisfying $\iota_{Y} F = 0$ and $\covD_{Y} \phi = 0$ by pulling
  back along the flow of $Y$.
  Note that $(A, \phi)$ has finite energy (in fact, bounded by $\leq
  E$), as we have
  \begin{equation} \label{eq:end:conc:loc-conv}
    \vC{T}_{T}[\widetilde{A}^{(n)}, \widetilde{\phi}^{(n)}] \to
    \vC{T}_{T}[A, \phi] \quad \hbox{ locally in } L^{1}_{t,x} \hbox{
      on } (-1, 1) \times \bbR^{4}
  \end{equation}
  by \eqref{eq:cpt:converge:cov} and the gauge invariance of the
  energy density $\vC{T}_{T}$.  After applying a suitable Lorentz
  transform, we may furthermore assume that $Y = T$.  By
  Proposition~\ref{prop:trivial:st}, it follows that $\calE [A, \phi]
  = 0$, but this contradicts \eqref{eq:end:conc:nontrivial} and
  \eqref{eq:end:conc:loc-conv}.

  \pfstep{Case 2} Suppose that for every $j \in \set{1, 2, \ldots}$
  the second scenario (uniform non-con\-cen\-tration of energy) in
  Lemma~\ref{lem:conc-scales} holds. In this case there is no need to
  rescale. Indeed, \eqref{eq:conc-scales:non-conc:2} and
  \eqref{eq:conc-scales:non-conc:3} (as well as
  \eqref{eq:final-rescale:energy}, \eqref{eq:final-rescale:extr} and
  Lemma~\ref{l:hardy+}) allow us to apply the rescaled
  Proposition~\ref{prop:cpt} directly to $(A^{(n)}, \phi^{(n)})$ on
  $\set{t} \times \sgm B(x)$, with $\sigma$ as in Case 1, for $(t, x)
  \in C^{T}_{[T, \infty)}$ for some $T = T(\sgm) > 1$. Proceeding as
  in the previous case using Lemma~\ref{lem:weak-cp-limit} and
  Proposition~\ref{prop:reg}, we then obtain a global smooth pair $(A,
  \phi)$ on $C^{T}_{[T, \infty)}$ satisfying the following properties:
  \begin{itemize}
  \item The pair $(A, \phi)$ is a smooth solution to \eqref{eq:MKG}
    obeying the self-similarity condition
    \begin{equation*}
      \iota_{X_{0}} F = 0, \quad (\covD_{X_{0}} + \frac{1}{\rho}) \phi = \frac{1}{\rho} \covD_{X_{0}}(\rho \phi)= 0.
    \end{equation*}
  \item The following local convergences hold:
    \begin{align}
      \vC{T}_{T}[\widetilde{A}^{(n)}, \widetilde{\phi}^{(n)}] \to & \vC{T}_{T}[A, \phi] \quad \hbox{ locally in } L^{1}_{t,x} \hbox{ on } C^{T}_{[T, \infty)},  \label{eq:end:non-conc:loc-conv:1} \\
      \mvC{X_{0}}_{T}[\widetilde{A}^{(n)}, \widetilde{\phi}^{(n)}] \to
      & \mvC{X_{0}}_{T}[A, \phi] \quad \hbox{ locally in } L^{1}_{t,x}
      \hbox{ on } C^{T}_{[T,
        \infty)}. \label{eq:end:non-conc:loc-conv:2}
    \end{align}
  \end{itemize}

  We extend $(A, \phi)$ to a smooth self-similar solution to
  \eqref{eq:MKG} on the whole cone $C_{(0, \infty)} = \set{0 \leq r <
    t}$ by pulling back $(A, \rho \phi)$ along the flow of
  $X_{0}$. Note that $(A, \phi)$ has finite energy (again bounded by
  $\leq E$), thanks to the local convergence
  \eqref{eq:end:non-conc:loc-conv:1}. Hence by
  Proposition~\ref{prop:trivial:ss}, it follows that $\calE_{S_{t}}[A,
  \phi] = 0$ for every $t \in (0, \infty)$. However, this is a
  contradiction with \eqref{eq:conc-scales:non-conc:1} (in particular,
  for large enough $t$ so that $S^{(1-\gmm_{2})t}_{t} \subseteq
  C^{T}_{[T, \infty)}$) and
  \eqref{eq:end:non-conc:loc-conv:2}. \qedhere
\end{proof}
\bibliographystyle{amsplain}

\providecommand{\bysame}{\leavevmode\hbox to3em{\hrulefill}\thinspace}
\providecommand{\MR}{\relax\ifhmode\unskip\space\fi MR }
\providecommand{\MRhref}[2]{%
  \href{http://www.ams.org/mathscinet-getitem?mr=#1}{#2} }
\providecommand{\href}[2]{#2}

\end{document}